\makeatletter \@removefromreset{equation}{section} \makeatother
\DeclareMathOperator{\HOM}{HOM}
\newcommand{\inc}{\mathrm{inc}}
\newcommand{\sset}{\mathit{sSet}}
\newcommand{\Ab}{\mathit{Ab}}
\newcommand{\Nat}{\mathbb{N}}
\newcommand{\Int}{\mathbb{Z}}
\newcommand{\Real}{\mathbb{R}}
\newtheorem{lem}[subsubsection]{Lemma}
\newtheorem{definition}[subsubsection]{Definition}
\newtheorem{lemdef}[subsubsection]{Lemma/Definition}
\newtheorem{cor}[subsubsection]{Corollary}
\newtheorem{thm}[subsubsection]{Theorem}
\newtheorem{prop}[subsubsection]{Proposition}
\newtheorem*{thm*}{Theorem}
\newtheorem*{lem*}{Lemma}
\newtheorem*{rep@theorem}{\rep@title}
\newcommand{\newreptheorem}[2]{%
\newenvironment{rep#1}[1]{%
 \def\rep@title{#2 \ref{##1}}%
 \begin{rep@theorem}}%
 {\end{rep@theorem}}}
\newtheorem{mainthm}{Theorem}
\theoremstyle{remark}
\newtheorem{rem}[subsubsection]{Remark}
\newtheorem{example}[subsubsection]{Example}
\newcommand{\id}{\mathrm{id}}
\DeclareMathOperator{\pr}{pr}
\DeclareMathOperator{\Hom}{Hom}
\DeclareMathOperator*{\colim}{colim}
\DeclareMathOperator{\supp}{supp}
\DeclareMathOperator{\Tel}{Tel}
\DeclareMathOperator{\Tr}{Tr}
\DeclareMathOperator{\Imag}{Im}
\DeclareMathOperator{\res}{res}
\newcommand{\sh}{\mathrm{sh}}
\newcommand{\op}{\mathrm{op}}
\newcommand{\Aut}{\mathrm{Aut}}
\newcommand{\bdot}{\textrm{\Large\bf.}}
\newcommand{\pt}{\mathrm{pt}}
\newcommand{\Ar}{\mathrm{Ar}}
\newcommand{\sk}{\mathrm{sk}}
\author{Mark Ullmann}
\title{Controlled Algebra for Simplicial Rings and Algebraic K-theory}
\renewcommand\subsubsection{\@startsection{subsubsection}{3}{\z@}%
                                    {3.25ex \@plus1ex \@minus.2ex}%
                                    {-1em}%
                                    {\normalfont\normalsize\bfseries}}
\begin{document}
\maketitle
\begin{abstract}
We develop a version of controlled algebra for simplicial rings. This
generalizes the methods which lead to successful proofs of the algebraic K-
theory isomorphism conjecture (Farrell-Jones Conjecture) for a large class of
groups.  This is the first step to prove the algebraic K-theory isomorphism
conjecture for simplicial rings.  We construct a category of controlled
simplicial modules, show that it has the structure of a Waldhausen category
and discuss its algebraic K-theory.

We lay emphasis on detailed proofs.  Highlights include the discussion of a
simplicial cylinder functor, the gluing lemma, a simplicial mapping telescope
to split coherent homotopy idempotents, and a direct proof that a weak
equivalence of simplicial rings induces an equivalence on their algebraic
K-theory.  Because we need a certain cofinality theorem for algebraic
K-theory, we provide a proof and show that a certain assumption, sometimes
omitted in the literature, is necessary.  Last, we remark how our setup
relates to ring spectra.
\end{abstract}
\pdfbookmark[1]{\contentsname}{Contents}
\tableofcontents
\section{Introduction}
Controlled algebra is a powerful tool to prove statements about the algebraic
K-theory of a ring $R$.  While early on it was used in
\cite{Pedersen-Weibel;nonconnective} to construct a nonconnective delooping
of $K(R)$---a space such that $\pi_i(K(R)) = K_{i-1}(R)$---it is a crucial
ingredient in recent progress of the Farrell-Jones Conjecture
(cf.~Section~\ref{subsec:farrell-jones}).  Our
aim here is to construct for a simplicial ring $R$, and a ``control
space'' $X$, a category of ``controlled simplicial $R$-modules over a $X$''.
It should be regarded as a generalization of controlled algebra from rings to
simplicial rings.

The category of
``controlled simplicial modules'' supports a homotopy theory which is formally
very similar to the homotopy theory of CW-complexes.  In particular, we have a
``cylinder object'' which yields a notion of homotopy and therefore the
category has homotopy equivalences.  Waldhausen nicely
summarized a minimal set of axioms to do homotopy theory, 
in~\cite{Waldhausen;spaces;;;;1985} he called it
a ``category with cofibrations and weak equivalences'', later authors used
the term \emph{Waldhausen category}.  He did this to define algebraic
K-theory of such a category.  Our category satisfies Waldhausen's axioms,
which is our main result: 
\begin{mainthm}\label{mainthm:c-waldh-cat}
  Let $X$ be a control space and $R$ a simplicial ring.  The category of
  controlled simplicial modules over $X$, $\mathcal{C}(X;R)$, together with
  the homotopy equivalences and a suitable class of cofibrations is a
  ``category with cofibrations and weak equivalences'' in the sense of
  Waldhausen (\cite{Waldhausen;spaces;;;;1985}).  Therefore, Waldhausen's
  algebraic K-theory of $\mathcal{C}(X;R)$ is defined.

  The category has a cylinder functor and it satisfies Waldhausen's cylinder
  axiom, his saturation axiom and his extension axiom.
\end{mainthm}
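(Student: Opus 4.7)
The plan is to verify Waldhausen's axioms one at a time for $\mathcal{C}(X;R)$, then construct the cylinder functor, and finally deduce the three extra axioms from properties of the cylinder. The starting point is to observe that $\mathcal{C}(X;R)$ has a zero object (the trivial controlled module), since the controlled setting is closed under the obvious zero construction. The cofibrations will be the morphisms that are degreewise split injections satisfying a compatible control condition, so that the orthogonal complement is again a controlled simplicial module. With this choice, closure of cofibrations under composition and under pushout becomes a bookkeeping argument: given a cofibration $A \hookrightarrow B$ and any map $A \to C$, the levelwise pushout $B \cup_A C$ inherits a splitting and a control structure from $B$, and one checks that the induced map $C \to B \cup_A C$ is again a cofibration of the same type.

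Next I would address the weak equivalences. Since we take weak equivalences to be homotopy equivalences defined through the cylinder, it is convenient to first construct the cylinder functor $\mathrm{cyl}(-)$. The natural candidate is a controlled analogue of the mapping cylinder built from a simplicial interval $\Delta^1$ acting on the target side, so that for $f\colon A\to B$ one sets $\mathrm{cyl}(f) = (A\otimes \Delta^1) \cup_A B$, with control coming from the control structure on $B$ and $A$. One then checks that the front inclusion $A\sqcup B \hookrightarrow \mathrm{cyl}(f)$ is a cofibration, that the projection $\mathrm{cyl}(f) \to B$ is a homotopy equivalence, and that these data are functorial in the arrow category. This gives Waldhausen's cylinder functor and, via the standard argument, the cylinder axiom (that $\mathrm{cyl}(f) \to B$ is a weak equivalence for all $f$).

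With the cylinder available, the remaining axioms fall into place. Saturation (two-out-of-three for homotopy equivalences) reduces to the usual formal argument once one has a well-behaved cylinder object: from any homotopy inverse one produces the necessary homotopies by standard pasting. The gluing lemma, i.e.\ that pushouts of weak equivalences along cofibrations are weak equivalences, is proved by the classical mapping cylinder replacement: replace the span $C \leftarrow A \rightarrowtail B$ by a span of cofibrations using $\mathrm{cyl}$, then exhibit an explicit homotopy inverse on the pushout by gluing homotopy inverses on each corner using the cylinder. The extension axiom then follows from the gluing lemma applied to the cofiber sequences, as is standard.

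The main obstacle I expect is the gluing lemma. The formal pattern is well understood in topology, but here every construction must respect the control structure, and the homotopies used to invert the weak equivalences on the corners must be glued into a single controlled homotopy on the pushout. In particular, I would need to check carefully that the mapping cylinder replacement does not degrade control, that the pushouts computed in $\mathcal{C}(X;R)$ agree with the naive degreewise pushouts on the underlying simplicial modules, and that the homotopy inverses one builds can be chosen functorially enough to be assembled. Once these technical points are secured, the verification of Waldhausen's axioms, together with cylinder, saturation, and extension, is a formal consequence of the properties of $\mathrm{cyl}$ that have been established.
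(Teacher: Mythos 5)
There are two genuine gaps. The most serious is your claim that ``the extension axiom then follows from the gluing lemma applied to the cofiber sequences, as is standard.'' It does not: the extension axiom is \emph{not} a formal consequence of having a cylinder functor, saturation and the gluing lemma --- Waldhausen's category of retractive spaces over a point satisfies all of those and fails the extension axiom. In $\mathcal{C}(X;R)$ the extension axiom is a separate, substantial argument that uses the additive structure on morphisms (one must be able to subtract maps), the fact that a surjection of simplicial abelian groups is a Kan fibration (so that $B\twoheadrightarrow B/A$ has a controlled relative lifting property), and a five-lemma-style homotopy argument lifting data through the cofiber sequence. Without an argument of this kind your proof of the extension axiom simply is not there.

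The second gap is in the gluing lemma and, more broadly, in the missing technical engine. Your plan to ``exhibit an explicit homotopy inverse on the pushout by gluing homotopy inverses on each corner'' is exactly the naive approach that fails: the homotopy inverse of the induced map on pushouts is not assembled from the corner inverses. What is actually needed is a controlled relative horn-filling/homotopy-extension lemma (proved by induction over cells, using the Hom-bijection and the Kan property of the controlled $\HOM$-simplicial abelian groups while tracking the control conditions), from which one deduces that for a homotopy equivalence $f\colon A\to B$ the front inclusion $A\rightarrowtail T(f)$ is a deformation retract, hence that pushout along an acyclic cofibration is a homotopy equivalence; the gluing lemma then follows, e.g.\ via the category-of-cofibrant-objects formalism. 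The same horn-filling input is what makes ``homotopic'' an equivalence relation and gives saturation --- in a simplicial (as opposed to topological) setting this is not free from the existence of a cylinder, and in the controlled setting one must check that the fillers can be chosen with bounded control. Relatedly, your choice of cofibrations as ``degreewise split injections with controlled complement'' is not the right class: cellular inclusions (the controlled analogue of relative CW-structures, i.e.\ maps isomorphic to iterated attachments of cells $R[\Delta^n]$ along $R[\partial\Delta^n]$) need not split simplicially, while pushouts along merely degreewise split maps need not stay cellular, which is precisely what is needed for the pushout to remain an object of the category.
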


In fact, for $G$ a group (for us all groups are discrete), there is a
$G$-equivariant version of
$\mathcal{C}(X;R)$ and of this theorem, which is crucial for applications to
the Farrell-Jones Conjecture.
The $G$-equivariant version of the theorem
is not more difficult to prove than its non-equivariant counterpart.  It is
stated in Section~\ref{subsec:waldhausen-category} as
Theorem~\ref{mainthm:c-g-waldh-cat} and
Section~\ref{sec:proofs-ii:waldhausen-category} is devoted completely to its
proof.

It is well-known that if a category has infinite coproducts,
its algebraic K-theory vanishes.  As $\mathcal{C}^G(X;R)$ has this problem,
we restrict to a full subcategory of \emph{bounded locally finite objects},
abbreviated \emph{bl-finite} objects, $\mathcal{C}^G_f(X;R)$.
It behaves from the homotopy theoretic point of view like finite CW-complexes.
From the algebraic point of view it corresponds to finitely generated free
modules.  Corresponding to projective modules we define the full subcategory
of \emph{homotopy bl-finitely dominated objects}
$\mathcal{C}^G_{hfd}(X;R)$ of $\mathcal{C}^G(X;R)$.  An object $X$ is
\emph{homotopy
bl-finitely dominated} if there is a bl-finite object $A$ and maps $r \colon A
\rightarrow X$, $i\colon X \rightarrow A$ such that $r \circ i \simeq \id_X$.

\begin{mainthm}\label{mainthm:c-g-finite}
  Both $\mathcal{C}^G_f(X;R)$ and $\mathcal{C}^G_{hfd}(X;R)$ are Waldhausen
  categories, with the inherited structure from $\mathcal{C}^G(X;R)$.
  They still have a cylinder functor and satisfy the saturation, extension and
  cylinder axiom.

  Furthermore, the inclusion $\mathcal{C}^G_f(X;R) \rightarrow
  \mathcal{C}^G_{hfd}(X;R)$ induces an isomorphism
  \begin{equation*}
    K_i(\mathcal{C}^G_f(X;R)) \rightarrow K_i(\mathcal{C}^G_{hfd}(X;R))
  \end{equation*}
  for $i\geq 1$ and an injection for $i=0$.
\end{mainthm}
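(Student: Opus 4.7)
The plan is to treat the two assertions of the theorem separately. The first is a verification, via closure properties, that each full subcategory inherits a Waldhausen structure together with the cylinder, saturation and extension axioms from $\mathcal{C}^G(X;R)$. The second is a K-theory comparison that I would deduce from a cofinality theorem, using the characterization of $\mathcal{C}^G_{hfd}(X;R)$ as the homotopy-theoretic retracts of $\mathcal{C}^G_f(X;R)$.

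For $\mathcal{C}^G_f(X;R)$, the axioms that need checking reduce to two closure statements: (i) if $A\hookrightarrow B$ is a cofibration between bl-finite objects and $A\to C$ is a morphism with $C$ bl-finite, then the pushout $B\cup_A C$ is bl-finite; (ii) the cylinder $T(f)$ of a map $f$ between bl-finite objects is bl-finite. Both should follow from the explicit description of pushouts and cylinders in $\mathcal{C}^G(X;R)$, since boundedness of supports and local finiteness are preserved by the finite colimit operations involved. The saturation and extension axioms, together with the cylinder axiom, are statements about a 2-out-of-3-type behaviour of weak equivalences in cofibration sequences; they are inherited from the ambient category since all the sequences and cylinders involved lie in the subcategory.

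For $\mathcal{C}^G_{hfd}(X;R)$ the same axioms must be verified, but now domination data has to be transported functorially. Given homotopy dominations $A\to X \to A$ and $B\to Y\to B$ with $A,B$ bl-finite, and a cofibration $X\hookrightarrow Y$, I would construct a bl-finite domination of the pushout by assembling the pushout of dominating objects, using the cylinder functor of Theorem A to rectify the relevant homotopies; a parallel argument handles cylinders. This is where the simplicial mapping telescope of the abstract enters: it allows coherent homotopy idempotents to be split and thus dominations to be composed without loss.

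For the K-theory statement, the natural tool is a Waldhausen cofinality theorem of the type advertised in the abstract. By definition every object of $\mathcal{C}^G_{hfd}(X;R)$ admits a bl-finite object dominating it; after splitting idempotents via the mapping telescope one obtains, for each $X\in\mathcal{C}^G_{hfd}$, a complementary $X'$ with $X\vee X'$ weakly equivalent to a bl-finite object. This is exactly the cofinality condition that forces $K_i(\mathcal{C}^G_f(X;R))\to K_i(\mathcal{C}^G_{hfd}(X;R))$ to be an isomorphism for $i\geq 1$ and an injection for $i=0$. The main obstacle I expect is the hfd closure argument: turning the mere existence of dominations into a construction compatible with pushouts and cylinders, and in particular arranging that the homotopies $r\circ i\simeq \id$ glue properly. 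Once this is in place, the K-theory half reduces to checking the (sometimes overlooked) hypotheses of the cofinality theorem, which by design is covered by the author's own version.
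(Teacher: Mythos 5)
Your K-theory step has a genuine gap: you apply the cofinality theorem directly to the pair $\mathcal{C}^G_f(X;R)\subseteq\mathcal{C}^G_{hfd}(X;R)$, but the hypotheses of that theorem fail for this pair. The cofinality theorem the paper proves (Theorem~\ref{thm:thomason-cofinality}) requires the subcategory to be \emph{saturated} (every object of the big category weakly equivalent to an object of the subcategory lies in the subcategory) and requires, for each $X$, a complement $X'$ with $X\vee X'$ literally \emph{in} the subcategory. Neither holds for $\mathcal{C}^G_f$: bl-finiteness is not invariant even under isomorphism (let alone weak equivalence) of controlled modules, and your telescope construction only produces $X\vee X'$ \emph{weakly equivalent} to a bl-finite object, i.e.\ homotopy bl-finite, not bl-finite. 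The saturation hypothesis is not a technicality one can wave away --- Section~\ref{subsubsec:saturated-counterexample} gives a counterexample showing the cofinality statement is false without it. The paper's route avoids this by interposing the category $\mathcal{C}^G_{hf}$ of homotopy bl-finite objects, which \emph{is} saturated in $\mathcal{C}^G_{hfd}$: the comparison $\mathcal{C}^G_f\to\mathcal{C}^G_{hf}$ is an equivalence on K-theory by Waldhausen's Approximation Theorem (rectifying a homotopy-commutative triangle with the cylinder functor), and only the step $\mathcal{C}^G_{hf}\to\mathcal{C}^G_{hfd}$ uses cofinality. Moreover the complement there is obtained without the mapping telescope: for $B$ a retract of $A\in\mathcal{C}^G_{hf}$ one replaces the coretraction by a cofibration via the mapping cylinder, sets $C=A/B$, and shows $A\to B\vee C$ is a homotopy equivalence by the extension axiom; saturation of $\mathcal{C}^G_{hf}$ then gives $B\vee C\in\mathcal{C}^G_{hf}$. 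The telescope and the splitting of coherent homotopy idempotents (Corollary~\ref{cor:hpty-idem-split}) are side results, not inputs to Theorem~\ref{mainthm:c-g-finite}.

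Two further points on the Waldhausen-structure half. For $\mathcal{C}^G_f$ the closure under pushouts is not the routine colimit check you describe: a cofibration between bl-finite objects is by definition only isomorphic \emph{in $\mathcal{C}^G$} to a cellular inclusion, and since bl-finiteness is not isomorphism-invariant one must re-choose the control map on the pushout; this uses the Hausdorff property of $X$ and is the content of Lemma~\ref{lem:finite-cofibrations-are-isom-to-finite-cell-inclusions} and the lemma following it. For $\mathcal{C}^G_{hfd}$ the paper's closure argument again does not need the telescope: it reduces, via the cylinder functor, the gluing lemma, and the characterization of homotopy bl-finitely dominated objects as (strict or homotopy) retracts of homotopy bl-finite ones (Lemma~\ref{lem:homotopy-retracts-finite}), to exhibiting $C\cup_A B$ as a retract of a pushout of homotopy bl-finite objects. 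Your instinct to ``assemble dominations over the pushout'' can be made to work, but the coherence you worry about is exactly what the intermediate category $\mathcal{C}^G_{hf}$ and the cylinder/gluing machinery are there to handle, and skipping $\mathcal{C}^G_{hf}$ removes the tool that makes the argument close.
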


The category $\mathcal{C}^G_{hfd}(X;R)$ is an analogue to the idempotent
completion of an additive category
(cf.~\cite[Section~5]{Cardenas-Pedersen;Karoubi}).  For an additive category
$\mathcal{A}$, the idempotent completion $\mathcal{A}^{\wedge}$ is the
universal additive category with additive functor $\mathcal{A} \rightarrow
\mathcal{A}^{\wedge}$ such that every idempotent splits.
We show in Corollary~\ref{cor:hpty-idem-split} in the appendix
that idempotents and ``coherent'' homotopy idempotents split up to homotopy in
$\mathcal{C}^G_{hfd}(X;R)$.

Both $\mathcal{C}^G_f(X;R)$ and $\mathcal{C}^G_{hfd}(X;R)$ are
basic ingredients needed to attack the Farrell-Jones Conjecture for simplicial rings.

\subsection{Results of independent interest}
In this article we need to discuss several topics which might be of interest.
Here is a guide for these topics.

\subsubsection{Controlled algebra for discrete rings}
We explain in Subsection~\ref{subsec:controlled-algebra-discrete} that the
constructions here specialize to a construction of a category of controlled
modules over a discrete ring.  Readers who are interested in controlled
algebra for rings can read Sections~\ref{subsec:control-spaces},
\ref{subsec:controlled-simplical-modules} and the relevant part of
\ref{subsec:G-equivariance}, as well
as~\ref{subsec:controlled-algebra-discrete}.  This gives in very few pages a
construction of a category of controlled modules.  We think our category is
technically nicer than the model described in~\cite{Bartels-Farrell-Jones-Reich;;;},
because it is, e.g., functorial in the control space and has an obvious
forgetful functor to free modules.  Otherwise the categories are
interchangeable.

\subsubsection{Establishing a Waldhausen structure and the gluing lemma}
A basic result in the homotopy theory of topological spaces is the gluing
lemma: assume that $D_i$ is pushout of $C_i \leftarrow A_i \rightarrowtail
B_i$ for $i=0,1$, where $\rightarrowtail$ denotes a cofibration.  Assume we
have maps $\varphi_A \colon A_0 \rightarrow A_1$ etc., which form a map of pushout
diagrams.  If $\varphi_A$, $\varphi_B$, $\varphi_C$ are homotopy equivalences,
then $\varphi_D$ is one.  This is not obvious, as the homotopy inverse of
$\varphi_D$ is not induced by the homotopy inverses of the other maps.

Waldhausen made the gluing lemma into one of the axioms of a Waldhausen
category.  Proofs that a given category satisfies
Waldhausen's axioms are usually omitted in the literature.
Section~\ref{sec:proofs-ii:waldhausen-category} contains a detailed proof that
our category $\mathcal{C}^G(X;R)$ satisfies Waldhausen's axioms.  Because in
$\mathcal{C}^G(X;R)$ the weak equivalences are homotopy equivalences, which
one can define once one has a cylinder functor, the proofs should generalize
to related settings, e.g., to Waldhausen's category of retractive
spaces over $X$.

\subsubsection{A Cofinality Theorem for algebraic K-theory}
Let $\mathcal{B}$ be the category of finitely generated projective modules over a
discrete ring $R$ and $\mathcal{A}$ the subcategory of free modules.
It is well-known \cite[Section~2]{Staffeldt;on-fundamental} that the algebraic K-theory of $\mathcal{B}$ differs from
that of $\mathcal{A}$ only in degree $0$.  A way to describe this is to say
that
\begin{equation*}
  K(\mathcal{A}) \rightarrow K(\mathcal{B}) \rightarrow
  \text{``}K_0(\mathcal{B}) / K_0(\mathcal{A})\text{''}
\end{equation*}
is a homotopy fiber sequence of connective spectra, where the last term is the
Eilenberg-MacLane spectrum of the group $K_0(\mathcal{B}) / K_0(\mathcal{A})$
in degree 0.  There are statements in the literature providing such a homotopy
fiber sequence when $\mathcal{A}$ and $\mathcal{B}$ satisfy a list of
conditions, e.g.~in~\cite{weibel;k-book,Thomason-Trobaugh;higher-algebraic}.
We show that these miss an essential assumption and provide a counterexample
to Corollary V.2.3.1 of Weibel's K-book~\cite{weibel;k-book}.  We also give a
proof of such a cofinality theorem, in
Subsection~\ref{subsec:cofinality-theorem}.  Note that the above example of
free and projective modules is just an illustration.  To apply the theorem
to finite and projective modules we would need to replace them by suitable
categories of chain complexes first, as they do not have mapping cylinders.

\subsubsection{A simplicial mapping telescope}
In topological spaces one can form a mapping telescope of a sequence $A_0
\rightarrow A_1 \rightarrow A_2 \dots$ by gluing together the mapping
cylinder of the individual maps.  It can be used to show that a space which is
dominated by a CW-complex is homotopy equivalent to a CW-complex, see
e.g. Hatcher~\cite[Proposition~A.11]{Hatcher;algebraic-topology}.  We need an
analogue of a mapping cylinder
in our category $\mathcal{C}^G(X;R)$.  Because it is a simplicial category,
and the homotopies are simplicial, more care is required.  We construct
a simplicial mapping telescope in
Appendix~\ref{sec:appendix-simplicial-mapping-telescope}.  For this we define
an analogue of Moore homotopies and provide the necessary tools to deal with
them.  Our results are summarized as Theorem~\ref{thm:mapping-telescopes}.  We
also define what we call a coherent homotopy idempotent in
Definition~\ref{def:coherent-homotopy-idempotent} and use the mapping
telescope to show these split up to homotopy in $\mathcal{C}^G(X;R)$ as
Corollary~\ref{cor:hpty-idem-split}.
We only use a few formal properties of $\mathcal{C}^G(X;R)$ to derive that
result.  We expect this construction to work in other settings to split
idempotents. But because we have no further examples of such categories
we refrain from providing an axiomatic framework in which the theorem would
hold.

\subsubsection{Weak equivalences of simplicial rings and algebraic K-theory}
A map $f\colon R \rightarrow S$ of simplicial rings is a weak equivalence if
it is one on
the geometric realization of the underlying simplicial sets.  Such a map
induces an equivalence $K(R) \rightarrow K(S)$ on algebraic K-theory.  Usually
this is proved by using a plus-construction description of $K(R)$,
e.g., in~\cite[Proposition~1.1]{Waldhausen;topological-spaces;;;1978}.
In~\ref{subsec:change-of-rings},
we provide a proof which only uses Waldhausen's Approximation Theorem.  The
proof shows that $f$ induces a weak equivalence on the algebraic K-theory of
the categories of controlled modules, for which we do not have a
plus-construction description.  Note, however,
that~\cite[Proposition~1.1]{Waldhausen;topological-spaces;;;1978} provides the
stronger statement that an $n$-connected map induces an $n+1$-connected map
on K-theory.  We currently have no analogue of this for controlled
modules over simplicial rings.

\subsection{The idea of control}
Let us now sketch the construction of $\mathcal{C}^G(X;R)$.  For
simplification we
assume that $G$ is the trivial group and $X$ arises from a metric space
$(X,d)$, for example from $\mathbb{R}^n$ with the euclidean metric.  The complete
and precise definitions can be found in
Section~\ref{sec:simplicial-modules-control}.

As a simplicial $R$-module $M$ is \emph{generated by a set} $\diamond_R M = \{
e_i\}_{i\in I} \subseteq \coprod_n M_n$ if every $R$-submodule $M' \subseteq
M$ which contains $\{ e_i \}_{i\in I}$ is equal to $M$.  The idea is now to
label each of the chosen generators $e_i$ of $M$ by an element $\kappa(e_i)$
of $X$ and require that maps respect the labeling ``up to an
$\alpha > 0$''.  More precisely, a \emph{controlled simplicial $R$-module over
$X$} is a simplicial $R$-module $M$, a set of generators
$\diamond_R M$ of $M$ and a map $\kappa^M\colon\diamond_R M \rightarrow
X$.  A morphism $f\colon (M, \diamond_R M , \kappa^M)
\rightarrow(N, \diamond_R N , \kappa^N)$ of controlled simplicial $R$-modules
is a map $f\colon M \rightarrow N$ of simplicial $R$-modules such that there
is an $\alpha \in \mathbb{R}_{> 0}$ such that for each $e \in \diamond_R M$ we
have that $f(e)\subseteq N$ is contained in an $R$-submodule generated by
elements $e' \in \diamond_R N$ with $d(\kappa^N(e'), \kappa^M(e)) \leq
\alpha$.

There are two problems with the objects here:  first, we want to have the
generators satisfy as few relations as possible.  This is the case for
\emph{cellular $R$-modules}, when $\diamond_R M$ is a set cells of $M$.  We
define cellular $R$-modules in
Section~\ref{subsec:basic-definitions}.  Second, the boundary maps in $M$
should behave well with respect to the labels in the control space.  A quick
way to obtain well-behaved boundary maps is to require that $\id_M \colon (M,
\diamond_R M, \kappa^M)
\rightarrow(M, \diamond_R M, \kappa^M)$ is controlled. This is a
condition on $(M,\diamond_R M, \kappa^M)$.  Therefore, we restrict to cellular modules
which are controlled.  When $X$ is a metric space, the category 
$\mathcal{C}(X;R)$ is the category of controlled cellular $R$-modules and
controlled maps.  The more general case, when $G$ is not trivial and $X$ a
\emph{control space}, is carefully
introduced in Section~\ref{sec:simplicial-modules-control}.

The category $\mathcal{C}(X;R)$ relates to the categories of controlled
modules of
Pedersen-Weibel~\cite{Pedersen-Weibel;nonconnective} and
Bartels-Farrell-Jones-Reich~\cite{Bartels-Farrell-Jones-Reich;;;} like how
chain complexes of free modules
relate to projective modules, or like how CW-complexes relate to projective
$\mathbb{Z}$-modules.  For $M$ a simplicial $R$-module and
$\mathbb{Z}[\Delta^1]$ the free simplicial abelian group on the $1$-simplex
define $M[\Delta^1] = M \otimes_\mathbb{Z} \mathbb{Z}[\Delta^1]$.  If 
$(M, \diamond_R M, \kappa^M)$ is a controlled simplicial $R$-module,
$M[\Delta^1]$ is also one, canonically.  This is the cylinder which yields the
homotopy theory in $\mathcal{C}(X;R)$.

\subsection{Structure of this article}
The proof of Theorems~\ref{mainthm:c-waldh-cat} and~\ref{mainthm:c-g-finite}
are quite involved as we need to
develop the entirety of the homotopy theory in $\mathcal{C}^G(X;R)$.  
Therefore, we split this
article into two main parts.  The first,
Sections~\ref{sec:simplicial-modules-control}
to~\ref{sec:algebraic-k-theory-control} provides only definitions without any
proofs, such that we can state our main theorems as soon as possible. 

In Section~\ref{sec:simplicial-modules-control},  we concisely review
simplicial rings and simplicial modules, as well as the idea of control.
We define the category $\mathcal{C}^G(X;R)$.
Section~\ref{sec:waldhausen-categories} defines the Waldhausen
structure on $\mathcal{C}^G(X;R)$.  We state the $G$-equivariant version of
Theorem~\ref{mainthm:c-waldh-cat} as Theorem~\ref{mainthm:c-g-waldh-cat} in
Section~\ref{subsec:waldhausen-category}.  Then we introduce the finiteness
conditions of bl-finite, homotopy bl-finite, and homotopy bl-finitely dominated
modules.  Each of these gives us a full subcategory of $\mathcal{C}^G(X;R)$.
We show (Thms.~\ref{thm:c-g-finite-waldhausen},
\ref{thm:c-g-homotopy-finite-waldhausen},
\ref{thm:c-g-hfd-waldhausen}) that the full subcategories of these are
naturally Waldhausen categories.  In
Section~\ref{sec:algebraic-k-theory-control}, we state that the category of
bl-finite and homotopy bl-finite modules have the same algebraic K-theory,
while the
algebraic K-theory of the homotopy bl-finitely dominated ones differ only at
$K_0$ (Thm.~\ref{thm:k-theory-comparison-f-hf-hfd}). 
This settles Theorem~\ref{mainthm:c-g-finite}.
If $R \rightarrow S$ is a weak equivalence of
simplicial rings we show (Thm.~\ref{thm:change-of-rings}) that the categories of
controlled modules $\mathcal{C}^G(X;R)$ and $\mathcal{C}^G(X;S)$ have
equivalent algebraic K-theory.

The second part, Sections~\ref{sec:proofs-i:control}
to~\ref{sec:proofs-iv:connective-alg-k-theory} and the appendix provide the
proofs and all intermediate definitions and theorems we need.  There does not
seem to exist in the literature an established way to verify the axioms of a
Waldhausen category apart from trivial cases, although proofs are surely
well-known.  Therefore, we provide a reasonable level of detail.  Most of the
proofs are rather formal once we establish the Relative Horn-Filling
Lemma~\ref{lem:horn-filling-relative}.

We state some initial results on simplicial modules and controlled maps
between them in Section~\ref{sec:proofs-i:control}.  The results provided
should be enough to make it possible for the experienced reader to verify all
statements we made in Section~\ref{sec:simplicial-modules-control}.

Section~\ref{sec:proofs-ii:waldhausen-category} verifies the axioms of a
Waldhausen category for $\mathcal{C}^G(X;R)$ for the cofibrations and weak
equivalences we defined in Section~\ref{subsec:waldhausen-category}.  The key
ingredient is the Relative Horn-Filling Lemma~\ref{lem:horn-filling-relative}.
Further important results are the establishing of a cylinder
functor~\ref{subsec:cylinder-functor}, the gluing
Lemma~\ref{subsec:gluing-lemma} and the extension axiom for the homotopy
equivalences~\ref{subsec:extension-axiom}.  

Section~\ref{sec:proofs-iii:finiteness-conditions} discusses the
different finiteness conditions.  This proves the results of
Section~\ref{subsec:finiteness-conditions}, i.e., it establishes that each of
the full subcategories of bl-finite (Thm.~\ref{thm:c-g-finite-waldhausen}),
homotopy bl-finite (Thm.~\ref{thm:c-g-homotopy-finite-waldhausen}) and homotopy
bl-finitely dominated modules (Thm.~\ref{thm:c-g-hfd-waldhausen}) are again
Waldhausen categories and satisfy all the extra axioms we listed.

In Section~\ref{sec:proofs-iv:connective-alg-k-theory}, we switch to algebraic
K-theory and prove
comparison theorems of the algebraic K-theory of the aforementioned
categories.  As an important part we prove a cofinality
Theorem~\ref{thm:thomason-cofinality} for algebraic K-theory.  It is stated as an exercise
in~\cite{Thomason-Trobaugh;higher-algebraic} and in Corollary V.2.3.1
in~\cite{weibel;k-book}, but we show that a crucial assumption is missing
there and prove the correct statement.  Last, we give a direct proof
(Thm.~\ref{thm:change-of-rings}) that a
weak equivalence of simplicial rings gives an equivalence on algebraic
K-theory of controlled modules.

Section~\ref{sec:miscellaneous} gives some applications. We elaborate
on the relation of this work to the Farrell-Jones Conjecture and to controlled
algebra for discrete rings
(Sections~\ref{subsec:controlled-algebra-discrete},
\ref{subsec:farrell-jones}).  We give a construction of a nonconnective
delooping of the algebraic K-theory of a simplicial ring
(Section~\ref{subsec:non-connective-algebraic-k-theory}) and discuss the case
of ring spectra (Section~\ref{subsec:ring-spectra}).
Appendix~\ref{sec:appendix-simplicial-mapping-telescope} constructs a
simplicial mapping telescope  and proves 
Theorem~\ref{thm:mapping-telescopes} about them, which is used in
Corollary~\ref{cor:hpty-idem-split} to analyze
idempotents and coherent homotopy idempotents in $\mathcal{C}^G(X;R)$.

\subsection{Previous results}
Pedersen and Weibel \cite{Pedersen-Weibel;nonconnective} first used
controlled modules to construct a nonconnective delooping of the algebraic
K-theory space of a discrete ring. 
Vogell \cite{Vogell;bounded} used the idea of control to construct a
homotopically flavoured category
which provides a delooping of Waldhausen's algebraic K-theory of spaces
$A(X)$.  Unfortunately, Vogell does not
provide any details on
why his category is a Waldhausen category.  Later
Weiss~\cite{weiss;excision} gave a quick construction of a category similar to
Vogell's one, but he also does not give a proof of the Waldhausen structure.
Weiss' definitions inspired the definitions we use here.  

With regard to discrete rings controlled algebra was developed
with the applications to the Farrell-Jones Conjecture in mind.  A
fundamental result is that of
Cardenas-Pedersen~\cite{Cardenas-Pedersen;Karoubi}, who construct a
highly useful fiber sequence of algebraic K-theory spaces, arising solely from
control spaces.  The most recent incarnation of controlled algebra is
described in \cite{Bartels-Farrell-Jones-Reich;;;} which describes the
category which is used in the most recent approaches to the Farrell-Jones
Conjecture.
Pedersen \cite{Pedersen;controlled-algebraic;;;2000}
gives a nice survey of controlled algebra at the time of its writing.

\subsection{Acknowledgements}
This work was partially supported by a PhD-grant of the Graduiertenkolleg
1150 ``Homotopy and Cohomology'' and the SFB 647: ``Space - Time - Matter.
Analytic and Geometric Structures''.  The author wants to thank Charles
Weibel for answering a question which lead to the
counterexample~\ref{subsubsec:saturated-counterexample}.
The author also wants to thank Rosona Eldred and the referee for
spotting several misprints in an earlier version of this article and many
remarks that helped to improve the exposition.

\subsection{Conventions}
We sometimes use the property that for a diagram in a category
\begin{equation*}
  \xymatrix{
    \bdot \ar[r]\ar[d]           
         & \bdot \ar[r]\ar[d] \ar@{}[dl]|{\mathrm{I}} 
         & \bdot \ar[d] \ar@{}[dl]|{\mathrm{II}}\\
    \bdot \ar[r]                   & \bdot \ar[r]        & \bdot}
\end{equation*}
the whole diagram I + II is a pushout if I and II are pushouts and II is
a pushout if I and I + II are pushouts. The dual version is proved
in~\cite[I.2.5.9]{borceux;handbook-categorical}, the third possible
implication does not hold in general.

Equations and diagrams are numbered continuously throughout the article.  If
we refer to them by number we preface the number of the section or Theorem
in which they occur, so
\ref{subsubsec:adjunction}.\eqref{eq:hom-bijection-all} is equation
\eqref{eq:hom-bijection-all} in Section~\ref{subsubsec:adjunction}.

For us, a \emph{Waldhausen category} is what
Waldhausen in~\cite{Waldhausen;spaces;;;;1985} calls a ``category with
cofibrations and weak equivalences''.  This means, a Waldhausen category comes
with two subcategories, the cofibrations and the weak equivalences which
satisfy Waldhausen's axioms from~\cite{Waldhausen;spaces;;;;1985}.

The set of natural numbers $\Nat$ contains zero. All rings have a unit.

\section{Simplicial modules and control}
\label{sec:simplicial-modules-control}

\subsection{Basic definitions}
\label{subsec:basic-definitions}
We assume familiarity with the theory of simplicial sets.  A good reference
is~\cite{goerss-jardine;simplicial-homotopy-theory}.

\subsubsection{Simplicial modules}
We recall the definition of simplicial modules. $\Delta$ is
always the category of finite ordered sets $[n] = \{0,\dots,n\}$ and
monotone maps.  A simplicial
abelian group is a functor $\Delta^\op \rightarrow \Ab$. Similarly, a
simplicial ring is a functor $\Delta^\op \rightarrow \mathcal{R}ings$.  There
are obvious generalizations of the notions of left and right modules and tensor
products.

For a simplicial set $A$ let $\Int[A]$ be the free
simplicial abelian group on $A$.  For $M$ a simplicial left $R$-module, define
$M[A]$ as the simplicial left $R$-module $M \otimes_\Int \Int[A]$.  For $M, N$
simplicial left $R$-modules define $\HOM_R(M,N)$ as the simplicial abelian
group $[n] \mapsto \Hom_R(M[\Delta^n], N)$.

\subsubsection{Cellular modules}
We call the simplicial left $R$-module $R[\Delta^n]$ an \emph{$n$-cell} and
$R[\partial \Delta^n]$ the \emph{boundary of an $n$-cell}.  We say $M$
\emph{arises from
$M'$} by attaching an $n$-cell if $M$ is isomorphic to the pushout $M'
\cup_{R[\partial \Delta^n]} R[\Delta^n]$.  Like a CW-complex in topological
spaces, a  cellular $R$-module \emph{relative to a submodule $A$} is a module $M$
together with a filtration of $R$-submodules $M^i, i \geq -1$ with $M^{-1} = A$ and $\bigcup M^i
= M$ such that $M^i$ arises from $M^{i-1}$ by attaching $i$-cells.  We call
the map $A \rightarrow M$ a \emph{cellular inclusion}.  The composition of two
cellular inclusions is again a cellular inclusion.  We prove this as
Lemma~\ref{lem:composition-of-cellular-inclusions}.
A simplicial left
$R$-module is called cellular if $* \rightarrow M$ is a cellular inclusion.
In the category of simplicial $R$-modules we use
$\rightarrowtail$ to denote cellular inclusions.

We will always remember the attaching maps of the cells to $M$ and
call this a \emph{cellular structure} on $M$.  This
gives and can be reconstructed from an element $e_n \in M_n$ for each $n$-cell
of $M$, where $M_n$ denotes the set of $n$-simplices of $M$.  This gives a set
$\diamond_R M \subseteq \bigcup_n M_n$ to which we refer as \emph{the cells of
$M$}.  As an $R$-module, $M$ can have many different cellular structures and we
do not require maps to respect them.

\begin{lem}\label{lem:cellular-inclusions}
  Let $A \rightarrowtail B$ be an inclusion of simplicial sets.  Let
  $M$ be a cellular $R$-module.  Then $M[A] \rightarrow M[B]$ is a cellular
  inclusion of cellular $R$-modules.

  If $M \rightarrowtail N$ is a cellular inclusion, then $M[A] \rightarrow
  N[A]$ is a cellular inclusion.
\end{lem}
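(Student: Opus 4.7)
The plan rests on two observations. First, both $(-)[A]$ (for fixed $A$) and $M[-]$ (for fixed $M$) are left adjoints, being the tensor products $-\otimes_\Int\Int[A]$ and $M\otimes_\Int\Int[-]$, so they preserve pushouts and filtered colimits. Second, $\Int[-]\colon\sset\to(\text{simplicial }\Ab)$ is strong symmetric monoidal, so $R[K][L]=R[K\times L]$ for simplicial sets $K,L$. The key \emph{base case} is: for any inclusion $C\hookrightarrow D$ of simplicial sets, $R[C]\to R[D]$ is a cellular inclusion. To see this, I filter $D$ relative to $C$ by $D^{(n)}=C\cup\sk_n D$; each step is the pushout $D^{(n)}=D^{(n-1)}\cup_{\coprod\partial\Delta^n}\coprod\Delta^n$ over the non-degenerate $n$-simplices of $D$ not in $C$. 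Applying $R[-]$ preserves pushouts and yields a cellular structure on $R[D]$ relative to $R[C]$.

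For the second statement, I would induct on the cellular filtration of $N$ relative to $M$. With $N^{-1}=M$ and $N^j=N^{j-1}\cup_{\coprod R[\partial\Delta^j]}\coprod R[\Delta^j]$, applying $-[A]$ gives $N^j[A]=N^{j-1}[A]\cup_{\coprod R[\partial\Delta^j\times A]}\coprod R[\Delta^j\times A]$. By the base case, $R[\partial\Delta^j\times A]\to R[\Delta^j\times A]$ is a cellular inclusion, so the pushout $N^{j-1}[A]\to N^j[A]$ is as well (pushouts of cellular inclusions along arbitrary maps remain cellular, since the individual cell attachments can be pushed out one at a time). Iterating and using composition of cellular inclusions (Lemma~\ref{lem:composition-of-cellular-inclusions}) yields $M[A]\to N[A]$ cellular.

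For the first statement, I induct on the cellular filtration $M^j$ of $M$; the base case $M^{-1}=*$ is trivial. Assuming $M^{j-1}[A]\to M^{j-1}[B]$ is cellular, I factor $M^j[A]\to M^j[B]$ as $M^j[A]\to P\to M^j[B]$, where $P=M^j[A]\cup_{M^{j-1}[A]}M^{j-1}[B]$. The first map is the pushout of $M^{j-1}[A]\to M^{j-1}[B]$ along $M^{j-1}[A]\to M^j[A]$, hence cellular by the inductive hypothesis. For the second, unfolding the pushouts that define $M^j[A]$ and $M^j[B]$ identifies $P=M^{j-1}[B]\cup_{R[\partial\Delta^j\times B]}R[S]$, where $S=(\partial\Delta^j\times B)\cup_{\partial\Delta^j\times A}(\Delta^j\times A)$ is the pushout-product of $\partial\Delta^j\hookrightarrow\Delta^j$ and $A\hookrightarrow B$, while $M^j[B]=M^{j-1}[B]\cup_{R[\partial\Delta^j\times B]}R[\Delta^j\times B]$. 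Hence $P\to M^j[B]$ is the pushout of $R[S]\to R[\Delta^j\times B]$, which is cellular by the base case applied to the simplicial-set inclusion $S\hookrightarrow\Delta^j\times B$. Composing gives the inductive step, and passing to the filtered colimit over $j$ yields $M[A]\to M[B]$ cellular.

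The main obstacle is the bookkeeping in the first statement: one has to correctly identify the intermediate pushout $P$ and verify the identity $R[\partial\Delta^j\times B]\cup_{R[\partial\Delta^j\times A]}R[\Delta^j\times A]=R[S]$ by applying $R[-]$ to the corresponding pushout of simplicial sets, then paste the two pushout squares defining $P$ and $M^j[B]$ to recognise $P\to M^j[B]$ as a pushout of $R[S]\to R[\Delta^j\times B]$. Once this setup is in place, everything else is formal manipulation of pushouts using the left-adjointness of $(-)[-]$ in both variables together with the symmetric monoidality of $\Int[-]$.
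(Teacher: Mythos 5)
Your proof is correct and follows essentially the same route as the paper's: an induction over the cellular filtration of $M$, identifying each step as a pushout of $R[-]$ applied to a pushout--product inclusion of simplicial sets (using $R[K][L]\cong R[K\times L]$ and that pushouts/colimits are preserved), together with the fact that $R[-]$ turns inclusions of simplicial sets into cellular inclusions. The only difference is organizational: the paper first reduces $A\rightarrowtail B$ to the single cell attachment $\partial\Delta^n\subseteq\Delta^n$ and then meets the specific pushout--product $\partial\Delta^n\times\Delta^q\cup\Delta^n\times\partial\Delta^q\subseteq\Delta^n\times\Delta^q$, whereas you keep $B$ general and absorb that reduction into your explicitly proved base case that $R[C]\rightarrow R[D]$ is cellular for every inclusion $C\hookrightarrow D$.
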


We give a detailed proof in Subsection~\ref{subsec:cellular-structure-proof}.

\subsubsection{Finiteness conditions}  Similarly to the case of CW-complexes or
simplicial sets, we call a cellular module \emph{finite} if it has only finitely
many cells and \emph{finite-dimensional}  if it has only cells of finitely
many dimensions.

\subsubsection{Dictionary} We compare the notions introduced in this section
to the corresponding notions of discrete rings.
  \begin{table}[htb]\center
    \begin{tabular}[c]{@{}ll@{}}
      \toprule
      simplicial $R$-modules, $R$ simplicial ring & discrete $R$-modules, $R$
      discrete ring\\ \midrule
      cellular module              & free module \\
      cellular structure           & choice of a basis \\
      cellular inclusion \phantom{with free complement}
      & direct summand with free complement \\
      $M[A]$ ($A$ a simplicial set)& $\bigoplus_{a\in A} M$ ($A$ a set) \\
      $\coprod_I R\left[\Delta^n\right]$    & $\bigoplus_I R$ \\
      finite-dimensional           & \ --- \\
      finite                       & finite dimensional\\
      \bottomrule
    \end{tabular} \caption{Dictionary between simplicial rings and modules.} \label{tab:dictionary} 
  \end{table}

\subsection{Control spaces}\label{subsec:control-spaces}
\begin{definition}[morphism control structure]
  Let $X$ be a Hausdorff topological space. A \emph{morphism control structure
  on $X$} consists of a set $\mathcal{E}$ of subsets $E \subseteq X\times X$
  (i.e.,~relations on $X$) satisfying:
  \begin{enumerate}
    \item \label{it:morph-cont-1} For $E, E' \in \mathcal{E}$ there is an
      $\overline{E}\in \mathcal{E}$ such that $E \circ E'
      \subseteq \overline{E}$ where ``\/$\circ$'' is the composition
      of relations.  (For two relations $E_1, E_2$ on $X$ their
      \emph{composition} is defined as
      \begin{equation*}
        E_2 \circ E_1 := \{(z,x)\mid \exists y \in X\colon (y,x) \in E_1,
        (z,y)\in E_2\}. )
      \end{equation*}
    \item \label{it:morph-cont-2} For $E, E' \in \mathcal{E}$ there is an
      $E''\in\mathcal{E}$ such that $E\cup E' \subseteq E''$.
    \item \label{it:morph-cont-3} Each $E\in \mathcal{E}$ is symmetric,
      i.e.,~$(x,y)\in E \Leftrightarrow (y,x) \in E$.
    \item \label{it:morph-cont-4} The diagonal $\Delta \subseteq X \times X$ is
      a subset of each $E \in \mathcal{E}$.
  \end{enumerate}
  An element $E \in \mathcal{E}$ is called a \emph{morphism control
  condition}. 
\end{definition}
The topology on $X$ is relevant for the finiteness conditions, see
Section~\ref{subsec:finiteness-conditions}.
\subsubsection{Thickenings}
For $U \subseteq X$ and
$E\in \mathcal{E}$ we call
\begin{equation*}
  U^E = \{x \in X\mid \exists y\in U\colon (x,y) \in E\}
\end{equation*}
the \emph{$E$-thickening} of $X$. 
\begin{definition}[object support structure]
  Given $X$ and a morphism control structure $\mathcal{E}$ on $X$. An
  \emph{object support structure on $(X, \mathcal{E})$} is a set
  $\mathcal{F}$ of subsets $F \subseteq X$ satisfying:
  \begin{enumerate}
    \item For $F, F'\in \mathcal{F}$ there is an $F''\in \mathcal{F}$ such that
      $F\cup F' \subseteq F''$.
    \item For $F \in \mathcal{F}$ and $E\in \mathcal{E}$ there is an $F'''\in
      \mathcal{F}$ such that $F^E \subseteq F'''$.
  \end{enumerate}
  An element $F \in \mathcal{F}$ is a called an \emph{object support
  condition}.
\end{definition}

\subsubsection{} 
We call the triple $(X, \mathcal{E}, \mathcal{F})$ a \emph{control space}.  If
$\mathcal{F}=\{X\}$ we often omit it from the notation.
If $\mathcal{E}$ is a morphism control structure on $X$ we can define a new
morphism control structure on $X$,
\begin{equation*}
  \mathcal{E}' := \{E' \subseteq X \times X \mid \exists E \in \mathcal{E}:
  \Delta \subseteq E' \subseteq E\}.
\end{equation*}
In all applications we can replace $\mathcal{E}$ with $\mathcal{E}'$
and obtain the \emph{same} category of controlled modules.  Similarly, we can
replace the object support structure $\mathcal{F}$ with $\mathcal{F}' := \{ F'
\subseteq X \mid \exists F \in \mathcal{F}: F' \subseteq F\}$ in all
applications.

\subsubsection{Maps}
A map of control spaces $(X_1, \mathcal{E}_1, \mathcal{F}_1) \rightarrow (X_2,
\mathcal{E}_2, \mathcal{F}_2)$ is a (not necessarily continuous) map $f\colon
X_1 \rightarrow X_2$ such that for each $E_1 \in \mathcal{E}_1$ and $F_1 \in
\mathcal{F}_1$ there are $E_2 \in \mathcal{E}_2$ and $F_2 \in \mathcal{F}_2$
with $(f\times f)(E_1) \subseteq E_2$ and $f(F_1) \subseteq F_2$.\bigskip

We give the most important examples,
see~\cite[Section~2.3]{Bartels-Farrell-Jones-Reich;;;} for more. 

\begin{example}[metric control]\label{ex:metric-first}
  Let $X$ have a metric $d$. Then
  \begin{equation*}
    \mathcal{E}_d = \left\{E \mid \exists \alpha  \text{ such that } E = \{(x,y)
    \mid d(x,y) \leq \alpha \} \right\}
  \end{equation*}
  is a morphism control structure on $X$.
\end{example}

\begin{example}[continuous control]\label{example:continous-control}
  Let $Z$ be a topological space and $[1,\infty)$ the half-open interval with
  closure $[1,\infty]$. Define \emph{continuous control} morphism control
  structure  $\mathcal{E}_{cc}$
  on $X := Z \times [1,\infty)$ as follows. $E$ is in $\mathcal{E}_{cc}$ if it
  is symmetric and the following two properties are satisfied:
  \begin{enumerate}
    \item For every $x\in Z$ and each neighborhood $U$ of $x\times \infty$ in
      $Z\times [1,\infty]$ there is a neighborhood $V \subseteq U$ of $x\times
      \infty$ in $Z\times [1,\infty]$ such that $E \cap ( (X\smallsetminus U)
      \times V) = \varnothing$.
    \item $p_{[1,\infty)} \times p_{[1,\infty)} (E) \in
      \mathcal{E}_d([1,\infty))$, where $d$ is the standard euclidean metric
      on $[1,\infty)$ and $p_{[1,\infty)}$ is the projection to $[1,\infty)$.
  \end{enumerate}
\end{example}

\begin{example}[compact support]
  Let $X$ be a topological space.  Set
  \begin{equation*}
    \mathcal{F}_c := \{F \subseteq X \mid F \text{ is compact}\}.
  \end{equation*}
  We call
  $\mathcal{F}_c$ the \emph{compact} object support structure on $X$. 
  If $X$ is a proper metric space (closed balls are compact), then
  $\mathcal{F}_c$ is an object support structure on $(X, \mathcal{E}_d)$.
  Also, if $Z$ is a topological space then $\mathcal{F}_c$ is an object
  support structure on $(Z \times[1,\infty), \mathcal{E}_{cc})$.
\end{example}
 
\subsection{Controlled simplicial modules}
\label{subsec:controlled-simplical-modules}
\subsubsection{Cellular submodules}
We required cellular $R$-modules to come with a chosen cellular structure
$\diamond_R M$.  A \emph{cellular submodule} is an $R$-submodule $M'$ of $M$
which is generated by a subset of $\diamond_R M$.  In particular, we have an
inclusion $\diamond_R M' \subseteq \diamond_R M$ induced by $M'
\hookrightarrow M$.

\begin{definition}
  For a set of simplices $Q \subseteq \bigcup M_n$ define $\left< Q\right>_M$
  as the smallest cellular submodule of $M$ containing $Q$.
\end{definition}
We abbreviate $\left<\{e\}\right>_M$ by $\left<e\right>_M$ or $\left<e\right>$.

\subsubsection{Modules over a space}
For a control space $(X, \mathcal{E}, \mathcal{F})$ define a \emph{general
module over $X$} to be a cellular module $(M, \diamond_R M)$ together with a
map $\kappa_R\colon \diamond_R M \rightarrow X$.  (We followed
\cite{weiss;excision} in the notation.)

\begin{definition}[Controlled module]
  \label{def:controlled-module}
  A controlled $R$-module over $X$ is a general $R$-module $(M, \diamond_R M,
  \kappa_R)$ over $X$ such that there are $E\in \mathcal{E}$, $F\in
  \mathcal{F}$ with:
  \begin{enumerate}
    \item For all $e\in \diamond_R M$ and $e' \in \diamond_R\left<e\right>_M$ we have
      $(\kappa_R(e), \kappa_R(e')) \in E$.
    \item $\kappa_R(\diamond_R M) \subseteq F$.
  \end{enumerate}
   
  We say $M$ is \emph{$E$-controlled and has support in $F$}, and often leave
  $\kappa_R$ understood from context.
\end{definition}

\begin{definition}[Controlled maps]
  A map $f\colon (M, \kappa_R^M) \rightarrow (N, \kappa_R^N)$ of controlled
  modules is a map $f\colon M \rightarrow N$ of simplicial $R$-modules such
  that there is
  an $E\in \mathcal{E}$ and for all $e\in \diamond_R M$, $e'\in \diamond_R
  \left<f(e)\right>_N$ we have $(\kappa^M_R(e), \kappa^N_R(e')) \in E$.

  We say $f$ is \emph{$E$-controlled}. We say $f$ is \emph{controlled} if
  there exists an $E$ such that $f$ is $E$-controlled.
\end{definition}

\subsubsection{Composition}\label{subsubsec:composition}
If $f, f_1, f_2 \colon M \rightarrow M'$ and $g\colon M' \rightarrow M''$ are
controlled maps of controlled modules, then $g\circ f$ and $f_1 + f_2$ are
controlled.  See Lemma~\ref{lem:sums-composition-of-controlled-maps} for the
proof.

\subsubsection{The category of controlled modules $\mathcal{C}^G(X;R)$}
For $(X, \mathcal{E}, \mathcal{F})$ a control space the controlled $R$-modules
over $X$ together with the controlled maps between them form a category which
we denote by $\mathcal{C}( X, \mathcal{E}, \mathcal{F}; R)$.  We will
usually abbreviate it by $\mathcal{C}(X;R)$, $\mathcal{C}(X)$, $\mathcal{C}(X,
\mathcal{E}, \mathcal{F})$ or $\mathcal{C}$.

If $M$ is a controlled module over $X$ and $A$ a simplicial set then $M[A]$ is
canonically a controlled module over $X$ with control map induced by the
projection $M[A] \rightarrow M$.  This is
functorial in $A$ and $M$.  We prove this as
Lemma~\ref{lem:adjoint-simp-module-controlled}.

\subsubsection{Cellular inclusion of controlled modules}
Define a cellular inclusion of controlled modules to be a map $(M, \kappa_R^M)
\rightarrow (N, \kappa_R^N)$ such that $(M,\diamond_R M) \rightarrow (N,
\diamond_R N)$ is a cellular inclusion of simplicial $R$-modules and the
inclusion $i\colon \diamond_R M \hookrightarrow \diamond_R N$ satisfies
$\kappa^M_R = \kappa^N_R \circ i$.  This is the right notion of a subobject in
$\mathcal{C}$.

If $A\rightarrowtail B$ is an inclusion of simplicial sets, then $M[A]
\rightarrow M[B]$ is a cellular inclusion of controlled modules.  If $M
\rightarrow N$ is a cellular inclusion of controlled modules, then $M[A]
\rightarrow N[A]$ is one.

\subsection{A $\Hom$-bijection}
\subsubsection{Controlled filtration on the $\HOM$-space}
Let $M,N\in \mathcal{C}(X, \mathcal{E}, \mathcal{F})$, $E \in \mathcal{E}$.
Define $\Hom^E_R(M, N)$ as the subset of maps $f\colon M \rightarrow N$ in
$\mathcal{C}(X)$ which are $E$-controlled.  Similarly, define $\HOM_R^E(M, N)$ as
the sub-simplicial set of $E$-controlled maps $M[\Delta^n]\rightarrow N$.
Boundaries and degeneracies respect $E$, so this is a well-defined simplicial
subset of $\HOM_R(M,N)$.  Define $\HOM^\mathcal{E}_R(M,N)$ as
$\bigcup_{E\in\mathcal{E}} \HOM^E_R(M,N)$.

\subsubsection{Uncontrolled adjunction and its controlled
counterparts}\label{subsubsec:adjunction}
If $A$ is a simplicial set, we have an obvious adjunction 
\begin{equation}
  \label{eq:sset-adjunction}
  \Hom_R(M[A], N) \cong
\Hom_{\sset}(A, \HOM_R(M,N)) 
\end{equation}
in simplicial $R$-modules.  This restricts to a bijection 
\begin{equation}
  \label{eq:hom-bijection}
  \Hom_R^E(M[A], N) \cong \Hom_{\sset}(A, \HOM_R^E(M,N)).
\end{equation}
If $A$ is a finite simplicial set, we have a bijection
\begin{equation}
  \Hom_R^\mathcal{E}(M[A], N) \cong \Hom_{\sset}(A, \HOM_R^\mathcal{E}(M,N)) 
  \label{eq:hom-bijection-all}
\end{equation}
which is natural in $A$, $M$ and $N$.  The two bijections
\eqref{eq:hom-bijection} and \eqref{eq:hom-bijection-all} are not quite
adjunctions.  Therefore, we refer to them as
\emph{Hom-bijections}.

\subsection{$G$-equivariance}
\label{subsec:G-equivariance}
Let $G$ be a group.  (All our groups are discrete.)  All notions
above generalize in a straightforward way to $G$-equivariant versions:

\subsubsection{$G$-equivariant cellular modules}
An \emph{action} of $G$ on a simplicial $R$-module $M$ is a group homomorphism
$\rho\colon G \rightarrow \Aut_R(M)$.  The action is called
\emph{cell-permuting} if it induces an action on $\diamond_R M$.  An action is
\emph{free} if it is cell-permuting and the action on $\diamond_R M$ is free.  If
$M$, $N$ are simplicial $R$-modules with $G$-actions $\rho_M$, $\rho_N$ a
map $f\colon M \rightarrow N$ of simplicial $R$-modules is
\emph{$G$-equivariant} if for each $g\in G$ the diagram
\begin{equation*}
  \xymatrix{
  M \ar[r]^f \ar[d]_{\rho_M(g)} & N \ar[d]^{\rho_N(g)} \\
  M \ar[r]^f & N
  }
\end{equation*}
commutes.  A $G$-equivariant map $L \rightarrow M$ is a \emph{cellular
inclusion}, if
it is one after forgetting the $G$-action.  

If $M$ is a simplicial $R$-module with $G$-action and $A$ a simplicial set then
$M[A]$ has a $G$-action by the functoriality in $M$.

\subsubsection{$G$-equivariant control spaces}
A control space $(X, \mathcal{E}, \mathcal{F})$ is $G$-equivariant if $X$ has
a continuous $G$-action such that $gE =E$ (diagonal action) and $gF =F$ for
all $g\in G$, $E\in \mathcal{E}$, $F\in\mathcal{F}$.  A free control space is
one where the action of $G$ on $X$ is free.  The examples of control spaces in
Section~\ref{subsec:control-spaces} have $G$-equivariant analogues, see
\cite[2.7, 2.9, 3.1, 3.2]{Bartels-Farrell-Jones-Reich;;;}.

\subsubsection{The category of $G$-equivariant controlled modules
$\mathcal{C}^G(X;R)$}
\label{subsubsec:G-equivariant-controlled-modules}
Assume $(X,\mathcal{E},\mathcal{F})$ is a free $G$-equivariant control space.  A
controlled simplicial $R$-module with $G$-action over $X$ is a controlled
module $(M, \diamond_R M, \kappa_R)$ with cell-permuting $G$-action such that
$\kappa_R$ is $G$-equivariant.  A morphism $(M, \kappa_R) \rightarrow
(N,\kappa_R)$ of such modules is a $G$-equivariant morphism $M \rightarrow N$
which is controlled over $X$.  Denote the category of these as
$\mathcal{C}^G(X,\mathcal{E}, \mathcal{F}; R)$.  We use abbreviations like
$\mathcal{C}^G$, etc.  All further definitions of
Section~\ref{subsec:controlled-simplical-modules} transfer to $\mathcal{C}^G$.

\subsubsection{The $G$-equivariant $\Hom$-bijection}\label{subsubsec:G-adjunctions}
The adjunction \ref{subsubsec:adjunction}.\eqref{eq:sset-adjunction} between
$M[-]$ and $\HOM_R(M,-)$ and its controlled counterparts generalize to the
$G$-equivariant setting.  Denote by $\Hom_R(M,N)^G$ the
subset of $\Hom_R(M,N)$ of $G$-equivariant maps.  Denote by
$\HOM_R(M,N)^G$ the sub-simplicial set of $\HOM_R(M,N)$ consisting of
$G$-equivariant maps.  The adjunction~\eqref{eq:sset-adjunction} and the
bijection~\eqref{eq:hom-bijection} of \ref{subsubsec:adjunction}
restrict to the adjunction
\begin{equation*}
  \Hom_R(M[A], N)^G \cong \Hom_{\sset}(A, \HOM_R(M,N)^G) 
\end{equation*}
and the bijection
\begin{equation}\label{eq:hom-bijection-G-E}
  \Hom_R^E(M[A], N)^G \cong \Hom_{\sset}(A, \HOM_R^E(M,N)^G).
\end{equation}
Similarly, if $A$ is a finite simplicial set~\eqref{eq:hom-bijection-all}
restricts to a
bijection 
\begin{equation}\label{eq:hom-bijection-all-G}
  \Hom_R^{\mathcal{E}}(M[A], N)^G \cong \Hom_{\sset}(A,
  \HOM_R^{\mathcal{E}}(M, N)^G) .
\end{equation} 
All of these are natural in $A, M, N$.\smallskip 

Most of the statements in this section are easy to check, so we will not
provide proofs.  The exceptions are proven in
Section~\ref{sec:proofs-i:control}.

\section{Waldhausen categories of controlled modules}
\label{sec:waldhausen-categories}
In the following, $(X,\mathcal{E}, \mathcal{F})$ is always a free
$G$-equivariant control space which we abbreviate as $X$.  We fix a
simplicial ring $R$ for this section.
We put some additional structure on $\mathcal{C}^G(X; R)$, which we now
describe.

\subsection{$\mathcal{C}^G(X, \mathcal{E}, \mathcal{F}; R)$ as a Waldhausen
category}\label{subsec:waldhausen-category}
First we make $\mathcal{C}^G(X;R)$ into a Waldhausen category.
We will use the definitions of category with cofibrations, category with weak
equivalences, cylinder functor and the saturation, cylinder and extension
axiom from~\cite[1.1, 1.2, 1.6]{Waldhausen;spaces;;;;1985}.
We will give detailed proofs of the statements below in
Section~\ref{sec:proofs-ii:waldhausen-category}.

\subsubsection{Cofibrations}
  Define a map $f\colon M \rightarrow N$ in $\mathcal{C}^G(X)$ to be a
  \emph{cofibration}
  if there are isomorphisms $\alpha\colon M' \rightarrow M$ and $\beta\colon N
  \rightarrow N'$ in $\mathcal{C}^G(X)$ such that $\beta\circ f\circ \alpha$ is
  a cellular inclusion.  Note that $\alpha$, $\beta$ do not need to preserve the
  cellular structures, so the notion is independent of chosen cellular
  structures.  The compositions of cofibrations is a cofibration.  We also
  denote cofibrations by $\rightarrowtail$.  If
  $A\rightarrowtail B$ is a cofibration and $A \rightarrow C$ a map then the
  pushout $B\cup_A C$ exists and $C\rightarrow B\cup_A C$ is a cofibration.  If
  $A \rightarrowtail B$ has been a cellular inclusion, then we have a
  preferred choice for this pushout, in particular, it can be
  chosen functorially. 
\begin{thm}\label{thm:c-g-cat-with-cofibrations}
  $\mathcal{C}^G(X;R)$ is a category with cofibrations.
\end{thm}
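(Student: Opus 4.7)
The plan is to verify Waldhausen's three axioms for a category with cofibrations (existence of a zero object, Cof~1 on isomorphisms, Cof~2 on maps out of zero, Cof~3 on pushouts). The bulk of the content has already been packaged into the definitions and the remarks in the paragraph immediately preceding the theorem; the proof is mostly a matter of assembling them.

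First, the zero object is $* = 0$, the trivial controlled $G$-module (empty cellular structure, trivial control map and $G$-action); it is both initial and terminal because simplicial $R$-module maps to and from the zero module are unique, trivially $G$-equivariant, and trivially controlled. Cof~1 is immediate: for an isomorphism $f \colon M \to N$ in $\mathcal{C}^G(X;R)$, take $\alpha = f^{-1}$ and $\beta = \id_N$, so that $\beta \circ f \circ \alpha = \id_N$ is (vacuously) a cellular inclusion, and $f$ is a cofibration by the definition in Section~\ref{subsec:waldhausen-category}. Cof~2 is equally direct: the filtration defining a cellular module begins with $M^{-1} = *$, so $* \rightarrowtail M$ is a cellular inclusion by definition, and the $G$-action together with the control data on $M$ automatically make this inclusion a morphism in $\mathcal{C}^G(X;R)$.

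For Cof~3, the cellular-inclusion case is exactly what is asserted in the preceding paragraph. Concretely, given a cellular inclusion $M \rightarrowtail N$ and a map $M \to L$, the pushout $L \cup_M N$ is constructed by attaching to $L$ the cells of $N$ not already in $M$, with attaching maps obtained by composing with $M \to L$; the cellular structure, the free $G$-action, and the control data on $N$ transfer directly to $L \cup_M N$, and $L \rightarrowtail L \cup_M N$ is a cellular inclusion. For a general cofibration $f$ factored as $\beta^{-1} \circ g \circ \alpha^{-1}$ with $g$ a cellular inclusion, the pushout is obtained by transporting along the isomorphisms $\alpha$ and $\beta$; the induced map is a cofibration because being a cofibration is by definition invariant under pre- and post-composition with isomorphisms. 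Closure of cofibrations under composition, also recorded above, reduces via the isomorphisms $\alpha,\beta$ to closure of cellular inclusions under composition, which is Lemma~\ref{lem:composition-of-cellular-inclusions}.

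The one point that is not purely formal — and thus the main thing to monitor — is verifying that the cell-wise pushout really lives in $\mathcal{C}^G(X;R)$. The new cells of $L \cup_M N$ inherit labels from their images in $N$, and an $E$-controlled bound for $M \rightarrowtail N$ yields an $E$-controlled bound for the inclusion $L \rightarrowtail L \cup_M N$, because control is defined in terms of images of cells under maps. Freeness of the $G$-action on the cells of $N \setminus M$ guarantees that the $G$-action extends unambiguously to the new cells. Once these checks are in place, all axioms are verified and the theorem follows.
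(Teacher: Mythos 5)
Your overall route is the same as the paper's: the substance is the existence of pushouts along cellular inclusions in $\mathcal{C}^G(X;R)$ (new cells are the cells of $N$ not in $M$, attached by composing with $M\to L$ and keeping their old labels), and the remaining axioms (zero object, isomorphisms, $*\rightarrowtail M$, transport along isomorphisms for general cofibrations) are formal, exactly as in Section~\ref{subsec:cofibrations}.

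One step is glossed in a way that does not work as written: your claim that closure of cofibrations under composition ``reduces via the isomorphisms $\alpha,\beta$ to Lemma~\ref{lem:composition-of-cellular-inclusions}.'' If $\beta f\alpha$ and $\delta g\gamma$ are cellular models of the two cofibrations, the straightening isomorphisms need not agree at the middle object, so the composite has the shape cellular inclusion -- isomorphism -- cellular inclusion, which is not yet a composite of cellular inclusions. The repair uses the pushout lemma you already proved: push the cellular model of $g$ out along the middle isomorphism $\gamma^{-1}\beta^{-1}$; the cobase change is again a cellular inclusion, the induced map on targets is an isomorphism, and now Lemma~\ref{lem:composition-of-cellular-inclusions} applies. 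This is precisely why the paper deduces composition-closure from Lemma~\ref{lem:cellular-pushouts-in-CG} rather than from Lemma~\ref{lem:composition-of-cellular-inclusions} alone. Two smaller points: in the control check, the bound for the pushout object must also involve the control $E_f$ of the attaching map $M\to L$, not only that of $N$ --- a new cell $e$ has $\left<e\right>$ meeting cells of $L$ hit by $f$, so the pushout is $(E_L\cup E_f\circ E_N)$-controlled, while the inclusion $L\rightarrowtail L\cup_M N$ is automatically $0$-controlled because it is cellular; and freeness of the $G$-action on cells is neither assumed in $\mathcal{C}^G$ nor needed here, since the action on the pushout is induced by equivariance and is cell-permuting because the cells are those of $L$ together with those of $N$ not in $M$.
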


\subsubsection{Cylinders}
Consider the simplicial set $\Delta^1$, the \emph{interval}.  It
comes with inclusions $i_0, i_1\colon \Delta^0 \rightarrow \Delta^1$ and a projection
$p\colon \Delta^1 \rightarrow \Delta^0$.  For $M$ in $\mathcal{C}^G(X;R)$, this
induces the corresponding cellular inclusions $M \rightarrow M[\Delta^1]$ and
a projection $M[\Delta^1] \rightarrow M$ which makes $M[\Delta^1]$ into a
cylinder object. 

\begin{thm}[Cylinder Functor]
  \label{thm:cylinder-functor}
  For a map $f\colon A \rightarrow B$ define $T(f)$ as
  $A[\Delta^1]\cup_{ {i_1}_*} B$.  This is functorial
  in the arrow category and therefore gives a \emph{cylinder functor} on
  $\mathcal{C}^G(X;R)$ in the sense of
  Waldhausen~\cite[1.6]{Waldhausen;spaces;;;;1985}.
\end{thm}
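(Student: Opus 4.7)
The plan is to verify the defining properties of a cylinder functor in the sense of Waldhausen~\cite[1.6]{Waldhausen;spaces;;;;1985}: produce three natural transformations $j_1, j_2, p$, verify the identities $p j_1 = f$ and $p j_2 = \id_B$, check that $j_1 \sqcup j_2$ is a cofibration, and confirm the degenerate case $A = \ast$. First I would verify that the pushout $T(f) = A[\Delta^1] \cup_{i_{1*}} B$ exists in $\mathcal{C}^G(X;R)$ with a preferred, functorial choice. Since $i_{1*} \colon A \cong A[\Delta^0] \to A[\Delta^1]$ is a cellular inclusion (Lemma~\ref{lem:cellular-inclusions} applied to $\{1\} \hookrightarrow \Delta^1$), this follows from the discussion preceding Theorem~\ref{thm:c-g-cat-with-cofibrations}; functoriality of $T$ in the arrow category is then immediate from the functoriality of $M \mapsto M[\Delta^1]$ and of the preferred pushout.

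Next I would define the three structure maps. Let $j_1 \colon A \to T(f)$ be the composite of $i_{0*} \colon A \to A[\Delta^1]$ with the pushout inclusion; let $j_2 \colon B \to T(f)$ be the pushout inclusion from $B$; and let $p \colon T(f) \to B$ be induced, via the pushout universal property, by $f \circ \mathrm{pr}_* \colon A[\Delta^1] \to B$ (where $\mathrm{pr} \colon \Delta^1 \to \Delta^0$) together with $\id_B$, which agree after precomposition with $i_{1*}$. All of these maps are $G$-equivariant and controlled by the naturality results of~\ref{subsubsec:G-adjunctions} and the fact that $M \mapsto M[A]$ lands in $\mathcal{C}^G(X;R)$. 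The identities $p j_1 = f$ and $p j_2 = \id_B$ follow immediately.

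The main obstacle is recognizing $j_1 \sqcup j_2 \colon A \sqcup B \to T(f)$ as a cofibration. For this I would paste the defining pushout with the evident pushout $A \sqcup B = (A \sqcup A) \cup_A B$ (inclusion into the second summand on one side, $f$ on the other) to obtain
\begin{equation*}
\xymatrix{
A \sqcup A \ar@{>->}[r]^-{i_{0*} \sqcup i_{1*}} \ar[d]_{\id_A \sqcup f} & A[\Delta^1] \ar[d] \\
A \sqcup B \ar[r] & T(f)
}
\end{equation*}
as a pushout (using the pasting principle recorded in the paper's conventions). Using $A[\partial \Delta^1] \cong A \sqcup A$ (Lemma~\ref{lem:cellular-inclusions}), the top arrow is a cellular inclusion, hence the bottom arrow is a cofibration; unwinding the construction identifies it with $j_1 \sqcup j_2$.

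Finally, for the degenerate case, when $A = \ast$ we have $\ast[\Delta^1] = \ast$, so the pushout defining $T(\ast \to B)$ collapses to $B$ itself with $j_2 = p = \id_B$ and $j_1$ the zero map, exactly as Waldhausen requires. Checking naturality of $j_1, j_2, p$ in the arrow category is then routine and reduces to functoriality of the constructions in $A$, $B$, and $f$.
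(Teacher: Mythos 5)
Your construction of $T(f)$, the structure maps $j_1,j_2,p$, the identities $pj_1=f$, $pj_2=\id_B$, the degenerate case $T(\ast\to B)=B$, and the pasting argument showing $j_1\vee j_2\colon A\vee B\to T(f)$ is a cofibration are all fine, and the pasting argument is essentially the same as the one used in the paper (Lemma~\ref{lem:cylinder-functor-assembled-gives-functor}, via Lemma~\ref{lem:cellular-pushouts-in-CG} and the pushout-cancellation principle).

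However, there is a genuine gap: Waldhausen's axiom (Cyl~1) does not only ask that $f\mapsto (A\vee B\rightarrowtail T(f))$ be a well-defined, natural assignment landing in $\mathcal{F}_1\mathcal{C}^G$ --- it asks that this assembled functor $\Ar\mathcal{C}^G\to\mathcal{F}_1\mathcal{C}^G$ be \emph{exact}. Dismissing the remaining work as ``routine naturality'' misses this. Exactness means the functor preserves the zero object, pushouts along cofibrations (easy, since pushouts in both arrow categories are pointwise and $\vee$ and $T$ commute with them), and, crucially, cofibrations. A cofibration in $\Ar\mathcal{C}^G$ from $f\colon A\to B$ to $f'\colon A'\to B'$ means both horizontal maps $A\to A'$, $B\to B'$ are cofibrations, while a cofibration in $\mathcal{F}_1\mathcal{C}^G$ requires (by~\cite[Lemma~1.1.1]{Waldhausen;spaces;;;;1985}) that $A\vee B\to A'\vee B'$ \emph{and} the induced map $(A'\vee B')\cup_{A\vee B}T(f)\to T(f')$ be cofibrations in $\mathcal{C}^G$. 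The second condition is the substantive point and occupies most of the paper's proof: one factors the map of arrows through $f^*\colon A\to B'$, identifies $(A\vee B')\cup_{A\vee B}T(f)\cong T(f^*)$ by a cancellation of pushouts, and then shows that
\begin{equation*}
  A'[0]\cup_{A[0]}A[\Delta^1]\cup_{f^*}B'\ \longrightarrow\ A'[\Delta^1]\cup_{f'}B'
\end{equation*}
is a cellular inclusion, using that $A'[0]\cup_{A[0]}A[\Delta^1]\cup_{A[1]}A'[1]\rightarrowtail A'[\Delta^1]$ is one (a consequence of Lemma~\ref{lem:cellular-inclusions}) together with another pushout analysis. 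Without this step you have not verified (Cyl~1), so the proposal as it stands does not establish that $T$ is a cylinder functor in Waldhausen's sense; you should add this argument (or an equivalent one).
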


\subsubsection{Weak equivalences}
Two maps $f, g\colon A \rightarrow B$ are homotopic if there is a
\emph{homotopy} $H\colon A[\Delta^1] \rightarrow B$ such that $H\circ {i_0}_* = f$
and $H\circ {i_1}_* = g$.  This gives rise to the obvious notion of homotopy
equivalence. 

\begin{thm}\label{thm:c-g-waldhausen-category}
The subcategory of homotopy equivalences in $\mathcal{C}^G(X;R)$
forms a category of weak equivalences, in particular, it satisfies the gluing
Lemma~\ref{lem:gluing-lemma}.  It also satisfies the saturation
axiom~\ref{lem:saturation-axiom} and the
extension axiom~\ref{def:extension-axiom}.  The cylinder functor satisfies
the cylinder axiom~\ref{lem:cylinder-axiom} with respect to these weak
equivalences.  
\end{thm}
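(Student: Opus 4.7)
The plan is to reduce all four assertions to formal manipulations with the cylinder $M[\Delta^1]$ plus one technical input: a relative horn-filling statement for controlled modules (the Relative Horn-Filling Lemma advertised in the introduction). First I would record the easy pieces. That homotopy equivalences form a subcategory containing the isomorphisms is immediate: the identity is a homotopy equivalence, isomorphisms are trivially so, and composition of homotopy equivalences follows from composing cylinder homotopies via the functoriality of $M \mapsto M[A]$ together with the $\Hom$-bijection \ref{subsubsec:adjunction}.\eqref{eq:hom-bijection-all}. The cylinder axiom is equally direct: the projection $T(f) = A[\Delta^1]\cup_{i_1} B \to B$ has a section $B \hookrightarrow T(f)$ coming from $i_1$, and a cylinder homotopy supplied by the interval structure on $A[\Delta^1]$ collapses the $A$-factor onto $B$; symbolically this uses the map $\Delta^1 \times \Delta^1 \to \Delta^1$ that contracts one end to a point. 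The saturation axiom reduces to the standard observation that homotopic maps have the same homotopy inverses, which gives two-out-of-three among maps that come with cylinder homotopies.

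The main obstacle is the gluing Lemma~\ref{lem:gluing-lemma}. The difficulty is genuine: given a map of pushout squares with $\varphi_A,\varphi_B,\varphi_C$ homotopy equivalences, the induced map $\varphi_D$ has an abstract homotopy inverse which is \emph{not} obtained by pushing out the inverses. My approach would be the usual one: replace the cofibrations $A_i \rightarrowtail B_i$ by the mapping cylinder cofibrations $A_i \rightarrowtail T(A_i \to B_i)$, choose homotopy inverses $\psi_A,\psi_B,\psi_C$ together with their prescribed homotopies, and then use the Relative Horn-Filling Lemma~\ref{lem:horn-filling-relative} to extend a partial homotopy defined on $A_1[\Delta^1] \cup C_1$ along the cofibration into $D_1[\Delta^1]$. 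This extension produces a map $D_1 \to D_0$ whose compositions with $\varphi_D$ are homotopic to the identities, again by a further application of relative horn filling to glue the known homotopies on $B$ and $C$ together over $A$. The bookkeeping is essentially that of the standard gluing argument for CW-complexes, but everything must be verified to be $G$-equivariant and controlled, which is where the simplicial nature of the cylinder and the naturality of \eqref{eq:hom-bijection-all-G} pay off.

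For the extension axiom~\ref{def:extension-axiom}, I would deduce it from the gluing lemma together with the cylinder functor, following the standard pattern. Given a map of cofiber sequences $(A_0 \rightarrowtail B_0 \twoheadrightarrow C_0) \to (A_1 \rightarrowtail B_1 \twoheadrightarrow C_1)$ in which $\varphi_A$ and $\varphi_C$ are homotopy equivalences, I factor each cofibration through its mapping cylinder and then realize $C_i$ as a pushout $B_i \cup_{A_i} *$ up to homotopy. The gluing lemma applied to these pushout squares then forces $\varphi_B$ to be a homotopy equivalence. The remaining cases of the extension axiom are handled symmetrically using the dual identification of $B_i$ as the pushout of $A_i \leftarrow A_i \rightarrowtail B_i$ and a further application of gluing.

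In summary, the whole theorem rests on (i) the cylinder formalism supplied by $A \mapsto M[A]$ together with the $\Hom$-bijection, which makes homotopies algebraically tractable, and (ii) the Relative Horn-Filling Lemma, which supplies the extension property of cofibrations along which partially-defined homotopies can be prolonged. The three axioms and the gluing lemma then fall out by the standard arguments once these two ingredients are in place, with the gluing lemma being the only step where any real work is required.
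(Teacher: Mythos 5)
Your treatment of the cylinder axiom, saturation, and the gluing lemma is broadly in line with what is needed (for the gluing lemma the paper actually takes a slightly different route: it proves that the pushout of an acyclic cofibration is acyclic, Lemma~\ref{lem:pushout-along-acyclic-cofibration}, via the deformation-retract Proposition~\ref{prop:cylinder-deformation-retract}, and then verifies the axioms of a category of cofibrant objects so that the gluing lemma follows from Lemma~II.8.8 of Goerss--Jardine, rather than running the hands-on CW-style argument you sketch; both routes are viable, and your direct argument would still need the same intermediate facts).

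However, your derivation of the extension axiom is a genuine gap. You propose to deduce it from the gluing lemma together with the cylinder functor, but this cannot work: the gluing lemma runs in the wrong direction. Writing $C_i = B_i/A_i$ as the pushout of $* \leftarrow A_i \rightarrowtail B_i$, the gluing lemma lets you conclude that $\varphi_C$ is an equivalence once you know $\varphi_A$ \emph{and} $\varphi_B$ are; the extension axiom needs the converse inference, and $B_i$ is not a pushout of any diagram built only from $A_i$ and $C_i$ (an extension is not determined by its sub and quotient). Your ``dual identification of $B_i$ as the pushout of $A_i \leftarrow A_i \rightarrowtail B_i$'' is just $B_i$ itself and makes the argument circular. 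That no formal argument of this kind can exist is shown by Waldhausen's category of retractive spaces over a point, which has a cylinder functor, saturation and the gluing lemma, yet fails the extension axiom (this is exactly why the paper flags the extension axiom as needing extra input). The paper's proof instead uses the additive structure of $\mathcal{C}^G$ in an essential way: quotient maps $B \twoheadrightarrow B/A$ are surjections of simplicial abelian groups, hence Kan fibrations, which yields a relative lifting/horn-filling lemma with respect to such quotients (Lemma~\ref{lem:kan-fibration-lifting}); one then factors the map of cofiber sequences through mapping cylinders and runs a diagram chase modelled on the five-lemma argument for short exact sequences of abelian groups, repeatedly lifting and \emph{subtracting} maps (the difference $\beta - \gamma$ in Lemma~\ref{lem:extension-general}) to show $B$ is a deformation retract of $T_B$. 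Without invoking this additivity (or some equivalent mechanism), the extension axiom does not follow from the ingredients you list.
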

Theorems~\ref{thm:c-g-cat-with-cofibrations}, \ref{thm:cylinder-functor}
and~\ref{thm:c-g-waldhausen-category} immediately imply the $G$-equivariant
version of Theorem~\ref{mainthm:c-waldh-cat}:

\begin{mainthm}[Waldhausen structure on $\mathcal{C}^G$]
\label{mainthm:c-g-waldh-cat}
  Let $X$ be a control space and $R$ a simplicial ring.  The category of
  controlled simplicial modules over $X$, $\mathcal{C}^G(X;R)$, together with
  the homotopy equivalences as weak equivalences is a Waldhausen category.
  Therefore, Waldhausen's algebraic K-theory of $\mathcal{C}(X;R)$ is defined.

  The category has a cylinder functor and it satisfies Waldhausen's cylinder
  axiom, his saturation axiom and his extension axiom.
\end{mainthm}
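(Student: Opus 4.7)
The plan is to prove Theorem~\ref{mainthm:c-g-waldh-cat} by direct assembly of the three immediately-preceding Theorems~\ref{thm:c-g-cat-with-cofibrations}, \ref{thm:cylinder-functor}, and~\ref{thm:c-g-waldhausen-category}. These three results package respectively the cofibration structure, the cylinder functor, and the weak-equivalence axioms (including saturation, extension, cylinder, and gluing), which together are precisely Waldhausen's definition of a category with cofibrations and weak equivalences satisfying the enumerated extra axioms. Hence, once they are established, Theorem~\ref{mainthm:c-g-waldh-cat} follows by concatenation, and the existence of Waldhausen's algebraic K-theory is then immediate by~\cite{Waldhausen;spaces;;;;1985}. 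So the real content is in the three intermediate theorems; I sketch the strategy for each.

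For Theorem~\ref{thm:c-g-cat-with-cofibrations}, I would verify Waldhausen's three cofibration axioms directly from the definition. Isomorphisms are cofibrations tautologically (take $\alpha$ and $\beta^{-1}$ as witnesses). The map $* \rightarrowtail M$ is a cofibration because $M$ is by hypothesis cellular. Closure under pushout is the key step: by the ``up to isomorphism'' phrasing it suffices to treat a genuine cellular inclusion $A \rightarrowtail B$, and then the pushout $B \cup_A C$ is built by reattaching to $C$ the cells of $B \setminus A$ along the image of their attaching maps under $A \to C$, transporting the labeling $\kappa$, the $G$-action, and the control bound $E$ in the obvious way. Functoriality of this construction when $A \rightarrowtail B$ is already cellular is then automatic.

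For Theorem~\ref{thm:cylinder-functor}, the cylinder $T(f) = A[\Delta^1] \cup_{i_1} B$ is a pushout of a cellular inclusion, so it exists in $\mathcal{C}^G(X;R)$ by the previous theorem. The front inclusion $A \to T(f)$, back inclusion $B \to T(f)$, and projection $T(f) \to B$ arise from the corresponding simplicial-set maps on $\Delta^1$ and are controlled via the $\Hom$-bijection of~\ref{subsubsec:G-adjunctions}. Functoriality in the arrow category and Waldhausen's two cylinder axioms (push-out compatibility, and the isomorphism $T(* \to B) \cong B$) then follow formally.

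The deep work is Theorem~\ref{thm:c-g-waldhausen-category}, which is also where I expect the main obstacle. The plan is to develop simplicial homotopy theory inside $\mathcal{C}^G(X;R)$ by analogy with CW-complexes, using the Relative Horn-Filling Lemma~\ref{lem:horn-filling-relative} as the single workhorse. From it one obtains cell-by-cell extension and lifting of homotopies along cofibrations, which yields: (i) the saturation axiom (homotopy equivalences satisfy two-out-of-three, proven by constructing homotopy inverses inductively on cells), (ii) the cylinder axiom (the projection $M[\Delta^1] \to M$ is a simplicial strong deformation retract, hence a weak equivalence even relative to $M \rightarrowtail M[\Delta^1]$), and (iii) the extension axiom (given a map of cofibration sequences, lift a homotopy inverse on the quotient and on the sub-object, then use Horn-Filling to patch them into a homotopy inverse on the total object). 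The main obstacle is the gluing lemma: given a map of pushout diagrams $C_i \leftarrow A_i \rightarrowtail B_i$ with all three component maps homotopy equivalences, the induced map $D_0 \to D_1$ on pushouts must be shown to be a homotopy equivalence. The difficulty is that the homotopy inverses of $\varphi_A, \varphi_B, \varphi_C$ need not be compatible on the amalgamation, so one cannot just glue them. The strategy is to first replace the chosen inverses with ones that are compatible up to specified homotopies (using saturation and the cylinder), then use Horn-Filling relative to the cofibrations $A_i \rightarrowtail B_i$ to coherently deform them until they agree on $A_1$, and finally glue over the pushout to obtain an inverse for $\varphi_D$; the two compositions are then shown to be homotopic to the identity by an analogous cell-by-cell patching argument.
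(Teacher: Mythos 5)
Your top-level argument is exactly the paper's: Theorem~\ref{mainthm:c-g-waldh-cat} is deduced by simply concatenating Theorems~\ref{thm:c-g-cat-with-cofibrations}, \ref{thm:cylinder-functor} and~\ref{thm:c-g-waldhausen-category}, and your sketches of the first two agree in substance with Lemma~\ref{lem:cellular-pushouts-in-CG} and Section~\ref{subsec:cylinder-functor}.

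Where you diverge from the paper is in the strategy for Theorem~\ref{thm:c-g-waldhausen-category}, and one point there needs care. For the gluing lemma you propose the classical direct argument: coherently deform the three homotopy inverses until they are compatible over $A_1$ and then glue. The paper instead proves only that the pushout of a cofibration which is a homotopy equivalence is again a homotopy equivalence (Lemma~\ref{lem:pushout-along-acyclic-cofibration}, via Proposition~\ref{prop:cylinder-deformation-retract}, i.e.\ $A$ is a deformation retract of $T(f)$ when $f$ is a homotopy equivalence), and then invokes the formal result that a category of cofibrant objects satisfies the gluing lemma \cite[Lemma~II.8.8]{goerss-jardine;simplicial-homotopy-theory}; this trades the delicate coherence bookkeeping of your route for one hard lemma plus categorical machinery. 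More seriously, your sketch of the extension axiom cannot run on the Relative Horn-Filling Lemma~\ref{lem:horn-filling-relative} alone: that lemma extends maps along cofibrations, whereas the proof of Lemma~\ref{lem:waldh-cat:extension-axiom} requires a lifting property \emph{against the quotient maps} $B \twoheadrightarrow B/A$ (Lemma~\ref{lem:kan-fibration-lifting}), which rests on the extra fact that surjections of simplicial abelian groups are Kan fibrations, and the patching in Lemma~\ref{lem:extension-general} crucially uses that one can subtract maps in $\mathcal{C}^G(X;R)$. This additive input is not cosmetic: Waldhausen's extension axiom fails for retractive spaces over a point \cite[1.2]{Waldhausen;spaces;;;;1985}, so any purely CW-style ``patch homotopy inverses by horn-filling'' argument, as you describe it, cannot suffice and would need to be supplemented exactly at this point.
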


This category is too big, it has an Eilenberg-Swindle which causes its
algebraic K-theory to vanish.  But it contains interesting
full subcategories, which we discuss next.

\subsubsection{A remark on the proofs}
The main tool for the proofs are the adjunctions of
\ref{subsubsec:G-adjunctions} and a careful analysis of the control conditions
in these settings, often accompanied by an induction over the cells.  Here is a
prototype of such a proof.  We need to show that we have horn-filling in our
category.  In particular, the following (simplified) lemma should hold.

\begin{lem*}
  Given a map $M[\Lambda^n_i] \rightarrow P$.  Then there is an extension to a
  map $M[\Delta^n] \rightarrow P$.
\end{lem*}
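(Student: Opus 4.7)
The plan is to pass through the Hom-bijection of~\ref{subsubsec:adjunction}. Since $\Lambda^n_i$ is finite and the given map $M[\Lambda^n_i] \rightarrow P$ is a morphism in $\mathcal{C}$, it is $E$-controlled for some $E \in \mathcal{E}$, and so corresponds under~\ref{subsubsec:adjunction}.\eqref{eq:hom-bijection-all} to a map of simplicial sets $\Lambda^n_i \rightarrow \HOM_R^E(M,P)$. Producing the desired extension $M[\Delta^n]\rightarrow P$ in $\mathcal{C}$ is then the same as extending this to a map $\Delta^n \rightarrow \HOM_R^{E'}(M,P)$ for some $E' \in \mathcal{E}$.

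First I would forget the control condition and work in the ambient simplicial set $\HOM_R(M,P)$. This is the underlying simplicial set of a simplicial abelian group, and every simplicial abelian group is a Kan complex, so a filler $\Delta^n \rightarrow \HOM_R(M,P)$ certainly exists. Via the adjunction~\ref{subsubsec:adjunction}.\eqref{eq:sset-adjunction} this already produces an a priori uncontrolled extension $M[\Delta^n]\rightarrow P$ of simplicial $R$-modules.

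The main step is then to verify that the filler may be chosen $E$-controlled, so that one can take $E' = E$. For this I would invoke the standard explicit horn-filler formula in a simplicial abelian group, which expresses the filling $n$-simplex as an $\Int$-linear combination of composites of face and degeneracy operators applied to the $(n-1)$-simplices making up the given horn. Translated back through the Hom-bijection, the extension $M[\Delta^n]\rightarrow P$ becomes an $\Int$-linear combination of maps of the form $M[\Delta^n]\rightarrow M[\Delta^{n-1}]\rightarrow P$, in which the first arrow is induced by a morphism $\Delta^n \rightarrow \Delta^{n-1}$ in $\Delta$ and the second is one of the original horn maps. Maps of the form $M[f]$ for $f$ a simplicial-set map preserve cell labels (they send each cell into the submodule generated by cells with the same $\kappa$-value), so each composite is $E$-controlled; and by~\ref{subsubsec:composition} a finite sum of $E$-controlled maps is again controlled. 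Hence the filler is controlled.

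The main obstacle is this last control-bookkeeping step: pinning down the explicit horn-filler and checking that each of its terms respects the labels in the control space. Once that is in place the passage to the $G$-equivariant version is purely formal, as the filler formula is natural and therefore commutes with the cell-permuting $G$-action on $M$ and $P$; one simply replaces~\eqref{eq:hom-bijection-all} by its equivariant counterpart~\ref{subsubsec:G-adjunctions}.\eqref{eq:hom-bijection-all-G}.
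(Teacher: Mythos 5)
Your argument is correct and uses the same mechanism as the paper: pass through the Hom-bijection and exploit the Kan property coming from the abelian group structure. The one piece of streamlining the paper performs is to observe directly that the sub-simplicial set $\HOM^E_R(M,P)$ of $E$-controlled maps is itself a simplicial abelian group (precisely because $E$-controlled maps are closed under sums, faces and degeneracies), hence Kan; this makes the filler land in the controlled subobject automatically and saves you from unwinding the Moore filler formula by hand, though your unwinding verifies exactly the same closure properties.
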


\begin{proof}
  By the $\Hom$-bjiection
  \ref{subsubsec:G-adjunctions}.\eqref{eq:hom-bijection-G-E}, the situation is
  equivalent to finding a lift in
  the diagram denoted by $\dashrightarrow$.
  \begin{equation*}
    \xymatrix{
    \Lambda^n_i \ar[r] \ar[d] & \HOM^E_R(M, P) \\
    \Delta^n \ar@{-->}[ur] }
  \end{equation*}
  But the simplicial set $\HOM^E_R(M, P)$ is in fact a simplicial abelian
  group, hence the lift exists by the Kan-property, i.e., it is fibrant.
\end{proof}

The general proofs are considerably more involved.  We will devote
Section~\ref{sec:proofs-ii:waldhausen-category} to them:  cofibrations are
discussed in Section~\ref{subsec:cofibrations}, the
cylinder functor in Section~\ref{subsec:cylinder-functor}.  We discuss a
homotopy extension property in~\ref{subsec:homotopy-extension-property}, which
will settle the saturation axiom~\ref{lem:saturation-axiom}.  If $f\colon A
\rightarrow B$ is a homotopy equivalence, then $A \rightarrow T(f)$ is a
deformation coretraction, which we prove
in Section~\ref{subsec:cylinders-homotopy-equivalences}.
In~\ref{subsec:pushouts-of-weq}, we show that the pushout of a homotopy
equivalence is a homotopy equivalence which settles the gluing
Lemma~\ref{lem:gluing-lemma} in Subsection~\ref{subsec:gluing-lemma}.  The
extension axiom is treated in Subsection~\ref{subsec:extension-axiom}.
A summarizing proof of Theorem~\ref{thm:c-g-waldhausen-category} is given at
the end of Section~\ref{sec:proofs-ii:waldhausen-category}.

\subsection{Finiteness conditions}
\label{subsec:finiteness-conditions}
We define full subcategories of $\mathcal{C}^G(X;R)$ by specifying conditions
on the objects.

\subsubsection{bl-finite controlled modules}
Here we use the topology on $X$.  Let $(M, \diamond_R M, \kappa_R)$ be a
controlled module over $X$. 
\begin{definition}[bl-finite]
  $M$ is \emph{locally finite} if for
  each $x\in X$ there is a neighborhood $U$ of $x$ in $X$ such that
  $\kappa_R^{-1}(U)$ is a finite subset of $\diamond_R M$.  Then $M$ is called
  \emph{bounded locally finite}, or \emph{bl-finite}, if it is locally finite
  and finite-dimensional.  Denote the full subcategory of bl-finite modules by
  $\mathcal{C}^G_f(X;R)$.  
\end{definition}
\begin{thm}\label{thm:c-g-finite-waldhausen}
  $\mathcal{C}^G_f(X;R)$ inherits the structure of a Waldhausen category from
  $\mathcal{C}^G(X;R)$. It satisfies the saturation and extension axiom and
  has a cylinder functor satisfying the cylinder axiom.  
\end{thm}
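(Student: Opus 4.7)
The plan is to verify that $\mathcal{C}^G_f(X;R)$ is closed under the operations defining the Waldhausen structure on $\mathcal{C}^G(X;R)$---zero object, cofibrations, pushouts along cofibrations, and the cylinder functor $T$---after which the saturation, extension, and cylinder axioms transfer automatically, since they concern only the behavior of cofibrations and homotopy equivalences, both of which restrict without adjustment to a full subcategory closed under the relevant constructions.

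First, I would check closure under the basic constructions. The zero module is trivially bl-finite. For $M$ bl-finite, the cylinder $M[\Delta^1]$ has cells indexed by pairs of a cell of $M$ with a non-degenerate simplex of $\Delta^1$; because $\Delta^1$ is finite and the control map factors as $M[\Delta^1]\to M\to X$, we obtain $\dim(M[\Delta^1])=\dim M+1$ and the preimage of any open neighborhood $U\subseteq X$ is a finite union of the finite sets $\kappa_M^{-1}(U)\times\{\sigma\}$ over the finitely many non-degenerate simplices $\sigma$ of $\Delta^1$. Hence $T(f)=A[\Delta^1]\cup_{i_1}B$ is bl-finite whenever $A,B$ are, and the same analysis handles a pushout $B\cup_A C$ along a cellular inclusion of bl-finite modules: its cells are the cells of $C$ together with those cells of $B$ not in the image of $A$, so finite-dimensionality is preserved and local finiteness follows by combining the two finite preimages $\kappa_B^{-1}(U)$ and $\kappa_C^{-1}(U)$.

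With closure established, the axioms pass to the subcategory. The saturation axiom depends only on composition of homotopy equivalences, inherited from the ambient category. The cylinder axiom concerns only $T(f)$ and its structural maps, already bl-finite, so it reduces to the ambient statement of Theorem~\ref{thm:c-g-waldhausen-category}. The extension axiom requires that a map of cofiber sequences whose left and right terms are equivalences has an equivalence in the middle; since the middle term in each sequence is bl-finite by hypothesis, and the ambient category satisfies the axiom, the restricted statement holds.

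The principal obstacle is that \emph{bl-finite} is a property of the \emph{chosen} cellular structure on an object, while cofibrations in $\mathcal{C}^G(X;R)$ were defined only up to isomorphism. I would therefore need to argue that bl-finiteness is preserved under isomorphism: given a controlled isomorphism $\alpha\colon M\to M'$ with controlled inverse, each cell of $M'$ is carried under $\alpha^{-1}$ into a cellular submodule of $M$ generated by finitely many cells in an $E$-bounded neighborhood of $\kappa^{M'}(e')$, and this bound, applied symmetrically, forces local finiteness to transport between $M$ and $M'$; a parallel argument with the $E$-control condition bounds the dimension. Once this is established, the reduction of a general cofibration $f\colon M\rightarrowtail N$ to its cellular avatar $\beta f\alpha$ stays within bl-finite objects, and the pushout computations above apply without change.
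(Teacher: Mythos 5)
Your closure checks (zero object, cylinder $M[\Delta^1]$, pushout along a cellular inclusion of bl-finite modules) and the formal transfer of the saturation, extension and cylinder axioms are sound and agree with the paper's reduction to the conditions (C0)--(C2) stated at the beginning of Section~\ref{sec:proofs-iii:finiteness-conditions}. The gap is in how you dispose of what you call the principal obstacle: the claim that bl-finiteness is preserved under controlled isomorphism is \emph{false} for a general control space, and the paper explicitly warns about this. Take $X=\Real\smallsetminus\{0\}$ with metric control and a $0$-dimensional module with one cell over each point $1/n$: this is locally finite (the only accumulation point, $0$, has been removed from the space), yet relabelling every cell to lie over the point $1$ moves each label by a distance less than $1$, so the identity of the underlying module is a controlled isomorphism onto a module that is not locally finite. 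Your argument breaks down because local finiteness is a condition on small topological neighborhoods, whereas a controlled isomorphism only constrains labels up to an $E$-thickening $\{x\}^E$, and such a thickening may meet infinitely many cells of a locally finite module unless the control space is proper (cf.\ Remark~\ref{rem:proof-of-finite-and-proper}); properness is not assumed in Theorem~\ref{thm:c-g-finite-waldhausen}. The same objection applies to your parallel claim that the $E$-control condition bounds the dimension of an isomorphic model.

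Consequently the passage from a general cofibration in $\mathcal{C}^G_f$ (a map that is merely a cofibration in $\mathcal{C}^G$ between bl-finite objects) to a cellular inclusion of bl-finite modules needs a genuinely different argument, and this is the technical heart of the paper's proof. There, one first replaces the cofibration $f'\colon A\to B$ by a cellular inclusion $f\colon A\rightarrowtail D$ whose target $D$ is controlled-isomorphic to the bl-finite module $B$ but need not itself be bl-finite, and then one \emph{changes the control map} on $D$: push the labels of $\diamond_R D$ into the image of $\kappa^B$ (which has no accumulation points, by local finiteness of $B$ and the Hausdorff property of $X$), and then redistribute the labels over the remaining ``trouble points'' using the cells of $B$ via the isomorphism, obtaining $\overline{\kappa}^D$ with $(D,\overline{\kappa}^D)$ bl-finite and still isomorphic to $(D,\kappa^D)$ (Lemma~\ref{lem:finite-cofibrations-are-isom-to-finite-cell-inclusions} and the lemma following it). Without this relabelling step, or an added properness hypothesis on $X$, your proof of the crucial closure property (C1) for $\mathcal{C}^G_f$ does not go through; once it is supplied, the rest of your outline matches the paper's argument.
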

Furthermore, cofibrations are isomorphic in $\mathcal{C}^G_f(X;R)$ to cellular
inclusions.  The proof needs the Hausdorff-property of $X$.  If $X$ is a
point, $M$ is bl-finite if and only if it is finite as cellular simplicial
module.

\subsubsection{Homotopy bl-finite controlled modules}
\begin{definition}[homotopy bl-finite]
  $M \in \mathcal{C}^G(X;R)$ is \emph{homotopy bl-finite} if there is a
  homotopy equivalence $M \xrightarrow{\sim} M'$ such that $M'$ is a bl-finite
  module.  We denote the full subcategory of homotopy bl-finite modules by
  $\mathcal{C}^G_{hf}(X;R)$.   
\end{definition}

\begin{thm}\label{thm:c-g-homotopy-finite-waldhausen}
  $\mathcal{C}^G_{hf}(X;R)$ inherits the structure of a Waldhausen category.
  It has a cylinder functor satisfying the cylinder axiom.  The saturation and
  extension axiom hold.
\end{thm}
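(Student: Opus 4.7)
The plan is to inherit the cofibrations and weak equivalences from the ambient category: a morphism in $\mathcal{C}^G_{hf}(X;R)$ is declared a cofibration (resp.\ homotopy equivalence) exactly when it is one in $\mathcal{C}^G(X;R)$. Given Theorem~\ref{mainthm:c-g-waldh-cat}, the Waldhausen category axioms together with the cylinder, saturation, and extension axioms are statements about morphisms which transfer verbatim to any full subcategory, \emph{provided} that subcategory is closed under the constructions named in those axioms. So the substantive task splits into three closure checks: (i) $0$ lies in $\mathcal{C}^G_{hf}(X;R)$, which is immediate since $0$ is bl-finite; (ii) closure under pushouts along cofibrations, i.e.\ if $B \leftarrow A \rightarrowtail C$ lies in $\mathcal{C}^G_{hf}(X;R)$ then so does $B \cup_A C$; and (iii) closure under the cylinder functor $T(f) = A[\Delta^1] \cup_A B$.

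Property (iii) will reduce to (ii): $(-)[\Delta^1]$ preserves both homotopy equivalences and bl-finiteness (the latter because $\Delta^1$ is a finite simplicial set and the control map on $A[\Delta^1]$ factors through the projection to $A$), so $A[\Delta^1]$ is homotopy bl-finite whenever $A$ is, and then $T(f)$ is a pushout of the type covered by (ii). Thus (ii) is the crux of the argument.

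For (ii), my strategy is to exhibit a strictly commuting map of pushout diagrams $(A,B,C) \to (A',B',C')$ in which $A',B',C'$ are bl-finite, $A' \rightarrowtail C'$ is a cofibration, and each of the three vertical maps is a homotopy equivalence. Granted such a diagram, the gluing Lemma~\ref{lem:gluing-lemma} applied in $\mathcal{C}^G(X;R)$ produces a homotopy equivalence $B \cup_A C \xrightarrow{\sim} B' \cup_{A'} C'$, while Theorem~\ref{thm:c-g-finite-waldhausen} applied in $\mathcal{C}^G_f(X;R)$ ensures that the target pushout is bl-finite, so that $B \cup_A C$ is homotopy bl-finite as required.

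The main obstacle is the rigidification: starting from arbitrarily chosen homotopy equivalences $A \xrightarrow{\sim} \tilde A$, $B \xrightarrow{\sim} \tilde B$, $C \xrightarrow{\sim} \tilde C$ to bl-finite targets, the induced candidate maps $\tilde A \to \tilde B$ and $\tilde A \to \tilde C$ (defined via a chosen homotopy inverse of $A \xrightarrow{\sim} \tilde A$) only make the comparison squares commute up to homotopy. I would convert these homotopies into strict commutativities by iterated mapping-cylinder factorizations; each stays within bl-finite modules because $(-)[\Delta^1]$ and pushouts along cofibrations preserve bl-finiteness, by Theorem~\ref{thm:c-g-finite-waldhausen} applied inside $\mathcal{C}^G_f(X;R)$. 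Finally, one factors the candidate replacement of $A \rightarrowtail C$ through its mapping cylinder to turn it into an honest cofibration, which completes the construction of $(A',B',C')$ and establishes (ii). Together with (i) and (iii), this yields the theorem.
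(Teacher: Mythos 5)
Your reduction of the theorem to the closure properties (i)--(iii), and of (iii) to (ii), is exactly the framework the paper uses, and the inheritance of the saturation/extension/cylinder axioms by a full subcategory satisfying these closures is unproblematic. The gap is in your proof of (ii). You want a \emph{single} strictly commuting map of diagrams $(B \leftarrow A \rightarrowtail C) \to (B' \leftarrow A' \rightarrowtail C')$ with $A',B',C'$ bl-finite, and you propose to rectify the homotopy-commutative comparison squares ``by iterated mapping-cylinder factorizations,'' claiming these stay within bl-finite modules because $(-)[\Delta^1]$ and pushouts along cofibrations preserve bl-finiteness. That justification does not apply: the homotopies you must absorb are maps $A[\Delta^1]\to \tilde B$ (and likewise for $B,C$), so the rectifying cylinders are objects of the form $A[\Delta^1]\cup_A \tilde A$ or $\tilde B$ with a copy of $A[\Delta^1]$ glued on --- and $A$, $B$, $C$ are only \emph{homotopy} bl-finite, so these objects are not bl-finite. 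Once a corner of the target diagram fails to be bl-finite you can no longer invoke Theorem~\ref{thm:c-g-finite-waldhausen} to see that its pushout is bl-finite, which was the whole point. Note also that you cannot repair this with the homotopy extension property along $A\to \tilde A$, since that map is not a cofibration, and making it one (replacing $\tilde A$ by its mapping cylinder) again destroys bl-finiteness; indeed no cofibration from a non-bl-finite $A$ into a bl-finite object exists. So the existence of the direct strict comparison you posit is precisely the hard point, and your sketch does not establish it.

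The paper circumvents this by dropping the requirement that the comparison go in one direction: since weak equivalences here are honest homotopy equivalences, a zigzag suffices to exhibit $B\cup_A C$ as homotopy bl-finite. Concretely, it builds a chain of four strict maps of diagrams, where every newly introduced arrow is defined by composition (so no rectification against a homotopy is ever needed): first replace $B$ by the mapping cylinder $M_B$ of $A'\to A\rightarrowtail B$ (homotopy bl-finite, with $A'\rightarrowtail M_B$ a cofibration), then replace $C$ by a bl-finite $C'$, then replace $C'$ by the cylinder $M_{C'}$ of $A'\to C'$ (bl-finite, with $A'\rightarrowtail M_{C'}$ a cofibration), and finally replace $M_B$ by a bl-finite $B'$. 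Applying the gluing Lemma~\ref{lem:gluing-lemma} four times shows $C\cup_A B$ is homotopy equivalent to $M_{C'}\cup_{A'}B'$, which is bl-finite as a pushout of bl-finite objects along a cofibration. If you reorganize your argument as such a zigzag --- allowing the intermediate diagrams to contain non-bl-finite objects and only insisting that the final one is bl-finite --- your proof goes through.
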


\subsubsection{Homotopy bl-finitely dominated controlled modules}
\begin{definition}[homotopy bl-finitely dominated]
  An object $M\in \mathcal{C}^G(X;R)$ is \emph{homotopy bl-finitely dominated}
  if it is a strict retract of a homotopy bl-finite object.  We denote the
  full subcategory of homotopy bl-finitely dominated modules by
  $\mathcal{C}^G_{hfd}(X;R)$.     
\end{definition}
\begin{thm}\label{thm:c-g-hfd-waldhausen}
  $\mathcal{C}^G_{hfd}(X;R)$   inherits the structure of a Waldhausen
  category.  It has a cylinder functor satisfying the cylinder axiom.  The
  saturation and extension axiom hold.  
\end{thm}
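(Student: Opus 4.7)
The plan is to mirror the proof of Theorem~\ref{thm:c-g-homotopy-finite-waldhausen}: it suffices to verify closure of $\mathcal{C}^G_{hfd}(X;R)$ under the zero object, isomorphism, the cylinder functor $T$, and pushouts along cofibrations. Once these closure properties are in hand, the saturation, cylinder, and extension axioms transfer directly from $\mathcal{C}^G(X;R)$ via Theorem~\ref{thm:c-g-waldhausen-category}, because all the diagrams that appear in those axioms have every constituent already in $\mathcal{C}^G_{hfd}(X;R)$ under the closure hypotheses.

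The easier step is closure under $T$. Given a map $f\colon A \to B$ in $\mathcal{C}^G_{hfd}(X;R)$ with strict retract data $i_A\colon A \to A'$, $r_A\colon A'\to A$ satisfying $r_A i_A = \id_A$, and analogous data for $B$ with $A',B' \in \mathcal{C}^G_{hf}(X;R)$, set $f':= i_B \circ f \circ r_A \colon A'\to B'$. Then $(i_A,i_B)$ and $(r_A,r_B)$ define morphisms in the arrow category from $f$ to $f'$ and back, whose composite is $\id_f$. Functoriality of $T$ (Theorem~\ref{thm:cylinder-functor}) then exhibits $T(f)$ as a strict retract of $T(f')$, and $T(f') \in \mathcal{C}^G_{hf}(X;R)$ by Theorem~\ref{thm:c-g-homotopy-finite-waldhausen}.

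The substantive step is closure under pushouts along cofibrations. Given a cofibration $A\rightarrowtail B$ and a map $A\to C$ with $A,B,C \in \mathcal{C}^G_{hfd}(X;R)$, pick homotopy bl-finite replacements $A',B',C'$ together with their sandwich retractions. Replace the induced map $A'\to B'$ by the cofibration $A'\rightarrowtail \widehat{B} := T(A'\to B')$, which keeps $\widehat{B}$ in $\mathcal{C}^G_{hf}(X;R)$; then form the pushout $P := \widehat{B}\cup_{A'}C'$ along this cofibration, which lies in $\mathcal{C}^G_{hf}(X;R)$ by its Waldhausen structure. Applying functoriality of pushouts to the sandwich structure is meant to exhibit $B\cup_A C$ as a strict retract of $P$.

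The main obstacle is that the sandwich is not automatically compatible with the pushout gluing: the naive composite $B\to B'\to \widehat{B}$ places $B'$ at the $i_1$-end of the cylinder, while the pushout identifies things along the $i_0$-end of $A'\to \widehat{B}$. The fix I would pursue is to symmetrize the construction by also replacing the original cofibration $A\to B$ by its mapping cylinder $T(A\to B)$ inside $\mathcal{C}^G(X;R)$, which stays in $\mathcal{C}^G_{hfd}(X;R)$ by the cylinder-closure step and provides matching cylinder ends on both sides of the pushout. Once this alignment is arranged, all remaining verifications are formal manipulations with the $G$-equivariant Hom-bijection~\ref{subsubsec:G-adjunctions} and the universal property of pushouts.
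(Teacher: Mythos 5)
Your overall architecture is the paper's: verify that $\mathcal{C}^G_{hfd}(X;R)$ contains $*$, is closed under $A\mapsto A[\Delta^1]$ (or $T$) and under pushouts along cofibrations, and then inherit the saturation, cylinder and extension axioms from Theorem~\ref{thm:c-g-waldhausen-category}. Your cylinder step is correct: with $f':=i_B\circ f\circ r_A$ the pairs $(i_A,i_B)$ and $(r_A,r_B)$ are strict arrow-category morphisms composing to $\id_f$, so functoriality of $T$ (Theorem~\ref{thm:cylinder-functor}) makes $T(f)$ a strict retract of $T(f')\in\mathcal{C}^G_{hf}$.

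The gap is in the pushout step, and it sits exactly where you stop. You correctly identify that $\iota_0\circ i_A\neq \iota_1\circ i_B\circ f$ in $\widehat{B}=T(f')$, but the proposed symmetrization does not ``exhibit $B\cup_A C$ as a strict retract of $P$.'' What it actually yields (since $(i_A,i_B)$ and $(r_A,r_B)$ commute with the \emph{front} inclusions of $T(A\rightarrowtail B)$ and $T(f')$, again by functoriality of $T$) is that $C\cup_A T(A\rightarrowtail B)$ is a strict retract of $C'\cup_{A'}T(f')$, the latter being homotopy bl-finite; so you have shown that the cylinder-replaced pushout is in $\mathcal{C}^G_{hfd}$, not that $B\cup_A C$ is. To transfer back you need two further ingredients, neither of which is a ``formal manipulation with the Hom-bijection'': first, the cylinder axiom~\ref{lem:cylinder-axiom} together with the gluing Lemma~\ref{lem:gluing-lemma} to see that the projection induces a homotopy equivalence $C\cup_A T(A\rightarrowtail B)\xrightarrow{\ \simeq\ } C\cup_A B$; and second, since homotopy bl-finitely dominated is defined by a \emph{strict} retraction, the equivalence of ``strict retract of a homotopy bl-finite object'' with ``homotopy retract of a (homotopy) bl-finite object'' (Lemma~\ref{lem:homotopy-retracts-finite}), which is what makes $\mathcal{C}^G_{hfd}$ closed under homotopy equivalence. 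Without invoking that lemma the argument does not close, because a strict retraction onto an object merely homotopy equivalent to $B\cup_A C$ gives a priori only a homotopy domination of $B\cup_A C$. The paper's own proof performs the same kind of cylinder rigidification, but it first reduces to the case where $A$ itself is homotopy bl-finite and then uses precisely Lemma~\ref{lem:homotopy-retracts-finite} and the gluing lemma to finish; if you add those two steps, your variant goes through.
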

Homotopy bl-finitely dominated modules are equivalently characterized by
being a retract up to homotopy of a bl-finite object.\bigskip

We use the abbreviations $\mathcal{C}^G_f$, $\mathcal{C}^G_{hf}$,
$\mathcal{C}^G_{hfd}$, etc.
Section~\ref{sec:proofs-iii:finiteness-conditions} is devoted to the proofs
that $\mathcal{C}^G_{f}$, $\mathcal{C}^G_{hf}$ and $\mathcal{C}^G_{hfd}$ are
Waldhausen categories.

\section{Algebraic K-theory of controlled modules}
\label{sec:algebraic-k-theory-control}
\subsection{Connective K-theory}
\label{subsec:connective-k-theory}
Let us make explicit that we can define algebraic K-theory of the 
Waldhausen categories $\mathcal{C}^G_{f}$, $\mathcal{C}^G_{hf}$ and
$\mathcal{C}^G_{hfd}$.

\begin{definition}[Algebraic K-theory of categories of controlled
  modules]\label{def:algebraic-k-theory-of-c-g}
  Let $G$ be a group, $(X, \mathcal{E}, \mathcal{F})$ be a free $G$-equivariant
  control space. Let $R$ be a simplicial ring.  Define the algebraic
  K-theory spectrum of the Waldhausen category 
  $\mathcal{C}^G_f(X,\mathcal{E}, \mathcal{F};R)$ as the connective spectrum
  \begin{equation*}
    K(w\mathcal{C}^G_f(X,\mathcal{E}, \mathcal{F};R))
  \end{equation*}
  where $K$ is Waldhausen's algebraic K-theory of spaces
  \cite{Waldhausen;spaces;;;;1985}.  We define similarly the algebraic K-theory
  of $\mathcal{C}^G_{hf}$ and $\mathcal{C}^G_{hfd}$.  We set
  $K_n(w\mathcal{C}^G) := \pi_n K(w\mathcal{C}^G)$.
\end{definition}

\begin{rem}
  In \cite{Waldhausen;spaces;;;;1985}, Waldhausen defines the K-theory as a
  space and then constructs a delooping, i.e., an $\Omega$-spectrum.  This is
  what we use here, because for the Cofinality
  Theorem~\ref{thm:thomason-cofinality} it is more
  convenient to work with spectra.  See also
  \cite[1.5.3]{Thomason-Trobaugh;higher-algebraic}.
\end{rem}

\begin{thm}[Different finiteness conditions]
  \label{thm:k-theory-comparison-f-hf-hfd}
  Let $(X, \mathcal{E}, \mathcal{F})$ be a control space and $R$ a simplicial
  ring.
  \begin{enumerate}
    \item \label{item:prop-k-theory:1} The inclusion $\mathcal{C}^G_f(X,
      \mathcal{E}, \mathcal{F};R)
      \rightarrow \mathcal{C}^G_{hf}(X, \mathcal{E}, \mathcal{F};R)$ is exact
      and induces a homotopy equivalence on K-Theory.
    \item \label{item:prop-k-theory:2} The inclusion $\mathcal{C}^G_{hf}(X, 
      \mathcal{E}, \mathcal{F};R) \rightarrow \mathcal{C}^G_{hfd}(X,
      \mathcal{E}, \mathcal{F};R)$ is exact and induces an isomorphism on $K_n$
      for $n\geq 1$ and an injection $K_0(\mathcal{C}^G_{hf}) \rightarrow
      K_0(\mathcal{C}^G_{hfd})$.
  \end{enumerate}
\end{thm}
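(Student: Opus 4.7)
The plan is to prove the two parts separately, using Waldhausen's Approximation Theorem for part~\eqref{item:prop-k-theory:1} and the Cofinality Theorem~\ref{thm:thomason-cofinality} for part~\eqref{item:prop-k-theory:2}.

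For part~\eqref{item:prop-k-theory:1}, I would apply Waldhausen's Approximation Theorem to the inclusion $F \colon \mathcal{C}^G_f \hookrightarrow \mathcal{C}^G_{hf}$. Exactness of $F$ is formal from Theorem~\ref{thm:c-g-homotopy-finite-waldhausen} together with the fact that the cylinder $M[\Delta^1]$ and pushouts along cofibrations preserve bl-finiteness. Condition (App~1) — that $F$ reflects and preserves homotopy equivalences — is immediate since $F$ is a full inclusion and both categories use the same cylinder, so the notion of homotopy is the same. The substance lies in verifying (App~2): given $A \in \mathcal{C}^G_f$, $B \in \mathcal{C}^G_{hf}$, and $f \colon A \to B$, find a cofibration $A \rightarrowtail A'$ in $\mathcal{C}^G_f$ and a homotopy equivalence $A' \xrightarrow{\sim} B$ whose composition with the cofibration equals $f$. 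To produce this, I would start with the mapping cylinder $A \rightarrowtail T(f) \xrightarrow{\sim} B$ of Theorem~\ref{thm:cylinder-functor}; since $T(f)$ is only homotopy bl-finite, I would then replace it using a bl-finite model $B_0 \xrightarrow{\sim} B$ of $B$'s homotopy type and build a bl-finite $A'$ by pushing out along $B \rightarrowtail T(h)$ for $h \colon B \xrightarrow{\sim} B_0$, iteratively rigidifying so that the back face still maps to $B$ by a map that restricts to the original $f$ on $A$. The extension and saturation axioms of Theorem~\ref{thm:c-g-waldhausen-category} allow the replacements needed to pass from "homotopic to $f$" to "equal to $f$ after enlarging $A$ along a cofibration".

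For part~\eqref{item:prop-k-theory:2}, I would apply the Cofinality Theorem~\ref{thm:thomason-cofinality} to the inclusion $\mathcal{C}^G_{hf} \hookrightarrow \mathcal{C}^G_{hfd}$. The cofinality hypothesis holds by the very definition of $\mathcal{C}^G_{hfd}$: every homotopy bl-finitely dominated object is a \emph{strict} retract of a homotopy bl-finite one. Exactness of the inclusion is immediate. After checking the additional assumption emphasized in the paper (the one missed in parts of the literature), the Cofinality Theorem yields a homotopy fiber sequence of connective spectra
\begin{equation*}
  K(\mathcal{C}^G_{hf}) \longrightarrow K(\mathcal{C}^G_{hfd}) \longrightarrow H\bigl(K_0(\mathcal{C}^G_{hfd})/K_0(\mathcal{C}^G_{hf})\bigr),
\end{equation*}
where the final term is concentrated in degree zero. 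The long exact sequence then gives the isomorphism on $K_n$ for $n \geq 1$ and the injection on $K_0$.

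The main obstacle I foresee is entirely in part~\eqref{item:prop-k-theory:1}, in rigidifying the factorization for (App~2): getting strict equality with $f$ rather than a mere homotopy requires a careful combined use of the cylinder functor, the extension axiom, and the gluing lemma, rather than a one-step cylinder construction. Part~\eqref{item:prop-k-theory:2} is essentially a clean application of the Cofinality Theorem once one has set up the finiteness conditions correctly, since the definition of $\mathcal{C}^G_{hfd}$ was tailored precisely so that cofinality holds by strict retracts.
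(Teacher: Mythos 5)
Your skeleton is the paper's (Approximation Theorem for part~\eqref{item:prop-k-theory:1}, Cofinality Theorem~\ref{thm:thomason-cofinality} for part~\eqref{item:prop-k-theory:2}), but in both parts the step where the actual work happens is either mis-sketched or asserted away. In part~\eqref{item:prop-k-theory:1} your (App~2) construction does not stay inside $\mathcal{C}^G_f$: any object obtained by pushing out along $B \rightarrowtail T(h)$, or otherwise containing $B$ or $B[\Delta^1]$ as a cellular piece, inherits the cells of $B$, which is only \emph{homotopy} bl-finite, so the resulting $A'$ is not bl-finite, and no amount of ``iterative rigidifying'' of the maps repairs the finiteness of the object. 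The correct move goes in the other direction and is one step: choose a bl-finite model $g\colon B_f \xrightarrow{\sim} B$ with homotopy inverse $\overline{g}$, set $j := \overline{g}\circ f\colon A \to B_f$, and take $A' := T(j)$, which is bl-finite because $A$ and $B_f$ are; since $g\circ j \simeq f$, the rectification of~\ref{subsubsec:rectifying-diagrams} yields $H\colon T(j)\to B$ with $H\circ \iota_0 = f$ \emph{strictly}, and $H$ is a homotopy equivalence by the cylinder and saturation axioms. No extension axiom or gluing lemma is needed here, and the ``strictness'' issue you flag as the main obstacle is absorbed entirely by the mapping-cylinder rectification.

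In part~\eqref{item:prop-k-theory:2}, cofinality~\eqref{item:cofinality:cofinal} is \emph{not} ``by the very definition'': the definition of $\mathcal{C}^G_{hfd}$ gives a strict retraction $i\colon B\to A$, $r\colon A\to B$ with $A$ homotopy bl-finite, which is weaker than the required wedge complement $B\vee\overline{B}\in\mathcal{C}^G_{hf}$. To produce the complement one replaces $A$ by $T(i)$ so that $i$ is a cofibration, forms the cofiber sequence $B\rightarrowtail A\twoheadrightarrow C$, and compares it with $B\rightarrowtail B\vee C\twoheadrightarrow C$ via the map $A\to B\vee C$ built from $r$ and the quotient map; the extension axiom~\ref{lem:waldh-cat:extension-axiom} shows this map is a homotopy equivalence, so $B\vee C$ is homotopy bl-finite and $\overline{B}:=C$ works. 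You also leave hypothesis~\eqref{item:cofinality:extensions} of Theorem~\ref{thm:thomason-cofinality} (closure of $\mathcal{C}^G_{hf}$ under extensions inside $\mathcal{C}^G_{hfd}$) unverified; this is not formal either, and the paper deduces it from the just-established cofinality by choosing a complement $B'$ with $B\vee B'\in\mathcal{C}^G_{hf}$ and running the cofiber sequences $A\rightarrowtail B\vee B'\twoheadrightarrow C\vee B'$, then $C\rightarrowtail C\vee B'\twoheadrightarrow B'$, then $B'\rightarrowtail B\vee B'\twoheadrightarrow B$ through the gluing lemma. Once these two hypotheses are actually checked, the fiber sequence and the long exact sequence give the statement exactly as you say.
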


\subsubsection{Proof of Theorem~\ref{mainthm:c-g-finite}}
Theorem~\ref{mainthm:c-g-finite} is a summary of
Theorems~\ref{thm:c-g-finite-waldhausen},
\ref{thm:c-g-homotopy-finite-waldhausen}, and \ref{thm:c-g-hfd-waldhausen} for
the Waldhausen structures on the subcategories defined by finiteness
conditions, and of Theorem~\ref{thm:k-theory-comparison-f-hf-hfd} for the
comparison of the algebraic K-theory of these subcategories.\smallskip

After we discuss a cofinality Theorem for algebraic K-Theory
in~\ref{subsec:cofinality-theorem} we prove
Theorem~\ref{thm:k-theory-comparison-f-hf-hfd} in
Section~\ref{subsec:change-of-finiteness-conditions}.  
\subsection{Further results}

\subsubsection{Separations of variables}
Let $\pt$ the one-element set.  
The categories $\mathcal{C}^G(G/1, \{G\times G\}, \{G\}; R)$ and
$\mathcal{C}(\pt, \{\pt\}, \{\pt\}; R[G])$ are equivalent.  Both are
equivalent to the category of cellular $R[G]$-modules.  The equivalences
respect the finiteness conditions $f$, $hf$ and $hfd$.

\begin{cor}\label{cor:sep-of-variables}
  The algebraic K-theory of $\mathcal{C}^G_{hfd}(G/1, \{G\times G\}, \{G\};
  R)$ is homotopy equivalent to the algebraic K-theory of the simplicial
  ring $R[G]$.
\end{cor}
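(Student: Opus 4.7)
The plan is to deduce the corollary entirely from the equivalences of categories asserted in the paragraph preceding it, combined with invariance of algebraic K-theory under an exact equivalence. The strategy splits into three stages: build an explicit functor realizing the equivalence $\mathcal{C}^G(G/1,\{G\times G\},\{G\};R)\simeq\mathcal{C}(\pt,\{\pt\},\{\pt\};R[G])$, check it is an equivalence of Waldhausen categories that preserves the $hfd$-condition, and finally identify $K(\mathcal{C}_{hfd}(\pt;R[G]))$ with the algebraic K-theory of the simplicial ring $R[G]$.

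First I would describe the functors. Given $(M,\diamond_R M,\kappa_R)$ in $\mathcal{C}^G(G/1,\{G\times G\},\{G\};R)$, the $G$-action on $M$ promotes $M$ to a simplicial left $R[G]$-module. Because the control conditions $\{G\times G\}$ and $\{G\}$ impose no restriction, freeness of the $G$-action on cells means that $\diamond_R M$ decomposes as a disjoint union of free $G$-orbits, and any $G$-transversal $T\subseteq\diamond_R M$ furnishes a cellular structure for $M$ as an $R[G]$-module. This gives a functor $\Phi$ to $\mathcal{C}(\pt;R[G])$. Conversely, given a cellular simplicial $R[G]$-module $N$ with cells $\diamond_{R[G]}N$, set $\Psi(N)=N$ as a simplicial $R$-module, with cells $\diamond_R\Psi(N)=G\cdot\diamond_{R[G]}N$ and control map $\kappa_R$ given by $(g,e)\mapsto g\in G/1$. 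Both functors are manifestly natural; moreover $\Psi\Phi$ and $\Phi\Psi$ are naturally isomorphic to the identity once one fixes, for $\Phi$, a choice of transversal, so the $2$-out-of-$3$ coherence is just bookkeeping.

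Next I would check that this equivalence is an equivalence of Waldhausen categories. Under $\Phi$ and $\Psi$, cellular inclusions in one side correspond precisely to cellular inclusions in the other, hence cofibrations match; the cylinder functor $(-)\otimes\mathbb{Z}[\Delta^1]$ is transported correctly because tensoring with $\mathbb{Z}[\Delta^1]$ commutes with forgetting/remembering the $G$-action. Since homotopy equivalences are defined in terms of the cylinder, weak equivalences also correspond. Because $X=G/1$ is discrete and $\{G\}$ is the only object support condition, the bl-finiteness condition on $\mathcal{C}^G(G/1,\ldots;R)$ reduces to having finitely many $G$-orbits of cells and finitely many dimensions, which matches exactly the notion of \emph{finite} cellular $R[G]$-module under $\Phi$. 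Consequently the subcategories of bl-finite, homotopy bl-finite, and homotopy bl-finitely dominated objects are sent to one another, in particular $\mathcal{C}^G_{hfd}(G/1,\ldots;R)\simeq\mathcal{C}_{hfd}(\pt;R[G])$ as Waldhausen categories.

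Applying the fact that Waldhausen's K-theory is invariant under exact equivalences of Waldhausen categories yields a homotopy equivalence
\begin{equation*}
K(\mathcal{C}^G_{hfd}(G/1,\{G\times G\},\{G\};R))\simeq K(\mathcal{C}_{hfd}(\pt,\{\pt\},\{\pt\};R[G])).
\end{equation*}
The right-hand side is, by Definition~\ref{def:algebraic-k-theory-of-c-g} applied with trivial group and trivial control space, precisely what we mean by the algebraic K-theory of the simplicial ring $R[G]$. The main obstacle I anticipate is not a deep homotopical one but the careful verification that the dictionary (cells versus $G$-orbits of cells, labeling by a point versus labeling by $G$) genuinely transports \emph{all} structure simultaneously: cellular structures, $G$-actions, control data, cofibrations, and the $hfd$ finiteness condition. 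Once this dictionary is set up, the corollary is immediate from Waldhausen invariance and needs no further K-theoretic input beyond what is already available in this section.
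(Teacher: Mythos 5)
Your proposal follows essentially the same route as the paper: the corollary is deduced from the equivalence $\mathcal{C}^G(G/1,\{G\times G\},\{G\};R)\simeq\mathcal{C}(\pt,\{\pt\},\{\pt\};R[G])$ (cells of the $R[G]$-module corresponding to $G$-orbits of $R$-cells, the control data being vacuous), the fact that this equivalence respects the finiteness conditions $f$, $hf$, $hfd$, and the invariance of Waldhausen K-theory under exact equivalences, with $K(\mathcal{C}_{hfd}(\pt;R[G]))$ then read as the algebraic K-theory of the simplicial ring $R[G]$ exactly as the paper intends. Your elaboration of the functors $\Phi$, $\Psi$ and of the translation of bl-finiteness into finitely many $G$-orbits of cells is a correct filling-in of details the paper leaves implicit.
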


\subsubsection{Change of rings}
For a map $\varphi\colon R
\rightarrow S$ of simplicial rings we obtain an induced functor $\mathcal{C}^G(X;R)
\rightarrow \mathcal{C}^G(X;S)$.  It restricts to a functor on the bl-finite,
homotopy bl-finite and homotopy bl-finitely dominated subcategories.

\begin{thm}[Change of rings]\label{thm:change-of-rings}
  Let $\varphi\colon R \rightarrow S$ be map of simplicial rings which is a weak
  equivalence. Then $\varphi$ induces a map $\mathcal{C}^G_{f}(X; R) \rightarrow
  \mathcal{C}^G_{f}(X; S)$ which is an equivalence on algebraic K-Theory.
\end{thm}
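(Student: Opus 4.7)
The plan is to verify the hypotheses of Waldhausen's Approximation Theorem for the exact functor $\varphi_*\colon \mathcal{C}^G_f(X;R) \to \mathcal{C}^G_f(X;S)$ given by extension of scalars $M \mapsto S \otimes_R M$.  Since $S \otimes_R R[\Delta^n] \cong S[\Delta^n]$ canonically, on the level of cellular data $\varphi_*$ leaves the cell set $\diamond_R M$ and the control map $\kappa$ untouched; so it preserves cofibrations, pushouts along cofibrations, the cylinder functor of Theorem~\ref{thm:cylinder-functor}, homotopy equivalences, $G$-equivariance and bl-finiteness, and is exact in the required sense.  It remains to verify Waldhausen's two approximation properties.

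For App~1 (weak equivalences are reflected), observe that each $\HOM^E_R(M,N)^G$ is a simplicial abelian group, since sums of $E$-controlled $G$-equivariant maps are again $E$-controlled and $G$-equivariant.  For bl-finite $M$ and arbitrary $N$, I would prove by induction on the finitely many cells of $M$ that the canonical map $\HOM_R^\mathcal{E}(M,N)^G \to \HOM_S^\mathcal{E}(\varphi_*M,\varphi_*N)^G$ is a weak equivalence.  The inductive step uses the pullback description $\HOM_R(M' \cup R[\Delta^n],N) \cong \HOM_R(M',N) \times_{\HOM_R(R[\partial\Delta^n],N)} \HOM_R(R[\Delta^n],N)$ coming from the Hom-bijection \ref{subsubsec:G-adjunctions}.\eqref{eq:hom-bijection-all-G}; the base reduces to showing that $N \to \varphi_*N$ is a weak equivalence of the underlying simplicial abelian groups, which follows by another cell-induction from the hypothesis that $\varphi$ is one.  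Taking $\pi_0$ then gives that $f\colon M \to N$ is a homotopy equivalence iff $\varphi_*f$ is, since homotopy equivalences in $\mathcal{C}^G_f$ are detected by bijectivity of $[-,f]$ on homotopy classes out of bl-finite objects.

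For App~2, given $A \in \mathcal{C}^G_f(X;R)$ and a morphism $g\colon \varphi_*A \to B$ in $\mathcal{C}^G_f(X;S)$, I would first factor $g$ through its mapping cylinder (using Theorem~\ref{thm:cylinder-functor}) as a cellular inclusion followed by a homotopy equivalence, reducing to the case in which $\varphi_*A \rightarrowtail B$ is a cellular inclusion and $B$ carries a relative cellular structure over $\varphi_*A$.  I would then construct a bl-finite $A'$, a cofibration $A \rightarrowtail A'$, and a map $\varphi_*A' \to B$ inductively over this filtration.  At each stage, for a new free $G$-orbit of $n$-cells in $B$, the attaching map lies in $\HOM_S^E(S[\partial\Delta^n],\varphi_*A_k')^G$ and admits a lift up to homotopy to $\HOM_R^E(R[\partial\Delta^n],A_k')^G$ by the weak equivalence established in App~1; attaching the corresponding $R$-cells with the same control label and $G$-orbit structure yields $A_{k+1}'$, and the comparison $\varphi_*A_{k+1}' \to B_{k+1}$ remains a weak equivalence by the same App~1 argument applied to the cones.

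The principal difficulty is App~2: organizing the cell-by-cell lift so that the resulting $R$-module is simultaneously bl-finite, $G$-equivariant, and controlled by a single morphism control condition.  Local finiteness is immediate because each new $R$-cell sits over the same point of $X$ as the $S$-cell it lifts; the $G$-action is managed by processing one free $G$-orbit at a time; and a uniform $E \in \mathcal{E}$ exists because $B$, being a single object of $\mathcal{C}^G_f(X;S)$, is itself controlled by one such $E$.  Once both approximation properties are in place, Waldhausen's theorem yields the claimed equivalence on algebraic K-theory.
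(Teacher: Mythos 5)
Your overall strategy coincides with the paper's: apply Waldhausen's Approximation Theorem~\ref{thm:approximation-theorem} to $S\otimes_R-$, with the key input that the unit $P\rightarrow \res_R S\otimes_R P$ is a weak equivalence (Lemma~\ref{lem:w-eq-of-rings-give-w-eq-of-modules}) and a cell-by-cell lifting of attaching maps for (App~2). However, there is a genuine gap in your central step for (App~1). You propose to prove that $\HOM_R^{\mathcal{E}}(M,N)^G \rightarrow \HOM_S^{\mathcal{E}}(\varphi_*M,\varphi_*N)^G$ is a weak equivalence ``by induction on the finitely many cells of $M$'', but a bl-finite object is only \emph{locally} finite and finite-dimensional; over a non-compact control space (e.g.\ $\mathbb{R}^n$ with metric control, the case of interest for delooping) it typically has infinitely many cells. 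With infinitely many cells your pullback description turns into an inverse limit of mapping spaces, and the controlled mapping space is a colimit over $E\in\mathcal{E}$ of subspaces of that limit; the weak equivalence at each finite stage does not formally pass to this colimit-of-limits, because one must produce lifts that are controlled by a \emph{single} $E$ uniformly over all cells. This uniformity is exactly what the paper's Lemma~\ref{lem:proof-of-R-S-weq-lifts} provides: it inducts over the (finitely many) \emph{dimensions}, treats all cells of a given dimension at once, and at each cell restricts to a bounded-support submodule $P'\subseteq P$ so that the lift stays over $\{\kappa(e)\}^E$. Without this control bookkeeping (or some substitute argument for infinite complexes), your (App~1) reduction to $\pi_0$ and the Whitehead-style argument has nothing to stand on; the Whitehead step itself is fine.

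A secondary, fixable vagueness occurs in your (App~2): since attaching maps can only be lifted to $R$ \emph{up to homotopy}, after attaching the lifted cells the map $\varphi_*A'_{k+1}\rightarrow B_{k+1}$ does not exist on the nose, so the induction cannot simply ``continue with the same comparison''. The paper handles this by using the gluing lemma to produce a homotopy equivalence between the pushout along the original attaching map and the pushout along the lifted one, then replacing $B$ accordingly (pushing out $N$ along this equivalence to obtain $\overline{N}$), so that at the next stage the skeleta agree; at the end a mapping cylinder is inserted to obtain the \emph{strictly} commutative triangle demanded by (App~2). Your proposal asserts the comparison ``remains a weak equivalence by the same App~1 argument applied to the cones'' without this replacement-and-rectification mechanism, and it is precisely there that the finite-dimensionality of the bl-finite objects is used to terminate the induction.
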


Theorem~\ref{thm:change-of-rings} is proved in
Section~\ref{subsec:change-of-rings}.

\section{Proofs I: control}
\label{sec:proofs-i:control}
Section~\ref{sec:simplicial-modules-control} introduced our basic categories
of controlled simplicial modules.  Most results are straightforward to check.
Therefore, we discuss only a few important lemmas and leave
the rest to the reader.  For the structure of a Waldhausen
category on $\mathcal{C}^G$ which we introduced in
Section~\ref{sec:waldhausen-categories} we will provide much more detailed
proofs in Section~\ref{sec:proofs-ii:waldhausen-category}.

\subsection{Cellular structure}
\label{subsec:cellular-structure-proof}


We start with a remark about cellular inclusions.
Let $M$ be a cellular simplicial $R$-module with filtration
$\bigcup_i M^i = M$.  Let $\sigma\colon R [\partial \Delta^n] \rightarrow M$ be
a map of $R$-modules.  By the adjunction
\ref{subsubsec:adjunction}.(\ref{eq:sset-adjunction}), this is the
same as a map $\sigma'\colon \partial \Delta^n \rightarrow M$ of simplicial
sets.  As $M^{n-1} \rightarrowtail M$ induces an isomorphism in simplicial degree
$\leq n-1$ and as $\sigma'$ is determined by
its image in degree
$n-1$ of the simplicial set $M$, $\sigma'$ and hence $\sigma$ factors over
$M^{n-1}$.  Therefore:

\begin{lem}\label{lem:composition-of-cellular-inclusions}
  Let $B$ be a cellular $R$-module relative to $A$ with cells
  $\diamond^{\mathrm{rel} A}_R B$.  Let $M$ be a cellular $R$-module
  relative to $B$ with cells $\diamond^{\mathrm{rel} B}_{R} M$.  Then $M$ is a
  cellular $R$-module relative to $A$ with cells $\diamond^{\mathrm{rel} A}_R
  B \cup \diamond^{\mathrm{rel} B}_{R} M$.  In particular, the composition of
  cellular inclusions is a cellular inclusion.

  Let $A \rightarrowtail A_1 \rightarrowtail A_2 \rightarrowtail \dots$ be a
  sequence of cellular inclusions, then $\colim_i A_i$ is a cellular
  $R$-module relative to $A$.\qed
\end{lem}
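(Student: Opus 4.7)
The plan is to construct an explicit cellular filtration of $M$ relative to $A$ whose $i$-skeleton is generated, over $A$, by all cells in $\diamond^{\mathrm{rel} A}_R B \cup \diamond^{\mathrm{rel} B}_R M$ of simplicial dimension at most $i$. Let $A = B^{-1} \subseteq B^0 \subseteq B^1 \subseteq \cdots$ be the given filtration of $B$ rel $A$, and let $B = M^{-1} \subseteq M^0 \subseteq M^1 \subseteq \cdots$ be the given filtration of $M$ rel $B$. Set $N^{-1} := A$, and for $i \geq 0$ define $N^i$ as the $R$-submodule of $M$ generated by $A$ together with all cells of $\diamond^{\mathrm{rel} A}_R B \cup \diamond^{\mathrm{rel} B}_R M$ of dimension $\leq i$. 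Exhaustion $\bigcup_i N^i = M$ will be immediate, since every element of $M$ sits inside some $M^k$, which is itself exhausted by its own skeleta, all contained in appropriate $N^i$.

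The heart of the argument is the inductive claim that $N^i$ arises from $N^{i-1}$ by attaching exactly the $i$-cells in $\diamond^{\mathrm{rel} A}_R B \cup \diamond^{\mathrm{rel} B}_R M$ of dimension $i$. For an $i$-cell of $B$ rel $A$, its attaching map factors through $B^{i-1} \subseteq N^{i-1}$, so no new work is required. For an $i$-cell of $M$ rel $B$, the attaching map $\sigma \colon R[\partial \Delta^i] \rightarrow M^{i-1}$ must be shown to factor through $N^{i-1}$; here I invoke directly the observation recorded just before the lemma, namely that $\sigma$ corresponds adjointly to a map of simplicial sets $\partial \Delta^i \rightarrow M$, which is determined by its image in simplicial degree $i-1$. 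I will show by induction that $N^{i-1} \hookrightarrow M$ is an isomorphism in simplicial degree $\leq i-1$: every simplex of $M$ of degree $\leq i-1$ lies in the $(i-1)$-skeleta of the two given filtrations, whose generators are all in $N^{i-1}$ by construction. Hence $\sigma$ factors uniquely through $N^{i-1}$, and $N^i$ is the corresponding pushout.

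For the colimit statement I iterate the first part, obtaining that each $A_k$ is cellular rel $A$ with cells $\bigsqcup_{j \leq k} \diamond^{\mathrm{rel} A_{j-1}}_R A_j$. On $A_\infty := \colim_i A_i$ I declare the set of cells over $A$ to be $\bigcup_k \diamond^{\mathrm{rel} A_{k-1}}_R A_k$, with $i$-skeleton generated by the cells of dimension $\leq i$. The same factorization argument applies verbatim, as each attaching map originates inside some $A_k$ and thus inside the appropriate small submodule, and exhaustion follows from $A_\infty = \bigcup_k A_k$ together with the fact that each $A_k$ is exhausted by its own skeleta. The main obstacle I expect throughout is the simplicial-degree bookkeeping needed to conclude that attaching maps factor through the small submodules $N^{i-1}$ rather than only through the larger $M^{i-1}$; once this is handled uniformly by the key observation above, everything else is formal.
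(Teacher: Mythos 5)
Your plan is correct and follows essentially the same route as the paper: the paper's entire justification is the observation recorded just before the lemma (an attaching map $R[\partial\Delta^n]\rightarrow M$ is determined in simplicial degree $n-1$, where the skeleton inclusion is an isomorphism, hence factors through the skeleton), and you invoke exactly this to reorganize the combined cells by dimension into a filtration of $M$ relative to $A$, treating the colimit case the same way. Your write-up is in fact somewhat more detailed than the paper's, which states the lemma with the proof left to the reader after that remark.
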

Lemma~\ref{lem:composition-of-cellular-inclusions} will be used freely in the
proof of the Lemma~\ref{lem:cellular-inclusions}, which we give next.  Let us
repeat the statement.

\begin{replem}{lem:cellular-inclusions}\samepage\  
  \begin{enumerate}
    \item Let $A \rightarrowtail B$ be an inclusion of simplicial sets.  Let
      $M$ be a cellular module.  Then $M[A] \rightarrow M[B]$ is a cellular
      inclusion of cellular $R$-modules.
    \item If $M \rightarrowtail N$ is a cellular inclusion, then $M[A]
      \rightarrow N[A]$ is a cellular inclusion.
  \end{enumerate}
\end{replem}

\begin{proof}[Proof of Lemma~\ref{lem:cellular-inclusions}]
  $B$ arises from $A$ by attaching cells (in the sense of simplicial sets).
  That is, $B = \colim_i B^{i,A}$ with $B^{0,A}=A$ and $B^{i,A} = \coprod
  \Delta^n \cup_{\coprod \partial \Delta^n} B^{i-1,A}$.
  $M[-]$ commutes with colimits, as it is a composition of two left-adjoint
  functors.  Hence, it suffices to show
  the case of the inclusion $\partial \Delta^n \rightarrow \Delta^n$ of
  simplicial sets.  Then $M[B] = \colim_i M[B^{i,A}]$ with 
  \begin{equation*}
    M[B^{i,A}] \cong \coprod M[\Delta^n] \cup_{\coprod M[\partial \Delta^n]}
    M[B^{i-1,A}] 
  \end{equation*}
  and hence
  $M[A] = M[B^{0,A}]\rightarrow M[B]$ is a cellular inclusion of simplicial
  $R$-modules, as pushouts, coproducts, and (infinite) compositions of
  cellular inclusions of simplicial sets are cellular inclusions.

  To show that $M[\partial \Delta^n] \rightarrow M[\Delta^n]$ is a cellular
  inclusion we do induction over the skeleta of $M$.  For simplicity we only
  consider the case of one cell. Attaching cells of the same dimension
  simultaneously works in the same way.  Assume $L$ and $M'$ are cellular
  subcomplexes of $M$ and that $M'$ arises from $L$ by attaching a $q$-cell,
  for some $q\geq 0$.

  We prove that
  \begin{equation*}
    \varphi\colon M[\partial\Delta^n] \cup_{L[\partial \Delta^n]} L[\Delta^n]
    \rightarrow M[\partial\Delta^n] \cup_{M'[\partial \Delta^n]} M'[\Delta^n] 
  \end{equation*}
  is a cellular inclusion.  As $M'$ arises from $L$ by attaching
  a $q$-cell and $R[A][B] \cong R[A \times B]$ the characteristic map $(R[
  \Delta^q], R[\partial \Delta^q]) \rightarrow (M', L)$ gives pushouts
  \begin{equation*}
    \xymatrix{
      R [\partial \Delta^n \times \partial \Delta^q] \ar[r] \ar[d] &
      R [\partial \Delta^n \times          \Delta^q]        \ar[d] \\
      L [\partial \Delta^n ] \ar[r] &
      M'[\partial \Delta^n ] } \ \text{ and }\ 
    \xymatrix{
      R [ \Delta^n \times \partial \Delta^q] \ar[r] \ar[d] &
      R [ \Delta^n \times          \Delta^q]        \ar[d] \\
      L [ \Delta^n ] \ar[r] &
      M'[ \Delta^n ] } 
  \end{equation*}
  Now consider the commutative diagram
  \begin{equation}
    \label{eq:attaching-cells-cellular-inclusion}
    \xymatrix{
      R [\partial \Delta^n \times          \Delta^q]               &
      R [\partial \Delta^n \times          \Delta^q] \ar[r] \ar[l] &
      R [         \Delta^n \times          \Delta^q]               \\
      R [\partial \Delta^n \times          \Delta^q] \ar[u] \ar[d]^{\tau_1} &
      R [\partial \Delta^n \times \partial \Delta^q] 
          \ar[r] \ar[l] \ar[u] \ar[d]^{\tau_2} &
      R [         \Delta^n \times \partial \Delta^q] \ar[u] \ar[d]^{\tau_3} \\
      M [\partial \Delta^n] &
      L [\partial \Delta^n] \ar[r] \ar[l] &
      L [         \Delta^n] }.
  \end{equation}
  The lower vertical maps come from the attaching data of the $q$-cell,
  namely $\tau_1$ comes from the $q$-cell $R[\Delta^q] \rightarrow M$ and
  $\tau_2$, $\tau_3$ from the boundary $R[\partial \Delta^q] \rightarrow L$. The
  other maps are the obvious inclusions.

  We can form the pushouts  of the columns of
  Diagram~\eqref{eq:attaching-cells-cellular-inclusion}, which
  yields the diagram $M[\partial \Delta^n] \leftarrow M'[\partial \Delta^n]
  \rightarrow M'[\Delta^n]$, whose pushout is the target of $\varphi$.  If we
  form the pushouts of the rows of
  Diagram~\eqref{eq:attaching-cells-cellular-inclusion} we obtain the diagram
  \begin{equation*}
    M[\partial\Delta^n] \cup_{L[\partial \Delta^n]} L[\Delta^n]
      \xleftarrow{\alpha}
    R[\partial \Delta^n \times \Delta^q \cup \Delta^n \times \partial\Delta^q] 
      \xrightarrow{\beta}
    R[\Delta^n \times \Delta^q]
  \end{equation*}
  whose pushout is therefore isomorphic to the target of $\varphi$.  Also,
  $\beta$ is a cellular inclusion because $R[-]$ commutes with pushouts.
  Hence,
  $\varphi$ is a pushout of a cellular inclusion and therefore itself
  cellular.  Thus by induction and taking a colimit over $q$ the map
  $M[\partial \Delta^n] \rightarrow M [\Delta^n]$ is cellular.

  The second part is now easier.  It suffices to consider the case where $N$
  arises from $M$ by attaching a single $R$-cell, as $M \mapsto M[A]$ commutes with
  pushouts.  If we attach a $q$-cell to $M$ to obtain $N$ we can obtain the
  pushout 
  \begin{equation*}
    \xymatrix{
    R[\partial \Delta^n][A] \ar[r] \ar[d]^{\tau}  & M[A] \ar[d]^{\tau'} \\
    R[         \Delta^n][A] \ar[r]         & N[A] }
  \end{equation*}
  and the $\tau$ is a cellular inclusion by the previous argument.
  Therefore, $\tau'$ is a cellular inclusion. This finishes the
  proof.
\end{proof}

\subsection{Controlled maps}
We denote the support of a controlled module by $\supp(M)$.  We have the
following precise statement about the control of maps.

\begin{lem}\label{lem:sums-composition-of-controlled-maps}\ 
  \begin{enumerate}
    \item If $f\colon M \rightarrow M'$ is $E$-controlled and $g\colon M'
      \rightarrow M'' $ is $E'$-controlled, then $g\circ f$ is $E' \circ
      E$-controlled.
    \item  If $f_1, f_2\colon M \rightarrow M'$ are $E_1$-,
      resp.~$E_2$-controlled and $E_1 \cup E_2 \subseteq E_3$, then $f_1 +
      f_2$ is $E_3$-controlled.
    \item If $M$ is an $E$-controlled module, then $\id_M$ is $E$-controlled.
  \end{enumerate}
\end{lem}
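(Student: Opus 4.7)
The plan is to verify the three parts in turn, each of which reduces to an elementary observation about how the cellular-submodule operation $\left<\cdot\right>$ interacts with $R$-linear maps and with sums, followed by the closure properties of $\mathcal{E}$. Part (3) is essentially tautological: the defining condition in~\ref{def:controlled-module} that $M$ is $E$-controlled, applied to cells $e,e'\in\diamond_R M$ with $e'\in\diamond_R\left<e\right>_M$, is literally the condition that $\id_M$ is $E$-controlled, since $\left<\id_M(e)\right>_M=\left<e\right>_M$. I would dispose of this in one line.

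For part (1) the strategy is to unpack cells of $\left<(g\circ f)(e)\right>_{M''}$ via an intermediate cellular submodule of $M'$. The key auxiliary observation I would establish first is: for any $R$-linear map $h\colon M'\to M''$ and any simplex $y\in M'$, if $\left<y\right>_{M'}$ is generated by cells $\{e'_j\}_{j\in J}$, then $\left<h(y)\right>_{M''}\subseteq \sum_{j\in J}\left<h(e'_j)\right>_{M''}$. This uses that $h(\left<y\right>_{M'})\subseteq\sum_j\left<h(e'_j)\right>_{M''}$ by $R$-linearity, that the right-hand side is a cellular submodule, and that $\left<h(y)\right>_{M''}$ is by definition the smallest such containing $h(y)$. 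It follows that every cell of $\left<h(y)\right>_{M''}$ lies in $\diamond_R\left<h(e'_j)\right>_{M''}$ for some $j$. Applying this with $h=g$ and $y=f(e)$, each $e''\in\diamond_R\left<(g\circ f)(e)\right>_{M''}$ is a cell of some $\left<g(e')\right>_{M''}$ with $e'\in\diamond_R\left<f(e)\right>_{M'}$. The $E$-control of $f$ gives $(\kappa_R^M(e),\kappa_R^{M'}(e'))\in E$, the $E'$-control of $g$ gives $(\kappa_R^{M'}(e'),\kappa_R^{M''}(e''))\in E'$, and composing relations yields $(\kappa_R^M(e),\kappa_R^{M''}(e''))\in E'\circ E$.

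Part (2) runs along the same lines, using instead the inclusion $\left<y_1+y_2\right>_{M'}\subseteq\left<y_1\right>_{M'}+\left<y_2\right>_{M'}$, whose right-hand side is the cellular submodule with cells $\diamond_R\left<y_1\right>_{M'}\cup\diamond_R\left<y_2\right>_{M'}$. Specializing to $y_i=f_i(e)$, any $e'\in\diamond_R\left<(f_1+f_2)(e)\right>_{M'}$ is a cell of one of the two summands, so $(\kappa_R^M(e),\kappa_R^{M'}(e'))$ lies in $E_1$ or in $E_2$, hence in $E_3$.

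The main obstacle is nothing deep, but a matter of being careful: one must keep scrupulously straight the distinction between an arbitrary $R$-submodule of $M'$ and a \emph{cellular} submodule, and remember that the control condition is measured on \emph{cells} of $\left<f(e)\right>_{M'}$, not on the simplex $f(e)$ itself. Once the two submodule inclusions above are in hand, the rest is simply reading off the closure axioms \ref{it:morph-cont-1} and \ref{it:morph-cont-2} for $\mathcal{E}$.
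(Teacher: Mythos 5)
Your proposal is correct and follows essentially the same route as the paper: the paper likewise reduces part (1) to the minimality inclusion $\left<(g\circ f)(e)\right>_{M''}\subseteq\left<g(\left<f(e)\right>_{M'})\right>_{M''}$ and part (2) to $\left<(f_1+f_2)(e)\right>\subseteq\left<\{f_1(e),f_2(e)\}\right>$ with $\diamond_R\left<\{f_1(e),f_2(e)\}\right>=\diamond_R\left<f_1(e)\right>\cup\diamond_R\left<f_2(e)\right>$, then invokes the closure properties of $\mathcal{E}$, treating part (3) as immediate. Your cell-level auxiliary observation is just a slightly more explicit phrasing (via sums of cellular submodules) of the same minimality argument the paper states in terms of supports and thickenings.
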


\begin{proof}
  Let $e\in \diamond_R M$.  Then $\supp(\left<f(e)\right>) \subseteq
  \{\kappa^M(e)\}^E$.  Furthermore, for $e'\in \diamond_R \left<f(e)\right>$ we have
  $\supp(\left<g(e')\right>) \subseteq\{\kappa^{M'}(e')\}^{E'}$.  By
  minimality, we have 
  \begin{equation*}
    \left<(g\circ f)(e))\right>_{M''} \subseteq
    \left<g(\left<f(e)\right>_{M'})\right>_{M''} 
  \end{equation*}
  so the support of $\left<(g\circ f)(e))\right>_{M''}$ is contained in
  $\{\kappa^M(e)\}^{E'\circ E}$.

  For the second part, $\left<(f_1 + f_2)(e)\right>  \subseteq
  \left<\{(f_1)(e),(f_2)(e)\}\right>$ and 
  \begin{equation*}
    \diamond_R\bigl<\{(f_1)(e),(f_2)(e)\}\bigr> = 
    \diamond_R \bigl<(f_1)(e)\bigr>\ \cup\ 
    \diamond_R \bigl<(f_2)(e)\bigr> .
  \end{equation*}
  Hence, the support of $\left<(f_1 + f_2)(e)\right>$ is contained in
  \begin{equation*}
    \{\kappa^M(e)\}^{E_1} \cup \{\kappa^M(e)\}^{E_2} \subseteq
    \{\kappa^M(e)\}^{E_3} .
  \end{equation*}
  The third part is clear.
\end{proof}

We say that a map $f\colon M \rightarrow N$ of cellular $R$-modules is
\emph{$0$-controlled} if it induces a map $\diamond_R M \rightarrow \diamond_R
N$ and $\kappa^M_R = \kappa^N_R \circ f$.  Cellular inclusions are
$0$-controlled.  The name $0$-controlled is a misuse of notation, such a map
has the control of its image.  However, $g\circ f$ has the control of $g$ if
$f$ is $0$-controlled.

\begin{lem}\label{lem:adjoint-simp-module-controlled}
  Let $(M, \kappa_R)$ be an $E$-controlled $R$-module.  Let $A$ be a
  simplicial set.  Then $M[A]$ can be made canonically into an $E$-controlled
  $R$-module.

  Furthermore, each map $A \rightarrow B$ of simplicial sets induces a
  $0$-controlled map $M[A] \rightarrow M[B]$.
\end{lem}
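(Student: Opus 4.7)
The plan is to describe a cellular structure and a control map on $M[A]$, then verify the $E$-control condition and the $0$-controlled property of the functoriality. First, since $M[-]$ is a composition of two left adjoints and hence preserves colimits, and since by Lemma~\ref{lem:cellular-inclusions} it takes inclusions of simplicial sets to cellular inclusions, we may build the cellular structure on $M[A]$ inductively along the skeletal filtration of $A$. Concretely, each cell $e \in \diamond_R M$ of degree $q$ provides an attaching map $R[\Delta^q] \hookrightarrow \left<e\right>_M \hookrightarrow M$; tensoring with $\Int[A]$ gives a cellular inclusion $R[\Delta^q][A] = R[\Delta^q \times A] \hookrightarrow M[A]$. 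Tracing through the inductive argument of Lemma~\ref{lem:cellular-inclusions}, every cell of $M[A]$ arises this way, so it is naturally indexed by a pair $(e, \sigma)$ with $e \in \diamond_R M$ and $\sigma$ a non-degenerate simplex of $\Delta^q \times A$.

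With this structure in hand, I would define the control map by the rule $\kappa^{M[A]}(e, \sigma) := \kappa^M(e)$; i.e., each cell of $M[A]$ carries the label of the cell of $M$ over which it sits. Functoriality in $A$ is then almost immediate: a map $f \colon A \to B$ induces $M[A] \to M[B]$ which, on the cellular level, sends the cells over $e$ into the cellular submodule generated by cells over $e$, matching labels. By the definition of ``$0$-controlled'' following Lemma~\ref{lem:sums-composition-of-controlled-maps}, this is exactly what is required.

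The core point is the $E$-control condition. Given $c = (e, \sigma) \in \diamond_R M[A]$ and $c' \in \diamond_R \left<c\right>_{M[A]}$, I would argue that $c'$ lies over some $e' \in \diamond_R \left<e\right>_M$. For this, observe that the inclusion $\left<e\right>_M \hookrightarrow M$ is cellular, so by Lemma~\ref{lem:cellular-inclusions}(2) the map $\left<e\right>_M[A] \hookrightarrow M[A]$ is also a cellular inclusion, and by construction its image contains $c$; by minimality, $\left<c\right>_{M[A]} \subseteq \left<e\right>_M[A]$, so $c'$ is indeed indexed by some $e' \in \diamond_R \left<e\right>_M$. The $E$-control of $M$ then gives $(\kappa^M(e), \kappa^M(e')) \in E$, which by definition of $\kappa^{M[A]}$ is exactly $(\kappa^{M[A]}(c), \kappa^{M[A]}(c')) \in E$.

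The main obstacle is making the identification of cells of $M[A]$ with pairs $(e, \sigma)$ canonical enough to define $\kappa^{M[A]}$ unambiguously and to support the containment $\left<c\right>_{M[A]} \subseteq \left<e\right>_M[A]$. The cleanest route is to build the cellular structure by iterating the explicit pushout diagram~\eqref{eq:attaching-cells-cellular-inclusion} used in the proof of Lemma~\ref{lem:cellular-inclusions}, which shows simultaneously that the assignment of $(e, \sigma)$ to cells is well-defined and that cellular submodules generated by cells over $\left<e\right>_M$ are preserved by the construction. Once this is in place, the verification of both the $E$-control and the $0$-controlled functoriality reduces to purely formal bookkeeping of labels.
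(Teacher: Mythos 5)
Your proposal is correct and follows essentially the same route as the paper: identify each cell of $M[A]$ with the unique cell of $M$ it lies over, pull back the control map along this assignment, verify $E$-control via the containment $\left<c\right>_{M[A]} \subseteq \left<e\right>_M[A]$ coming from Lemma~\ref{lem:cellular-inclusions} and minimality, and check functoriality cellwise using compatibility with the projection to $M$. The extra bookkeeping with pairs $(e,\sigma)$ is just a more explicit version of the paper's map $p$ from cells of $M[A]$ to cells of $M$.
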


\begin{proof}
  From Lemma~\ref{lem:cellular-inclusions} it follows that
  each cell $e$ of $M[A]$ arises from exactly one cell $p(e)$ of $M$.  Define
  $\kappa^{M[A]}(e) := \kappa^M(e)$.  It makes $M[A]$ into an $E$-controlled
  module:  for $e\in M[A]$ we have $p(e) \in \diamond_R M$.  Then $e\in
  \left<p(e)\right>[A] \subseteq M[A]$, and $\left<p(e)\right>[A]$ is supported on
  $\{\kappa(e)\}^E$.  This shows the first part.

  Another way to describe the control map is to note that the map $M[A]
  \rightarrow M[\pt] \cong M$ is $0$-controlled.
  A cell of $M$ is given by a map $R[\Delta^n] \rightarrow M$.  Hence, for each
  cell of $M$ we obtain a commutative diagram
  \begin{equation*}
    \xymatrix{ 
        M[A] \ar[r]^f & M[B] \\
        R[\Delta^n][A]\ar[u]\ar[r] & R[\Delta^n][B]\ar[u]
        }
  \end{equation*}
  which shows that $f$ maps cells to cells.  As $f$ commutes with the map to
  $M[\pt]$ this shows that $f$ is $0$-controlled.
\end{proof}

\section{Proofs II: controlled simplicial modules as a Waldhausen category}
\label{sec:proofs-ii:waldhausen-category}
In this section we establish that $\mathcal{C}^G(X, \mathcal{E},
\mathcal{F};R)$ is indeed a Waldhausen
category, thus proving all the claims we made in
Subsection~\ref{subsec:waldhausen-category}.  In particular, we prove
Theorem~\ref{thm:c-g-waldhausen-category}, which says that the homotopy
equivalences form a class of weak equivalences for $\mathcal{C}^G$.  We 
use the definitions from~\ref{subsec:waldhausen-category} without further
notice.

If the category $\mathcal{C}^G$
were a subcategory of cofibrant objects of a Quillen model category, the we
would basically obtain the structure of a Waldhausen category for
free (cf.~\cite[Proposition~II.8.1]{goerss-jardine;simplicial-homotopy-theory}
and Section~\ref{subsec:gluing-lemma}).
Many examples in the literature are of that form, e.g.~chain complexes
\cite[1.2.13]{Thomason-Trobaugh;higher-algebraic} or
Waldhausen's retractive spaces over a space \cite{Waldhausen;spaces;;;;1985}.
Unfortunately, there
does not
seem to be a suitable Quillen model category which contains our category of
controlled modules.  In fact, neither general pushouts nor infinite unions
exist in general in any of our categories of controlled modules.

Therefore, we prove all results directly.  The hardest part is to prove the
gluing Lemma~\ref{lem:gluing-lemma}.  We follow a strategy the author learned
from a proof of Waldhausen of the gluing lemma for topological
spaces.

We expect the experienced reader to be able to fill gaps easily, but otherwise
refer to the author's thesis~\cite{thesis-ullmann}.  Note that the thesis
works with a slightly different definition of controlled module.


\subsection{Cofibrations}
\label{subsec:cofibrations}

We assume familiarity with Section 1.1 to 1.6 of
\cite{Waldhausen;spaces;;;;1985} and use the language from there freely. 

\begin{lem}[Pushouts along cellular inclusions] 
  \label{lem:cellular-pushouts-in-CG}
  Let $(A, \kappa_R^A) \rightarrow (B, \kappa_R^B)$ be a cellular inclusion in
  $\mathcal{C}^G(X,\mathcal{E},\mathcal{F};R)$, let $f\colon (A, \kappa_R^A) \rightarrow (C, \kappa_R^C)$
  be any controlled map in $\mathcal{C}^G(X,\mathcal{E},\mathcal{F};R)$. The pushout $D := C \cup_A B$, 
  \begin{equation*}
    \xymatrix{A \ar@{ >->}[r] \ar[d]^f  & B \ar[d] \\
    C \ar@{ >->}[r]                     & D}
  \end{equation*}
  of simplicial $R$-modules can be chosen canonically and, furthermore, it has a
  canonical structure of an object $(D, \kappa_R^D)$ in $\mathcal{C}^G$.
  Furthermore, $(C, \kappa_R^C) \rightarrow (D, \kappa_R^D)$ is a cellular
  inclusion.

  Hence, $\mathcal{C}^G$ has canonical pushouts along cellular inclusions.
\end{lem}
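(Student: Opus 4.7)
The plan is to construct the pushout cell-by-cell using the relative cellular filtration of $B$ over $A$ provided by Lemma~\ref{lem:composition-of-cellular-inclusions}, then transport the control labels from $B$ and $C$, and finally check the control and support conditions using Lemma~\ref{lem:sums-composition-of-controlled-maps}.

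First I would use that $A \rightarrowtail B$ is a cellular inclusion, so $B$ is cellular relative to $A$ with a cellular filtration $A = B^{-1} \subseteq B^0 \subseteq B^1 \subseteq \dots$, where each $B^i$ arises from $B^{i-1}$ by attaching $i$-cells. Write $\diamond^{\mathrm{rel}A}_R B$ for the cells of this relative structure. Setting $D^{-1} := C$ and building $D^i$ from $D^{i-1}$ by attaching an $i$-cell for every element of $\diamond^{\mathrm{rel}A}_R B$ in dimension $i$, with attaching map obtained by composing the original boundary $R[\partial \Delta^i] \to B^{i-1}$ with the already-constructed map $B^{i-1} \to D^{i-1}$ (which exists by induction and the pushout on the previous stage), produces a canonical candidate $D := \colim_i D^i$ together with a canonical cellular inclusion $C \rightarrowtail D$. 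A standard check cell-by-cell shows that $D$ is the pushout of $A \to B$ along $f$ in the category of simplicial $R$-modules.

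Next I would equip $D$ with control data. Its set of cells decomposes canonically as $\diamond_R D = \diamond_R C \sqcup \diamond^{\mathrm{rel}A}_R B$; define $\kappa^D_R$ by $\kappa^C_R$ on the first summand and by the restriction of $\kappa^B_R$ on the second. Because $f$, $A \rightarrowtail B$, and $A \to C$ are all $G$-equivariant and the $G$-actions are cell-permuting, the action on $D$ induced by functoriality of the pushout permutes $\diamond_R D$ and is compatible with $\kappa^D_R$. This shows $(D, \diamond_R D, \kappa^D_R)$ is a general $G$-equivariant module; the universal property in simplicial $R$-modules is preserved, and the canonical $C \rightarrowtail D$ is cellular by construction.

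The main obstacle is verifying that $D$ actually lies in $\mathcal{C}^G$, i.e., that it is $E$-controlled and supported on some $F$. Choose $E_B, E_C \in \mathcal{E}$ and $F_B, F_C \in \mathcal{F}$ so that $B$ and $C$ are $E_B$- and $E_C$-controlled, supported in $F_B$ and $F_C$, and choose $E_f$ so that $f$ is $E_f$-controlled. For a cell $e \in \diamond_R C$ the submodule $\langle e\rangle_D$ equals $\langle e\rangle_C$, so the $E_C$-control condition is inherited. For a cell $e \in \diamond^{\mathrm{rel}A}_R B$ the submodule $\langle e\rangle_D$ consists of the cells of $\langle e\rangle_B$ lying in $\diamond^{\mathrm{rel}A}_R B$, which are $E_B$-close to $\kappa^B_R(e)$, together with those cells of $C$ that arise from applying $f$ to cells of $A \cap \langle e\rangle_B$; by Lemma~\ref{lem:sums-composition-of-controlled-maps} these are $E_f \circ E_B$-close to $\kappa^B_R(e)$. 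Choosing $E \in \mathcal{E}$ to contain $E_C \cup E_B \cup (E_f \circ E_B)$ (possible by axioms~\eqref{it:morph-cont-1} and~\eqref{it:morph-cont-2} of a morphism control structure) and choosing $F \in \mathcal{F}$ to contain $F_B \cup F_C$ shows that $D$ is $E$-controlled with support in $F$. Combined with the construction this proves that the chosen pushout lies in $\mathcal{C}^G$ and is canonical, completing the proof.
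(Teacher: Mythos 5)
Your proposal is correct and follows essentially the same route as the paper: identify $\diamond_R D$ canonically with $\diamond_R C$ together with the cells of $B$ relative to $A$ (whose attaching maps get composed through $f$), define $\kappa_R^D$ from $\kappa_R^C$ and $\kappa_R^B$, and verify control via Lemma~\ref{lem:sums-composition-of-controlled-maps}, yielding the bound $E_C \cup E_f\circ E_B$ (your extra $E_B$ is absorbed since the diagonal lies in $E_f$). The only cosmetic difference is that you build the canonical pushout by skeletal induction, whereas the paper takes the coproduct of $B$ and $C$ modulo the relations from $A$ and then reads off the same cellular structure.
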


\begin{rem}
  Being canonical should mean that for each diagram there is a
  \emph{preferred choice} of the pushout, only depending on the diagram.  In
  general for a cofibration $A \rightarrowtail M$ there is no preferred choice
  of a cellular inclusion isomorphic to it.  Having canonical
  pushouts along cellular inclusions allows us to define a cylinder functor
  in Subsection~\ref{subsec:cylinder-functor} which does not depend on
  making choice for each diagram.  In particular, we know the control
  conditions involved.
\end{rem}

\begin{proof}[Proof of Lemma~\ref{lem:cellular-pushouts-in-CG}]
  The category of simplicial $R$-modules can be equipped with
  canonical pushouts.  For example, in the above situation one could form the
  coproduct of $B$ and $C$ and divide out the relations from $A$.  We discuss
  the cellular structure.

  Let $e$ be a cell in $B$ not in $A$ with attaching map $\alpha$.  This gives
  a cell in $D$ with attaching map $f\circ \alpha$.  This way one shows that
  $C\rightarrow D$ is cellular, and therefore so is $D$.  Hence, there is a
  canonical isomorphism 
  \begin{equation*}
    \diamond_R D \cong \diamond_R C \cup (\diamond_R B \smallsetminus
    \diamond_R A).
  \end{equation*}
  Define $\kappa^D_R\colon \diamond_R D
  \rightarrow X$ as the composition of this isomorphism with the map
  $\kappa^C_R \cup \kappa^B_R \colon \diamond_R C \cup (\diamond_R B
  \smallsetminus
      \diamond_R A) \rightarrow X$.  This produces the control conditions of
  Table~\ref{tab:cc-on-pushouts}.  As $\varphi$ is $G$-equivariant, $D$ is a
  controlled $G$-equivariant cellular $R$-module.
\end{proof}

  \begin{table}[tb]
    \center
    \mbox{
    \begin{tabular}[c]{@{}cl@{}}
      \toprule
      \multicolumn{2}{l}{control conditions we have} \\
      \cmidrule(rl){1-2}
      $A,B$ & $E_B$-controlled  \\
      $C$   & $E_C$-controlled  \\
      $f$   & $E_f$-controlled  \\
      $g_C,g_B$ & $E$-controlled\\
      \bottomrule
    \end{tabular}\ 
    $\vcenter{\xymatrix{
      A \ar@{ >->}[r] \ar[d]^f       & B \ar[d]^{\widetilde{f}} \ar@/^/[ddr]^{g_B} \\
      C \ar@{ >->}[r] \ar@/_/[drr]_{g_C} & D \ar@{-->}[dr]^g \\
      && T}}$ \ 
    \begin{tabular}[c]{@{}cl@{}}
      \toprule
      \multicolumn{2}{l}{control conditions we get} \\
      \cmidrule(rl){1-2} 
      $D$ & $E_C \cup E_f \circ E_B$-controlled \\
      $\widetilde{f}$ & $E_f\circ E_B$-controlled \\
      $g$ & $E$-controlled \\
      \bottomrule
    \end{tabular}} \caption{Control conditions on pushouts along cellular
    inclusions in $\mathcal{C}^G$.}
    \label{tab:cc-on-pushouts} 
  \end{table}

There are no canonical pushouts along cofibrations in $\mathcal{C}^G$, but
as cofibrations are isomorphic to cellular inclusions, pushouts along
cofibrations also exist in~$\mathcal{C}^G$.

\subsubsection{The subcategory of cofibrations}
\label{subsubsec:subcategory-cofibrations}
By Lemma~\ref{lem:cellular-pushouts-in-CG} the
composition of cofibrations is again a cofibration.  Isomorphisms are
cofibrations and the map $* \rightarrow M$ from the trivial module to any
controlled module $M$ is a cofibration.
Lemma~\ref{lem:cellular-pushouts-in-CG} immediately implies that pushouts
along cofibrations exist. 
We have just verified:

\begin{repthm}{thm:c-g-cat-with-cofibrations}
  $\mathcal{C}^G(X;R)$ is a category with cofibrations.
\end{repthm}

Note that Lemma~\ref{lem:cellular-pushouts-in-CG} implies, in particular, that
for $A, B \in \mathcal{C}^G$ the coproduct $A \vee B$ in $\mathcal{C}^G$ exists.
In our setting a retract of a cofibration might not be a cofibration
in general, as pushouts along such maps do not need to be cellular.  This is
already true for discrete rings and free modules.

\subsection{The cylinder functor}\label{subsec:cylinder-functor}
The goal of this section is to prove Theorem~\ref{thm:cylinder-functor}.
\begin{repthm}{thm:cylinder-functor}[Cylinder functor]
  For a map $f\colon A \rightarrow B$ define
  \begin{equation*}
    T(f) := A[\Delta^1]\cup_{ {i_1}_*} B .
  \end{equation*}
  This is functorial
  in the arrow category and therefore gives a \emph{cylinder functor} on
  $\mathcal{C}^G(X;R)$ in the sense of
  Waldhausen~\cite[1.6]{Waldhausen;spaces;;;;1985}.
\end{repthm}

If we need to refer to the object $T(f)$, we sometimes call it the
\emph{mapping cylinder} of $f$.
$T$ gives a functor from $\Ar \mathcal{C}^G$, the category of arrows in
$\mathcal{C}^G$, into diagrams in $\mathcal{C}^G$,
taking $f\colon A \rightarrow B$ to a commutative diagram
  \begin{equation}\label{eq:diag:cylinder-T-is-cylinder-functor}
    \xymatrix{ 
      A \ar[r]^{\iota_0} \ar[dr]_f   &T(f)\ar[d]^{p}   & B \ar[l]_{\iota_1}
      \ar[dl]^{\id}\\
    &B& }.
  \end{equation}
Here $\iota_0$ is called the \emph{front inclusion}, $\iota_1$ is called the
\emph{back inclusion} and $p$ is called the \emph{projection}.  Waldhausen
requires the following two axioms to be satisfied.
\begin{enumerate}
  \item (Cyl~1) Front and back inclusion assemble to an exact functor
    \begin{equation*}
      \begin{gathered}
        \Ar \mathcal{C}^G \longrightarrow \mathcal{F}_1 \mathcal{C}^G \\
         f \mapsto \big( \iota_0 \vee \iota_1 \colon A \vee B \rightarrowtail
         T(f) \big).
      \end{gathered}
    \end{equation*}
  \item (Cyl~2) $T( * \rightarrow A ) = A$ for every $A\in \mathcal{C}^G$ and
    the projection and the back inclusion are the identity map on $A$.
\end{enumerate}

Here $\mathcal{F}_1 \mathcal{C}^G$ is the full subcategory of
$\Ar\mathcal{C}^G$
with objects the cofibrations. Both can be made into categories with
cofibrations, with the cofibrations of $\mathcal{F}_1 \mathcal{C}^G$ and the
notion of an exact functor as in~\cite[1.1]{Waldhausen;spaces;;;;1985}.  We
use the notion $A\vee B$ from
\cite{Waldhausen;spaces;;;;1985} for the coproduct of $A$
and $B$. 

Waldhausen first defines weak equivalences and then defines the cylinder
functor.  We proceed in the opposite order.

(Cyl~2) is directly verified using $*[\Delta^1] = *$ and choosing the right
canonical pushouts along $* \rightarrow *$.

\begin{lem}\label{lem:cylinder-functor-assembled-gives-functor}
  Front and back inclusion give a functor $\Ar \mathcal{C}^G \rightarrow
  \mathcal{F}_1 \mathcal{C}^G$,
  \begin{equation*}
    f\mapsto (A \vee B\rightarrowtail T(f)).
  \end{equation*}
\end{lem}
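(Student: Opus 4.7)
The goal is to verify two things: (i) for each $f$, the map $A \vee B \to T(f)$ is a cofibration, so that it is indeed an object of $\mathcal{F}_1\mathcal{C}^G$; and (ii) the assignment $f \mapsto (A \vee B \rightarrowtail T(f))$ respects morphisms and composition.

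For (i), my plan is first to argue that each of $\iota_0, \iota_1$ is a cellular inclusion, and then to exhibit the cellular structure of $T(f)$ so that the image of $A \vee B$ is a cellular subset. By Lemma~\ref{lem:cellular-inclusions}, the two faces $i_0, i_1\colon A = A[\Delta^0] \rightarrowtail A[\Delta^1]$ are cellular inclusions; the canonical pushout of Lemma~\ref{lem:cellular-pushouts-in-CG} formed along $i_1$ then makes $\iota_1\colon B \rightarrowtail T(f)$ a cellular inclusion, and $\iota_0$ is the composition $A \xrightarrow{i_0} A[\Delta^1] \to T(f)$ of cellular inclusions. Tracing the cells through the two lemmas, the set $\diamond_R T(f)$ decomposes canonically as $\iota_0(\diamond_R A) \sqcup \iota_1(\diamond_R B) \sqcup C$, where $C$ collects the ``cylinder'' cells coming from pairing $\diamond_R A$ with the nondegenerate $1$-cell of $\Delta^1$. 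Hence the image of $A \vee B = A \coprod B$ sits inside $\diamond_R T(f)$ as the union of the first two disjoint pieces, so $A \vee B \rightarrowtail T(f)$ is a cellular inclusion, and in particular a cofibration.

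For (ii), given a morphism $(\alpha,\beta)\colon f \to f'$ in $\Ar \mathcal{C}^G$, the functor $M \mapsto M[\Delta^1]$ of Lemma~\ref{lem:adjoint-simp-module-controlled} produces a controlled $\alpha[\Delta^1]\colon A[\Delta^1] \to A'[\Delta^1]$ compatibly with $i_1$, and the universal property of the pushout (valid in $\mathcal{C}^G$ for pushouts along cellular inclusions by Lemma~\ref{lem:cellular-pushouts-in-CG}) gives a unique controlled map $T(\alpha,\beta)\colon T(f) \to T(f')$ with $T(\alpha,\beta)\circ \iota_0 = \iota_0'\circ \alpha$ and $T(\alpha,\beta)\circ \iota_1 = \iota_1'\circ \beta$. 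This yields the commutative square
\begin{equation*}
\xymatrix{ A \vee B \ar[r] \ar[d]_{\alpha\vee\beta} & T(f) \ar[d]^{T(\alpha,\beta)} \\ A' \vee B' \ar[r] & T(f') }
\end{equation*}
which is a morphism in $\mathcal{F}_1\mathcal{C}^G$. Preservation of identities and composition then follows from the functoriality of $M\mapsto M[\Delta^1]$ together with the canonical nature of the pushouts provided by Lemma~\ref{lem:cellular-pushouts-in-CG}.

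The only point needing care is the bookkeeping of the cellular decomposition of $T(f)$, so that one really does see disjoint copies of $\diamond_R A$ and $\diamond_R B$ inside $\diamond_R T(f)$; but this is an immediate combination of the cell-tracking already carried out in Lemmas~\ref{lem:cellular-inclusions} and \ref{lem:cellular-pushouts-in-CG}. No substantive obstacle is expected.
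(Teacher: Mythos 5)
Your part~(ii) is unproblematic and matches what the paper leaves implicit: because the pushouts of Lemma~\ref{lem:cellular-pushouts-in-CG} are canonical, $T(-)$ is strictly functorial on $\Ar\mathcal{C}^G$ and the squares you write down commute. The gap is in part~(i), and it sits exactly at the step the lemma is about. Your cell bookkeeping shows that $\iota_0(\diamond_R A)\sqcup\iota_1(\diamond_R B)$ is a subset of $\diamond_R T(f)$, but the conclusion ``hence $A\vee B\rightarrowtail T(f)$ is a cellular inclusion'' is not an ``immediate combination'' of Lemmas~\ref{lem:cellular-inclusions} and~\ref{lem:cellular-pushouts-in-CG}: neither lemma says anything about the submodule generated by an arbitrary subset of the cells of a cellular module. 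To finish along your route you would still have to verify (a) that $A\vee B\rightarrow T(f)$ is injective and its image is precisely the submodule generated by those cells, with the two pieces meeting only in $0$ inside $T(f)$ --- this needs a module-level computation in the pushout $T(f)=A[\Delta^1]\cup_{A[1]}B$, since $f$ need not be injective --- and (b) that a cellular submodule inclusion admits a relative cell filtration, i.e.\ that $T(f)$ is obtained from the image of $A\vee B$ by attaching the remaining interior cylinder cells in order of dimension (a re-filtration/subcomplex argument). Both points are true and not hard, but they are the actual content here, not bookkeeping already carried out elsewhere. (Also, cosmetically, the interior cells are not ``$\diamond_R A$ paired with the nondegenerate $1$-simplex'': a $q$-cell of $A$ contributes $q+1$ interior $(q+1)$-cells of the prism.)

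The paper sidesteps all of this with a purely formal argument you could adopt instead: by Lemma~\ref{lem:cellular-inclusions}, $\iota_0\vee\iota_1\colon A[0]\vee A[1]=A[0\amalg 1]\rightarrowtail A[\Delta^1]$ is a cellular inclusion, and the square having this map on top and $A\vee B\rightarrow T(f)$ on the bottom is a pushout by the pasting law for pushout squares: the square $A\rightarrow A\vee A$, $B\rightarrow A\vee B$ is a pushout because the coproduct squares over $*$ are, and then the square in question is a pushout because its horizontal composite with that one is the defining pushout of $T(f)$. A single application of Lemma~\ref{lem:cellular-pushouts-in-CG} then shows that $A\vee B\rightarrowtail T(f)$ is a cellular inclusion, with no analysis of the cell structure of $T(f)$ and no injectivity check needed.
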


\begin{proof}
  This proof is a template for later proofs.
  The only thing to show is that $A \vee B \rightarrow T(f)$ is a cellular
  inclusion. Consider the diagram
  \begin{equation*}
    \xymatrix{
    {*} \ar@{ >->}[r]\ar@{ >->}[d]    & A \ar@{ >->}[d] \ar@{}[dl]|{\mathrm{I}}  &   \\
    A \ar@{ >->}[r]\ar[d]_f           
         & A\vee A \ar@{ >->}[r]\ar[d] \ar@{}[dl]|{\mathrm{II}} 
         & A[\Delta^1] \ar[d] \ar@{}[dl]|{\mathrm{III}}\\
    B \ar@{ >->}[r]                   & A\vee B \ar[r]        & T(f)}.
  \end{equation*}
  Here $A \vee A \rightarrowtail A [\Delta^1]$ is the cellular inclusiona
  \begin{equation*}
    \iota_0 \vee \iota_1 \colon A[0] \vee A[1] = A[0 \amalg 1] \rightarrowtail
    A[\Delta^1].
  \end{equation*}
  We claim that
  every possible square is a pushout along a cellular inclusion.
  $\mathrm{I}$~is a pushout square by definition, as well as $\mathrm{I} +
  \mathrm{II}$. It
  follows that II is one. Furthermore, II + III  is a pushout square by definition of
  $T(f)$, so III is one. Hence, the lower map $A \vee B \rightarrow T(f)$ is a
  cellular inclusion by Lemma~\ref{lem:cellular-pushouts-in-CG}.
\end{proof}

\begin{lem}
  The functor of Lemma~\ref{lem:cylinder-functor-assembled-gives-functor} is
  exact.
\end{lem}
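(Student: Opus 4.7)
By the definition of an exact functor between categories with cofibrations, I need to check three things: preservation of the zero object, preservation of cofibrations, and preservation of pushouts along cofibrations. Preservation of the zero object is immediate, since $*[\Delta^1] = *$ gives $T(* \to *) = *$ and $* \vee * = *$, so the image of the zero object $(* \to *)$ of $\Ar \mathcal{C}^G$ is $(* \rightarrowtail *)$, the zero object of $\mathcal{F}_1 \mathcal{C}^G$.

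To verify that cofibrations are preserved, I would start with a cofibration $(f_0 \colon A_0 \to B_0) \rightarrowtail (f_1 \colon A_1 \to B_1)$ in $\Ar \mathcal{C}^G$, so by definition both $A_0 \rightarrowtail A_1$ and $B_0 \rightarrowtail B_1$ are cofibrations. A cofibration in $\mathcal{F}_1 \mathcal{C}^G$ requires in addition that the induced map
\begin{equation*}
(A_1 \vee B_1) \cup_{A_0 \vee B_0} T(f_0) \rightarrowtail T(f_1)
\end{equation*}
be a cofibration (the source-level coproduct map $A_0 \vee B_0 \rightarrowtail A_1 \vee B_1$ being a cofibration is immediate). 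After reducing to the cellular case via isomorphisms on both ends, I would build a layered pushout diagram of the same flavour as in the proof of Lemma~\ref{lem:cylinder-functor-assembled-gives-functor}: starting from the squares that assemble $T(f_0)$ out of $A_0$, $B_0$ and $A_0[\Delta^1]$, enlarge $A_0$ to $A_1$ using that $A_0[\Delta^1] \rightarrowtail A_1[\Delta^1]$ is cellular by Lemma~\ref{lem:cellular-inclusions}, and then enlarge $B_0$ to $B_1$. Each elementary square is a pushout along a cellular inclusion, and iterated application of the pasting lemma for pushouts from the Conventions, together with Lemma~\ref{lem:cellular-pushouts-in-CG}, identifies the map in question as a pushout of a cellular inclusion, hence cellular.

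For preservation of pushouts along cofibrations, I would argue formally: the coproduct $(-) \vee (-)$, the endofunctor $(-)[\Delta^1]$ (a left adjoint, therefore colimit-preserving), and pushouts along cellular inclusions all commute with pushouts in the arrow category. Consequently, applying the functor $f \mapsto (A \vee B \rightarrowtail T(f))$ to a pushout along a cofibration in $\Ar \mathcal{C}^G$ produces a pushout of cofibrations in $\mathcal{C}^G$, i.e., a pushout in $\mathcal{F}_1 \mathcal{C}^G$, with the control data inherited canonically via Lemma~\ref{lem:cellular-pushouts-in-CG}.

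The main obstacle is the middle step, specifically the verification that the induced map into $T(f_1)$ from the pushout is a cellular inclusion. This is a bookkeeping task with no new conceptual content, but it requires identifying each piece of a roughly $4 \times 3$ pushout diagram correctly so that repeated pasting yields the desired conclusion; the cellularity of $A_0[\Delta^1] \rightarrowtail A_1[\Delta^1]$ provided by Lemma~\ref{lem:cellular-inclusions} is the ingredient that makes the argument go through.
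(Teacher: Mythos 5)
Your overall plan coincides with the paper's proof: the zero object and the pushouts along cofibrations are handled formally (pushouts in $\Ar\mathcal{C}^G$ and $\mathcal{F}_1\mathcal{C}^G$ are pointwise, and $A\vee B$ and $T(f)$ are built from coproducts, $(-)[\Delta^1]$ and canonical pushouts along cellular inclusions, all of which commute with the relevant pushouts), and the real content is that the functor sends a cofibration $(f_0\colon A_0\to B_0)\rightarrowtail (f_1\colon A_1\to B_1)$ in $\Ar\mathcal{C}^G$ to a cofibration in $\mathcal{F}_1\mathcal{C}^G$, which after replacing cofibrations by cellular inclusions is a pasting argument with Lemma~\ref{lem:cellular-pushouts-in-CG}.

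However, the ingredient you name for the crucial middle step is not strong enough. The map you must show is a cellular inclusion is
\begin{equation*}
  A_1[0]\cup_{A_0[0]} A_0[\Delta^1]\cup_{f^*} B_1 \longrightarrow
  A_1[\Delta^1]\cup_{f_1} B_1 ,
\end{equation*}
where $f^*\colon A_0\to B_1$ is the composite, and the only way to exhibit it as a cobase change of a cellular inclusion is along the \emph{ends-enlarged cylinder}
\begin{equation*}
  A_1[0]\cup_{A_0[0]} A_0[\Delta^1]\cup_{A_0[1]} A_1[1] \rightarrowtail
  A_1[\Delta^1].
\end{equation*}
Knowing only that $A_0[\Delta^1]\rightarrowtail A_1[\Delta^1]$ is cellular (Lemma~\ref{lem:cellular-inclusions}) does not suffice: pushing out along that inclusion alone produces an object in which the copies of $A_1$ sitting at the two ends are identified with $A_1[\Delta^1]$ only along $A_0$, which is not $T(f_1)$, and the remaining identification is not a pushout along a cofibration. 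The cellularity of the ends-enlarged cylinder inclusion is a pushout-product type statement; it follows from the cell-by-cell induction used in the \emph{proof} of Lemma~\ref{lem:cellular-inclusions}, not from its statement, and the paper simply asserts it before the pasting diagram~\eqref{diag:cylinder-functor-exact}. A further simplification the paper uses, which you may want to adopt: factor the arrow-category cofibration as $(\id_{A_0}, B_0\rightarrowtail B_1)$ followed by $(A_0\rightarrowtail A_1, \id_{B_1})$; for the first factor the comparison map $(A_0\vee B_1)\cup_{A_0\vee B_0}T(f_0)\to T(f^*)$ is an isomorphism (``cancel $A_0$''), so only the second factor needs the relative cylinder fact, and the bookkeeping reduces to one small diagram.
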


\begin{proof}
  We need to show that the functor respects the structure of a category with
  cofibrations.  Pushouts and the zero object are defined pointwise in
  $\Ar \mathcal{C}^G$ and $\mathcal{F}_1 \mathcal{C}^G$.  Therefore, $A \vee B$
  and $T(f)$ commute with pushouts.  So we only have to show that the functor
  maps cofibrations to cofibrations.

Let us briefly recall the cofibrations in $\Ar \mathcal{C}^G$ and $\mathcal{F}_1
\mathcal{C}^G$, cf.~\cite[Lemma 1.1.1]{Waldhausen;spaces;;;;1985}. For notation
let 
\begin{equation}\label{eq:diag:mor-in-ar-cg}
  \vcenter{\xymatrix{ A \ar[r] \ar[d]^{f} & A' \ar[d]^{f'}  \\
	      B\ar[r]     & B'}}
\end{equation}
be a map in $\Ar\mathcal{C}^G$ from $A\rightarrow B$ to $A'\rightarrow B'$.
It is a cofibration in $\Ar\mathcal{C}^G$ if both horizontal maps are
cofibrations. 

The category $\mathcal{F}_1 \mathcal{C}^G$ is the full
subcategory of $\Ar\mathcal{C}^G$ with objects being the cofibrations in
$\mathcal{C}^G$.  Hence, if $f$ and $f'$ are cofibrations, the diagram also
shows a map in $\mathcal{F}_1 \mathcal{C}^G$.  It is a cofibration in
$\mathcal{F}_1 \mathcal{C}^G$ if $A \rightarrow A'$ and $A' \cup_A B
\rightarrow B'$ are cofibrations in $\mathcal{C}^G$.  (It follows that $B
\rightarrow B'$ is a cofibration.)
See~\cite[Lemma~1.1.1]{Waldhausen;spaces;;;;1985} for details and a proof that
the composition of cofibrations in $\mathcal{F}_1 \mathcal{C}^G$ is again a
cofibration.

  We have to show that for a map (\ref{eq:diag:mor-in-ar-cg}) which
  is a cofibration in $\Ar\mathcal{C}^G$ the maps $A \vee B\rightarrow A' \vee
  B'$ and $(A'\vee B') \cup_{A \vee B} T(f) \rightarrow T(f')$ are
  cofibrations in $\mathcal{C}^G$. As functors respect isomorphisms we can
  assume that all cofibrations are cellular inclusions. 

  Assume we have a diagram (\ref{eq:diag:mor-in-ar-cg}) where the vertical
  maps are cellular inclusions. We can factor (\ref{eq:diag:mor-in-ar-cg})
  into 
  \begin{equation*}
    \xymatrix{
      A \ar[d]^f \ar@{=}[r]^{\id}   & A \ar[d]^{f^*} \ar[r]  & A' \ar[d]^{f'} \\
      B          \ar[r]         & B'     \ar@{=}[r]^{\id}    & B' }.
  \end{equation*}
  It suffices to check each map individually.  The map $A \vee B \rightarrow A
  \vee B'$ is a pushout along the cofibration $B \rightarrow B'$, similarly,
  for $A \vee B' \rightarrow A' \vee B'$.  Hence, both are cofibrations.

  Recalling that by definition $T(f)$ is the pushout $B \cup_f A[\Delta^1]$
  \begin{equation*}
    \xymatrix{ A \ar[r]\ar[d]^f   & A[\Delta^1] \ar[d] \\
     B \ar[r]                  & T(f)}
  \end{equation*}
  we see that $T(f^*)$ is the pushout 
  \begin{equation*}
    \xymatrix{ B \ar[r]\ar[d]   & T(f) \ar[d] \\
     B' \ar[r]                  & T(f^*)}
  \end{equation*}
  and hence (by ``canceling $A$'' by a similar pushout argument as in the
  proof of Lemma~\ref{lem:cylinder-functor-assembled-gives-functor}) it is the
  pushout
  \begin{equation*}
    \xymatrix{ A\vee B \ar[r]\ar[d]   & T(f) \ar[d] \\
     A \vee B' \ar[r]                  & T(f^*)}
  \end{equation*}
  so the map $(A\vee B') \cup_{A \vee B} T(f) \rightarrow T(f^*)$ is an
  isomorphism and therefore a cellular inclusion.  Using the canceling
  argument
  for $B'$ we can write the other map 
  \begin{equation*}
    (A' \vee B') \cup_{A\vee B'} T(f^*) \rightarrow T(f')
  \end{equation*}
  as
  \begin{equation}\label{eq:map:cylinder-functor-cofibrations}
    A' \cup_{A[0]} A[\Delta^1] \cup_{f^*} B' \rightarrow A'[\Delta^1]
    \cup_{f'} B'.
  \end{equation}
  Here the first object is a cylinder where we glued in spaces at both sides.
  But because $A \rightarrowtail A'$ is a cellular inclusion so is 
  \begin{equation*}
    A'[0]
  \cup_{A[0]} A[\Delta^1] \cup_{A[1]} A'[1] \rightarrowtail A'[\Delta^1].
  \end{equation*}
  We have the commutative diagram
  \begin{equation}
    \xymatrix{
    {A[1]} \ar@{ >->}[r]\ar@{ >->}[d] \ar@/_2pc/@{ >->}[dd]_{f^*}    & 
        A'[0] \cup_{A[0]} A[\Delta^1] \ar@{ >->}[d]   &   \\
    A'[1] \ar@{ >->}[r]\ar[d]_{f'}          
        & A'[0]\cup_{A[0]} A[\Delta^1] \cup_{A[1]} A'[1] \ar@{ >->}[r]\ar[d] 
        & A'[\Delta^1] \ar[d] \\
    B' \ar@{ >->}[r]                   
    & A'[0] \cup_{A[0]} A[\Delta^1] \cup_{f^*} B' \ar[r]^-{\tau}
        & A'[\Delta^1]\cup_{f'} B'}
        \label{diag:cylinder-functor-exact}
  \end{equation}
  where every square and, in particular, the lower right one is a pushout (by
  the same reasoning as in the proof of
  Lemma~\ref{lem:cylinder-functor-assembled-gives-functor}). The map $\tau$ is
  the same as the map (\ref{eq:map:cylinder-functor-cofibrations}).
  Hence, using Lemma~\ref{lem:cellular-pushouts-in-CG} one last time it follows
  that the map~\eqref{eq:map:cylinder-functor-cofibrations} is a cellular
  inclusion.
\end{proof}

Weak equivalences in $\Ar\mathcal{C}^G$ and $\mathcal{F}_1 \mathcal{C}^G$ are
defined pointwise.  As $A \rightarrow A[\Delta^1]$ respects homotopy
equivalences, the cylinder functor respects homotopy equivalences, too.  
This finishes the proof of Theorem~\ref{thm:cylinder-functor}.
\subsection{The homotopy extension property}
\label{subsec:homotopy-extension-property}

We want to prove the gluing lemma for the homotopy equivalences in
$\mathcal{C}^G(X;R)$.  The main ingredient is the relative homotopy extension
property, which we will prove first.  Recall that the $i$th horn $\Lambda^n_i
\subseteq \Delta^n$ is $\partial \Delta^n$ minus the $i$th face, see e.g.
\cite[I.1, p.~6]{goerss-jardine;simplicial-homotopy-theory}.

\begin{lem}[Relative Horn-Filling]\label{lem:horn-filling-relative}
  Let $M, P \in \mathcal{C}^G$. Let $A$ be a cellular submodule of $M$, let
  $\Lambda^n_i \subseteq \Delta^n$ be a horn.  Any controlled maps
  $A[\Delta^n] \rightarrow P$ and $M[\Lambda^n_i] \rightarrow P$ which agree
  on $A[\Lambda^n_i]$ can be extended to a controlled map $M[\Delta^n]
  \rightarrow P$.  

  If $M$ is $E_M$-controlled and both maps to $P$ are $E_f$-controlled, then
  the extended map can be chosen to be $E_f \circ E_M$-controlled.
\end{lem}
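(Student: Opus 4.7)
The plan is to use the Hom-bijection~\eqref{eq:hom-bijection-all-G} to convert the extension problem into a lifting problem in simplicial sets, and then reduce by induction over the cellular structure of $M$ relative to $A$ to repeated horn-filling in simplicial abelian groups.

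First, I would translate the hypothesis: via the Hom-bijection, the given maps $A[\Delta^n] \to P$ and $M[\Lambda^n_i] \to P$ correspond to compatible simplicial maps $\sigma\colon \Delta^n \to \HOM_R^{E_f}(A, P)^G$ and $\tau\colon \Lambda^n_i \to \HOM_R^{E_f}(M, P)^G$. The sought extension $M[\Delta^n] \to P$ corresponds to a diagonal filler lifting $(\sigma,\tau)$ along the restriction
\[
  \HOM_R^{E_f \circ E_M}(M, P)^G \longrightarrow \HOM_R^{E_f \circ E_M}(A, P)^G,
\]
where we enlarge the control bound from $E_f$ to $E_f \circ E_M$ in order to absorb the support spread of cells of $M$ that are not in $A$.

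Next, I would induct over the relative cellular filtration $A = M^{-1} \rightarrowtail M^0 \rightarrowtail M^1 \rightarrowtail \cdots$ of $M$. Because the $G$-action on $\diamond_R M$ is free and cell-permuting, it suffices to extend across a single representative cell $e\colon R[\Delta^k] \to M$ per $G$-orbit and then propagate the extension $G$-equivariantly. A single-cell extension amounts to extending a map
\[
  R\bigl[\partial \Delta^k \times \Delta^n \,\cup\, \Delta^k \times \Lambda^n_i\bigr] \longrightarrow P,
\]
assembled from the data of the previous induction step together with the pullback of the given map on $A[\Delta^n]$ along the attaching map, to all of $R[\Delta^k \times \Delta^n]$. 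The underlying inclusion of simplicial sets is the pushout-product of $\partial \Delta^k \hookrightarrow \Delta^k$ with $\Lambda^n_i \hookrightarrow \Delta^n$, hence anodyne, so it decomposes as a finite composition of pushouts of horn inclusions $\Lambda^m_j \hookrightarrow \Delta^m$. Each such step, translated back by the Hom-bijection, becomes a horn-filling problem inside the simplicial abelian group $\HOM_R^{E_f \circ E_M}(R[\Delta^k], P)$, which is Kan by virtue of being a simplicial abelian group, so the filler exists.

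The main obstacle I expect to meet is the control bookkeeping: I must check that every horn-filling produced by the Kan property actually lies in $\HOM_R^{E_f \circ E_M}(R[\Delta^k], P)$ rather than in some $\HOM_R^{E'}$ for a strictly larger $E'$. The point is that the chosen $k$-cell of $M$ has support contained in the $E_M$-thickening of $\kappa^M_R(e)$ by $E_M$-control of $M$, and the given data is $E_f$-controlled, so by Lemma~\ref{lem:sums-composition-of-controlled-maps} all sums and linear combinations of these maps---which is what the Kan filler reduces to in a simplicial abelian group---land in the $E_f \circ E_M$-thickening, i.e., inside the correct sub-simplicial abelian group. Once this control estimate is verified to be preserved cell-by-cell and orbit-by-orbit, the $G$-equivariant propagation of the chosen fillers assembles to the desired $E_f \circ E_M$-controlled extension $M[\Delta^n] \to P$.
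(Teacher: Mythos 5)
Your overall skeleton matches the paper's: reduce via the Hom-bijection to a lifting problem, induct over the cells of $M$ relative to $A$, observe that the pushout-product inclusion is built from horn inclusions, fill horns using the Kan property of simplicial abelian groups, and make equivariant choices one cell per $G$-orbit. The gap is in the control bookkeeping, and it is a real one. When you fill the horn for a $k$-cell $e$, the horn data on $\partial\Delta^k\times\Delta^n$ is not part of the originally given ($E_f$-controlled) maps: it consists of the fillers you constructed at earlier stages of the induction. If all you carry through the induction is ``the partial extension is $E_f\circ E_M$-controlled'', then for a boundary cell $e'$ of $\left<e\right>_M$ you only know its value lands in a submodule supported on $\{\kappa(e')\}^{E_f\circ E_M}\subseteq\{\kappa(e)\}^{E_f\circ E_M\circ E_M}$, so the horn you want to fill need not lie in $\HOM^{E_f\circ E_M}_R(\underline{R},P)$ at all, and the filler you get is only $E_f\circ E_M\circ E_M$-controlled on $\left<e\right>$. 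Iterating, the bound degrades with the cell dimension, and since objects of $\mathcal{C}^G$ need not be finite-dimensional (bl-finiteness is only imposed later), the colimit over $k$ need not be controlled by any single element of $\mathcal{E}$; indeed an upper bound for $\bigcup_k E_f\circ E_M^{\circ k}$ need not exist in $\mathcal{E}$. Your appeal to Lemma~\ref{lem:sums-composition-of-controlled-maps} only estimates sums and composites of maps whose control you already know, so it cannot repair this by itself.

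What is missing is a strengthened, per-cell induction hypothesis: the filler attached to every cell $e_0$ of $M$ (not just those handled so far) must be forced to land in a fixed submodule determined by $e_0$ alone, independent of the induction stage. The paper does this by not extending into $P$ directly at all: it constructs an $E_M$-controlled retraction $r\colon M[\Delta^n]\rightarrow M[\Lambda^n_i]\cup_{A[\Lambda^n_i]}A[\Delta^n]$, with the induction hypothesis that $r$ maps $\left<e_0\right>_M[\Delta^n]\cap N_k$ into $\left<e_0\right>_M[\Delta^n]\cap N_{-1}$ for every $e_0\in\diamond_R M$; each filler is found inside the bounded simplicial abelian group $\left<e\right>_M[\Delta^n]\cap N_{-1}$ (Kan because it is an abelian group), so the bound never grows, and only at the end is $r$ composed with the given $E_f$-controlled maps to get $E_f\circ E_M$. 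Alternatively, if you insist on extending into $P$ directly, you must carry a hypothesis of the shape ``the value on $\left<e_0\right>_M[\Delta^n]\cap N_k$ lies in $\left<f\bigl(\left<e_0\right>_M[\Delta^n]\cap N_{-1}\bigr)\right>_P$'' and restrict the codomain of each horn-filling to that bounded submodule; this is exactly how the paper handles the harder relative version in Lemma~\ref{lem:kan-fibration-horn-filling}. Without one of these devices your induction does not close, so as written the proposal does not establish the stated $E_f\circ E_M$ bound, nor even controlledness of the limit map for infinite-dimensional $M$.
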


\begin{proof}
  First we prove the claim of the lemma if $G= \{1\}$.  It suffices to produce a
  controlled retraction $r\colon M[\Delta^n] \rightarrow  M[\Lambda^n_i]
  \cup_{A[\Lambda^n_i]} A[\Delta^n]$ with section the inclusion.
  
  Let $B_k := A \cup M_k$, where $M_k$ is the submodule of $M$ generated
  by all cells of dimension $\leq k$.  We do induction over $k$.  We assume
  the following induction hypothesis:
  \begin{enumerate}
    \item There is a retraction $g_k\colon M[\Lambda^n_i]
      \cup_{B_k[\Lambda^n_i]} B_k[\Delta^n] \rightarrow M[\Lambda^n_i]
      \cup_{A[\Lambda^n_i]} A[\Delta^n]$.
    \item For each $e_0\in \diamond_R M$ the map $g_k$ restricts to
      \begin{multline}\label{eq:horn-filling-ind-hypothesis}
        \left<e_0\right>_M [\Delta^n]\ \cap\ 
        \left( M[\Lambda^n_i] \cup_{B_k[\Lambda^n_i]} B_k[\Delta^n]\right)
        \xrightarrow[\hspace{2em}]{g_k} \\
        \left<e_0\right>_M [\Delta^n]\  \cap \ 
        \left( M[\Lambda^n_i] \cup_{A[\Lambda^n_i]} A[\Delta^n]\right)
      \end{multline}
  \end{enumerate}
  The second condition is needed to keep track of the control condition during
  the induction.  It is
  important that the condition holds for all cells of $M$ and not only
  those from $B_k$.  We abbreviate $N_k := \left( M[\Lambda^n_i]
  \cup_{B_k[\Lambda^n_i]} B_k[\Delta^n]\right)$.  Hence, $g_k$ is a map $N_k
  \rightarrow N_{-1}$.

  For $k=-1$ the induction hypothesis is satisfied because $g_{-1} = \id$.

  So assume the induction hypothesis holds for $k-1$.  As $B_{k}$ arises
  from $B_{k-1}$ by attaching cells of dimension $k$, it suffices to treat
  the case of attaching one cell $e$, as cells of the same dimension can be
  attached independently.

  We obtain a diagram
  \begin{equation}\label{eq:rel-horn-filling}
    \vcenter{\xymatrix{
    \underline{R}[\Delta^k \times \Lambda^n_i \cup \partial \Delta^k \times \Delta^n]
    \ar[d]\ar[r]^{\partial e_*} &
      B_k[\Lambda^n_i] \cup_{B_{k-1}[\Lambda^n_i]}  
      B_{k-1}[\Delta^n] \ar[d]\ar[r]^-{g_{k-1}} & 
      N_{-1}\\
      \underline{R}[\Delta^k \times \Delta^n] \ar[r]^{e_*} & B_k[\Delta^n]
      \ar@{-->}[ur]_{\tau}}},
  \end{equation}
  where we want to construct the dashed arrow $\tau$.  Given such a map
  $\tau_i$ for each $k$-cell $e_i$, we define $g_k\colon N_k \rightarrow
  N_{-1}$ as the union of $g_{k-1}\colon N_{k-1} \rightarrow N_{-1}$ and the
  maps~$\tau_i$.
  
  In~\eqref{eq:rel-horn-filling}, $\underline{R}$
  is the module $R[\Delta^0]$ over $\kappa^M(e)$.  But as the square is a
  pushout it suffices to find a lift
  $\underline{R}[\Delta^k\times
  \Delta^n] \rightarrow N_{-1}$. The map $e_*$ 
  factors over $\left<e\right>_M[\Delta^n]$, as does $\partial e_*$.
  By the induction hypothesis, $g_{k-1} \circ \partial e_* \subseteq
  \left<e\right>_M [\Delta^n] \cap N_{-1}$.  It suffices to find a lift
  $\underline{R}[\Delta^k \times \Delta^n] \rightarrow \left<e\right>_M
  [\Delta^n] \cap N_{-1}$ in order to construct $\tau$.

  By the $\Hom$-bijection
  \ref{subsubsec:G-adjunctions}.\eqref{eq:hom-bijection-all-G}, it
  suffices to find a
  lift in the diagram of simplicial sets
  \begin{equation}
    \xymatrix{
    \Delta^k \times \Lambda^n_i \cup \partial \Delta^k \times \Delta^n
    \ar[d]_{\iota}\ar[r] &
      \HOM_R^\mathcal{E}(\underline{R},\left<e\right>_M [\Delta^n] \cap
  N_{-1})\\
    \Delta^k \times \Delta^n \ar@{-->}[ur]}.
    \label{diag:extension-axiom-lift}
  \end{equation}
  But
  \begin{equation*}
    \HOM_R^\mathcal{E}(\underline{R},\left<e\right>_M [\Delta^n] \cap N_{-1})
    = \left<e\right>_M [\Delta^n] \cap N_{-1}
  \end{equation*}
  as $\left<e\right>_M$ has bounded support.  Such a lift exists because
  first, $\left<e\right>_M [\Delta^n] \cap N_{-1}$ is an abelian group and
  hence fibrant, and second, the vertical inclusion $\iota$ in
  \eqref{diag:extension-axiom-lift} arises by repeated horn-filling
  (cf.~\cite[p.~18/19]{goerss-jardine;simplicial-homotopy-theory}). 
  
  This gives a lift
  $\underline{R}[\Delta^k \times
  \Delta^n] \rightarrow \left< e\right>_M [\Delta^n] \cap N_{-1}$, which
  induces $\tau$ and hence provides $g_k$.
  Note that $g_k$ restricts to a map
  \begin{equation}\label{eq:horn-filling-bounded-condition}
    \left<e\right>_M [\Delta^n]
    \rightarrow 
    \left<e\right>_M [\Delta^n]  \cap N_{-1}
  \end{equation}
  which is exactly the second condition of the induction hypothesis for $k$
  if $e_0 = e \in \diamond_R M$.  For general $e_0 \in
  \diamond_R M$ consider first the case $e\not\in \diamond_R
  \left<e_0\right>_M$. Then $\left<e_0\right>_M [\Delta^n]
  \cap N_k
  \subseteq\left<e_0\right>_M [\Delta^n] \cap N_{k-1}$ and the induction
  hypothesis follows from the induction hypothesis
  for $k-1$.  Otherwise $\left<e\right>_M \subseteq
  \left<e_0\right>_M$ and then $g_k$ restricts to
  \begin{gather*}  
    \begin{split}
      \left< e_0\right>_M [\Delta^n] \cap N_{k} =  
      & \left(\left<e_0\right>_M [\Delta^n] \cap N_{k-1} \right) \ \cup\
      \left<e\right>_M [\Delta^n] \\
      \longrightarrow
      &\left(\left<e_0\right>_M [\Delta^n] \cap N_{-1}\right) \ \cup\ \left(
      \left<e\right>_M [\Delta^n] \cap N_{-1} \right) \\
      & \subseteq \left<e_0\right>_M [\Delta^n] \cap N_{-1} 
    \end{split}
  \end{gather*}

  Setting $r := \colim_{k\rightarrow \infty} g_k$ yields the retraction, which
  has the property that $r(\left<e\right>_M [\Delta^n] ) \subseteq
  \left<e\right>_M [\Delta^n]  \cap N_{-1}$, so it is $E_M$-controlled when
  $E_M$
  is the control of $M$.

  If $G \neq \{1\}$ we can choose the above lifts equivariantly, e.g.~by
  constructing first a lift for one cell in a $G$-orbit and then extending
  equivariantly.  This shows the general case.
\end{proof}

As a special case it follows that cofibrations have the homotopy extension
property.  The usual arguments
show that being homotopic is an equivalence relation. We obtain:

\begin{lem}[saturation axiom]
  \label{lem:saturation-axiom}
  The homotopy equivalences satisfy the $2$-out-of-$3$ property.  That is,
  assume we have $f\colon A\rightarrow B$ and $g\colon B \rightarrow C$ in
  $\mathcal{C}^G$ and two of $f$, $g$, $g\circ f$ are homotopy
  equivalences, then so is the third.\qed
\end{lem}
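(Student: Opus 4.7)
The plan is standard: first verify that homotopy is an equivalence relation on each $\Hom$-set of $\mathcal{C}^G(X;R)$ which is compatible with composition on both sides, and then deduce the $2$-out-of-$3$ property by the formal algebra of homotopy inverses. The only non-formal ingredient is transitivity of the homotopy relation, for which the Relative Horn-Filling Lemma~\ref{lem:horn-filling-relative} is tailor-made.

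Reflexivity is witnessed by the constant homotopy $f \circ p_*\colon A[\Delta^1] \to B$, where $p\colon \Delta^1 \to \Delta^0$ is the projection. For transitivity, given $H_1\colon f \simeq g$ and $H_2\colon g \simeq h$, I would assemble $H_1$, $H_2$, and the constant homotopy on $g$ into a controlled map $A[\Lambda^2_1] \to B$ by placing $H_2$ on the face $d_0$, $H_1$ on $d_2$, and noting that they agree with the constant $g$ on the vertex where they meet; then extend to $A[\Delta^2] \to B$ via Lemma~\ref{lem:horn-filling-relative} with trivial cellular submodule, and restrict along the missing face $d_1$ to obtain a homotopy $f \simeq h$. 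Symmetry is analogous, using the horn $\Lambda^2_0$ with one face the given homotopy $H\colon f \simeq g$ and the other a constant homotopy on $f$. Compatibility with composition is easier still: postcomposing a homotopy with a controlled map yields a controlled homotopy by Section~\ref{subsubsec:composition}, and precomposition works via the naturality of $A \mapsto A[\Delta^1]$.

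With this in hand, the $2$-out-of-$3$ property is purely formal. If $f, g$ are homotopy equivalences with inverses $\bar f, \bar g$, then $\bar f \bar g$ is a homotopy inverse of $g f$. If $f$ and $g f$ are equivalences with inverses $\bar f$ and $k$, I set $\bar g := f k$; then $g \bar g = (g f) k \simeq \mathrm{id}$, while from $\bar g g f = f k (g f) \simeq f$ one obtains $\bar g g \simeq f \bar f \simeq \mathrm{id}$ upon postcomposing with $\bar f$ and cancelling. The dual case where $g$ and $g f$ are equivalences is handled symmetrically by setting $\bar f := k g$ and computing $\bar f f = k (g f) \simeq \mathrm{id}$ directly and $f \bar f \simeq \mathrm{id}$ by precomposing $g f \bar f \simeq g$ with $\bar g$.

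The main obstacle is really the reduction step: one has to check that the gluing of two homotopies along a horn actually assembles into a map of controlled modules and that the resulting horn-filling produces a controlled homotopy. But this is exactly what Lemma~\ref{lem:horn-filling-relative} provides, with the control of the extension bounded by the composite $E_f \circ E_M$ of the control conditions of the inputs, so no further control bookkeeping is required.
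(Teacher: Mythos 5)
Your argument is correct and follows the same route the paper (implicitly) takes: the Relative Horn-Filling Lemma~\ref{lem:horn-filling-relative} shows that homotopy is an equivalence relation compatible with composition on both sides, after which the $2$-out-of-$3$ property is the standard formal manipulation of homotopy inverses, which is exactly why the paper records the lemma with only the remark that ``the usual arguments'' apply. (Only a cosmetic quibble: in your cancellation steps you compose with $\bar f$ on the right and with $\bar g$ on the left, which is pre- resp.\ postcomposition rather than the other way around; the computations themselves are fine.)
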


\subsection{Cylinders and homotopy equivalences}
\label{subsec:cylinders-homotopy-equivalences}

We need a little bit more homotopy theory to proceed.

\begin{definition}[Deformation retraction]\label{def:deformation-retract}
  Let $i\colon A \rightarrowtail M$ be a cellular inclusion in
  $\mathcal{C}^G$. We can consider $A$ as a submodule of $M$. $A$ is a
  \emph{deformation retract} of $M$ if there is a map $r \colon M \rightarrow A$ such
  that $r \circ i$ is $\id_A$ and $i \circ r$ is homotopic to $\id_M$ relative
  $A$.

  The map $i$ is called the \emph{inclusion} and $r$ is called the
  \emph{retraction} or \emph{deformation retraction}.
\end{definition}

For $f\colon A \rightarrow B$ the target $B$ is a retract of the mapping
cylinder $T(f)$ via the map $p\colon T(f) \rightarrow B$
from~\ref{subsec:cylinder-functor}.\eqref{eq:diag:cylinder-T-is-cylinder-functor}.  

\begin{lem}[cylinder axiom]\label{lem:cylinder-axiom}
  $p\colon T(f) \rightarrow B$ is a homotopy equivalence.
  More precisely, is it even a deformation retraction.
\end{lem}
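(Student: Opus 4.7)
My plan is to construct an explicit simplicial homotopy
\[ H\colon T(f)[\Delta^1] \longrightarrow T(f) \]
from $\id_{T(f)}$ to $\iota_1\circ p$ which is stationary on $B$. Combined with the identity $p\circ\iota_1 = \id_B$ (immediate from the definition of $T(f)$ as a pushout and of $p$ via the universal property, cf.~diagram~\eqref{eq:diag:cylinder-T-is-cylinder-functor}), this exhibits $B$ as a deformation retract of $T(f)$ in the sense of Definition~\ref{def:deformation-retract}; note that $\iota_1$ is a cellular inclusion because, by Lemma~\ref{lem:cylinder-functor-assembled-gives-functor}, the assembled map $A\vee B\rightarrowtail T(f)$ is one. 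A deformation retraction is in particular a homotopy equivalence, which is what the lemma demands.

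For the construction I first fix a simplicial map $h\colon \Delta^1\times\Delta^1 \rightarrow \Delta^1$ by the vertex assignment $(0,0)\mapsto 0$ and $(1,0),(0,1),(1,1)\mapsto 1$; this is order-preserving along each of the two $2$-simplices of the standard triangulation of $\Delta^1\times\Delta^1$ and therefore extends uniquely to a simplicial map, enjoying the three key properties $h(\cdot,0)=\id_{\Delta^1}$, $h(\cdot,1)\equiv 1$, and---crucially---$h(1,\cdot)\equiv 1$. Because the functor $M\mapsto M[\Delta^1]$ is a left adjoint (by the Hom-bijection \ref{subsubsec:adjunction}.\eqref{eq:sset-adjunction}) and hence commutes with pushouts, there is a canonical identification
\[ T(f)[\Delta^1] \;\cong\; A[\Delta^1\times\Delta^1]\;\cup_{f[\Delta^1]}\; B[\Delta^1]. \]
I then define $H$ on the first summand as $A[h]$ followed by the canonical map $A[\Delta^1]\hookrightarrow T(f)$, and on the second as the projection $B[\Delta^1]\to B$ followed by $\iota_1$. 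The property $h(1,\cdot)\equiv 1$ is exactly what forces the two composites to agree on the gluing locus $A[\{1\}][\Delta^1]$, so $H$ descends to the pushout.

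The three required conclusions fall out almost mechanically: $H\circ{i_0}_* = \id_{T(f)}$ and $H\circ{i_1}_* = \iota_1\circ p$ follow from $h(\cdot,0)=\id$ and $h(\cdot,1)\equiv 1$ respectively, while on the $B$-summand $H$ is by construction constant in the $\Delta^1$-direction, so the homotopy is relative $B$. Control is immediate: $A[h]$ and the projection $B[\Delta^1]\to B$ are $0$-controlled by Lemma~\ref{lem:adjoint-simp-module-controlled}, $\iota_1$ is controlled as a cellular inclusion, and compositions of controlled maps are controlled by Lemma~\ref{lem:sums-composition-of-controlled-maps}. I expect no serious obstacle here; the only subtle point is picking the combinatorial map $h$ so that it is simultaneously a valid simplicial map, compatible with the pushout identification, and effects a deformation onto the vertex $1$---and the vertex assignment above arranges all three conditions at once.
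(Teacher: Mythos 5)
Your proof is correct and follows essentially the same route as the paper: the paper also reduces to a homotopy on $A[\Delta^1]$ relative to $A[1]$ induced by a combinatorial map $\Delta^1\times\Delta^1\rightarrow\Delta^1$ (your explicit $\max$-type map $h$) and glues it with the constant homotopy on $B$ across the pushout. You merely spell out what the paper leaves as ``well-known'' --- the vertex assignment, the compatibility on the gluing locus, and the control bookkeeping --- which is a fine, if more detailed, rendering of the same argument.
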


\begin{proof}
  We only have to prove that $\iota_1 \circ p \colon T(f) \rightarrow B
  \rightarrow T(f)$ is homotopic relative $B$ to $\id_{T(f)}$. Recall that
  $T(f)$ is defined as the pushout of $B \leftarrow A[1] \rightarrow
  A[\Delta^1]$. We see that $\iota_1 \circ p$ is induced by $p_1 \colon
  A[\Delta^1] \rightarrow A[1] \rightarrow A[\Delta^1]$.  It suffices to give
  a homotopy from the identity to $A[\Delta^1] \rightarrow A[1] \rightarrow
  A[\Delta^1]$ which is relative to~$A[1]$.

  But there is a well-known map $\widehat{H}\colon \Delta^1 \times \Delta^1
  \rightarrow \Delta^1$ of simplicial sets inducing such a map.
  Thus the homotopy $H$ which is induced by $\widehat{H}$ is a homotopy relative to $A[1]$ which
  induces the desired homotopy.
\end{proof}
For the next part we need a diagram language.

\subsubsection{Describing maps by diagrams}\label{subsubsec:diagrams}
In the following, we will often have to describe maps of the form $A[\Delta^1
\times \Delta^1] \rightarrow B$, for $A,B \in \mathcal{C}^G$.  We give concise
ways to describe
them.  The simplicial set $\Delta^1 \times \Delta^1$ comes from a simplicial
complex, so it suffices to give compatible maps on the $0$-, $1$- and
$2$-simplices.  We use the pictures 
\begin{equation}\label{diag:subsets-of-I2}
  \sIbig11111,\quad \sIbig11000,\quad \sIbig00100,\quad 
  \sIbig11011 \quad \text{ etc.}
\end{equation}
to denote simplicial subsets of $\Delta^1\times \Delta^1$ which are generated
by the shown $1$- and $2$-simplices.  A $2$-simplex is in the subset if its
boundary is.  The dots are only drawn to specify the corresponding subset and
are only in the subset if they are a boundary.  As an example, we write a map
\begin{equation*}
  A[\Delta^1 \times \{0,1\}] \cup A[0 \times \Delta^1] \rightarrow B
  \quad \text{ as } \quad
  A[\sI11010] \rightarrow B.
\end{equation*}
We can use the same kind of diagrams for other
simplicial sets like $\Delta^1\cup_{\Delta^0} \Delta^1$
and products of them.

If $P$ is a subset of $\Delta^1 \times \Delta^1$ and $A,B \in \mathcal{C}^G$
we want concisely describe maps $A[P] \rightarrow B$.
We often draw diagrams like in~\eqref{diag:subsets-of-I2} and write the maps on
the simplices.  We give examples to illustrate this.

Let $\alpha, \beta\colon A \rightarrow B$ be maps in $\mathcal{C}^G$ and
$H\colon A[\Delta^1] \rightarrow B$ a homotopy from $\alpha$ to $\beta$.
We can extend $H$ to a map $A[\Lambda^2_0] \rightarrow B$ with a trivial
homotopy on the first face and $H$ on the second face.  By horn-filling this
extends to a map $A[\Delta^2] \rightarrow B$.  The following diagrams show the
obtained maps $A[\Delta^1] \rightarrow B$, $A[\Lambda^2_0]
\rightarrow B$, $A[\Delta^2] \rightarrow B$, where
$\Tr$ denotes a trivial homotopy and $\overline{H}$ the
homotopy obtained by horn-filling:
\begin{equation}\label{diag:homotopy-horn-diagram-language}
  \vcenter{\smash{ \xymatrix{
  \alpha \ar[r]|H & \beta}}} \qquad\qquad
  \vcenter{\xymatrix{
	   &  \alpha \\
	   \alpha \ar[ur]|\Tr \ar[r]|H & \beta }} \qquad\qquad
  \vcenter{\xymatrix{
	   &  \alpha \\
	   \alpha \ar[ur]|\Tr \ar[r]|H & \beta \ar[u]|{\overline{H}}}}
\end{equation}
Sometimes we omit the decorations for vertices, as they are uniquely
determined by the decorations on the arrows, and draw dots instead.  We
usually omit the decoration for the $2$-simplices as well, as the actual
maps are usually less important for us.
All of this works for more complicated simplicial sets as long as we can draw
diagrams for them.

\subsubsection{Rectifying homotopy commutative
diagrams}\label{subsubsec:rectifying-diagrams}
If we have the homotopy commutative diagram on the left below, we can turn it
into a strictly commutative diagram on the right below.
\begin{equation*}
  \vcenter{\xymatrix{ A \ar[r]^f \ar[dr]_h & B \ar[d]^g \\ & P }} \qquad
  \rightsquigarrow\qquad 
  \vcenter{\xymatrix{ A \ar[r]^-{\iota_0} \ar[dr]_h & T(f)\ar[d]^{g'} \\ & P }},
\end{equation*}
Define $g'\colon A[\Delta^1] \cup B \rightarrow P$ as induced by the homotopy
and by $g$.  The back inclusion $\iota_1\colon B \rightarrow T(f)$ is a homotopy
equivalence and $g = g' \circ \iota_1$.  

If $f\colon A \rightarrow B$ is a homotopy equivalence, then the
two-out-of-three property implies that then $i\colon A \rightarrow T(f)$ is a
homotopy equivalence.  

\begin{prop}\label{prop:cylinder-deformation-retract}
  If $f\colon A \rightarrow B$ is a homotopy equivalence, then
  $A$ is a deformation retract of $T(f)$ via $\iota_0$.
\end{prop}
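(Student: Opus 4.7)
The plan is to first reduce to a general principle, then construct the strict retraction, then produce the homotopy rel $A$ via a cancellation trick in the mapping space. Observe that $\iota_0\colon A\rightarrow T(f)$ is a cellular inclusion and, by the saturation axiom (Lemma~\ref{lem:saturation-axiom}) applied to $p\circ \iota_0=f$ with $p\colon T(f)\rightarrow B$ a homotopy equivalence (Lemma~\ref{lem:cylinder-axiom}), is a homotopy equivalence; so it suffices to construct a strict retraction $r\colon T(f)\rightarrow A$ and a homotopy $\iota_0\circ r\simeq \id_{T(f)}$ relative to $A$.

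For the strict retraction, pick any homotopy inverse $r_0\colon T(f)\rightarrow A$ together with a homotopy $K\colon A[\Delta^1]\rightarrow A$ from $r_0\circ \iota_0$ to $\id_A$. The maps $r_0$ on $T(f)[\{0\}]$ and $K$ on $A[\Delta^1]$ agree on $A[\{0\}]$, so the Relative Horn-Filling Lemma~\ref{lem:horn-filling-relative} (with $n=1$ and horn $\Lambda^1_0=\{0\}$, submodule $A\rightarrowtail T(f)$, codomain $A$) extends them to $\tilde K\colon T(f)[\Delta^1]\rightarrow A$ with $\tilde K|_{T(f)[\{0\}]}=r_0$ and $\tilde K|_{A[\Delta^1]}=K$. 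Defining $r := \tilde K|_{T(f)[\{1\}]}$ gives $r\circ \iota_0=\id_A$ strictly, while $r\simeq r_0$ remains a homotopy inverse.

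For the homotopy rel $A$, pick any homotopy $H\colon T(f)[\Delta^1]\rightarrow T(f)$ from $\id_{T(f)}$ to $\iota_0\circ r$, and set $K' := H\circ (\iota_0\circ r)[\Delta^1]$. A direct computation using $r\circ \iota_0=\id_A$ shows $K'|_{\{0\}}=K'|_{\{1\}}=\iota_0\circ r$ (so $K'$ is a ``loop'' at $\iota_0\circ r$), and its restriction to $A$ via $\iota_0$ agrees with $H|_A := H\circ \iota_0[\Delta^1]$, again because $r\circ \iota_0=\id_A$. Work in the simplicial abelian group $\HOM^{\mathcal{E}}_R(T(f),T(f))^G$---a simplicial abelian group because the sum of an $E_1$-controlled and an $E_2$-controlled map is controlled by any $E\supseteq E_1\cup E_2$ in $\mathcal{E}$, cf.~Section~\ref{subsubsec:G-adjunctions}---and set
\[
H' := H - K' + s_0(\iota_0\circ r).
\]
Computing the two faces yields $H'|_{\{0\}}=\id_{T(f)}-\iota_0 r+\iota_0 r=\id_{T(f)}$ and $H'|_{\{1\}}=\iota_0 r-\iota_0 r+\iota_0 r=\iota_0\circ r$, so $H'$ is again a homotopy $\id_{T(f)}\simeq\iota_0\circ r$; restricting to $A$ yields $H|_A-H|_A+s_0(\iota_0)=s_0(\iota_0)$, the constant homotopy at $\iota_0$. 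Hence $H'$ is the desired homotopy rel $A$ (up to reversing direction, which is routine since homotopy rel $A$ is an equivalence relation in this Kan setting).

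The key insight---and the place where one could get stuck---is the observation that the ``obstructing'' loop $H|_A$ which any choice of $H$ produces on $A$ can be cancelled by the parallel loop $K'|_A$ on $T(f)$, built from $H$ by post-composing with the retraction $\iota_0\circ r$. The strict retraction condition $r\circ \iota_0=\id_A$ from the second step is essential here: without it, $K'|_A$ would not equal $H|_A$ and the cancellation would fail. Once this trick is spotted, the proof is a few lines of calculation in the simplicial abelian group structure of the controlled mapping space.
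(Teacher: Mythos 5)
Your proof is correct, but it takes a genuinely different route from the paper in the key step. The paper builds the strict retraction $r$ by rectifying the homotopy $g\circ f\simeq \id_A$ (with $g$ a homotopy inverse of $f$), proves $\iota_0\circ r\simeq \id_{T(f)}$ by the pre-/post-composition argument with $\iota_1\circ p$ in Lemma~\ref{lem:homotopy-not-relative}, and then upgrades the homotopy to one relative $A$ in Lemma~\ref{lem:cyl-homotopy-relative-A} via the homotopy extension property, constructing a map $T(f)[\Delta^1\times\Delta^1]\rightarrow T(f)$ whose crucial ingredient is the replacement of $\overline{H}$ by $\overline{H}\circ s$, justified by $s^2=s$ --- an adaptation of Waldhausen's topological argument. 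You instead obtain the strict retraction by filling the horn $\Lambda^1_0$ relative to $A\rightarrowtail T(f)$ (Lemma~\ref{lem:horn-filling-relative}) applied to a homotopy inverse of $\iota_0$ (which is an equivalence by Lemma~\ref{lem:saturation-axiom} and Lemma~\ref{lem:cylinder-axiom}), and then produce the rel-$A$ homotopy purely algebraically, by forming $H-H\circ(\iota_0 r)[\Delta^1]+s_0(\iota_0 r)$ in the simplicial abelian group of controlled equivariant maps; your face and restriction computations are right, the identity $r\circ\iota_0=\id_A$ (equivalently $s^2=s$) is used in exactly the two places you flag, and controlledness of the sum is covered by Lemma~\ref{lem:sums-composition-of-controlled-maps}. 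What each approach buys: yours is shorter and avoids the two-dimensional diagram and HEP bookkeeping, but it leans on the additive structure of the morphism spaces, so it would not transfer to non-additive settings such as Waldhausen's retractive spaces, to which the paper explicitly wants its arguments to generalize; the paper's proof uses only the cylinder, HEP and idempotency, at the cost of the more delicate gluing of $G_1$ and $G_2$.
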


We prove the proposition in the rest of this section.  It is an adaption of
the corresponding proof for topological spaces which the author learned from
F.~Waldhausen \cite[pp.~140ff.]{Waldhausen;A-Top-I}.
Let $g$ be the homotopy inverse of $f$, then we have a homotopy commutative
diagram $\id_A = g\circ f$.  By the argument above, we obtain a map $T(f)
\rightarrow A$.  This is the retraction $r$.

\begin{lem}\label{lem:homotopy-not-relative}
  The composition $s\colon T(f)\xrightarrow{r} A \xrightarrow{\iota_0} T(f)$
  is homotopic to the identity.
\end{lem}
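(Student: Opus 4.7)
The plan is to deduce the statement abstractly from the cylinder axiom (Lemma~\ref{lem:cylinder-axiom}) and the saturation axiom (Lemma~\ref{lem:saturation-axiom}), rather than constructing the homotopy by hand. The guiding observation is that once $\iota_0$ is recognised as a homotopy equivalence, a short manipulation with homotopy inverses produces the required homotopy $s \simeq \id_{T(f)}$.

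First I would show that $\iota_0\colon A \rightarrow T(f)$ is a homotopy equivalence. By the cylinder axiom, $p\colon T(f) \rightarrow B$ is a homotopy equivalence; by assumption $p \circ \iota_0 = f$ is one as well, so the two-out-of-three property from the saturation axiom gives that $\iota_0$ is a homotopy equivalence. Applying two-out-of-three again to the equality $r \circ \iota_0 = \id_A$ then shows that $r\colon T(f) \rightarrow A$ is itself a homotopy equivalence.

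Next I would fix a homotopy inverse $\bar r\colon A \rightarrow T(f)$ of $r$, so that $r \circ \bar r \simeq \id_A$ and $\bar r \circ r \simeq \id_{T(f)}$. From $r \circ \iota_0 = \id_A \simeq r \circ \bar r$, composing on the left with $\bar r$ gives $\bar r \circ r \circ \iota_0 \simeq \bar r \circ r \circ \bar r$, and applying the homotopy $\bar r \circ r \simeq \id_{T(f)}$ to both sides yields $\iota_0 \simeq \bar r$. Composing on the right with $r$ then produces $s = \iota_0 \circ r \simeq \bar r \circ r \simeq \id_{T(f)}$, which is precisely the homotopy the lemma demands.

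I do not expect a serious obstacle: the argument is purely formal once the cylinder and saturation axioms are in hand, both of which are established earlier in Section~\ref{sec:proofs-ii:waldhausen-category}. The price paid for this brevity is that the homotopy produced is not relative to $A$; upgrading it to a homotopy rel $A$, as required by Proposition~\ref{prop:cylinder-deformation-retract}, will be the substantive remaining step and will presumably invoke the homotopy extension property supplied by the relative horn-filling Lemma~\ref{lem:horn-filling-relative}.
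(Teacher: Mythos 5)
Your argument is correct, but it runs along a different track than the paper's. The paper proves this lemma by a direct computation with the explicitly given maps: it sets $t = \iota_1 \circ p$, notes $t \simeq \id_{T(f)}$ by the cylinder axiom~\ref{lem:cylinder-axiom}, and then evaluates $t \circ s \circ t$ using the identities $p\circ\iota_0 = f$ and $r\circ\iota_1 = g$ coming from the construction of $r$, so that $t\circ s\circ t = \iota_1\circ f\circ g\circ p \simeq \iota_1\circ p = t \simeq \id$; since $t\circ s\circ t \simeq s$, the claim follows. You instead argue formally in the homotopy category: $\iota_0$ is a homotopy equivalence by the cylinder axiom and saturation (as the paper itself notes just before Proposition~\ref{prop:cylinder-deformation-retract}), hence $r$ is one by two-out-of-three applied to $r\circ\iota_0=\id_A$, and a one-sided inverse of a homotopy equivalence is automatically two-sided, giving $\iota_0\circ r \simeq \id_{T(f)}$. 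Your route never uses the relation $r\circ\iota_1 = g$ or the homotopy $f\circ g \simeq \id_B$, so it applies to \emph{any} retraction of $\iota_0$, at the cost of invoking the saturation axiom and choosing a homotopy inverse $\bar r$; the paper's computation stays with the explicitly constructed maps and homotopies and needs no such choices. Both versions rest on homotopy being an equivalence relation compatible with composition, which is available at this point via the relative horn-filling Lemma~\ref{lem:horn-filling-relative}, and you correctly observe that neither produces a homotopy relative to $A$ --- that upgrade is the separate step carried out in Lemma~\ref{lem:cyl-homotopy-relative-A}.
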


\begin{proof}
  By Lemma~\ref{lem:cylinder-axiom}, the composition $t\colon T(f)
  \xrightarrow{p} B
  \xrightarrow{\iota_1} T(f)$ is homotopic to the identity. So we pre- and
  postcompose $s$ with $t$ and obtain a map which is homotopic to $s$. This
  can be written as
  \begin{equation*}
    \xymatrix{
    T(f) \ar[r]^r & A \ar[r]^{\iota_0}\ar[dr]_f & T(f) \ar[d]^p \\
    B \ar[u]^{\iota_1}\ar[ur]_g   &             & B \ar[d]^{\iota_1} \\
    T(f) \ar[u]^{p}               &             & T(f)}
  \end{equation*}
  with compositions identified as $f$ and $g$. But $f\circ g$ is homotopic to
  $\id_B$ by assumption. We are left with
  \begin{equation*}
    \xymatrix{ B \ar[rr]^\id && B \ar[d]^{\iota_1} \\
    T(f) \ar[u]^{p}          && T(f)}
  \end{equation*}
  which is again $t$ and therefore homotopic to $\id_{T(f)}$. Being homotopic
  is an equivalence relation so $s$ is homotopic to $\id_{T(f)}$.  
\end{proof}

The homotopy $H$ of Lemma~\ref{lem:homotopy-not-relative} does not need to be
relative to $A$, but we can improve it as follows.
We have $s \circ \iota_0 =
\iota_0$ as well as $\id_{T(f)} \circ \iota_0 = \iota_0$ so on the endpoints
$H$ is relative to the cellular inclusion $\iota_0 \colon A \rightarrow T(f)$.
We want to make the whole homotopy relative to $A$, i.e.,
\begin{equation*}
  A[\Delta^1] \xrightarrow{\iota_0[\Delta^1]} T(f)[\Delta^1] \xrightarrow{H}
  T(f)
\end{equation*}
should be equal to $A[\Delta^1] \xrightarrow{p} A \xrightarrow{\iota_0} T(f)$.

\begin{lem}\label{lem:cyl-homotopy-relative-A}
  Let $s$ be the map $T(f) \xrightarrow{r} A \xrightarrow{\iota_0} T(f)$.
  There is a homotopy relative $A$ from the identity on $T(f)$ to $s$.
\end{lem}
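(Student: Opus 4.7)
The plan is to modify the homotopy $H$ from Lemma~\ref{lem:homotopy-not-relative} into a rel-$A$ homotopy using the Relative Horn-Filling Lemma~\ref{lem:horn-filling-relative}. Write $K := H\circ\iota_0[\Delta^1]\colon A[\Delta^1]\to T(f)$; since $s\circ\iota_0=\iota_0$, the map $K$ is a self-loop at $\iota_0$ in the simplicial abelian group $\HOM^{\mathcal{E}}_R(A,T(f))^G$. The goal is to produce a ``homotopy of homotopies'' $F\colon T(f)[\Delta^1\times\Delta^1]\to T(f)$ whose restriction to $T(f)[\Delta^1\times 0]$ is $H$, whose two vertical faces $T(f)[0\times\Delta^1]$ and $T(f)[1\times\Delta^1]$ are the trivial homotopies at $\id_{T(f)}$ and $s$ respectively, and whose restriction to $A[\Delta^1\times\Delta^1]$ is trivial at $\iota_0$ on the top face. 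Reading $F$ on the top face $T(f)[\Delta^1\times 1]$ then yields the desired rel-$A$ homotopy.

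The construction proceeds in two stages. First, I build $L\colon A[\Delta^1\times\Delta^1]\to T(f)$ with $L|_{A[\Delta^1\times 0]}=K$ and $L$ trivial at $\iota_0$ on the other three edges of the square. Here I exploit that $s=\iota_0\circ r$ factors through $A$: the ``double'' square $(a,u,v)\mapsto H(H(\iota_0(a),u),v)$ furnishes a 2-simplex in $\HOM^{\mathcal{E}}_R(A,T(f))^G$ relating $K$ to $\iota_0\circ r\circ K$, and the simplicial abelian group structure of the mapping space makes it Kan, so obstructions can be filled one 2-simplex at a time using the diagram language of \ref{subsubsec:diagrams}. Combining the contributions coming from the three pieces whose composition produces $H$ (and tracking what each piece does on $A$, using the cylinder direction $A\subset A[\Delta^1]\subset T(f)$ and the homotopies $t\simeq\id_{T(f)}$, $f\circ g\simeq\id_B$, $g\circ f\simeq\id_A$) gives the required $L$.

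Second, with $L$ and the three prescribed edges of $T(f)[\Delta^1\times\Delta^1]$ in hand, I apply Lemma~\ref{lem:horn-filling-relative} twice, once for each of the two non-degenerate 2-simplices of $\Delta^1\times\Delta^1$. First fill the simplex $[(0,0),(1,0),(1,1)]$ using $H$ on the bottom and the constant $s$ on the right as the two legs of a $\Lambda^2_1$-horn, which simultaneously produces the diagonal edge. Then fill the simplex $[(0,0),(0,1),(1,1)]$ using the constant $\id_{T(f)}$ on the left and the just-produced diagonal as a $\Lambda^2_0$-horn. In both cases the restriction of the filling to $A$ is pre-determined by $L$, which is precisely the compatibility hypothesis of Lemma~\ref{lem:horn-filling-relative}. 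The restriction of the resulting $F$ to $T(f)[\Delta^1\times 1]$ is the sought homotopy from $\id_{T(f)}$ to $s$, and by construction it restricts to the trivial homotopy on $A$. The main obstacle is the construction of $L$ in the first stage: showing that the specific loop $K$ obtained from Lemma~\ref{lem:homotopy-not-relative} is nullhomotopic rel endpoints requires unraveling how the factorization $s=\iota_0\circ r$ interacts with the pieces of $H$ on $A$.
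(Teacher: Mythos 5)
Your Stage 2 is fine: splitting $\Delta^1\times\Delta^1$ into its two nondegenerate $2$-simplices and filling them by two applications of Lemma~\ref{lem:horn-filling-relative} (a $\Lambda^2_1$-horn from the bottom edge $H$ and the constant edge at $s$, then a $\Lambda^2_0$-horn from the constant edge at $\id_{T(f)}$ and the produced diagonal), with the $A$-data prescribed throughout, is mechanically correct and the compatibility on $A$ is exactly as you say, using $s\circ\iota_0=\iota_0$. The problem is Stage 1, and it is not a small gap: your whole argument requires that the \emph{specific} path $K=H\circ\iota_0[\Delta^1]$, a loop at $\iota_0$ in $\HOM^{\mathcal{E}}_R(A,T(f))^G$, be nullhomotopic relative to its endpoints, and you never establish this. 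The ``double square'' $(a,u,v)\mapsto H(H(\iota_0(a),u),v)$ has bottom edge $K$, top edge $s\circ K$ and both vertical edges equal to $K$, so in $\pi_1$ it only yields $[s\circ K]=[K]^{-1}[K][K]=[K]$; it does not kill $[K]$, it merely trades it for $[\iota_0\circ r\circ K]$, a claim of exactly the same difficulty. For an arbitrary homotopy from $\id_{T(f)}$ to $s$ the restricted loop can perfectly well be essential, so any proof along your lines must extract a nullhomotopy from the particular construction in Lemma~\ref{lem:homotopy-not-relative}, and your sketch (``filled one $2$-simplex at a time'', ``combining the contributions of the three pieces'') does not do this.

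The paper's proof is designed precisely to avoid ever proving such a statement about $K$. Instead of demanding that the square be constant on $A$ away from the bottom edge, it first horn-fills $H$ together with a trivial homotopy to get a square $G\colon T(f)[\Delta^1\times\Delta^1]\rightarrow T(f)$ whose right-hand edge is $\Tr$ \emph{by construction}, takes $G_1:=G|_{A[\Delta^1\times\Delta^1]}$ as the prescribed (non-constant!) $A$-data, and on the $T(f)$-level prescribes $G$ on the left and bottom edges but $\overline{H}\circ s$ instead of $\overline{H}$ on the top edge. Compatibility then needs only $s^2=s$ (for the corner) and $s\circ\iota_0=\iota_0$ (so that $\overline{H}\circ s$ and $\overline{H}$ agree after restriction to $A$), and a single application of the homotopy extension property produces the square; the desired relative homotopy is its remaining edge, which is trivial on $A$ because the right-hand edge of $G$, hence of $G_1$, is trivial. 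If you want to salvage your route, you would have to either prove that your particular $K$ is nullhomotopic rel endpoints (nontrivial, and not a consequence of what you wrote) or, as the paper does, give up constancy of the prescribed data on $A$ and exploit the idempotency of $s$.
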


\begin{proof} We use the diagram notation of~\ref{subsubsec:diagrams}.
  $A$ is a retract of $T(f)$ and $\iota_0\colon A \rightarrow T(f)$
  has the homotopy extension property. We will use this homotopy
  extension property to construct a certain map $T(f)[\Delta^1 \times \Delta^1]
  \rightarrow T(f)$ which restricted to $1\times \Delta^1$ will be the
  desired homotopy from $\id_{T(f)}$ to $s$ relative to~$A$.

  Note that $s$ is an idempotent, i.e.,~$s^2= s$. We use the notation from
  above.  The proof will proceed as follows. We will prescribe the map
  $T(f)[\Delta^1\times \Delta^1] \rightarrow T(f)$ on the subspace $A[\Delta^1
  \times \Delta^1]
  \rightarrowtail T(f)[\Delta^1\times \Delta^1]$ and on the top, bottom and
  left part of $\Delta^1\times \Delta^1 = \sI11111$, i.e., on
  $T(f)[\sI11010]$. Then we check that the two maps are compatible. This will
  give a map
  \begin{equation*}
    { T(f)[\sI11010] \cup A[\sI11111] \rightarrow T(f)}
  \end{equation*}
  which can be extended by the homotopy extension property to the desired map
  $T(f)[\sI11111] \rightarrow T(f)$.\smallskip

  Both maps will be constructed from the same map, which we describe first.
  Horn-filling gives for any map $T(f)[\sI10001] \rightarrow T(f)$ a map
  $T(f)[\sI10011] \rightarrow T(f)$, in particular, we obtain for the first
  diagram below the second one, where $\overline{H}$ is the inverse homotopy.
  Extending this as in the third diagram below gives a map
  $G\colon T(f)[\Delta^1 \times \Delta^1] \rightarrow T(f)$.
  \begin{equation*}
    \xymatrix{ \bdot & \bdot \\
      \bdot \ar[u]|H \ar[ru]|{\Tr} &} \qquad\qquad
    \xymatrix{ 
      \bdot \ar@{.>}[r]|{\overline{H}} & \bdot \\
      \bdot \ar[u]|H \ar[ru]|{\Tr}  & } \qquad\qquad
    \xymatrix{
      \bdot \ar@{.>}[r]|{\overline{H}} & \bdot \\
      \bdot \ar[u]|H \ar[ru]|{\Tr} \ar[r]|\Tr & \bdot \ar[u]|\Tr } 
  \end{equation*}

  Define the map $G_1\colon A[\Delta^1 \times \Delta^1] \rightarrow T(f)$ as the
  restriction of $G$ to $A[\Delta^1 \times \Delta^1]$. Define the map
  $G_2\colon T(f)[\sI11010] \rightarrow T(f)$ as 
  \begin{equation}\label{diag:horn-filling-homotopy}
    \xymatrix{
      \bdot \ar@{->}[r]|{\overline{H}\circ s} & \bdot \\
      \bdot \ar[u]|H  \ar[r]|\Tr & \bdot  } 
  \end{equation}
  so on the $\sI11000$-part it is the restriction of $G$, but on the upper
  part $\sI00010$ we replace the homotopy $\overline{H}$ by $\overline{H}\circ s$. This
  replacement is crucial for the proof.

  We check that these maps are compatible. First $\overline{H}$ is a homotopy
  from $s$ to $\id$, hence $\overline{H}\circ s$ is a homotopy from $s^2$ to
  $s$; but $s^2 = s$ so it agrees with $H$ on the upper left vertex. Second,
  restricted to $A$ the map $s$ is the inclusion $\iota_0\colon A \rightarrow
  T(f)$, hence $\overline{H}\circ s \circ \iota_0 = \overline{H} \circ
  \iota_0$. So $G_1$ and $G_2$ glue to a map
  \begin{equation*}
     { T(f)[\sI11010] \cup A[\sI11111] \rightarrow T(f)}.
  \end{equation*}
  This map can be interpreted as a map $T(f)[0\times \Delta^1] \rightarrow T(f)$ together with a
  homotopy on the submodule $T(f)[0\times\{0,1\}] \cup A[0 \times \Delta^1]$.
  Using the homotopy extension property
  we
  obtain a map $T(f)[\Delta^1\times \Delta^1]
  \rightarrow T(f)$. This map in turn defines a homotopy when restricting
  along $T(f)[1\times \Delta^1] \rightarrow  T(f)[\Delta^1 \times
  \Delta^1]$ (which is $T(f)[\sI00100] \rightarrow  T(f)[\sI11111]$). This homotopy
  starts at the identity, ends at the map~$s$ and is the trivial homotopy
  on~$A$. Hence, it is the desired homotopy.
\end{proof}

\subsection{Pushouts of weak equivalences}
\label{subsec:pushouts-of-weq}

\begin{lem}\label{lem:pushout-along-acyclic-cofibration}
  Let
  \begin{equation*}\xymatrix{
    A \ar[r]\ar[d] & B \ar[d] \\
    C \ar[r]       & D}
  \end{equation*}
  be a pushout diagram in $\mathcal{C}^G$ where $A \rightarrow C$ is a
  cofibration and a homotopy equivalence. Then $B\rightarrow D$ is a homotopy
  equivalence.
\end{lem}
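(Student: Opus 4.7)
The plan is to upgrade $A \rightarrowtail C$ to exhibit $A$ as a deformation retract of $C$ in the sense of Definition~\ref{def:deformation-retract}, and then to push this data across the pushout. Without loss of generality $A \rightarrowtail C$ is a cellular inclusion (replacing it by an isomorphic one changes $D$ only up to isomorphism), so I write $\widetilde{i}\colon B \rightarrowtail D$ and $\widetilde{j}\colon C \rightarrowtail D$ for the canonical cellular inclusions of the pushout furnished by Lemma~\ref{lem:cellular-pushouts-in-CG}, and $j\colon A \rightarrow B$ for the given map.

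First I would produce a retraction $r\colon C \rightarrow A$ with $r\circ i = \id_A$. Let $g\colon C \rightarrow A$ be a homotopy inverse of $i$ and let $h\colon A[\Delta^1] \rightarrow A$ be a homotopy from $g\circ i$ to $\id_A$. Apply the Relative Horn-Filling Lemma~\ref{lem:horn-filling-relative} to $A \rightarrowtail C$ with $\Lambda^1_0 \subseteq \Delta^1$ and target $P = A$: the maps $h$ and $g\colon C = C[\Lambda^1_0] \rightarrow A$ agree on $A[\Lambda^1_0]$ (both restrict to $g\circ i$), so they extend to a controlled map $H\colon C[\Delta^1] \rightarrow A$. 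Setting $r := H \circ {i_1}_*$ one gets $r \circ i = h \circ {i_1}_* = \id_A$ and $r \simeq g$ via $H$, whence $i \circ r \simeq i\circ g \simeq \id_C$. The endomorphism $s := i\circ r\colon C \rightarrow C$ then satisfies $s^2 = s$ (from $r\circ i = \id_A$) and $s\circ i = i$.

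Second I would upgrade the homotopy $\id_C \simeq s$ to one relative to $A$, i.e., a homotopy $K\colon C[\Delta^1] \rightarrow C$ whose restriction along $i[\Delta^1]$ factors as $A[\Delta^1] \rightarrow A \xrightarrow{i} C$. The two formal properties $s^2 = s$ and $s\circ i = i$ are precisely those used in Lemma~\ref{lem:cyl-homotopy-relative-A}, so its proof applies essentially verbatim with $T(f)$ replaced by $C$ and $\iota_0\colon A \rightarrowtail T(f)$ replaced by $i\colon A \rightarrowtail C$: one fills the square $\Delta^1 \times \Delta^1$ by the idempotency trick of replacing one copy of the reverse homotopy $\overline{H}$ by $\overline{H}\circ s$ and then applying the Relative Horn-Filling Lemma to extend from $C[\sI11010] \cup A[\sI11111]$ to $C[\Delta^1\times\Delta^1]$. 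Finally I transport the data across the pushout. The maps $\id_B\colon B\rightarrow B$ and $j\circ r\colon C\rightarrow B$ agree on $A$ (both equal $j$) and so induce $r'\colon D\rightarrow B$ with $r'\circ \widetilde{i}=\id_B$ and $r'\circ \widetilde{j}=j\circ r$. Since $-[\Delta^1]$ is a left adjoint (Section~\ref{subsec:basic-definitions}), it preserves pushouts, giving $D[\Delta^1]=C[\Delta^1]\cup_{A[\Delta^1]}B[\Delta^1]$. The maps $\widetilde{j}\circ K\colon C[\Delta^1]\rightarrow D$ and the trivial homotopy $\widetilde{i}\circ p\colon B[\Delta^1]\rightarrow D$ then agree on $A[\Delta^1]$, precisely because $K$ is rel-$A$ and $\widetilde{j}\circ i=\widetilde{i}\circ j$, and so glue to $\widetilde{K}\colon D[\Delta^1]\rightarrow D$. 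A direct check on the two halves using the universal property gives $\widetilde{K}\circ {i_0}_* = \id_D$ and $\widetilde{K}\circ {i_1}_* = \widetilde{i}\circ r'$, which together with $r'\circ\widetilde{i}=\id_B$ shows $\widetilde{i}$ is a homotopy equivalence (in fact exhibits $B$ as a deformation retract of $D$).

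The main obstacle is the second step, producing the rel-$A$ homotopy: constructing the retraction $r$ from $g$ via one application of horn-filling is routine, but forcing the homotopy $\id_C \simeq i\circ r$ to be constant on $A$ genuinely uses the idempotency $s^2 = s$ and requires the relatively delicate idempotent-trick argument of Lemma~\ref{lem:cyl-homotopy-relative-A}. Everything else is formal bookkeeping with pushouts and the fact that $-[\Delta^1]$ commutes with them.
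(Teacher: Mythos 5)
Your proof is correct, but it follows a different route than the paper. The paper does not try to show that the acyclic cofibration $A\rightarrowtail C$ is itself a deformation retract inclusion; instead it factors $A\rightarrow C$ through the mapping cylinder as $A\rightarrowtail T(f)\rightarrow C$, splits the square into two pushouts $B\rightarrow Q\rightarrow D$ (Diagram~\eqref{diag:factoring-pushout-square}), quotes Proposition~\ref{prop:cylinder-deformation-retract} (whose proof contains the idempotent trick) to see that $B$ is a deformation retract of $Q$, and then proves by a separate explicit construction (Lemma~\ref{lem:two-pushouts}~(\ref{lem:2p:claim:QD}), using the homotopy extension property of $A\rightarrowtail C$) that collapsing the cylinder $Q\rightarrow D$ is a homotopy equivalence. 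You instead prove directly the stronger statement that a cellular inclusion which is a homotopy equivalence admits a strict retraction (one horn-filling, via Lemma~\ref{lem:horn-filling-relative}) and a deformation rel $A$, by observing that the proof of Lemma~\ref{lem:cyl-homotopy-relative-A} only uses $s^2=s$, $s\circ i=i$, a homotopy $s\simeq\id$, and the homotopy extension property of the inclusion --- none of which is specific to $T(f)$ --- and then you transport the deformation retraction across the single pushout using that $-[\Delta^1]$ preserves pushouts. What your route buys is economy: no intermediate object $Q$, no analysis of the cylinder collapse, and as a by-product the useful fact that acyclic cofibrations in $\mathcal{C}^G$ are deformation retract inclusions. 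What the paper's route buys is that the delicate rel-$A$ argument is used only in the form in which it is literally stated (for the front inclusion of a mapping cylinder), so no lemma has to be re-run outside its stated hypotheses; your argument is sound, but strictly speaking you are reusing the \emph{proof} of Lemma~\ref{lem:cyl-homotopy-relative-A} rather than its statement, and it would be cleanest to record the generalized statement (an idempotent $s$ with $s\circ i=i$, $s\simeq\id$, and $i$ a cellular inclusion admits a homotopy $\id\simeq s$ rel $A$) as a lemma before invoking it.
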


This is a key result on the way to prove the gluing lemma for homotopy
equivalences.  We remark that almost exactly the same proof works if we assume
that $A \rightarrow B$ is a cofibration instead of $A \rightarrow C$.

We can factor $f\colon A\rightarrow C$ into $A \rightarrowtail T(f) \rightarrow C$.
Taking the pushouts along the cellular inclusion $A \rightarrowtail T(f)$
and along the cofibration $A\rightarrowtail C$ gives a commutative diagram
\begin{equation}\label{diag:factoring-pushout-square}
  \xymatrix{
  A   \ar[r] \ar[d]  & B \ar[d] \\
  T(f)\ar[r] \ar[d]  & Q \ar[d] \\
  C   \ar[r]         & D
  }
\end{equation}
and the induced map $Q \rightarrow D$ completes the lower square to a
pushout square. 

The following lemma shows that both maps $B \rightarrow Q$ and $Q
\rightarrow D$ are homotopy equivalences, so their composition $B \rightarrow
D$ is one.

\begin{lem}\label{lem:two-pushouts} 
  In the situation of Diagram~\eqref{diag:factoring-pushout-square} the
  following holds.
  \begin{enumerate}
    \item \label{lem:2p:claim:BQ} The map $B \rightarrow Q$ is a homotopy
      equivalence. \\
      (This uses that $A \rightarrow C$ is a homotopy equivalence.)
    \item \label{lem:2p:claim:QD} The map $Q \rightarrow D$ is a homotopy
      equivalence. \\
      (This uses that $A \rightarrow C$ is a cofibration.)
  \end{enumerate}
\end{lem}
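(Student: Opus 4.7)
For part (\ref{lem:2p:claim:BQ}), I would first upgrade the cellular inclusion $\iota_0 \colon A \rightarrowtail T(f)$ to a deformation retraction. Since $f \colon A \to C$ is a homotopy equivalence by hypothesis and $p \colon T(f) \to C$ is one by the cylinder axiom~\ref{lem:cylinder-axiom}, the saturation axiom~\ref{lem:saturation-axiom} forces $\iota_0$ to be a homotopy equivalence; Proposition~\ref{prop:cylinder-deformation-retract} then supplies a retraction $r \colon T(f) \to A$ together with a homotopy $G \colon T(f)[\Delta^1] \to T(f)$ from $\iota_0 \circ r$ to $\id_{T(f)}$ relative to $A$. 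The plan is to push this data out along $A \to B$: using the pushout $Q = T(f) \cup_A B$, the pair $(r, \id_B)$ assembles to a retraction $\tilde{r} \colon Q \to B$, and $G$ together with the trivial homotopy on $B[\Delta^1]$ assembles to a homotopy $\tilde{G} \colon Q[\Delta^1] \to Q$. The hypothesis that $G$ is relative to $A$ is exactly what is needed for the two pieces of $\tilde{G}$ to agree on $A[\Delta^1]$, so both $\tilde{r}$ and $\tilde{G}$ are well-defined. This exhibits $B \rightarrowtail Q$ as a deformation retract of $Q$, hence a homotopy equivalence.

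For part (\ref{lem:2p:claim:QD}), I would invoke the dual deformation retraction from Lemma~\ref{lem:cylinder-axiom}: $p \colon T(f) \to C$ has cellular section $\iota_1 \colon C \rightarrowtail T(f)$, and there is a homotopy $H \colon T(f)[\Delta^1] \to T(f)$ from $\iota_1 p$ to $\id_{T(f)}$ relative to $C$. An additional, essential observation is that $H$ is induced by the simplicial map $\widehat{H}\colon \Delta^1\times\Delta^1 \to \Delta^1$ used in the proof of Lemma~\ref{lem:cylinder-axiom}, and that $p$ restricted to $A[\Delta^1]$ factors through the projection $A[\Delta^1]\to A$; therefore $p \circ H$ equals the constant homotopy $p \circ \pr$. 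Writing $j \colon T(f) \rightarrowtail Q$ for the cellular inclusion obtained as the pushout of $\iota_0$, the homotopy extension property for $j$---a direct consequence of the Relative Horn-Filling Lemma~\ref{lem:horn-filling-relative}---extends the homotopy $j \circ H$ to a homotopy $K \colon Q[\Delta^1] \to Q$ starting at $\id_Q$ and ending at a map $\tilde{s} \colon Q \to Q$ satisfying $\tilde{s} \circ j = j \circ \iota_1 \circ p$.

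I would then define the candidate inverse $s \colon D \to Q$ as $\tilde{s}$ on $Q$ and $j \circ \iota_1$ on $C$; by the defining property of $\tilde{s}$ these agree on $T(f)$, so $s$ is well-defined by the pushout property of $D = Q \cup_{T(f)} C$. The composite $s \circ \bar{j}$ equals $\tilde{s}$, which is homotopic to $\id_Q$ via $K$. The other direction requires a homotopy from $\bar{j} \circ s$ to $\id_D$: the natural candidate is to glue $\bar{j} \circ K$ on $Q[\Delta^1]$ with the constant homotopy at $\bar{p}$ on $C[\Delta^1]$, but these two pieces agree on the intersection $T(f)[\Delta^1]$ \emph{precisely} when $p \circ H$ is constant in the $\Delta^1$-direction, which is exactly the rigidification $p\circ H = p\circ \pr$ noted above. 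I expect this compatibility check to be the main obstacle of the proof: it is geometrically transparent but technically requires unpacking the explicit construction of $H$ rather than using only its formal properties. Once it is in hand, the glued homotopy identifies $\bar{j}\circ s$ with $\id_D$, and $\bar{j}\colon Q \to D$ is a homotopy equivalence.
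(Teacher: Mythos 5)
Your part~(\ref{lem:2p:claim:BQ}) is correct and is exactly the check the paper leaves to the reader: Proposition~\ref{prop:cylinder-deformation-retract} gives a deformation retraction of $T(f)$ onto $A$ \emph{relative to $A$}, and precisely because the retraction and the homotopy are constant on $A$ they glue with $\id_B$ (respectively the constant homotopy on $B$) over $Q[\Delta^1]=T(f)[\Delta^1]\cup_{A[\Delta^1]}B[\Delta^1]$, exhibiting $B$ as a deformation retract of $Q$.

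Part~(\ref{lem:2p:claim:QD}) has a genuine gap. You apply the homotopy extension property to $j\colon T(f)\rightarrow Q$, calling it ``the cellular inclusion obtained as the pushout of $\iota_0$''. But in the top square of Diagram~\eqref{diag:factoring-pushout-square} the cobase change of the cellular inclusion $\iota_0\colon A\rightarrowtail T(f)$ along $A\rightarrow B$ is the map $B\rightarrowtail Q$; the map $T(f)\rightarrow Q$ is the cobase change of the \emph{arbitrary} map $A\rightarrow B$ along $\iota_0$, and by Lemma~\ref{lem:cellular-pushouts-in-CG} (see Table~\ref{tab:cc-on-pushouts}) it is merely a controlled map, not a cofibration --- for $B=*$ it is the quotient $T(f)\rightarrow T(f)/A$. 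Hence Lemma~\ref{lem:horn-filling-relative} does not supply your extension $K$, and no repair is possible along these lines: since $Q[\Delta^1]=T(f)[\Delta^1]\cup_{A[\Delta^1]}B[\Delta^1]$, extending the pair $(\id_Q,\,j\circ H)$ would require a homotopy on $B[\Delta^1]$ restricting on $A[\Delta^1]$ to $j\circ H\circ\iota_0[\Delta^1]$, the nonconstant sliding homotopy from $j\circ\iota_0$ to $j\circ\iota_1\circ f$, and there is no way to extend this across the arbitrary map $A\rightarrow B$. A telling symptom is that your argument never uses the hypothesis that $A\rightarrow C$ is a cofibration, which the lemma explicitly flags as the input for this half. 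The paper's proof uses it exactly where you would need it: the homotopy extension property of $A\rightarrowtail C$ gives a retraction $R\colon C[\Delta^1]\rightarrow A[\Delta^1]\cup_{A[1]}C$, from which one builds a homotopy inverse $g$ of $e\colon A[\Delta^1]\cup_{A[1]}C\rightarrow C$ together with homotopies $H$ and $G$ that are relative to $A$ (respectively $A[0]$); only because these are constant on $A$ do they glue along $A[0]\rightarrow B$ to a homotopy inverse of $Q\rightarrow D$. Your observation that $p\circ H=p\circ\pr$ is correct, but it only handles the gluing on the $C$-side; the real obstruction sits on the $B$-side of $Q$.
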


\begin{proof}[Proof of part~(\ref{lem:2p:claim:BQ})]
  By Proposition~\ref{prop:cylinder-deformation-retract}, $A$ is a
  deformation retract of $T(f)$ via the cellular inclusion $A \rightarrowtail
  T(f)$.  One may check directly that then $B$ is a deformation retract of $Q$.
  In particular, $B \rightarrow Q$ is a homotopy equivalence.
\end{proof}

\begin{proof}[Proof of part~(\ref{lem:2p:claim:QD})]
  Written out $Q \rightarrow D$ is the map
  \begin{equation*}
    B \cup_{A[0]} A[\Delta^1] \cup_{A[1]} C \rightarrow B \cup_A C
  \end{equation*}
  induced by $A[\Delta^1] \rightarrow A$. We have to construct a homotopy
  inverse for this map. We will construct a homotopy equivalence $A[\Delta^1]
  \cup_{A[1]} C \rightarrow C$ and its inverse which is relative to $\iota_0^A
  \colon A[0]
  \rightarrowtail A[\Delta^1] \cup_{A[1]} C$, resp.~to $j_A\colon A
  \rightarrowtail C$, hence glues along $A[0] \rightarrow B$ to the desired
  homotopy equivalence
  \begin{equation*}
    B \cup_{A[0]} A[\Delta^1] \cup_{A[1]} C \xrightarrow{\simeq} B \cup_A C\ ,
  \end{equation*}
  as $M \mapsto M[\Delta^1]$ commutes with pushouts. Therefore, we have to construct for
  \begin{equation*}
    e  \colon A[\Delta^1]\cup_{A[1]} C \rightarrow C 
  \end{equation*}
  (induced by $A[\Delta^1]\rightarrow A$) maps
  \begin{equation*}
      g  \colon C \rightarrow A[\Delta^1]\cup_{A[1]} C
  \end{equation*}
  and homotopies
  \begin{gather*}
      H  \colon C[\Delta^1] \rightarrow C\\
      G  \colon \left(A[\Delta^1]\cup_{A[1]} C\right)\![\Delta^1] \rightarrow A[\Delta^1]
      \cup_{A[1]} C
  \end{gather*}
  with the properties
  \begin{align*}
    H_0 &= e\circ g      & H_1 &= \id \\
    G_0 &= \id           & G_1 &= g\circ e \\
    G\circ \iota_0^A[\Delta^1] &= \iota_0^A  & H \circ j_A[\Delta^1] &= j_A \\
    g\circ j_A &= \iota_0^A       & e \circ \iota_0^A &= j_A\ .
  \end{align*}

  Using the homotopy extension property of the cofibration $j_A\colon A \rightarrowtail
  C$ there is a retraction $R \colon
  C[\Delta^1] \rightarrow A[\Delta^1]\cup_{A[1]} C$. Define $g$ as the
  composition
  \begin{equation*}
    C \xrightarrow[\hspace{2em}]{\iota_0^C} C[\Delta^1]
    \xrightarrow[\hspace{2em}]{R} A[\Delta^1] \cup_{A[1]} C.
  \end{equation*}
  We obtain $g \circ j_A = \iota^A_0$.

  Define $H$ as the composition $e \circ R\colon C[\Delta^1] \rightarrow
  A[\Delta^1] \cup_{A[1]} C \rightarrow C$. One may check that $H$ is a homotopy
  from $e\circ g$ to $\id_C$ relative to $A$.

  For the other composition consider the commutative diagram
  \begin{equation*}
    \xymatrix@C=12pt{  
    A[\Delta^1]\cup_{A[1]} C  \ar[rd]_j \ar[rr]^-e  && 
       C \ar[rr]^-g\ar[rd]_{\iota_0^C}   && 
       A[\Delta^1] \cup_{A[1]} C \\
    &  C[\Delta^1] \ar[ur]_{\pr}\ar@{-->}[rr]  && 
       C[\Delta^1] \ar[ur]_R
    }
  \end{equation*}
  where dashed map is the projection to $C[0]$. It is homotopic relative
  $C[0]$ to the
  identity. This gives a homotopy $G$ from the identity to the composition
  $g\circ e$, using that $R$ is a retraction for $j$. One may check that $G$ is
  relative to $A[0]$.

  This shows that $e$ is a homotopy equivalence and therefore makes $Q
  \rightarrow D$ into one.
\end{proof}

\subsection{The gluing lemma}
\label{subsec:gluing-lemma}

We now prove the gluing lemma in $\mathcal{C}^G$.  It is the essential
ingredient to prove Theorem~\ref{thm:c-g-waldhausen-category}, which says that
the homotopy equivalences are a category of weak equivalences for
$\mathcal{C}^G$.
\begin{lem}[gluing lemma]\label{lem:gluing-lemma}
  If we have the diagram in $\mathcal{C}^G$
  \begin{equation*}
    \xymatrix{
    B \ar[d]^{\sim}    & A \ar[d]^{\sim} \ar@{ >->}[l]\ar[r]   & C \ar[d]^{\sim} \\
    B'          & A'       \ar@{ >->}[l]\ar[r]   & C'}
  \end{equation*}
  with $A \rightarrowtail B$ and $A' \rightarrowtail B'$ cofibrations and
  all three vertical arrows are homotopy equivalences, then the induced map
  \begin{equation*}
    B \cup_A C \rightarrow B' \cup_{A'} C' 
  \end{equation*}
  on the pushouts is also a homotopy equivalence. 
\end{lem}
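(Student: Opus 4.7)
The plan is to reduce first to the case where both horizontal maps out of $A$ (respectively $A'$) are cofibrations, and then to compare both pushouts with a common intermediate pushout built from the cylinders of the three vertical weak equivalences.

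For the reduction, replace $A\to C$ and $A'\to C'$ by the cellular inclusions into their mapping cylinders: $A\rightarrowtail \tilde C := T(A\to C)\xrightarrow{\sim} C$ and $A'\rightarrowtail \tilde C' \xrightarrow{\sim} C'$. Functoriality of $T$ extends the original diagram to a commutative one, in which $\tilde C\to \tilde C'$ is a homotopy equivalence by the cylinder axiom (Lemma~\ref{lem:cylinder-axiom}) and the saturation axiom (Lemma~\ref{lem:saturation-axiom}). Lemma~\ref{lem:two-pushouts}(2), applied with the roles of $B$ and $C$ exchanged (the proof uses only the homotopy extension property of the cofibration out of $A$, which here is $A\rightarrowtail B$), shows that the natural maps $B\cup_A \tilde C\to B\cup_A C$ and $B'\cup_{A'}\tilde C'\to B'\cup_{A'} C'$ are homotopy equivalences. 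By 2-out-of-3 it suffices to prove the conclusion for the modified diagram, in which all four horizontal maps are now cellular inclusions.

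In this doubly-cofibrant case, factor each vertical weak equivalence through its mapping cylinder: $A\xrightarrow{\iota_0^A} T(\varphi_A)\xrightarrow{p_A} A'$, where $\iota_0^A$ is an acyclic cofibration by Proposition~\ref{prop:cylinder-deformation-retract} and $p_A$ is a homotopy equivalence by Lemma~\ref{lem:cylinder-axiom}; consequently the back inclusion $\iota_1^A$ is also an acyclic cofibration, since $p_A\circ\iota_1^A = \id_{A'}$. Analogously for $B$ and $\tilde C$. Functoriality of $T$ on the two commuting vertical squares of the reduced diagram yields a middle row $T(\varphi_B)\leftarrow T(\varphi_A)\to T(\varphi_{\tilde C})$, whose horizontal arrows are cofibrations (either by direct inspection using the cofibrancy of $A\rightarrowtail B$, $A\rightarrowtail \tilde C$, $A'\rightarrowtail B'$, $A'\rightarrowtail \tilde C'$, or from the exactness of the cylinder functor, Cyl~1 in Theorem~\ref{thm:cylinder-functor}). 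Set $P_2 := T(\varphi_B)\cup_{T(\varphi_A)}T(\varphi_{\tilde C})$; the front inclusions induce $P_1 := B\cup_A \tilde C\to P_2$, and the back inclusions induce $P_3 := B'\cup_{A'}\tilde C'\to P_2$.

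The final step is to show $P_1\to P_2$ and $P_3\to P_2$ are homotopy equivalences. Starting from $P_1$, iteratively push out along the acyclic cofibrations $A\rightarrowtail T(\varphi_A)$, $B\rightarrowtail T(\varphi_B)$, and $\tilde C\rightarrowtail T(\varphi_{\tilde C})$; each step is a homotopy equivalence by Lemma~\ref{lem:pushout-along-acyclic-cofibration}, and by associativity of pushouts together with the universal property (the copies of $A, B, \tilde C$ in the iterated pushout already factor through the respective cylinders, so any cocone on $(T(\varphi_A), T(\varphi_B), T(\varphi_{\tilde C}))$ extends canonically), the resulting object is canonically isomorphic to $P_2$. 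The symmetric argument using back inclusions shows $P_3\to P_2$ is a homotopy equivalence. Since $p_?\circ \iota_1^? = \id$ on each of $A', B', \tilde C'$, the composition $P_3\to P_2\to P_3$ equals the identity, so by saturation $P_2\to P_3$ is also a homotopy equivalence; composing yields the desired homotopy equivalence $P_1\to P_3$. The main obstacle will be the bookkeeping in verifying that the iterated pushout is canonically isomorphic to $P_2$, which amounts to a careful pasting of pushout squares and tracking of the cylinder embeddings; the identifications are forced by universal properties, but making them explicit requires some care.
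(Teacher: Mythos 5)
Your overall architecture (reduce to the doubly cofibrant case, then compare $P_1$ and $P_3$ with the double mapping cylinder $P_2$) is sound and is a genuinely different route from the paper, which instead verifies the axioms of a category of cofibrant objects and invokes Lemma~II.8.8 of \cite{goerss-jardine;simplicial-homotopy-theory}. Your reduction step is acceptable, with the caveat that Lemma~\ref{lem:two-pushouts}~(\ref{lem:2p:claim:QD}) as stated has $A\rightarrowtail C$ as the cofibration; what you need is the role-exchanged variant, which does hold by the same argument using the homotopy extension property of $A\rightarrowtail B$ (compare the remark after Lemma~\ref{lem:pushout-along-acyclic-cofibration}), so this is a variant to be spelled out rather than a citation.

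The genuine gap is in the final step. The object obtained by iteratively pushing out $P_1$ along $A\rightarrowtail T(\varphi_A)$, $B\rightarrowtail T(\varphi_B)$ and $\tilde C\rightarrowtail T(\varphi_{\tilde C})$ is the colimit of a diagram that does \emph{not} contain the maps $T(\varphi_A)\rightarrow T(\varphi_B)$ and $T(\varphi_A)\rightarrow T(\varphi_{\tilde C})$; it only glues the three cylinders to $P_1$ along $A$, $B$, $\tilde C$, and is therefore not canonically isomorphic to $P_2$. Concretely, take every object in the diagram to be $R$ (over a point) and every map the identity: then $P_1\cong R$ and $P_2\cong R[\Delta^1]$, while your iterated pushout is $R$ of a wedge of three $1$-simplices, which has three $1$-cells instead of one. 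The universal-property parenthetical does not repair this: a cocone on $T(\varphi_B)\leftarrow T(\varphi_A)\rightarrow T(\varphi_{\tilde C})$ must agree on all of $T(\varphi_A)$, not merely on $A$. The correct decomposition is $P_1\rightarrowtail B\cup_A T(\varphi_{\tilde C})\cong\bigl(B\cup_A T(\varphi_A)\bigr)\cup_{T(\varphi_A)}T(\varphi_{\tilde C})\rightarrow P_2$, where the first map is a pushout of the acyclic cofibration $\tilde C\rightarrowtail T(\varphi_{\tilde C})$ and the second is a pushout of the \emph{corner map} $B\cup_A T(\varphi_A)\rightarrow T(\varphi_B)$. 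So the missing content is precisely that this corner map is a cofibration (this is the exactness of the cylinder functor, i.e.~(Cyl~1) from Theorem~\ref{thm:cylinder-functor}) and a homotopy equivalence (by saturation, since $B\rightarrowtail B\cup_A T(\varphi_A)$ is a pushout of the acyclic cofibration $A\rightarrowtail T(\varphi_A)$ and $B\rightarrowtail T(\varphi_B)$ is acyclic by Proposition~\ref{prop:cylinder-deformation-retract}); the analogous corner map for the back inclusions is what makes $P_3\rightarrow P_2$ an equivalence. This is not bookkeeping forced by universal properties; it is the essential step, and without it the argument does not close.
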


\begin{proof}
  It it shown in Lemma~II.8.8
  in~\cite[p.~127]{goerss-jardine;simplicial-homotopy-theory} that a
  \emph{category of cofibrant objects} satisfies the gluing lemma.  We recall
  that notion from \cite[p.~122]{goerss-jardine;simplicial-homotopy-theory}.
  It was first introduced by Kenneth Brown in
  \cite{brown;abstract-homotopy-sheaf-cohomology}, where he treats the dual
  version.
   
  A \emph{category of cofibrant objects} is a category $\mathcal{D}$ which
  satisfies the following axioms.
  \begin{enumerate} \setcounter{enumi}{-1}
    \item \label{it:cofibrant-obj:coproducts} 
      The category contains all finite coproducts.
    \item \label{it:cofibrant-obj:two-of-three} 
      The $2$-out-of-$3$ property holds for weak equivalences.
    \item \label{it:cofibrant-obj:composition-cofibrations} 
      The composition of cofibrations is a cofibration, isomorphisms are
      cofibrations.
    \item \label{it:cofibrant-obj:pushouts-of-cofibrations} 
\label{it:rem:cat-of-cofib-pushout}
      Pushout diagrams of the form
      \begin{equation*}
        \xymatrix{A \ar[r] \ar@{->}[d]^i  & B \ar@{->}[d]^{i_*} \\
                  C \ar[r]                  & D}
      \end{equation*}
      exist when $i$ is a cofibration. In this case $i_*$ is a cofibration which is
      additionally a weak equivalence if $i$ is one.
    \item \label{it:cofibrant-obj:cylinder-object} 
      For each object there is a cylinder object.
    \item \label{it:cofibrant-obj:cofibrant-objects} 
      For each $X$ the unique map $* \rightarrow X$ from the initial
      object is a cofibration.
  \end{enumerate}
  The notion of a cylinder object in
  \cite[p.~123]{goerss-jardine;simplicial-homotopy-theory} is slightly
  different from our notion, but as the cylinder
  axiom~\ref{lem:cylinder-axiom} holds our cylinder functor from
  Section~\ref{subsec:cylinder-functor} applied to the identity yields a
  cylinder object in the sense of
  \cite[p.~123]{goerss-jardine;simplicial-homotopy-theory}.  This verifies
  condition (\ref{it:cofibrant-obj:cylinder-object}.).

  We have already established that the other conditions hold for
  $\mathcal{C}^G$: as $\mathcal{C}^G$ is a category with cofibrations
  (Section~\ref{subsubsec:subcategory-cofibrations}) every object is cofibrant
  (\ref{it:cofibrant-obj:cofibrant-objects}.),
  we have twofold and hence finite coproducts
  (\ref{it:cofibrant-obj:coproducts}.), isomorphisms as well as
  the composition of cofibrations are cofibrations
  (\ref{it:cofibrant-obj:composition-cofibrations}.), and the pushout along a
  cofibration exists and is a cofibration
  (half of \ref{it:cofibrant-obj:pushouts-of-cofibrations}.).
  The 2-out-of-3 property (\ref{it:cofibrant-obj:two-of-three}.) is the
  Saturation Axiom~\ref{lem:saturation-axiom}.   Finally, the pushout of a
  cofibration which is a homotopy equivalence is a homotopy equivalence
  (\ref{it:cofibrant-obj:pushouts-of-cofibrations}.) by
  Lemma~\ref{lem:pushout-along-acyclic-cofibration}.
  Hence, by Lemma II.8.8 of~\cite{goerss-jardine;simplicial-homotopy-theory},
  the gluing lemma holds in $\mathcal{C}^G$.
\end{proof}


\subsection{The extension axiom}
\label{subsec:extension-axiom}

Next we want to prove the extension axiom for our category
$\mathcal{C}^G(X;R)$.  We will need to use explicitly that we can sum maps.
Unlike the results in the previous sections, the extension axiom does not hold
in Waldhausen's category of retractive spaces over a point,
see~\cite[1.2]{Waldhausen;spaces;;;;1985}.

Let $\mathcal{C}$ be a category with cofibrations. A \emph{cofiber sequence}
in $\mathcal{C}$ is a sequence $A \rightarrowtail B \twoheadrightarrow C$ in
$\mathcal{C}$ where $A \rightarrowtail B$ is a cofibration and $B
\twoheadrightarrow C$ is isomorphic to the map $B \twoheadrightarrow B/A := B
\cup_A *$.

\begin{definition}[extension axiom]
  \label{def:extension-axiom}
  A subcategory $w\mathcal{C}$ of weak equivalences of $\mathcal{C}$ satisfies
  the \emph{extension axiom} if for each map of cofiber sequences
  \begin{equation*}
    \xymatrix{
    A \ar[d]^{f_A} \ar@{ >->}[r]  & B \ar[d]^{f_B} \ar@{->>}[r] & C \ar[d]^{f_C} \\
    A'             \ar@{ >->}[r]  & B'             \ar@{->>}[r] & C'}
  \end{equation*}
  where $f_A$ and $f_C$ are weak equivalences, the map $f_B$ is a weak
  equivalence. Sometimes $B$ (resp.~$f_B$) is called an \emph{extension} of
  $A$ by $C$ (resp.~of $f_A$ by $f_C$).
\end{definition}

We first need a relative homotopy lifting property.  We directly prove a more
general horn-filling property.

\begin{lem}[Horn-filling relative to a map]\label{lem:kan-fibration-horn-filling}\label{lem:kan-fibration-lifting}
  Let $A \rightarrowtail M$ be a cellular inclusion in $\mathcal{C}^G$. Let $U
  \rightarrowtail P$ also be a cellular inclusion in $\mathcal{C}^G$ and let
  $P \twoheadrightarrow Q := P/U$ the quotient map. Then $A \rightarrow M$
  has the \emph{relative horn-filling property} with respect to $P
  \rightarrow Q$. This means, given a horn $\Lambda^n_i \subseteq \Delta^n$
  and a solid commutative diagram of controlled maps
  \begin{equation}\label{eq:diag-hep-lifting}
    \vcenter{\xymatrix{
        M[\Lambda^n_i] \cup A[\Delta^n]\ar[r] \ar[d]   & P \ar[d]  \\
        M[\Delta^n]  \ar[r] \ar@{-->}[ur]          & Q}}
  \end{equation}
  then the dashed lift exists.
\end{lem}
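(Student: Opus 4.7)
The proof will closely follow the proof of Lemma~\ref{lem:horn-filling-relative}, replacing the single horn-filling step (where one uses that a simplicial abelian group is a Kan complex) by the observation that a surjection of simplicial abelian groups is a Kan fibration. Without loss of generality the cofibrations in~\eqref{eq:diag-hep-lifting} are already cellular inclusions. Set $B_k := A \cup M_k$ and $N_k := M[\Lambda^n_i] \cup_{B_k[\Lambda^n_i]} B_k[\Delta^n]$, so that $N_{-1} = M[\Lambda^n_i] \cup A[\Delta^n]$ and $N_k = M[\Delta^n]$ once $k$ exceeds the cellular dimension of $M$. I will construct lifts $g_k \colon N_k \to P$ inductively, starting from $g_{-1}$ given by the top horizontal arrow of~\eqref{eq:diag-hep-lifting}, requiring throughout that $g_k$ is compatible with the composition $N_k \hookrightarrow M[\Delta^n] \to Q$ and satisfies a support condition of the form~\eqref{eq:horn-filling-ind-hypothesis}.

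To extend from $g_{k-1}$ to $g_k$ it suffices (attaching one $G$-orbit at a time) to deal with a single $k$-cell $e$ of $M$ not in $A$. Exactly as in~\eqref{eq:rel-horn-filling}, this reduces to finding a single map
\begin{equation*}
  \tau\colon \underline{R}[\Delta^k \times \Delta^n] \longrightarrow P
\end{equation*}
over $\kappa^M(e)$, extending a prescribed map from $\underline{R}[\Delta^k \times \Lambda^n_i \cup \partial\Delta^k \times \Delta^n]$ produced by $g_{k-1}$, and with a prescribed image in $Q$ coming from the bottom arrow of~\eqref{eq:diag-hep-lifting}. By the inductive support condition together with boundedness of $\langle e\rangle_M$, both the boundary datum and the quotient datum are confined to a cellular submodule $P_e \subseteq P$ of bounded support lying over $\{\kappa^M(e)\}^{E}$ for a suitable $E \in \mathcal{E}$; let $Q_e \subseteq Q$ denote its image.

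Via the $\Hom$-bijection \ref{subsubsec:G-adjunctions}.\eqref{eq:hom-bijection-all-G} together with the identification $\HOM^{\mathcal{E}}_R(\underline{R}, P_e) = P_e$ (valid since $P_e$ has bounded support), and likewise for $Q_e$, the problem becomes a lifting problem of simplicial sets
\begin{equation*}
  \xymatrix{
    \Delta^k \times \Lambda^n_i \cup \partial\Delta^k \times \Delta^n
         \ar[r] \ar[d] & P_e \ar@{->>}[d] \\
    \Delta^k \times \Delta^n \ar[r] \ar@{-->}[ur] & Q_e
  }
\end{equation*}
The left inclusion is anodyne and the right vertical map is a surjection of simplicial abelian groups, hence a Kan fibration, so the dashed lift exists. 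Assembling these lifts cell-by-cell yields $\tau$ and thereby $g_k$. The inductive support condition for $g_k$ follows by the same case distinction on whether $e \in \diamond_R \langle e_0\rangle_M$ or not, exactly as in the chain of inclusions around~\eqref{eq:horn-filling-bounded-condition}. Finally, $G$-equivariance is obtained by choosing $\tau$ only on a set of representatives of $G$-orbits of $k$-cells and extending equivariantly, as in the last paragraph of the proof of Lemma~\ref{lem:horn-filling-relative}.

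The main obstacle is the careful verification that $P_e \twoheadrightarrow Q_e$ is genuinely surjective; this uses that $P \twoheadrightarrow Q = P/U$ is a degreewise surjection of simplicial abelian groups and that $P_e$ can be chosen so that its image is all of $Q_e$, which is possible since lifting happens one simplex at a time and the boundedness of $P_e$ can be enlarged by a fixed $E$-thickening without leaving $\mathcal{E}$. Once that is in place, the rest of the argument is essentially bookkeeping of control conditions, parallel to the absolute case.
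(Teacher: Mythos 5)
Your proposal is correct and takes essentially the same route as the paper's proof: the same cell-by-cell induction with a strengthened, controlled induction hypothesis, reduction via the $\Hom$-bijection to a lifting problem of simplicial sets against a surjection (hence Kan fibration) of simplicial abelian groups of bounded support, and orbitwise equivariant choice of lifts. The only point the paper makes more explicit is that the preimages are chosen once and globally as a cell-section $\vartheta\colon \diamond_R Q \rightarrow \diamond_R P$ of the quotient (possible since $\diamond_R P \cong \diamond_R Q \cup \diamond_R U$, so no thickening is needed), which makes your submodule $P_e$ monotone in $\left<e\right>_M$ and its surjectivity onto the relevant submodule of $Q$ automatic by construction --- exactly what is needed for the inductive support condition to propagate to all cells $e_0$ with $\left<e\right>_M \subseteq \left<e_0\right>_M$.
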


The case $n=1$, $i=1$ gives the homotopy lifting property with respect
to $P \rightarrow Q$.  The proof proceeds similarly to the proof of
Lemma~\ref{lem:horn-filling-relative}.  It is not stated there in the full
generality, as we need the generalized version only in this section.

We will need the following extra ingredient: any surjective map $B
\twoheadrightarrow C$ of simplicial abelian groups is a Kan fibration.  This
follows e.g. from \cite[Corollary V.2.7,
p.~263]{goerss-jardine;simplicial-homotopy-theory}.  Consequently, for a
cellular inclusion of simplicial $R$-modules $A \rightarrowtail B$, the map $B
\twoheadrightarrow B/A$ is a Kan fibration of simplicial sets.

\begin{proof}[Proof of Lemma~\ref{lem:kan-fibration-horn-filling}]
  The proof is very similar to the proof of
  Lemma~\ref{lem:horn-filling-relative}, but more involved.  The main point is
  that we need to find a lift relative to the map $P \rightarrow Q$.  To still
  keep the control, we have to strengthen the induction hypothesis.
  
  We first treat the case $G=\{1\}$.  Let $B_k := A \cup M_k$, where $M_k$ is
  the submodule of $M$ generated by all cells of dimension $\leq k$.  We do
  induction over $k$.  We abbreviate $N_k := \left( M[\Lambda^n_i]
  \cup_{B_k[\Lambda^n_i]} B_k[\Delta^n]\right)$, $N_{\infty} := M[\Delta^n] =
  \bigcup_k N_k$.  We have to find a lift in the diagram (which also fixes our
  notation for the maps)
  \begin{equation}\label{eq:horn-fill-map-lift}
    \vcenter{\xymatrix{
    N_{-1}\ar[r]^{f} \ar[d]^i   & P \ar[d]^p  \\
    N_{\infty} \ar[r]^h \ar@{-->}[ur]          & Q}}.
  \end{equation}

  We need to be able to restrict $p$
  ``locally'', such that it is still a fibration.  It suffices that we
  ``locally''
  construct maps which are surjections of abelian simplicial
  groups after forgetting control and $R$-module structure.  We make the
  following choices.  For each $e_Q \in \diamond_R Q$ choose an $\vartheta(e_Q)
  \in \diamond_R P$ with $p(\vartheta(e_Q)) = e_Q$.  Such a map
  $\vartheta\colon \diamond_R Q \rightarrow \diamond_R P$ exists as
  $\diamond_R P \cong \diamond_R Q \cup \diamond_R U$.
  
  We assume the following induction hypothesis:
  \begin{enumerate}
    \item There is a map $g_k\colon N_k \rightarrow P$ which extends $f$ over
      $h$, i.e., is a partial lift in the diagram~\eqref{eq:horn-fill-map-lift}.
    \item For each $e_0\in \diamond_R M$ the map $g_k$ restricts to
      \begin{multline*}
        \left<e_0\right>_M [\Delta^n]\ \cap\ 
        N_k
        \xrightarrow[\hspace{2em}]{g_k} \\
        \left<f\left(\left<e_0\right>_M [\Delta^n]  \cap  
        N_{-1} \right)\right>_P\ \cup \ 
        \bigcup
        \Bigl\{\left<\vartheta(e_Q)\right>_P\, \Big|\, e_Q\in \diamond_R  
        \left<h(\left<e_0\right>_{M}[\Delta^n]) \right>_Q\Bigr\}
      \end{multline*}
  \end{enumerate}
  The second condition implies that $g_k$ is $E_f \circ E_M \cup E_M \circ
  E_h \circ E_P$-controlled.  Roughly speaking it ensures that the lift does
  not hit a module which is uncontrollably large.  Here is a reason for why it
  has to be at least that size.  First we must allow a cell $e_0$ to at least
  hit the image of $f$ of the part of the cell
  intersecting $N_{-1}$.  Second, the cell hits certain elements in $Q$, so we
  must have possible lifts for all of them.

  We do induction over $k$.  We can attach cells of the same dimension
  independently, so we only treat the case of attaching one cell $e$ of
  dimension $k$.  As before the left square of the following
  diagram is a pushout.
  \begin{equation}\label{eq:rel-horn-filling-map}
    \vcenter{\xymatrix{
    R[\Delta^k \times \Lambda^n_i \cup \partial \Delta^k \times \Delta^n]
    \ar[d]\ar[r]^{\partial e_*} &
      B_k[\Lambda^n_i] \cup_{B_{k-1}[\Lambda^n_i]}  
      B_{k-1}[\Delta^n] \ar[d]\ar[r]^-{g_{k-1}} & 
      P \ar[d]^p\\
      R[\Delta^k \times \Delta^n] \ar[r]^{e_*} & B_k[\Delta^n] \ar@{-->}[ur]
      \ar[r]^h &
      Q} }
  \end{equation}
  We can replace the middle column by $N_{k-1} \rightarrow N_k$ and the
  diagram remains commutative and the left square a pushout.  We only have
  to find a lift in the outer
  diagram of~\eqref{eq:rel-horn-filling-map}.  We abbreviate
  \begin{equation*}
    \begin{split}
        P^f(e) &:=\left<f\left(\left<e\right>_M [\Delta^n]\cap  
        N_{-1} \right)\right>_P  \\
        P^h(e) & :=\bigcup
        \Bigl\{\left<\vartheta(e_Q)\right>_P\, \Big|\, e_Q\in \diamond_R  
        \left<h(\left<e\right>_{M}[\Delta^n]) \right>_Q\Bigr\}
    \end{split}
  \end{equation*}
  Both are cellular submodules of $P$.  We obtain a factorization of the outer
  diagram of~\eqref{eq:rel-horn-filling-map}
  \begin{equation*}
    \vcenter{\xymatrix{
    R[\Delta^k \times \Lambda^n_i \cup \partial \Delta^k \times \Delta^n]
    \ar[d]\ar[r]^-{\partial e_*} & 
    P^f(e) \cup P^h(e) \ar[r]\ar[d] & P \ar[d]^p\\
      R[\Delta^k \times \Delta^n] \ar[r]^{e_*} &
      \left<h(\left<e\right>_M[\Delta^n]) \right>_Q \ar[r]
      & Q } }
  \end{equation*}
  by the induction hypothesis and it suffices to find a lift in the left
  diagram.  By the fundamental lemma, and because the middle column in the
  diagram has bounded support on some $\{x\}^{E}$, it suffices to find a
  (dashed) lift in the diagram of simplicial sets 
  \begin{equation*}
    \xymatrix{
    \Delta^k \times \Lambda^n_i \cup \partial \Delta^k \times \Delta^n
      \ar[d]\ar[r] &
      P^f(e) \cup P^h(e) \ar[d]\\
    \Delta^k \times \Delta^n \ar@{-->}[ur] \ar[r] &
      \left<h(\left<e\right>_M[\Delta^n]) \right>_Q }
  \end{equation*}
  Such a lift exists if the right map is a Kan Fibration.  But as it is a
  homomorphism of simplicial abelian groups it suffices to show that it is
  surjective.  But $P^h(e)
  \rightarrow \left<h(\left<e\right>_M[\Delta^n]) \right>_Q$ is already
  surjective by construction, as $P^h(e)$ has exactly one cell $e_P$ for each
  cell $e_Q$ of $\left<h(\left<e\right>_M[\Delta^n]) \right>_Q$ and by
  definition of $\vartheta$ the cell $e_P$ is mapped to $e_Q$.  This gives the lift
  $g_k$, and by construction it satisfies the first condition of the
  induction hypothesis for $k$.

  The second condition is satisfied for $e$ by construction and for $e_0$ with
  $e \not \in \left<e_0\right>_M$ by the induction hypothesis for $k-1$.
  Otherwise $\left<e\right>_M \subseteq \left<e_0\right>_M$ which implies
  $P^f(e) \subseteq P^f(e_0)$ and $P^h(e) \subseteq P^h(e_0)$.  Then $g_k$
  restricted to $\left<e_0\right>_M[\Delta^n]$ factors as
  \begin{multline*}
      \left<e_0\right>_M[\Delta^n] \cap N_k = 
      \left<e_0\right>_M[\Delta^n] \cap (N_{k-1} \cup
      \left<e\right>_M[\Delta^n]) \\
      \xrightarrow[\hspace{2em}]{g_k} 
      P^f(e_0) \cup P^h(e_0)\  \cup\ P^f(e) \cup P^h(e) 
      =  P^f(e_0) \cup P^h(e_0)
  \end{multline*}
  Therefore, the second condition is also satisfied.

  If $G \neq \{1\}$ we can choose the above lifts equivariantly, e.g.~by
  constructing first a lift for one cell in a $G$-orbit and then extending
  equivariantly.  This shows the general case.
\end{proof}

\begin{lem}[extension axiom]\label{lem:waldh-cat:extension-axiom}
  Let
  \begin{equation*}
    \xymatrix{
    A \ar@{ >->}[r] \ar[d]^\sim   & B \ar[d] \ar@{->>}[r]  & C \ar[d]^\sim \\
    A'\ar@{ >->}[r]               & B'       \ar@{->>}[r]  & C'}
  \end{equation*}
  be a map of cofiber sequences in $\mathcal{C}^G$. Assume that $A \rightarrow A'$ and
  $C\rightarrow C'$ are homotopy equivalences. Then $B\rightarrow B'$ is a
  homotopy equivalence.
\end{lem}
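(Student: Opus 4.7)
The plan is to construct a controlled homotopy inverse $g_B\colon B' \to B$ directly, by lifting along the Kan fibration $\pi_B\colon B \twoheadrightarrow C$ cell-by-cell over the cellular structure of $B'$ relative to $A'$, then to verify that $g_B$ is a homotopy inverse using the homotopy-lifting property. We may assume that all cofibrations are cellular inclusions. Choose homotopy inverses $g_A\colon A' \to A$ and $g_C\colon C' \to C$ together with homotopies witnessing this. As a preliminary reduction, factoring $f_A$ through its mapping cylinder as $A \rightarrowtail T(f_A) \xrightarrow{p_A} A'$, forming the pushout $B_1 := B \cup_A T(f_A)$, and invoking Lemma~\ref{lem:pushout-along-acyclic-cofibration} with the saturation axiom (Lemma~\ref{lem:saturation-axiom}), brings us to a situation where the left-hand map is a cellular inclusion homotopy equivalence; this simplifies the bookkeeping but is not strictly essential.

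Set $g_B|_{A'} := (A' \xrightarrow{g_A} A \hookrightarrow B)$ and extend $g_B$ over the cells of $B'$ relative to $A'$ by induction. At each cell attachment, one faces a controlled lifting problem
\[
\xymatrix{
R[\partial \Delta^n] \ar[r] \ar[d] & B \ar[d]^{\pi_B} \\
R[\Delta^n] \ar[r] \ar@{-->}[ur] & C
}
\]
As $\pi_B$ is a surjection of simplicial abelian groups with kernel $A$, one picks a preimage in $B$ of the $n$-cell of $C$; the difference with the prescribed boundary in $R[\partial \Delta^n]$ lands in $A$, and we adjust via a controlled homotopy using the Relative Horn-Filling Lemma~\ref{lem:horn-filling-relative}. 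The control conditions are tracked throughout the induction as in the proof of Lemma~\ref{lem:kan-fibration-horn-filling}. To verify that $g_B$ is a homotopy inverse, we lift the homotopies $f_C g_C \simeq \id_{C'}$ and $g_C f_C \simeq \id_C$ to homotopies on $B'$ and $B$, respectively, using the homotopy-lifting case $n=1$, $i=1$ of Lemma~\ref{lem:kan-fibration-horn-filling}; these combine with the extended boundary homotopies on $A'$ and $A$ to yield $f_B g_B \simeq \id_{B'}$ and $g_B f_B \simeq \id_B$.

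The principal obstacle is the cell-by-cell construction of $g_B$: since $R[\partial \Delta^n] \rightarrowtail R[\Delta^n]$ is not a horn inclusion, the strict lift may fail, and one must allow corrections by controlled homotopy. Tracking controlled estimates through these corrections and ensuring $G$-equivariance is the most delicate part of the argument; once this is done, the verification of the remaining homotopies follows by straightforward relative horn-filling arguments in the same spirit as the proof of Lemma~\ref{lem:kan-fibration-horn-filling}.
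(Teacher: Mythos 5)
There is a genuine gap, and it sits exactly where you place the ``principal obstacle.'' Your inductive step asks, for each cell of $B'$ relative to $A'$, for a lift of $R[\partial\Delta^n]\rightarrowtail R[\Delta^n]$ against the surjection $\pi_B\colon B\twoheadrightarrow C$ with prescribed boundary. After choosing any preimage of the cell, the discrepancy with the prescribed boundary is a map $R[\partial\Delta^n]\rightarrow A$, and what you need is to extend it (possibly after correction) over $R[\Delta^n]$. That is an honest homotopy-theoretic obstruction living in the homotopy of a (controlled piece of) $A$; it is not a horn-filling problem, so Lemma~\ref{lem:horn-filling-relative} and the relative lifting Lemma~\ref{lem:kan-fibration-horn-filling} cannot remove it, and ``adjusting by a controlled homotopy'' would mean going back and changing $g_B$ on lower skeleta, for which you give no mechanism that closes up. In fact a map $g_B$ strictly compatible with $g_A$ and $g_C$ (i.e.\ $\pi_B g_B=g_C\pi_{B'}$, $g_B|_{A'}=i\circ g_A$) need not exist at all, so the induction as formulated must break down; any repair has to work systematically up to specified homotopies, which is precisely the difficulty.

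The verification step has the same problem in a second guise: lifting the homotopy $g_Cf_C\simeq\id_C$ through $\pi_B$ only produces a homotopy from $g_Bf_B$ to some self-map $\phi$ of $B$ covering $\id_C$; then $\phi-\id_B$ factors through $A$, and you still must show $\phi\simeq\id_B$. This is where the real content lies, and it is what the paper's argument supplies: after replacing the vertical maps by mapping cylinders so that (Proposition~\ref{prop:cylinder-deformation-retract}) $A\rightarrowtail T_A$ and $\overline{B}\rightarrowtail T_{\overline{B}}$ are deformation retracts, Lemma~\ref{lem:extension-general} runs a five-lemma-style argument using the additive structure --- deform the projection into $\overline{B}$, lift that homotopy along the quotient (Lemma~\ref{lem:kan-fibration-lifting}), choose a lift $\gamma$ of the projected map, observe that the difference $\beta-\gamma$ factors through $T_A$, deform it into $A$, and add $\gamma$ back. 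Your sketch replaces both of these corrections by an appeal to horn-filling, which cannot do this work; to complete your route you would need an analogue of that difference-and-deform argument (or an equivalent obstruction-theoretic bookkeeping compatible with control and $G$-equivariance), at which point you would essentially be reproducing the paper's proof of Lemmas~\ref{lem:extension-mapping-cylinder} and~\ref{lem:extension-general}.
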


\begin{proof}
  We can factor the vertical maps functorially by using the cylinder functor.
  As a cylinder functor is exact it respects the cofiber sequences.  We obtain a
  diagram
  \begin{equation*}
    \xymatrix{
    A \ar@{ >->}[r] \ar[d]^\sim   & B \ar[d] \ar@{->>}[r]  & C \ar[d]^\sim \\
    T_A \ar@{ >->}[r] \ar[d]^\sim & T_B \ar[d]^\sim \ar@{->>}[r]  & 
          T_C \ar[d]^\sim \\
    A'\ar@{ >->}[r]               & B'       \ar@{->>}[r]  & C'}.
  \end{equation*}
  By Proposition~\ref{prop:cylinder-deformation-retract}, $A$ and
  $C$ are deformation retracts of $T_A$ and $T_C$, respectively, with the
  inclusions being the left and the right vertical upper maps.
  What remains to be shown is that the vertical upper middle map is a
  homotopy equivalence. This is proved in
  Lemma~\ref{lem:extension-mapping-cylinder} below, where it is shown that 
  $B$ is a deformation retract of $T_B$.
\end{proof}

\begin{lem} \label{lem:extension-mapping-cylinder}
  Assume we have a cofiber sequence $A \rightarrowtail B \twoheadrightarrow
  \overline{B}$ in $\mathcal{C}^G$ where $\overline{B} = B/A$ for brevity.
  Suppose we have a diagram
  \begin{equation*}
    \xymatrix{
    A \ar@{ >->}[d]\ar@{ >->}[r] & B \ar@{ >->}[d]\ar@{->>}[r] & \overline{B} \ar@{ >->}[d] \\
    T_A     \ar@{ >->}[r]        & T_B     \ar@{->>}[r]        & T_{\overline{B}} }
  \end{equation*}
  in $\mathcal{C}^G$ where the horizontal lines are cofiber sequences and the
  vertical arrows are cellular inclusions. 
  Suppose that $A$ and $\overline{B}$ are deformation retracts of $T_A$ and
  $T_{\overline{B}}$ with inclusions the left and right vertical maps.
  Then $B$ is a deformation retract of $T_B$ with inclusion the middle
  vertical map.
\end{lem}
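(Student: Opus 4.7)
The plan is to construct the retraction $r_B\colon T_B \to B$ and the relative homotopy $H_B\colon T_B[\Delta^1] \to T_B$ from $\id_{T_B}$ to $i_B\circ r_B$ by a cell-by-cell induction, following the template of Lemma~\ref{lem:horn-filling-relative} and Lemma~\ref{lem:kan-fibration-horn-filling}. The key fact I will repeatedly invoke is that a surjective homomorphism of simplicial abelian groups is a Kan fibration, so the quotient maps $B \twoheadrightarrow \overline{B}$ and $T_B \twoheadrightarrow T_{\overline{B}}$ have the right lifting property with respect to anodyne inclusions of simplicial sets (and hence, after applying the $\Hom$-bijection, against the corresponding cell-by-cell attachments).

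First I verify that the square $A \rightarrowtail B$, $A \rightarrowtail T_A$, $B \rightarrowtail T_B$, $T_A \rightarrowtail T_B$ is a pullback: the composition $B \rightarrowtail T_B \twoheadrightarrow T_{\overline{B}}$ equals $B \twoheadrightarrow \overline{B} \rightarrowtail T_{\overline{B}}$ and therefore has kernel $A$, so $B \cap T_A = A$ as cellular submodules of $T_B$. Consequently every cell of $T_B$ lies in exactly one of the four classes: cells of $A$, cells of $B\smallsetminus A$ (lifts of cells of $\overline{B}$), cells of $T_A\smallsetminus A$, and ``new'' cells of $T_B \smallsetminus (B\cup T_A)$ (bijectively lifting cells of $T_{\overline{B}}\smallsetminus \overline{B}$). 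I then define $r_B$ by induction on dimension of cells in $T_B\smallsetminus B$: the identity on $B$, the given $r_A$ on $T_A\smallsetminus A$, and on a new cell $e$ projecting to $\bar e \in T_{\overline{B}}\smallsetminus\overline{B}$ I extend the already-defined attaching data $R[\partial\Delta^k]\to B$ to a map $R[\Delta^k]\to B$ whose projection to $\overline{B}$ equals $r_{\overline{B}}(\bar e)$. The commutativity of the boundary square holds by the inductive hypothesis $\pi_B\circ r_B = r_{\overline{B}}\circ \pi_{T_B}$, and a lift exists because $B \twoheadrightarrow \overline{B}$ is a Kan fibration.

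For the homotopy $H_B$ I prescribe: on $B[\Delta^1]$ the constant homotopy (enforcing ``rel $B$''), on $T_A[\Delta^1]$ the given $H_A$, and at the endpoints $T_B[\{0\}], T_B[\{1\}]$ the maps $\id_{T_B}$ and $i_B\circ r_B$; these are compatible on all overlaps because $H_A$ is rel $A$ and because $\pi_B\circ r_B = r_{\overline{B}}\circ \pi_{T_B}$. On each remaining new cell $e$, constructing $H_B$ on $e[\Delta^1]$ becomes, via the $\Hom$-bijection~\ref{subsubsec:G-adjunctions}.\eqref{eq:hom-bijection-all-G}, a simplicial lifting problem
\begin{equation*}
  \xymatrix{
    \Delta^k\times\partial\Delta^1 \cup \partial\Delta^k\times \Delta^1 \ar[r]\ar[d]
      & T_B \ar[d]^{\pi_B} \\
    \Delta^k\times\Delta^1 \ar[r] \ar@{-->}[ur]
      & T_{\overline{B}}
  }
\end{equation*}
where the bottom map is the composite with $H_{\overline{B}}$ applied to $\bar e$. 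The vertical inclusion on the left is anodyne and $\pi_B$ is a Kan fibration, so a lift exists.

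The main obstacle is not the existence of lifts but the bookkeeping: one must keep $r_B$ and $H_B$ both controlled and $G$-equivariant. For control I strengthen the induction hypothesis exactly in the style of the proof of Lemma~\ref{lem:horn-filling-relative}, pinning down for each $e_0\in\diamond_R T_B$ the cellular submodule of $B$ (respectively $T_B$) in which the image of $\langle e_0\rangle[\Delta^n]$ is allowed to land; this pins the support bound at each step to $E_{r_A}\cup E_{r_{\overline{B}}}$ precomposed with the cellular support of the cell $e$. Equivariance is handled by choosing the lift for a single representative of each $G$-orbit and extending by the group action, as in the end of the proofs of Lemmas~\ref{lem:horn-filling-relative} and~\ref{lem:kan-fibration-horn-filling}. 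A summarizing proof then concludes that $B$ is a deformation retract of $T_B$ via $i_B$.
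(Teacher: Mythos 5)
Your overall strategy---build $r_B$ and the homotopy cell by cell over the new cells of $T_B$, lifting against the quotient maps because surjections of simplicial abelian groups are Kan fibrations---breaks down at both lifting steps, and the failure is not just bookkeeping. For the retraction you ask, on a new $k$-cell $e$, for a lift in the square with left column $\partial\Delta^k \subseteq \Delta^k$ and right column $B \twoheadrightarrow \overline{B}$. That left inclusion is a cofibration but \emph{not} anodyne, and a Kan fibration only has the right lifting property against anodyne maps; only a \emph{trivial} fibration lifts against all cofibrations, and $B\twoheadrightarrow\overline{B}$ is not one (its kernel is $A$, which is not contractible in general). Concretely, the obstruction to extending the already-defined map $R[\partial\Delta^k]\to B$ over $R[\Delta^k]$ compatibly with $r_{\overline{B}}$ is a class in $\pi_{k-1}$ of the fiber $A$, and there is no reason for it to vanish; insisting on the strict compatibility $\pi_B\circ r_B=r_{\overline{B}}\circ\pi_{T_B}$ makes this unavoidable. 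The same problem recurs for the homotopy: the inclusion $\Delta^k\times\partial\Delta^1\cup\partial\Delta^k\times\Delta^1 \subseteq \Delta^k\times\Delta^1$ is geometrically $S^k\hookrightarrow D^{k+1}$, hence again not anodyne (you have prescribed \emph{both} ends of the prism), so ``$\pi_B$ is a Kan fibration'' does not produce the dashed lift. Lemma~\ref{lem:kan-fibration-lifting} only covers inclusions of horn type, i.e.\ prisms with one free end, which is exactly why it cannot be quoted here.

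The paper's proof is designed around this obstruction. It does not construct a strict lift of $r_{\overline{B}}$ at all; instead it proves the relative statement (Lemma~\ref{lem:extension-general}) that any map of pairs $(D,D_0)\to(T_B,B)$ is homotopic rel $D_0$ to a map into $B$, and specializes to $\id\colon(T_B,B)\to(T_B,B)$. The only lifts used are homotopy lifts (one end free, so Lemma~\ref{lem:kan-fibration-lifting} applies), and the step your approach cannot make is replaced by an argument that uses the additive structure of $\mathcal{C}^G$: after lifting, one forms the difference $\beta-\gamma$, observes it factors through $(T_A,A)$, pushes it into $A$ with the given deformation retraction of $T_A$, and adds $\gamma$ back---the simplicial analogue of the five-lemma surjectivity argument sketched in the ``toy situation''. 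If you want to salvage a cell-by-cell proof you would have to incorporate exactly this correction-by-differences mechanism; as written, the two lifting claims are false and the induction cannot get past the first cell on which the obstruction class is nonzero.
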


We prove a slightly stronger statement than
Lemma~\ref{lem:extension-mapping-cylinder}:

\begin{lem}\label{lem:extension-general}
  Assume that we are in the situation of
  Lemma~\ref{lem:extension-mapping-cylinder}. Let $D_0$ be a cellular
  submodule of $D$.  Then each controlled map $(D, D_0) \rightarrow (T_B,B)$
  of pairs in $\mathcal{C}^G$ is controlled homotopic relative $D_0$ to a map
  into $B$.
\end{lem}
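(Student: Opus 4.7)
The plan is a two-stage construction exploiting the two given deformation retractions, followed by a concatenation via horn-filling. Let $p_T\colon T_B \twoheadrightarrow T_B/T_A = T_{\overline{B}}$ denote the quotient map, and let $r_A, H_A$ (respectively $r_{\overline{B}}, H_{\overline{B}}$) witness that $A$ is a deformation retract of $T_A$ (respectively $\overline{B}$ of $T_{\overline{B}}$). A preliminary observation I would record is that $p_T^{-1}(\overline{B}) = B \cup_A T_A$ as cellular submodules of $T_B$: any $x \in T_B$ with $p_T(x) \in \overline{B}$ can be written $x = i_B(b) + t$ with $b \in B$ and $t \in T_A$, using surjectivity of $q\colon B \twoheadrightarrow \overline{B}$ and exactness of the cofiber sequence $T_A \rightarrowtail T_B \twoheadrightarrow T_{\overline{B}}$.

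First, compose $H_{\overline{B}}$ with $p_T \circ f$ to obtain a controlled homotopy $\overline{H}\colon D[\Delta^1] \to T_{\overline{B}}$ from $p_T \circ f$ to a map into $\overline{B}$; relativity with respect to $D_0$ is automatic, since $f(D_0) \subseteq B$ and $H_{\overline{B}}$ is rel $\overline{B}$. Then I would invoke Lemma~\ref{lem:kan-fibration-horn-filling} with cellular inclusion $D_0 \rightarrowtail D$, cellular inclusion $T_A \rightarrowtail T_B$ (so $Q = T_{\overline{B}}$), and the horn $\Lambda^1_0 \subseteq \Delta^1$: lift the given data, namely $f$ on $D[0]$ together with the trivial homotopy on $D_0[\Delta^1]$ (which projects to $\overline{H}$ downstairs), to a controlled homotopy $H^1\colon D[\Delta^1] \to T_B$. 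Setting $f_1 := H^1|_1$, the map $p_T \circ f_1$ lands in $\overline{B}$, so $f_1$ factors through the cellular submodule $B \cup_A T_A \subseteq T_B$.

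Second, I would glue $H_A$ with the trivial homotopy on $B[\Delta^1]$ across $A[\Delta^1]$ (these match because $H_A$ is rel $A$) to obtain a controlled deformation retraction $H_A^{\mathrm{ext}}\colon (B \cup_A T_A)[\Delta^1] \to B \cup_A T_A$ of $B \cup_A T_A$ onto $B$, rel $B$. Composing with $f_1$ yields a controlled homotopy $H^2\colon D[\Delta^1] \to T_B$ from $f_1$ to a map $f_2\colon D \to B$, rel $D_0$. Finally, I would concatenate $H^1$ and $H^2$ by applying the relative horn-filling Lemma~\ref{lem:horn-filling-relative} to the horn $\Lambda^2_1 \subseteq \Delta^2$: assemble the map $D[\Lambda^2_1] \to T_B$ carrying $H^1$ on face $d_2$ and $H^2$ on face $d_0$ (agreeing at $f_1$ on vertex~$1$), paired with the constant-at-$f|_{D_0}$ map $D_0[\Delta^2] \to T_B$; fill the horn; and read off face $d_1$ as the desired homotopy from $f$ to $f_2$ rel $D_0$.

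The main obstacle I anticipate is the control bookkeeping at each stage, particularly verifying that Lemma~\ref{lem:kan-fibration-horn-filling} produces a lift whose controlled bound is expressible in terms of the controls of $f$, of $H_{\overline{B}}$, and of the cellular structure of $T_B$, and that the final concatenation step preserves control. A secondary subtlety worth settling first is that $p_T^{-1}(\overline{B})$ genuinely coincides with the cellular submodule $B \cup_A T_A$ (not merely as an abstract submodule), since this is what makes the extension of $H_A$ to $B \cup_A T_A$ a morphism in $\mathcal{C}^G$ and hence legitimizes stage two. Applied to $D = T_B$, $D_0 = B$ with the identity map, the lemma immediately supplies the deformation retraction required for Lemma~\ref{lem:extension-mapping-cylinder}.
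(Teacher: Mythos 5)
Your argument is correct, and its first half coincides with the paper's: project along $T_B \twoheadrightarrow T_{\overline{B}}$, use the deformation retraction of $T_{\overline{B}}$ onto $\overline{B}$, and lift the resulting homotopy rel $D_0$ through the surjection via Lemma~\ref{lem:kan-fibration-lifting} to reach a map $f_1$ landing in $p_T^{-1}(\overline{B})$. From that point on you diverge genuinely. The paper stays ``algebraic'': it applies the lifting lemma a second time, now to $B \twoheadrightarrow \overline{B}$, to choose a lift $\gamma\colon D \rightarrow B$ of the projected map, observes that the difference $\beta - \gamma$ kills the projection and hence factors through $(T_A,A)$, deforms it into $A$ using the retraction of $T_A$, and adds $\gamma$ back -- this is exactly the toy five-lemma argument, transported using that morphism sets are abelian groups. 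You instead argue ``geometrically'': you identify $p_T^{-1}(\overline{B})$ with the cellular submodule $B \cup_A T_A$ of $T_B$ and retract it onto $B$ by gluing the retraction of $T_A$ (rel $A$) with the constant homotopy on $B$. Both routes are sound; yours uses the lifting lemma only once and replaces the subtraction trick by an explicit retraction, at the cost of the identification you yourself flag: one needs $B \cap T_A = A$ inside $T_B$ (which follows degreewise from injectivity of $\overline{B} \rightarrowtail T_{\overline{B}}$, since an element of $B \cap T_A$ dies in $T_{\overline{B}}$ and the kernel of $B \rightarrow T_{\overline{B}}$ is $A$) and that $B + T_A$ carries the cellular/controlled structure generated by $\diamond_R B \cup \diamond_R T_A$, so that the glued homotopy is a controlled $G$-map; strictly, in the generality of Lemma~\ref{lem:extension-mapping-cylinder} the horizontal arrows are only cofibrations, so one should first replace them up to isomorphism by cellular inclusions (the paper's own invocation of Lemma~\ref{lem:kan-fibration-lifting} makes the same silent adjustment). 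Note also that additivity has not been avoided: it enters your proof through the lifting lemma (surjections of simplicial abelian groups are Kan fibrations) and through writing $x = b + t$ in the preimage computation, consistent with the paper's remark that summing maps is essential for the extension axiom. Your explicit $\Lambda^2_1$ concatenation at the end is more detailed than the paper's one-line ``concatenating the two homotopies'', and your closing observation that $D = T_B$, $D_0 = B$, $\alpha = \id$ recovers Lemma~\ref{lem:extension-mapping-cylinder} matches the paper.
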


\begin{proof}[Proof of Lemma~\ref{lem:extension-mapping-cylinder} using
  \ref{lem:extension-general}]
  By Lemma~\ref{lem:extension-general}, the map $\id\colon (T_B, B) \rightarrow (T_B,
  B)$ is controlled homotopic relative $B$ to a map $T_B \rightarrow B$. This
  is the desired deformation retraction.  
\end{proof}

\begin{rem}[Toy situation]
  Assume we have a commutative diagram of abelian groups
  \begin{equation*}
    \xymatrix{
    A \ar@{->}[d]\ar@{->}[r] & B \ar@{->}[d]_{f_B}\ar@{->}[r] & \overline{B} \ar@{->}[d] \\
    A'     \ar@{->}[r]     & B' \ar@{->}[r]             & \overline{B}' }
  \end{equation*}
  where the horizontal lines are short exact sequences. Assume that the outer
  maps are surjective.  We want to show that the middle map is surjective.
  The proof proceeds exactly like the proof of
  Lemma~\ref{lem:extension-general}, but is easier.  We give it to help with
  the general proof.

  Let $\alpha$ be an element in $B'$. We will denote the constructed elements
  by consecutive Greek letters and denote projections to the quotient by a bar.
  So $\overline{\alpha}$ is an element in $\overline{B}'$. As $\overline{B}
  \rightarrow \overline{B}'$ is surjective there is an element $\beta$ in
  $\overline{B}$ which maps to $\overline{\alpha}$. As $B \rightarrow
  \overline{B}$ is surjective there is an element $\gamma$ in $B$ which maps to
  $\beta\in \overline{B}$. The elements $f_B(\gamma)$ and $\alpha$ do not need
  to be equal in $B'$, but they become equal when projected to $\overline{B}'$,
  so $\alpha - f_B(\gamma) $ factors through $A' \rightarrowtail B'$. As $A
  \rightarrow A'$ is surjective there is an element $\delta$ in $A$ which maps
  to $\alpha - f_B(\gamma)$ in $B'$. Hence, considered in $B$, $f_B(\delta +
  \gamma)$ equals $\alpha$.
\end{rem}

Lemma~\ref{lem:kan-fibration-lifting} applies to the maps $B \rightarrow
\overline{B}$ and $T_B \rightarrow T_{\overline{B}}$, so we have the relative
homotopy lifting property with respect to these maps.

\begin{proof}[Proof of Lemma~\ref{lem:extension-general}] 
  Let $\alpha\colon (D, D_0) \rightarrow (T_B, B)$ be a controlled map. This
  gives a map $\overline{\alpha}$ into $(\overline{B}, T_{\overline{B}})$.
  As $\overline{B}$ is a deformation retract of $T_{\overline{B}}$ we obtain a
  homotopy $\overline{H}\colon D[\Delta^1] \rightarrow T_{\overline{B}}$ from
  $\overline{\alpha}$ to a map into $\overline{B}$ which is trivial on $D_0$.
  It comes from the deformation of $(\overline{B}, T_{\overline{B}})$
  precomposed with $\alpha$. Lemma \ref{lem:kan-fibration-lifting} applies to
  the map $T_B \rightarrow T_{\overline{B}}$. We obtain a lift $H$
  of $\overline{H}$, relative to $\alpha$ and $D_0$.
  \begin{equation*}
    \xymatrix{ 
    D_0[\Delta^1] \cup D[0]  \ar[r]^-\alpha \ar[d]
    &  T_B \ar@{->>}[d]   \\
    D[\Delta^1] \ar@{-->}[ur]^H \ar[r]^{\overline{H}} & T_{\overline{B}}
    }
  \end{equation*}

  This is a homotopy from $\alpha$ to a better map, call it $\beta \colon
  D \rightarrow T_B$. However, $\beta$ might not yet factor through $B$ in
  which case the lemma would follow.
  But composition with $T_{B\vphantom{\overline{B}}} \rightarrow
  T_{\overline{B}}$ gives a map $\overline{\beta}$ to $T_{\overline{B}}$ which
  factors through $\overline{B}$.  
  \begin{equation*}
    \xymatrix{
        & B \ar@{->>}[r] \ar@{->}[d]     & \overline{B} \ar[d] \\
      D \ar[r]_\beta \ar@/_1.2ex/@{->}[urr]_(.75){\overline{\beta}}& 
          T_B  \ar@{->>}[r] & T_{\overline{B}}}
  \end{equation*}
  Using Lemma \ref{lem:kan-fibration-lifting} again this time for $B
  \twoheadrightarrow \overline{B}$ and the trivial homotopy of $\overline{\beta}$ in
  $\overline{B}$ we obtain some lift of $\overline{\beta}$ to $B$, call it $\gamma$.
  \begin{equation*}
    \xymatrix{
    {*}  \ar[r] \ar@{ >->}[d]     & B \ar@{->>}[r]  &  \overline{B} \\
      D \ar@{-->}[ur]^\gamma \ar@/_/[rru]_(.75){\overline{\beta}} &}
  \end{equation*}
  It follows that the difference $\beta - \gamma\colon D \rightarrow
  T_B$ is zero when composed with $T_B \rightarrow T_{\overline{B}}$. Hence, it
  factors through $T_A$. As the restrictions of $\beta$ and $\gamma$ to $D_0$
  both lie in $B$ the restriction of $\beta - \gamma$ to $D_0$ factors through
  $A$. So $\beta - \gamma$ gives a map $(D, D_0) \rightarrow (T_A, A)$. We can
  show the situation by the following commuting diagrams.
  \begin{equation*}
    \xymatrix{T_A \ar@{ >->}[r]   & T_B \ar@{->>}[r]  & T_{\overline{B}} \\
    & D \ar[u]_(.6){\beta-\gamma} \ar[ur]_{0}
    \ar@{-->}[ul]^{\beta-\gamma} & }, \qquad
    \xymatrix{A \ar@{ >->}[r]   & B \ar@{->>}[r]  & {\overline{B}} \\
    & D_0 \ar[u]_(.6){\beta-\gamma} \ar[ur]_{0}
    \ar@{-->}[ul]^{\beta-\gamma} & }, \qquad
    \xymatrix{ 
      A \ar[r]    & T_A   \\
      D_0 \ar@{-->}[u]^{\beta-\gamma} \ar[r]  & D \ar@{-->}[u]^{\beta-\gamma} }
  \end{equation*}
  Hence, as $A \rightarrow T_A$ is a deformation retraction, there is a
  homotopy $G$ relative to $D_0$ of $\beta - \gamma$ to a map
  into $A$. It comes from the deformation of $(A,T_A)$ precomposed with
  $\beta-\gamma$. Call the resulting map $\delta\colon D \rightarrow A$. Via the
  inclusion $(T_A,A) \rightarrow (T_B,B)$ the map $G$ can be viewed as a
  homotopy to $T_B$ with:
  \begin{equation*}
    \begin{array}[]{ll}
      G  \colon & D[\Delta^1] \rightarrow T_B \\
      G_{|0} & =  \beta - \gamma \\
      G_{|1} & = \delta \\
      G_{|D_0[\Delta^1]} & = \beta-\gamma_{|D_0}
    \end{array}.
  \end{equation*}
  Therefore, $G + \gamma \colon D[\Delta^1] \rightarrow T_B$ is a homotopy from
  $\beta$ to $\delta + \gamma$, where $\delta$ and $\gamma$ factor through $B$
  so the sum also factors through~$B$.  Furthermore, the homotopy is
  trivial on $D_0$.  Concatenating the two homotopies $H$ and $G$ thus gives
  a homotopy relative $D_0$ from $\alpha$ to a map into $B$. This is what we
  wanted to show.

  Note that all maps above are in fact in $\mathcal{C}^G$, because maps in
  $\mathcal{C}^G$ form an abelian group and being homotopic relative a
  subspace is an equivalence relation in~$\mathcal{C}^G$.
\end{proof}

\begin{proof}[Proof of Theorem~\ref{thm:c-g-waldhausen-category}]
  We established that $\mathcal{C}^G(X, \mathcal{E},
  \mathcal{F};R)$ has the structure of a category with cofibrations
  $co\mathcal{C}^G$ in~\ref{subsec:cofibrations}.  The subcategory of homotopy
  equivalences in $\mathcal{C}^G$ contains all the isomorphisms and satisfies
  the gluing Lemma~\ref{lem:gluing-lemma}. In other words, it is a subcategory
  of weak equivalences for $(\mathcal{C}^G,co\mathcal{C}^G)$.  The the
  saturation axiom~\ref{lem:saturation-axiom}, the cylinder
  axiom~\ref{lem:cylinder-axiom} and extension
  axiom~\ref{lem:extension-general} hold.
  This settles Theorem~\ref{thm:c-g-waldhausen-category}.
\end{proof}

\section{Proofs III: finiteness conditions}
\label{sec:proofs-iii:finiteness-conditions}
\subsection{Finiteness conditions}

Let $(X, \mathcal{E}, \mathcal{F})$ be a $G$-equivariant control space.
We have shown that the category $\mathcal{C}^G(X, \mathcal{E}, \mathcal{F}; R)$ has the
structure of a Waldhausen category.  The
weak equivalences satisfy the saturation and extension axiom.  The category
$\mathcal{C}^G(X;R)$ has 
a cylinder functor which satisfies the cylinder axiom.

Let $\mathcal{C}^G_?$ be a full subcategory of $\mathcal{C}^G(X;R)$.  Define
map in $\mathcal{C}^G_?$ to be a cofibration, resp.~a weak equivalence, if it is
a cofibration, resp.~a weak equivalence in $\mathcal{C}^G$.  
Assume the following three conditions as satisfied:
  \begin{itemize}
    \item[(C0)] \label{enum:condition:c0}
      We have $* \in \mathcal{C}^G_{?}$.
    \item[(C1)] \label{enum:condition:c1} 
      For $C \leftarrow A \rightarrowtail B$ in $\mathcal{C}^G_{?}$ the
      pushout is in $\mathcal{C}^G_{?}$.
    \item[(C2)] \label{enum:condition:c2}
      For $A$ in $\mathcal{C}^G_{?}$, $A[\Delta^1]$ is in $\mathcal{C}^G_{?}$.
  \end{itemize}
Then $\mathcal{C}^G_?$ has the structure of a Waldhausen category, it has a cylinder functor and satisfies the saturation,
extension and the cylinder axiom.  We show that the bl-finite, homotopy bl-finite
and homotopy bl-finitely dominated objects satisfy these conditions.

Recall that $(M,\kappa) \in \mathcal{C}^G(X;R)$ is \emph{bl-finite} if
it is finite-dimensional and locally finite, i.e., each $x \in X$ has a
neighborhood $U$ such that $\kappa^{-1}(U) \subseteq \diamond_R M$ is a
finite cellular $R$-module (as uncontrolled module).
$M$ is \emph{homotopy bl-finite} if there is a weak equivalence $M
\xrightarrow{\sim} M'$ where $M'$ is bl-finite.  $M$ is \emph{homotopy
bl-finitely
dominated} if it is a strict retract of a homotopy bl-finite object.  The
corresponding categories obtain as decoration a subscript $f$, $hf$ or $hfd$,
respectively.  While $\mathcal{C}^G_{hf}$ and $\mathcal{C}^G_{hfd}$ contain
all objects isomorphic to one already in the category,
this is not true for $\mathcal{C}^G_f$.  Hence, in
$\mathcal{C}^G_{hf}$ we can define the Waldhausen structure only referring to
$\mathcal{C}^G_{hf}$: a cofibration is a map which is isomorphic (in
$\mathcal{C}^G_{hf}$) to a cellular inclusion of homotopy bl-finite controlled
simplicial $R$-modules over $X$ and a weak equivalence is a homotopy equivalence
in $\mathcal{C}^G_{hf}$.  The same holds for $\mathcal{C}^G_{hfd}$.  We prove
in Lemma~\ref{lem:finite-cofibrations-are-isom-to-finite-cell-inclusions}
below that it also holds in $\mathcal{C}^{G}_f$, which requires $X$ to be
Hausdorff.

Note that obviously $*\in \mathcal{C}^G_f \subseteq
\mathcal{C}^G_{hf}\subseteq \mathcal{C}^G_{hfd}$.

\subsection{Bl-finite modules}
There is an obvious notion of a \emph{set over $X$} and of a \emph{controlled
map of sets over $X$}.  Let $(M, \diamond_R M, \kappa)$ be a controlled module
over $X$.  Our prime example of a set over $X$ is $(\diamond_R M, \kappa)$.  If
$(M,\kappa_R^1)$, $(M,\kappa_R^2)$ are controlled modules over $X$ such that
$(\diamond_R M, \kappa_R^1)$ and $(\diamond_R M,\kappa_R^2)$ are controlled
isomorphic as sets over $X$ then $(M,\kappa_R^1)$ and $(M,\kappa_R^2)$ are
controlled isomorphic.

Note that a controlled module $(M, \diamond_R M, \kappa)$ over $X$ is locally
finite if $(\diamond_R M, \kappa)$ is \emph{a locally finite set} over $X$,
i.e., each $x\in X$ has a neighborhood $U$ with $\kappa^{-1}(U)\subseteq
\diamond_R M$ being finite.

\begin{rem}
  Modules isomorphic to bl-finite modules do not need to be bl-finite
  again, if the control space is not ``good''.   An example is
  $\Real\smallsetminus \{0\}$ with metric control.

  A control space $(X,\mathcal{E}, \mathcal{F})$ is called \emph{proper}, if for each
  compact subset $K$ and $E\in \mathcal{E}$, $F\in \mathcal{F}$ we have that
  $(F\cap K)^E \cap F$ is contained in a compact set.  If the control space is
  proper then modules isomorphic to bl-finite modules are again bl-finite, see also
  Remark~\ref{rem:proof-of-finite-and-proper}.
\end{rem}

We want to show that $\mathcal{C}^G_f$, the full subcategory of bl-finite
objects, is indeed a Waldhausen category.  Is is clear that $A[\Delta^1]$ is again bl-finite if $A$ is
bl-finite.  The main part is to show that the pushout of $C \leftarrow A
\rightarrowtail B$ exists when $A$, $B$, $C$ are bl-finite modules and $A
\rightarrowtail B$ is a cofibration \emph{in $\mathcal{C}^G$}.  If we know
that the pushout is isomorphic to a cellular inclusion of bl-finite modules
then we are done.

But that is not obvious, we mentioned above that not every module which is
isomorphic to a bl-finite module needs to be bl-finite again.  Furthermore, the problem
is not only that we could change the map $\kappa$ to $X$, but we could
have a different cellular structure.

\begin{lem}\label{lem:finite-cofibrations-are-isom-to-finite-cell-inclusions}
  Let $f'\colon A \rightarrow B$ be a cofibration in $\mathcal{C}^G_f$.  Then it is
  isomorphic to a cellular inclusion in $\mathcal{C}^G_f$.
\end{lem}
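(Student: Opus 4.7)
The plan is to transfer the cellular structures witnessing the cofibration property across the defining isomorphisms, then verify bl-finiteness of the result. By the definition of cofibration in $\mathcal{C}^G$, the map $f'$ admits isomorphisms $\alpha\colon M' \to A$ and $\beta\colon B \to N'$ in $\mathcal{C}^G$ such that $\beta\circ f'\circ \alpha\colon M' \rightarrowtail N'$ is a cellular inclusion. I would transport the cellular structures: let $\widetilde{A}$ denote $A$ regarded as a simplicial $R$-module with cell set $\alpha(\diamond_R M')$ and labeling $\kappa^{M'}\circ \alpha^{-1}$, and define $\widetilde{B}$ analogously using $\beta$ and $N'$. Because $\alpha^{\pm 1}$ and $\beta^{\pm 1}$ are controlled, the identities on underlying simplicial $R$-modules yield controlled isomorphisms $A \cong \widetilde{A}$ and $\widetilde{B} \cong B$ in $\mathcal{C}^G$, and $f'\colon \widetilde{A} \rightarrowtail \widetilde{B}$ becomes a cellular inclusion with respect to the transported structures.

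What remains is to show that $\widetilde{A}$ and $\widetilde{B}$ are bl-finite, so that all of this takes place inside $\mathcal{C}^G_f$. Finite-dimensionality is the easier half: the largest degree of a non-degenerate simplex is an isomorphism invariant of a simplicial $R$-module, and each attaching of a $k$-cell in the filtration of a cellular module introduces a new non-degenerate $k$-simplex (the image of the top simplex of $\Delta^k$, which is not identified by its boundary attaching map), so this invariant bounds the cellular dimension of any cellular structure. Thus $\widetilde{A}$ and $\widetilde{B}$ inherit finite-dimensionality from $A$ and $B$.

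Local finiteness is the heart of the proof, and this is where the Hausdorff hypothesis on $X$ enters essentially. For $e' \in \diamond_R M'$, expanding $\alpha^{-1}\alpha(e') = e'$ in the cellular structure of $M'$ shows that $e'$ must appear in the cellular support of $\alpha^{-1}(e)$ for some cell $e \in \diamond_R A$ with $\kappa^A(e)$ in a fixed $E$-thickening of $\kappa^{M'}(e')$, where $E$ is a common morphism control for $\alpha^{\pm 1}$. Given $x \in X$, local finiteness of $A$ supplies a finite set of candidate labels in $\{x\}^{E\circ E}\cap \kappa^A(\diamond_R A)$; the Hausdorff property then furnishes a neighborhood $U$ of $x$ separating $x$ from the finitely many candidate labels distinct from $x$, in such a way that any cell of $M'$ with transported label in $U$ is forced to come from one of the finitely many cells of $A$ labeled at $x$ itself. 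A symmetric argument yields local finiteness of $\widetilde{B}$. With $\widetilde{A}, \widetilde{B}\in \mathcal{C}^G_f$ established, the cellular inclusion $f'\colon \widetilde{A} \rightarrowtail \widetilde{B}$ together with the identity isomorphisms $A \cong \widetilde{A}$ and $\widetilde{B} \cong B$ in $\mathcal{C}^G_f$ completes the proof.
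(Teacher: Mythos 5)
There is a genuine gap, and it sits exactly at the step you call the heart of the proof. Your $\widetilde{A}$ and $\widetilde{B}$ are nothing but $M'$ and $N'$ with their elements renamed along $\alpha$ and $\beta^{-1}$: cells and labels are transported unchanged, so $\widetilde{A}$ is bl-finite if and only if $M'$ is. But $M'$ is merely \emph{some} object of $\mathcal{C}^G$ isomorphic to the bl-finite $A$, and bl-finiteness is not invariant under controlled isomorphism. For instance, on $X=\Real\smallsetminus\{0\}$ with metric control, let $A$ have one $0$-cell labelled at each $1/n$; relabelling the $n$th cell to $1+1/n$ is a controlled isomorphism onto a module that is not locally finite near $1\in X$. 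So the statement you are trying to establish for $\widetilde{A}$ is false in general, and no argument can close this. The concretely false step is the claim that local finiteness of $A$ makes $\{x\}^{E\circ E}\cap \kappa^A(\diamond_R A)$ finite: local finiteness only controls preimages of small neighborhoods, not of $E$-thickenings. Finiteness of labels in bounded thickenings is a properness condition on the control space, which is precisely what is not assumed (and fails in the example above); the Hausdorff hypothesis cannot substitute for it.

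What the lemma asks for is not that the given witnesses $M'$, $N'$ be bl-finite, but that \emph{some} cellular inclusion in $\mathcal{C}^G_f$ be isomorphic to $f'$; one must build new data rather than transport the old. The paper's route is: push out $f'$ along $\id_A$ (using the canonical pushouts of Lemma~\ref{lem:cellular-pushouts-in-CG} applied to the cellular model of $f'$) to obtain a cellular inclusion $f\colon A\rightarrowtail D$ in which $A$ retains its own bl-finite structure and $D\cong B$; then, since $(D,\kappa^D)$ need not be bl-finite, construct a \emph{new} control map $\overline{\kappa}^D$ --- first moving all labels into the image of $\kappa^B$, which has no accumulation points, and then redistributing the labels over the remaining trouble points via the controlled isomorphism $\theta\colon B\rightarrow D$ --- so that $(D,\overline{\kappa}^D)$ is bl-finite and controlled isomorphic to $(D,\kappa^D)$. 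Any correct proof must contain a relabelling step of this kind; your proposal has none.
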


The proof is fairly complicated, so we only provide a sketch. The first step
is the following:

By definition, $f'$ is only isomorphic to a cellular inclusion in
$\mathcal{C}^G$, which does not need to be in $\mathcal{C}^G_f$ by
the Remark above.

By Lemma~\ref{lem:cellular-pushouts-in-CG}, the pushout of $f'$ along $\id_A$
can be chosen as
\begin{equation*}
  \xymatrix{
  A \ar[r]^{f'} \ar[d]^{\id_A}  & B \ar[d] \\
  A \ar[r]^f   & D}
\end{equation*}
such that $f$ is a cellular inclusion.  Then $D$ is isomorphic to the bl-finite
module $B$, but need not be bl-finite itself.  We show in the next lemma that
$D$ can indeed be made into a bl-finite module.  That lemma finishes the proof.

\begin{lem}
  Let $f\colon A \rightarrow D$ be a cellular inclusion in $\mathcal{C}^G$
  such that $A$ is bl-finite and $(D,\kappa^D)$ is isomorphic to a bl-finite module
  $(B,\kappa^B)$.  Then there is a control map $\overline{\kappa}^D\colon
  \mbox{$\diamond_R D$}
  \rightarrow X$ such that $(D, \overline{\kappa})$ is a bl-finite module which
  is isomorphic to $(D, \kappa^D)$.

  It follows that $A \rightarrow (D, \overline{\kappa})$ is isomorphic to a
  cellular inclusion in $\mathcal{C}^G_f$.
\end{lem}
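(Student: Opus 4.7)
The plan is to define $\overline{\kappa}^D$ by transporting labels from $B$ along a well-chosen preimage under the given controlled isomorphism $\varphi\colon (B,\kappa^B) \to (D,\kappa^D)$.  For each cell $e \in \diamond_R D$ I will pick a witness $b_e \in \diamond_R B$ satisfying $e \in \langle \varphi(b_e) \rangle_D$, and then set $\overline{\kappa}^D(e) := \kappa^B(b_e)$.

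The first step is to establish the existence of such a $b_e$.  Writing $\varphi^{-1}(e) \in B$ as a finite $R$-linear combination of simplices, each summand is a face or degeneracy of a cell, hence lies in $\langle b_j \rangle_B$ for some cell $b_j \in \diamond_R B$; thus $\varphi^{-1}(e) \in \sum_{j=1}^{k} \langle b_j \rangle_B$ for finitely many indices.  Applying $\varphi$ gives $e \in \sum_{j=1}^{k} \langle \varphi(b_j) \rangle_D$, and because the cell set of a sum of cellular submodules is the union of the individual cell sets, the cell $e$ must lie in some single summand $\langle \varphi(b_j) \rangle_D$; take $b_e := b_j$.  For $G$-equivariance I make the choices on a set of representatives for the free $G$-action on $\diamond_R D$ and extend by $b_{ge} := g \cdot b_e$; since $\varphi$ is $G$-equivariant, $g\langle \varphi(b_e)\rangle_D = \langle \varphi(g b_e)\rangle_D$, so this extension still witnesses the defining condition.

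Next I verify that $(D,\overline{\kappa}^D)$ is a controlled $G$-equivariant module isomorphic to $(D,\kappa^D)$.  If $E_\varphi \in \mathcal{E}$ bounds $\varphi$, then $\supp \langle \varphi(b_e)\rangle_D \subseteq \{\kappa^B(b_e)\}^{E_\varphi}$, and in particular $(\kappa^D(e), \overline{\kappa}^D(e)) \in E_\varphi$ for every $e \in \diamond_R D$.  Hence the identity $(D,\kappa^D) \to (D,\overline{\kappa}^D)$ is $E_\varphi$-controlled (and likewise its inverse), and combining with the bound $E_D$ of $(D,\kappa^D)$ shows that $(D,\overline{\kappa}^D)$ itself satisfies the control condition of Definition~\ref{def:controlled-module} with bound $E_\varphi \circ E_D \circ E_\varphi$; the support condition holds because $\overline{\kappa}^D(\diamond_R D) \subseteq \kappa^B(\diamond_R B)$ lies in the object support condition for $B$.

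The main obstacle is verifying bl-finiteness of $(D,\overline{\kappa}^D)$.  Finite-dimensionality is immediate since $\varphi$ preserves the top dimension of non-degenerate simplices.  For local finiteness, given $x \in X$ I use local finiteness of $B$ to find a neighborhood $U$ of $x$ with $(\kappa^B)^{-1}(U) = \{b_1^*, \dots, b_n^*\}$ finite.  Any $e \in (\overline{\kappa}^D)^{-1}(U)$ satisfies $\kappa^B(b_e) \in U$, so $b_e = b_j^*$ for some $j$, which forces $e \in \diamond_R \langle \varphi(b_j^*)\rangle_D$; therefore
\begin{equation*}
    (\overline{\kappa}^D)^{-1}(U) \ \subseteq\ \bigcup_{j=1}^{n} \diamond_R \langle \varphi(b_j^*)\rangle_D ,
\end{equation*}
which is finite because each $\varphi(b_j^*) \in D$ is a finite linear combination of simplices, so $\langle \varphi(b_j^*)\rangle_D$ has only finitely many cells.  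The concluding assertion follows by restricting $\overline{\kappa}^D$ to the cellular submodule $A$: the resulting control map $\overline{\kappa}^A$ differs from $\kappa^A$ by at most $E_\varphi$, hence $(A,\overline{\kappa}^A) \cong (A,\kappa^A)$ via the identity; $(A,\overline{\kappa}^A)$ is bl-finite as a cellular submodule of the bl-finite module $(D,\overline{\kappa}^D)$; and $f\colon (A,\overline{\kappa}^A) \to (D,\overline{\kappa}^D)$ is a genuine cellular inclusion in $\mathcal{C}^G_f$, witnessing the desired isomorphism.
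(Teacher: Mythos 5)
Your construction is correct, and its core mechanism is exactly the one the paper uses in the decisive step of its (sketched) proof: relabel a cell $d$ of $D$ by $\kappa^B(b)$ for some cell $b$ of $B$ with $d\in\langle\varphi(b)\rangle_D$. The difference is organizational. The paper proceeds in two stages: it first pushes every label into the image of $\kappa^B$ (using only the control of the isomorphism), observes this image has no accumulation points, isolates the ``trouble set'' $T$ of points with infinite fiber (here the Hausdorff property of $X$ enters), and then applies the above recipe only over $T$, degreewise. You instead apply the recipe globally to every cell from the start, and then local finiteness falls out directly: each point has a neighborhood $U$ meeting only finitely many $\kappa^B(b_j^*)$, and each $b_j^*$ can serve as witness for at most the finitely many cells of $\langle\varphi(b_j^*)\rangle_D$. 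This is cleaner --- it bypasses the accumulation-point and Hausdorff discussion entirely --- at the cost of leaning explicitly on closure finiteness (every cell of a cellular module generates a \emph{finite} cellular submodule, which follows by induction on dimension since each attaching map involves only finitely many cells); you should state that fact rather than deduce finiteness of $\diamond_R\langle\varphi(b_j^*)\rangle_D$ only from $\varphi(b_j^*)$ being a finite linear combination.

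Two small points to phrase carefully. First, in producing the witness $b_e$, the inference ``$\varphi^{-1}(e)\in\sum_j\langle b_j\rangle_B$, hence $e\in\sum_j\langle\varphi(b_j)\rangle_D$'' is not valid as a general implication, because $\varphi(\langle b\rangle_B)\subseteq\langle\varphi(b)\rangle_D$ can fail (the smallest cellular submodule containing $b$ may contain individual cells whose images leave $\langle\varphi(b)\rangle_D$). What is true, and what your setup already provides, is the summand-wise statement: writing $\varphi^{-1}(e)=\sum_i r_i s_{\alpha_i}b_i$ in the canonical basis, each image summand $r_i s_{\alpha_i}\varphi(b_i)$ lies in $\langle\varphi(b_i)\rangle_D$ because $\varphi$ commutes with the simplicial operators and the $R$-action; then your ``cells of a sum are the union of the cells'' argument finishes the existence of $b_e$. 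Second, the identity $(D,\kappa^D)\to(D,\overline{\kappa}^D)$ is $E_\varphi\circ E_D$-controlled rather than $E_\varphi$-controlled (one must still move within $\langle e\rangle_D$ before comparing labels); this changes nothing, since the axioms of a morphism control structure supply an element of $\mathcal{E}$ containing the composite, and the resulting object bound $E_\varphi\circ E_D\circ E_\varphi$ you record is the right one.
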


\begin{proof}
  We only sketch the proof.  The difficult part is, of course, that $D$ and
  $B$ might have different cellular structures.

  We have to find for $(\diamond_R D, \kappa^D)$ a set over $X$ which is
  controlled isomorphic to it and locally finite.  We will do that in two steps,
  first improve $\kappa_0 := \kappa^D$ to $\kappa_1$, and then to $\kappa_2$.
  All maps $\kappa_1, \kappa_2\colon \diamond_R D \rightarrow X$
  are controlled isomorphic to $\kappa_0$
  and improve $\kappa_0$, in particular, $\kappa_2$ is a locally finite set
  over $X$.  Then we can set $\overline{\kappa} := \kappa_2$.  We prove the
  non-equivariant case first, i.e.,~assume $G= \{e\}$.

  First we define $\kappa_1$ such that its image is contained in the image of
  $\kappa^B$.  As $(B,\kappa^B)$ and $(D,\kappa_0)$ are controlled isomorphic,
  there is an $E\in \mathcal{E}$ such that for each $e\in \diamond_R D$ there
  is an $x(e) \in \Imag \kappa^B\subseteq X$ such that $(\kappa_0(e), x(e))\in
  E$.  Set $\kappa_1(e) := x(e)$.

  As $\kappa^B \colon \diamond_R B \rightarrow X$ is a locally
  finite set over $X$, its image has no accumulation points, i.e.,~for each
  point $x\in X$ there is a neighborhood $U_x$ containing only finitely many
  points of the image of $\kappa^B$.  Thus the same is true for the image of
  $\kappa_1$.

  Set 
  \begin{equation*}
    T := \{ x\in X \mid \kappa_1^{-1}(x) \text{ is infinite}\}.
  \end{equation*}
  As $X$ is Hausdorff we have that $(D,\kappa_1)$ is bl-finite if and only if $T$
  is empty.  So $T$ are the ``trouble points''.  We change $\kappa_1$ on
  $\kappa_1^{-1}(T)$.  We can proceed degreewise.  The rough idea is that if
  $\kappa_1^{-1}(T)$ is infinite in that degree, it needs to come from an
  infinite submodule of $B$.  

  Let $\theta\colon B \rightarrow D$ be the controlled isomorphism.  We
  change $\kappa_1$ on, say, $d\in \diamond_R D$ to map to $\kappa_B(b)$ for
  some $b\in \diamond_R B$ with $d\in \left<\theta(b)\right>_D$.  Careful
  checking shows that the changed map is locally finite, at least in that
  degree.  Also, the new map is controlled isomorphic to the old one.

  For $G$ being non-trivial, we can make all choices $G$-equivariant and are
  done.
\end{proof}

\begin{rem}\label{rem:proof-of-finite-and-proper}
  The proof of the Lemma implies the following for the control space $X$ if
  $T$ was not empty:  there are points $x\in X$ and $E\in \mathcal{E}$ such
  that $\{x\}^E$ is not contained in a compact subset.  Namely $i^x_n$ must
  hit infinitely many cells of $B$ over points in $\{x\}^E$, but $B$ is
  locally finite.  In particular, $X$ is not a proper control space.
\end{rem}

If $X$ and $Y$ are $G$-equivariant control spaces, then any $G$-equivariant
map $f\colon X \rightarrow Y$ induces a functor $\mathcal{C}^G(X) \rightarrow
\mathcal{C}^G(Y)$.  For the bl-finite objects we have the following obvious
criterion.

\begin{lem}\label{lem:induced-maps-on-finite-objects}
  Let $\varphi\colon (X, \mathcal{E}_X, \mathcal{F}_X) \rightarrow (Y,
  \mathcal{E}_Y, \mathcal{E}_Y)$ be a map of control spaces which maps locally
  finite sets over $X$ to locally finite sets over $Y$.  Then $\varphi$
  induces a functor $\mathcal{C}^G_f(X, \mathcal{E}_X, \mathcal{F}_X;R)
  \rightarrow \mathcal{C}^G_f(Y, \mathcal{E}_Y,
  \mathcal{F}_Y;R)$. \hfill$\qed$
\end{lem}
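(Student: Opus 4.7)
The plan is to leverage the already-established functor $\varphi_*\colon \mathcal{C}^G(X;R) \rightarrow \mathcal{C}^G(Y;R)$ induced by the $G$-equivariant map $\varphi$ of control spaces and show that it restricts to the full subcategory of bl-finite objects. On objects this functor sends $(M, \diamond_R M, \kappa_R)$ to $(M, \diamond_R M, \varphi \circ \kappa_R)$; on morphisms it is the identity on the underlying $R$-module map, since the new control over $Y$ is produced by the conditions defining a map of control spaces. Thus the only thing to verify is that $\varphi_*$ preserves the property of being bl-finite, after which functoriality is automatic from the functoriality of the unrestricted $\varphi_*$.

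Given $M \in \mathcal{C}^G_f(X;R)$, the cellular structure $\diamond_R M$ is unchanged by $\varphi_*$, so $\varphi_*(M)$ has cells only in finitely many dimensions; hence finite-dimensionality is immediate. For local finiteness, I observe that $M$ being bl-finite means precisely that $(\diamond_R M, \kappa_R)$ is a locally finite set over $X$ in the sense discussed just before the lemma. The hypothesis on $\varphi$ then says exactly that $(\diamond_R M, \varphi \circ \kappa_R)$ is a locally finite set over $Y$, which is the definition of $\varphi_*(M)$ being locally finite. Combined with finite-dimensionality, $\varphi_*(M) \in \mathcal{C}^G_f(Y;R)$.

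Since the inclusions $\mathcal{C}^G_f(X;R) \hookrightarrow \mathcal{C}^G(X;R)$ and $\mathcal{C}^G_f(Y;R) \hookrightarrow \mathcal{C}^G(Y;R)$ are full, no separate verification is needed for morphisms, and functoriality of the restricted assignment follows from the functoriality of $\varphi_*$ on the ambient categories. There is no real obstacle in this proof; the lemma is essentially a tautological unfolding of the definition of a locally finite set over a control space and of the action of $\varphi_*$ on control maps, so the only content is noting that the hypothesis of the lemma is the exact statement needed to conclude preservation of local finiteness.
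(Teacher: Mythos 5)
Your proof is correct and is exactly the argument the paper has in mind (the lemma is stated with its proof omitted as obvious): the ambient functor induced by $\varphi$ replaces $\kappa_R$ by $\varphi\circ\kappa_R$, finite-dimensionality is untouched, and the hypothesis on $\varphi$ is precisely the statement that local finiteness of $(\diamond_R M,\kappa_R)$ is preserved, with fullness of the subcategories handling morphisms. Nothing further is needed.
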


\begin{rem}\label{rem:closed-inclusions-give-functor}
  Note that inclusions of subspaces do not map locally finite sets to locally
  finite sets in general. A counterexample is the inclusion $\Real
  \smallsetminus \{0\} \rightarrow \Real$.  However closed inclusions
  do map locally finite sets to locally finite sets and hence do induce
  functors of categories of controlled modules.
\end{rem}

\subsection{Homotopy bl-finite objects}
We show that the full subcategory of homotopy bl-finite objects of
$\mathcal{C}^G$ is a Waldhausen category.  It follows directly from the
cylinder axiom that the cylinder functor of a homotopy bl-finite object is
again homotopy bl-finite.  Hence, we are left to
show~\eqref{enum:condition:c1}.

\begin{lem}
  Assume that $C \leftarrow A \rightarrowtail B$ is a diagram of homotopy
  bl-finite objects and $A \rightarrowtail B$ a cofibration. 
  Then $C\cup_A B$ is homotopy bl-finite.
\end{lem}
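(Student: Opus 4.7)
The plan is to exhibit $D := C \cup_A B$ as weakly equivalent to a bl-finite controlled module, by comparing $D$ via the gluing Lemma~\ref{lem:gluing-lemma} to a pushout of bl-finite modules. The main obstacle will be rectifying homotopy-commutative comparison diagrams to strictly commutative ones: while the homotopy extension property from the Relative Horn-Filling Lemma~\ref{lem:horn-filling-relative} lets me rectify along the cofibration $A \rightarrowtail B$, the other leg $p \colon A \to C$ is only a plain map, so the standard rectification trick fails on that side.

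To overcome this, I will first replace $C$ by the mapping cylinder $T(p) := A[\Delta^1] \cup_A C$, with cellular inclusion $\iota_0 \colon A \rightarrowtail T(p)$ and projection $\pi \colon T(p) \xrightarrow{\sim} C$ satisfying $\pi \circ \iota_0 = p$. The gluing Lemma~\ref{lem:gluing-lemma} applied to the strictly commutative map of spans
\begin{equation*}
  \xymatrix{T(p) \ar[d]^{\sim} & A \ar[l]_{\iota_0} \ar@{=}[d] \ar@{ >->}[r]^{i} & B \ar@{=}[d] \\
  C & A \ar[l]^{p} \ar@{ >->}[r]^{i} & B}
\end{equation*}
gives $T(p) \cup_A B \xrightarrow{\sim} D$, reducing the problem to showing that $T(p) \cup_A B$ is homotopy bl-finite; crucially, both legs of the span are now cofibrations.

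Next I will pick bl-finite replacements $\alpha \colon A \xrightarrow{\sim} A'$, $\beta \colon B \xrightarrow{\sim} B'$, and $\tilde{\gamma} \colon T(p) \xrightarrow{\sim} \tilde{C}$ (the last exists because $T(p) \simeq C$ is homotopy bl-finite), together with a homotopy inverse $\alpha' \colon A' \to A$. Define $g := \beta \circ i \circ \alpha'$ and $\tilde{h} := \tilde{\gamma} \circ \iota_0 \circ \alpha'$, and form the mapping cylinders $T(g)$ and $T(\tilde{h})$. Using the explicit description of pushouts along cellular inclusions (Lemma~\ref{lem:cellular-pushouts-in-CG}), one sees that pushouts of bl-finite modules along cellular inclusions stay bl-finite; hence $T(g)$, $T(\tilde{h})$, and the pushout $T(\tilde{h}) \cup_{A'} T(g)$ are all bl-finite. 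The initial comparison maps $\iota_1^{T(g)} \circ \beta \colon B \to T(g)$ and $\iota_1^{T(\tilde{h})} \circ \tilde{\gamma} \colon T(p) \to T(\tilde{h})$ only make the respective squares commute up to homotopy, via $\alpha' \circ \alpha \simeq \mathrm{id}_A$ combined with the cylinder homotopies $\iota_0 \simeq \iota_1 \circ g$ in $T(g)$ and $\iota_0 \simeq \iota_1 \circ \tilde{h}$ in $T(\tilde{h})$. Since both $i$ and $\iota_0 \colon A \rightarrowtail T(p)$ are cofibrations, the Relative Horn-Filling Lemma~\ref{lem:horn-filling-relative} supplies the homotopy extension property, and I extend the two homotopies on $A$ along these cofibrations to produce $\phi_B \colon B \to T(g)$ with $\phi_B \circ i = \iota_0^{T(g)} \circ \alpha$ and $\tilde{\phi}_C \colon T(p) \to T(\tilde{h})$ with $\tilde{\phi}_C \circ \iota_0^{T(p)} = \iota_0^{T(\tilde{h})} \circ \alpha$; since $\phi_B$ and $\tilde{\phi}_C$ are homotopic to the original weak equivalences, they are themselves weak equivalences.

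Finally, a second application of the gluing Lemma~\ref{lem:gluing-lemma} to the resulting strictly commutative map of spans from $T(p) \leftarrow A \rightarrowtail B$ to $T(\tilde{h}) \leftarrow A' \rightarrowtail T(g)$ yields $T(p) \cup_A B \xrightarrow{\sim} T(\tilde{h}) \cup_{A'} T(g)$. Since the target is bl-finite and $D$ is weakly equivalent to $T(p) \cup_A B$, this shows $D$ is homotopy bl-finite.
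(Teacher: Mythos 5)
Your proof is correct, and it rests on the same two pillars as the paper's argument --- the gluing Lemma~\ref{lem:gluing-lemma} together with the facts that mapping cylinders of maps between bl-finite objects and pushouts of bl-finite objects along cellular inclusions are again bl-finite (via the explicit cell structure of Lemma~\ref{lem:cellular-pushouts-in-CG}) --- but the route is organized differently. The paper never rectifies a homotopy-commutative square: it builds a chain of four strictly commutative maps of spans, replacing one corner at a time ($B$ by the cylinder $M_B$ of the composite $A' \rightarrow A \rightarrowtail B$, then $C$ by $C'$, then $C'$ by the cylinder $M_{C'}$ of $A' \rightarrow C'$, then $M_B$ by $B'$), where each comparison map is simply a composite into or out of a mapping cylinder, so strict commutativity holds by construction; four applications of the gluing lemma then give $C\cup_A B \simeq M_{C'}\cup_{A'} B'$. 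You instead first cylinder-replace $C$ by $T(p)$ so that \emph{both} legs of the span become cofibrations, and then strictify a single comparison to the bl-finite span $T(\tilde h) \leftarrow A' \rightarrowtail T(g)$ by the homotopy extension property supplied by the Relative Horn-Filling Lemma~\ref{lem:horn-filling-relative}, using only two applications of the gluing lemma. What the paper's chain buys is that no explicit HEP rectification is needed; what your version buys is a shorter chain and one concrete bl-finite model $T(\tilde h)\cup_{A'} T(g)$, at the price of the preliminary replacement of $C$ --- which is genuinely necessary in your setup, since the HEP modification of the $C$-side comparison map must be performed along a cofibration, and $A \rightarrow C$ need not be one. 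Both arguments ultimately invoke the same finiteness bookkeeping, which you justify correctly.
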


\begin{proof}
  Assume that there are bl-finite objects $A', B', C'$ weakly equivalent to
  $A,B,C$.  Note that we have inverses for weak equivalences, which we will
  use freely.  Below we denote mapping cylinders by $M_A$, $M_B$, etc.~and
  cofibrations by $\rightarrowtail$.
  
  We obtain a chain of maps of diagrams. In the following, the arrows marked with
  $\xrightarrow{\bullet}$ are defined by composition. The first step is
  \begin{equation*}
    \xymatrix{ 
    C & A    \ar@{ >->}[r]       \ar[l]   & B \\
    C       \ar@{=}[u]  & 
    A'      \ar[u]^\sim \ar[l]_{\bullet} \ar@{ >->}[r] & 
    M_B     \ar[u]^\sim
    },
  \end{equation*}
  where $A'$ is bl-finite and $M_B$ is the mapping cylinder of $A' \rightarrow A
  \rightarrowtail B$, which still is homotopy bl-finite. Next we obtain a map
  \begin{equation*}
    \xymatrix{ 
    C       \ar[d]_\sim  & 
    A'      \ar@{=}[d] \ar[l] \ar@{ >->}[r] & 
    M_B     \ar@{=}[d] \\
    C' & A'    \ar@{ >->}[r]       \ar[l]_{\bullet}   & M_B 
    }
  \end{equation*}
  by $C$ being homotopy bl-finite. Then consider
  \begin{equation*}
    \xymatrix{ 
    C' & A'    \ar@{ >->}[r]       \ar[l]   & M_B \\
    M_{C'}     \ar[u]^\sim  & 
    A'      \ar@{=}[u] \ar@{ >->}[l] \ar@{ >->}[r] & 
    M_B     \ar@{=}[u] 
    }
  \end{equation*}
  with $M_{C'}$ being the cylinder of $A' \rightarrow C'$ which is bl-finite as
  $A'$ and $C'$ are bl-finite. Finally, we obtain a map
  \begin{equation*}
    \xymatrix{ 
    M_{C'}  \ar@{=}[d]  & 
    A'      \ar@{=}[d] \ar@{ >->}[l] \ar@{ >->}[r] & 
    M_B     \ar[d]_\sim \\
    M_{C'} & A'    \ar[r]^{\bullet}  \ar@{ >->}[l]   & B'
    }
  \end{equation*}
  as $M_B$ is weakly equivalent to $B'$. Using the gluing lemma four times
  gives that $C \cup_A B$ is weakly equivalent to the bl-finite object
  $M_{C'}\cup_{A'} B'$.
\end{proof}

\subsection{Homotopy bl-finitely dominated objects}
We give a three different characterizations of homotopy bl-finitely dominated
objects.

\begin{definition}
  \label{def:retract-homotopy-retract}
  Let $M, M'$ be objects in $\mathcal{C}^G$.
  \begin{enumerate}
    \item $M$ is called a \emph{retract} of $M'$ if there are maps $i\colon M
      \rightarrow M'$, $r\colon M' \rightarrow M$ such that $r\circ i =
      \id_M$.
    \item $M$ is called a \emph{homotopy retract} of $M'$, or \emph{dominated
      by $M'$}, if there are maps $i\colon M \rightarrow M'$,
      $r\colon M' \rightarrow M$ and a homotopy $H \colon M[\Delta^1]
      \rightarrow M$ from $r\circ i$ to $\id_M$.
  \end{enumerate}
\end{definition}

\begin{lem} \label{lem:homotopy-retracts-finite}
  Let $A\in \mathcal{C}^G$. Then the following are equivalent.
  \begin{enumerate}
    \item \label{enum:lem:retracts-hr-f}
      $A$ is a homotopy retract of a bl-finite module $A'$.
    \item \label{enum:lem:retracts-r-hf}
      $A$ is a retract of a homotopy bl-finite
      module $A''$.
    \item \label{enum:lem:retracts-hr-hf}
      $A$ is a homotopy retract of a homotopy bl-finite module $A'''$.
  \end{enumerate}
\end{lem}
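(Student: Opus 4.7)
My plan is to prove the cyclic chain $(\ref{enum:lem:retracts-hr-f}) \Rightarrow (\ref{enum:lem:retracts-r-hf}) \Rightarrow (\ref{enum:lem:retracts-hr-hf}) \Rightarrow (\ref{enum:lem:retracts-hr-f})$. The middle step is immediate, since a strict retraction $r \circ i = \id_A$ is a homotopy retraction via the constant homotopy, and the homotopy bl-finite witness for (\ref{enum:lem:retracts-r-hf}) works unchanged as a witness for (\ref{enum:lem:retracts-hr-hf}). The other two implications require a little work.

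For $(\ref{enum:lem:retracts-hr-hf}) \Rightarrow (\ref{enum:lem:retracts-hr-f})$, I would replace a homotopy bl-finite witness $A'''$ by an actual bl-finite module through a homotopy equivalence. Given $i\colon A \rightarrow A'''$, $r\colon A''' \rightarrow A$ with $r\circ i \simeq \id_A$, and $f\colon A''' \xrightarrow{\sim} A'$ with $A'$ bl-finite, pick a homotopy inverse $g\colon A' \rightarrow A'''$. Setting $i' := f\circ i$ and $r' := r \circ g$ yields $r'\circ i' = r\circ(g\circ f)\circ i \simeq r\circ i \simeq \id_A$, so $A$ is a homotopy retract of the bl-finite $A'$. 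The saturation axiom~\ref{lem:saturation-axiom} guarantees that homotopies compose and concatenate in the needed way.

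The main step is $(\ref{enum:lem:retracts-hr-f}) \Rightarrow (\ref{enum:lem:retracts-r-hf})$: I would convert a homotopy retraction into a strict one by thickening the target via the mapping cylinder. Given $i, r, H$ with $H_0 = r\circ i$, $H_1 = \id_A$, form $T(i) = A[\Delta^1]\cup_{{i_1}_*} A'$ from Theorem~\ref{thm:cylinder-functor}. The cylinder axiom~\ref{lem:cylinder-axiom} gives a homotopy equivalence $T(i) \xrightarrow{\sim} A'$, so $T(i)$ is homotopy bl-finite, and the front inclusion $\iota_0\colon A \rightarrowtail T(i)$ is cellular. Using that being homotopic is an equivalence relation on maps in $\mathcal{C}^G$, I invert $H$ to obtain $\overline H\colon A[\Delta^1] \rightarrow A$ with $\overline H_0 = \id_A$ and $\overline H_1 = r\circ i$. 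The pair $(\overline H, r)$ is compatible at the attaching maps $A \xrightarrow{{i_1}_*} A[\Delta^1]$ and $A \xrightarrow{i} A'$, since $\overline H_1 = r\circ i$, so by the universal property of the pushout it descends to a controlled map $\rho\colon T(i) \rightarrow A$ satisfying $\rho\circ \iota_0 = \overline H_0 = \id_A$. This exhibits $A$ as a strict retract of the homotopy bl-finite object $T(i)$.

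The main (though modest) obstacle is control bookkeeping: I need to check that $\overline H$ is a controlled map (which follows because homotopy is an equivalence relation in $\mathcal{C}^G$, so the saturation argument produces $\overline H$ inside the same category), that $T(i)$ exists with canonical control conditions (Theorem~\ref{thm:cylinder-functor} together with Lemma~\ref{lem:cellular-pushouts-in-CG}), and that $\rho$ inherits control from $\overline H$ and $r$. All of this is automatic from the structure of $\mathcal{C}^G$ developed earlier, so the proof should proceed cleanly.
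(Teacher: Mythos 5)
Your proposal is correct and matches the paper's argument in substance: the paper likewise handles the easy implications into (\ref{enum:lem:retracts-hr-hf}), proves (\ref{enum:lem:retracts-hr-hf})$\Rightarrow$(\ref{enum:lem:retracts-hr-f}) by composing with a homotopy equivalence to a bl-finite witness, and proves (\ref{enum:lem:retracts-hr-f})$\Rightarrow$(\ref{enum:lem:retracts-r-hf}) by rectifying the homotopy-commutative retraction triangle through the mapping cylinder $T(i)$, exactly as you do. The only difference is the trivial one of organizing the implications as a cycle rather than the paper's arrangement.
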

\begin{proof}
  Clearly \eqref{enum:lem:retracts-hr-f} $\Rightarrow$
  \eqref{enum:lem:retracts-hr-hf} and \eqref{enum:lem:retracts-r-hf}
  $\Rightarrow$ \eqref{enum:lem:retracts-hr-hf} hold. We show
  \eqref{enum:lem:retracts-hr-hf} $\Rightarrow$
  \eqref{enum:lem:retracts-hr-f} first.

  As $A'''$ is homotopy bl-finite, there is a bl-finite module $B$ and maps $f\colon
  A'''\rightarrow B$, $g\colon B \rightarrow A'''$ such that $g\circ f \simeq
  \id_{A'''}$, so $A$ is a homotopy retract of $B$ via $A \xrightarrow{i} A'''
  \xrightarrow{f} B$ and $B\xrightarrow{g} A''' \xrightarrow{r} B$.

  Now we show \eqref{enum:lem:retracts-hr-f} $\Rightarrow$
  \eqref{enum:lem:retracts-r-hf}. We have maps $i \colon A \rightarrow A'$,
  $r\colon A'\rightarrow A$ with $r\circ i \simeq \id_A$.  We can make the
  homotopy commutative diagram
  \begin{equation*}
    \xymatrix{A \ar[r]^i \ar[dr]_\id  & A' \ar[d]^r \\& A}
  \end{equation*}
  into a strict commutative one, namely
  \begin{equation*}
    \xymatrix{ A \ar[r] \ar[dr]_\id & T(i) \ar[d] \\& A} .
  \end{equation*}
  Hence, $A$ is a retract of $T(i)$ and as $T(i) \xrightarrow{\sim} A'$ is a
  homotopy equivalence, $T(i)$ is homotopy bl-finite.
\end{proof}

\begin{lem}\label{lem:homotopy-finitely-dominated-is-w}
  $\mathcal{C}^G_{hfd}$ is a Waldhausen category.
  It has a cylinder functor satisfying the cylinder axiom and the class of
  weak equivalences satisfies the extension and the saturation axiom.
\end{lem}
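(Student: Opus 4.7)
The plan is to verify the three conditions (C0), (C1), (C2) stated at the opening of Section~\ref{sec:proofs-iii:finiteness-conditions}; once these hold, the Waldhausen structure, cylinder functor, and saturation/extension/cylinder axioms on $\mathcal{C}^G_{hfd}$ then follow automatically, being inherited from $\mathcal{C}^G(X;R)$.

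Condition (C0) is immediate since $\ast \in \mathcal{C}^G_f \subseteq \mathcal{C}^G_{hfd}$. For (C2), if $M$ is a strict retract of a homotopy bl-finite $M'$ via $(i,r)$ with $r\circ i = \id_M$, then $i[\Delta^1]$ and $r[\Delta^1]$ exhibit $M[\Delta^1]$ as a strict retract of $M'[\Delta^1]$, and $M'[\Delta^1]$ is homotopy bl-finite by condition (C2) for $\mathcal{C}^G_{hf}$ (Theorem~\ref{thm:c-g-homotopy-finite-waldhausen}).

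The main work is (C1). Given $C \xleftarrow{g} A \xrightarrow{f} B$ in $\mathcal{C}^G_{hfd}$ with $f$ a cofibration, the plan is to build a span $\widetilde{C} \leftarrow \widetilde{A} \rightarrowtail \widetilde{B}$ of homotopy bl-finite objects (with the right-pointing arrow a cofibration) together with strict natural transformations of spans $\alpha$, $\beta$ in opposite directions, such that $\beta\circ\alpha \simeq \id$ via a homotopy of natural transformations of spans. Since $(-)[\Delta^1]$ preserves pushouts, this descends to $\beta_{\ast}\alpha_{\ast} \simeq \id$ on $B\cup_A C$, exhibiting $B\cup_A C$ as a homotopy retract of $\widetilde{B}\cup_{\widetilde{A}}\widetilde{C}$; the latter is homotopy bl-finite by condition (C1) for $\mathcal{C}^G_{hf}$ (Theorem~\ref{thm:c-g-homotopy-finite-waldhausen}), and Lemma~\ref{lem:homotopy-retracts-finite} then places $B\cup_A C$ in $\mathcal{C}^G_{hfd}$. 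Concretely, use Lemma~\ref{lem:homotopy-retracts-finite} to pick strict retract data $(i_A,r_A)$, $(i_B,r_B)$, $(i_C,r_C)$ onto homotopy bl-finite $\widetilde{A}$, $\widetilde{B}_0$, $\widetilde{C}_0$, and define $\widetilde{B} := T(i_B f r_A\colon \widetilde{A}\to \widetilde{B}_0)$ and $\widetilde{C} := T(i_C g r_A\colon \widetilde{A}\to \widetilde{C}_0)$; both are homotopy bl-finite by the cylinder axiom (Theorem~\ref{thm:cylinder-functor}), and $\widetilde{A}\rightarrowtail\widetilde{B}$ is the front inclusion of the cylinder. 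The retraction $\beta := (r_A, r_B\circ p_B, r_C\circ p_C)$, with $p_{\bullet}$ the cylinder projections, is tautologically a strict natural transformation of spans, using the identity $r_B\circ p_B\circ\iota_0 = r_B\circ\psi_B = f\circ r_A$ on $\widetilde{A}$.

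The main obstacle is producing $\alpha$. The naive choice $\alpha_A := i_A$, $\alpha_B^0 := \iota_1\circ i_B$, $\alpha_C^0 := \iota_1\circ i_C$ (back inclusions into the cylinders) gives $\beta_X\circ\alpha_X = \id_X$ strictly, but fails strict naturality along $f$ and $g$ by the canonical cylinder homotopy. I rectify $\alpha_B^0$ by invoking the relative horn-filling Lemma~\ref{lem:horn-filling-relative} along the cofibration $f$: the cylinder structure of $\widetilde{B}$ supplies a sub-homotopy $H\colon A[\Delta^1]\to \widetilde{B}$ from $\iota_0\circ i_A$ at $t=0$ to $\alpha_B^0\circ f$ at $t=1$, and horn-filling extends it to $\widehat{H}\colon B[\Delta^1]\to \widetilde{B}$ with $\widehat{H}(1) = \alpha_B^0$; setting $\alpha_B := \widehat{H}(0)$ gives $\alpha_B\circ f = \iota_0\circ i_A = \widetilde{f}\circ i_A$ strictly. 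The post-composition $r_B\circ p_B\circ\widehat{H}$ is a homotopy from $\beta_B\alpha_B$ to $\id_B$ whose restriction to $A[\Delta^1]$ is $r_B\circ p_B\circ H$; the identities $p_B\circ\iota_0 = \psi_B = p_B\circ\iota_1\circ\psi_B$ in the mapping cylinder make $p_B\circ H$ constant at $i_B f$, so $r_B\circ p_B\circ H$ is the constant homotopy at $f$. The same rectification applied to $g$ handles $\alpha_C$. Together with the trivial homotopy $H_A$ witnessing $\beta_A\alpha_A = \id_A$, the resulting homotopies commute with $f$ and $g$ and thus assemble into the required homotopy of natural transformations of spans, completing (C1) and hence the lemma.
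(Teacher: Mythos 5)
Your overall strategy (verify (C0)--(C2), replace the span $C \leftarrow A \rightarrowtail B$ by a span of homotopy bl-finite objects with a strict retraction of spans and a co-retraction up to a compatible homotopy, then conclude via Lemma~\ref{lem:homotopy-retracts-finite}) is sound, and your $B$-leg rectification is correct. But there is a genuine gap at the $C$-leg: the step ``the same rectification applied to $g$ handles $\alpha_C$'' does not go through. Your rectification of $\alpha_B$ rests on the relative horn-filling Lemma~\ref{lem:horn-filling-relative} (equivalently, the homotopy extension property) \emph{along the cofibration} $f\colon A \rightarrowtail B$: you extend a homotopy defined on $A[\Delta^1]$ over $B[\Delta^1]$. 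In condition (C1) only $A \rightarrowtail B$ is a cofibration; $g\colon A \rightarrow C$ is an arbitrary map, $A$ need not be (isomorphic to) a cellular submodule of $C$, and there is no homotopy extension along $g$. With your choice $\widetilde{C} = T(i_C g r_A)$ and $\widetilde{g} = \iota_0$ the front inclusion, there is no canonical map $\alpha_C\colon C \rightarrow \widetilde{C}$ with $\alpha_C \circ g = \iota_0 \circ i_A$, and the back inclusion $\iota_1 \circ i_C$ only satisfies this up to the cylinder homotopy, which you cannot push across $g$. So the strict span map $\alpha$, which you need in order to get any induced map $B\cup_A C \rightarrow \widetilde{B}\cup_{\widetilde{A}}\widetilde{C}$ at all, is not constructed.

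The fix is to treat the $C$-leg asymmetrically, which is exactly what the paper's proof does by a two-stage replacement: it first leaves $C$ completely unchanged (identity vertical maps) and improves only $A$ and $B$, with the new map out of the replacement $A'$ into $C$ defined by \emph{composition} $A' \rightarrow A \xrightarrow{g} C$, so strict commutativity is automatic and no extension along $g$ is ever needed; only after $A$ has been traded for a homotopy bl-finite object does it replace $C$ and the cylinder $\overline{M}$, again via maps defined by composition out of the now-fixed $A'$. Within your one-stage setup the same idea works: drop the cylinder on the $C$-side entirely, take $\widetilde{C} := \widetilde{C}_0$ with structure map $i_C\, g\, r_A\colon \widetilde{A} \rightarrow \widetilde{C}_0$, and set $\alpha_C := i_C$, $\beta_C := r_C$; then $\alpha_C g = i_C g = (i_C g r_A)\, i_A$ and $\beta_C (i_C g r_A) = g\, r_A = g\,\beta_A$ hold on the nose, $\beta_C\alpha_C = \id_C$ with the constant homotopy, and this is compatible with your constant-at-$f$ homotopy on the $B$-leg, so the three homotopies glue over the pushout. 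With that correction your argument gives the same conclusion as the paper, just packaged as a single span-level retraction instead of the paper's chain of retract diagrams.
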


\begin{proof}
  Again we only show \hyperref[enum:condition:c1]{(C1)} and
  \hyperref[enum:condition:c2]{(C2)} from before. 
  For (C2), $A[\Delta^1]$ is dominated by $A'[\Delta^1]$.
  
  For (C1), assume that $A,B,C$ are retracts of homotopy bl-finite objects
  $A', B', C'$. Note that
  we can make the co-retraction into a cofibration by replacing $A'$ with the
  mapping cylinder of $A \rightarrow A'$, so we will assume that the
  co-retractions $i_A, i_B, i_C$ are actually cofibrations.

  We want to show that $C \cup_A B$ is a retract of a homotopy bl-finite object.
  We reduce this to the case where $A$ is homotopy bl-finite. Consider the
  commutative diagram
  \begin{equation*}
    \xymatrix{ A \ar@{ >->}[r]   \ar@{ >->}[d]  & B \ar@{=}[d] \\
               A'\ar[r]^\bullet   \ar[d]         & B \ar@{=}[d] \\
               A \ar@{ >->}[r]                  & B }.
  \end{equation*}
  (As before $\xrightarrow{\bullet}$ denotes a map defined by composition.) We
  can factor the horizontal maps into cofibrations simultaneously using the
  cylinder functor. We obtain
  \begin{equation*}
    \xymatrix{ 
      A \ar@{ >->}[r]   \ar@{ >->}[d] & M \ar[d] \ar[r]^\sim  & B \ar@{=}[d] \\
      A'\ar@{ >->}[r] \ar[d] &\overline{M}\ar[d] \ar[r]^\sim  & B \ar@{=}[d] \\
      A \ar@{ >->}[r]                 & M        \ar[r]^\sim  & B }.
  \end{equation*}
  Here and in all following diagrams the composition of the vertical arrows is
  always the identity, which holds in the diagram above by the functoriality
  of the cylinder functor. By the gluing lemma, $C \cup_A M$ is weakly
  equivalent to $C \cup_A B$. Then the diagram, extended by $C$, is
  \begin{equation*}
    \xymatrix{ 
      C \ar@{=}[d] & A \ar[l] \ar@{ >->}[r]   \ar@{ >->}[d] & M \ar[d] \\
      C \ar@{=}[d] & A'\ar[l]_\bullet \ar@{ >->}[r] \ar[d] &\overline{M}\ar[d]  \\
      C            & A \ar[l] \ar@{ >->}[r]                 & M },
  \end{equation*}
  which shows that $C\cup_A M$ is a retract of $C \cup_{A'} \overline{M}$. We are
  done if we show that $C \cup_{A'} \overline{M}$ is bl-finitely dominated.
  As $M$ is homotopy equivalent to the homotopy bl-finitely dominated object $B$,
  Lemma \ref{lem:homotopy-retracts-finite} shows that $\overline{M}$ is again
  a retract of a homotopy bl-finite module $M'$.

  Now we can use that we have co-retractions $C\rightarrowtail C'$,
  $\overline{M} \rightarrowtail M'$ with $C', M'$ homotopy bl-finite objects,
  which are also cofibrations. This gives a commuting retraction diagram
  \begin{equation*}
    \xymatrix{ 
      C  \ar[d] & A'\ar[l] \ar@{ >->}[r]   \ar@{=}[d] & \overline{M} \ar@{ >->}[d]  \\
      C' \ar[d] & A'\ar[l]_\bullet \ar@{ >->}[r]^\bullet \ar@{=}[d] &M'\ar[d]   \\
      C         & A'\ar[l] \ar@{ >->}[r]                 & \overline{M}   },
  \end{equation*}
  where we want to emphasize, that the map $A' \rightarrowtail M'$, defined by
  composition, is a cofibration. Thus $C \cup_{A'} \overline{M}$ is a retract
  of $C' \cup_{A'} M'$, which is homotopy bl-finite, as being a pushout of
  homotopy bl-finite objects along a cofibration.

\end{proof}

\section{Proofs IV: connective algebraic K-theory of categories of controlled
simplicial modules}
\label{sec:proofs-iv:connective-alg-k-theory}
\subsection{Algebraic K-theory}
In the last section we showed that the categories $\mathcal{C}^G_{f}$,
$\mathcal{C}^G_{hf}$ and $\mathcal{C}^G_{hfd}$ are all categories with
cofibrations and weak equivalences, so we can use Waldhausen's
$\mathcal{S}_{\bdot}$-construction from \cite{Waldhausen;spaces;;;;1985} to
produce an algebraic K-Theory spectrum $K(\mathcal{C}^G_{?})$ and therefore
also the corresponding infinite loop space. Define
$K_n(\mathcal{C}^G_{?})$ for $n\geq 0$ as the $n$th homotopy group $\pi_n
K(\mathcal{C}^G_{?})$. This algebraic K-Theory spectrum is always connective
so we do not assign any name to its negative homotopy groups.

\begin{rem}
  There is a slight set-theoretical problem, as $\mathcal{C}^G$ is not a
  small category according to our definition but it needs be one to apply the
  K-theory construction. However, we follow the usual
  approach (see e.g.~\cite[Remark before~2.1.1]{Waldhausen;spaces;;;;1985})
  and fix a suitably large set-theoretical small category of simplicial
  $R$-modules to begin with. Then all the categories we consider are again
  small. (We could obtain such a category by fixing a large cardinal and
  requiring it to contain all elements.)
  We will assume such a choice from now on.
\end{rem}

\subsection{A cofinality theorem}
\label{subsec:cofinality-theorem}

We want to show that $K_n(\mathcal{C}^G_f)$
$K_n(\mathcal{C}^G_{hf})$ and
$K_n(\mathcal{C}^G_{hfd})$ agree for $n \geq 1$.  For this we need a
cofinality theorem first.

\begin{thm}[Waldhausen-Thomason cofinality]\label{thm:thomason-cofinality}
  Let $\mathcal{A}$ and $\mathcal{B}$ be Waldhausen categories.  Suppose that
  $\mathcal{A}$ is a full subcategory of $\mathcal{B}$ which satisfies the
  following conditions
  \begin{enumerate}
    \item \label{item:cofinality:cofib}
      $\mathcal{A}$ is a Waldhausen subcategory: $f\colon X\rightarrow B$
      in $\mathcal{A}$ is a cofibration if and only if it is a cofibration in
      $\mathcal{B}$ with cokernel in $\mathcal{A}$.
    \item \label{item:cofinality:weq}
      A map in $\mathcal{A}$ is a weak equivalence if and only if it is
      one in $\mathcal{B}$.
    \item \label{item:cofinality:saturated}
      $\mathcal{A}$ is saturated: every object in $\mathcal{B}$ which is
      weakly equivalent to an object in $\mathcal{A}$ is itself in
      $\mathcal{A}$.
    \item \label{item:cofinality:extensions}
      $\mathcal{A}$ is closed under extensions: if $X\rightarrowtail Y
      \twoheadrightarrow Z$ is a cofiber sequence in $\mathcal{B}$ and $X,Z$
      are in $\mathcal{A}$, then $Y$ is in $\mathcal{A}$.
    \item \label{item:cofinality:cylinders} 
      $\mathcal{B}$ has mapping cylinders satisfying the cylinder axiom
      and $\mathcal{A}$ is closed under them.
    \item \label{item:cofinality:cofinal} 
      $\mathcal{A}$ is cofinal in $\mathcal{B}$:  for every object $X$ in
      $\mathcal{B}$ there is an object $\overline{X}$ in $\mathcal{B}$ such
      that $X \vee \overline{X}$ is in $\mathcal{A}$.
  \end{enumerate}
  Then 
  \begin{equation*}
    K(\mathcal{A}) \rightarrow K(\mathcal{B}) \rightarrow
    \text{``}K_0(\mathcal{B}) / K_0(\mathcal{A})\text{''} 
  \end{equation*}
  is a homotopy fiber sequence of connective spectra.  Here
  $\text{``}K_0(\mathcal{B}) / K_0(\mathcal{A})\text{''}$ denotes
  the Eilenberg-MacLane spectrum with the group
  $K_0(\mathcal{B}) / K_0(\mathcal{A})$ in degree~$0$.
\end{thm}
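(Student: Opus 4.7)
The plan is to factor the inclusion $\mathcal{A}\subseteq\mathcal{B}$ through an intermediate Waldhausen subcategory $\mathcal{A}'\subseteq\mathcal{B}$, defined as the full subcategory of those objects $X$ in $\mathcal{B}$ whose class $[X]\in K_0(\mathcal{B})$ lies in the image of $K_0(\mathcal{A})\to K_0(\mathcal{B})$. First I would check that $\mathcal{A}'$ inherits conditions (\ref{item:cofinality:cofib})--(\ref{item:cofinality:cofinal}) from $\mathcal{A}$: closure under extensions in $\mathcal{B}$ is immediate from additivity of $[-]$ on cofiber sequences, saturation follows because $[-]$ depends only on the weak equivalence class, the cylinder of $X$ has the class $[X]$ (via the cylinder axiom applied to the projection), and cofinality is trivially inherited since $\mathcal{A}\subseteq\mathcal{A}'$. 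By construction $K_0(\mathcal{A})\twoheadrightarrow K_0(\mathcal{A}')$ is surjective; it is also injective because relations among $\mathcal{A}$-objects imposed in $K_0(\mathcal{A}')$ already hold in $K_0(\mathcal{B})$ and hence, by (\ref{item:cofinality:weq}), (\ref{item:cofinality:cofib}), (\ref{item:cofinality:extensions}), in $K_0(\mathcal{A})$.

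With this in place, for the inclusion $\mathcal{A}\hookrightarrow\mathcal{A}'$ I would invoke Waldhausen's cofinality theorem~\cite[1.5.9]{Waldhausen;spaces;;;;1985}: we have strong cofinality~(\ref{item:cofinality:cofinal}), shared weak equivalences and cofibrations, a shared cylinder functor, and now the $K_0$-groups agree. The output is that $K(\mathcal{A})\to K(\mathcal{A}')$ is a homotopy equivalence of connective spectra. The upgrade from an equivalence of K-theory spaces to spectra is obtained by running the same argument iteratively in the $S_\bullet^{(m)}$-construction, using that all the hypotheses (in particular cofinality by taking $\bar{X}$ termwise in each filtration degree) pass to $wS_nC$ for every Waldhausen category $C$ we have considered.

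Next I would treat the second-stage inclusion $\mathcal{A}'\hookrightarrow\mathcal{B}$ and show that it induces an isomorphism on $K_i$ for $i\geq 1$. The key mechanism is the $S_\bullet$-machinery: any $n$-simplex of $wS_\bullet\mathcal{B}$ is a chain of cofibrations $B_0\rightarrowtail\cdots\rightarrowtail B_n$, and using~(\ref{item:cofinality:cofinal}) I pick $\bar{B}$ with $B_n\vee\bar{B}\in\mathcal{A}$; replacing each $B_i$ by $B_i\vee\bar{B}$ via a cofibration that is a weak equivalence in $K$-theory (Waldhausen's additivity/cofinality trick) moves the whole chain into $\mathcal{A}'$ up to a bounded ambiguity living in $K_0(\mathcal{B})/K_0(\mathcal{A})$. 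Iterating this in each $S_\bullet^{(m)}$-construction, and using saturation~(\ref{item:cofinality:saturated}) to stay inside $\mathcal{A}'$ after replacing by weakly equivalent representatives, gives that $\pi_i K(\mathcal{A}')\to\pi_i K(\mathcal{B})$ is an isomorphism for $i\geq 1$.

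Finally, the homotopy cofiber of $K(\mathcal{A})\simeq K(\mathcal{A}')\to K(\mathcal{B})$ is a connective spectrum whose positive homotopy groups vanish by the previous step, and whose $\pi_0$ is $K_0(\mathcal{B})/\mathrm{im}(K_0(\mathcal{A}))=K_0(\mathcal{B})/K_0(\mathcal{A})$ by construction of $\mathcal{A}'$. Such a spectrum is necessarily the Eilenberg--MacLane spectrum on that group, and therefore
\begin{equation*}
K(\mathcal{A})\longrightarrow K(\mathcal{B})\longrightarrow \text{``}K_0(\mathcal{B})/K_0(\mathcal{A})\text{''}
\end{equation*}
is a homotopy fiber sequence of connective spectra. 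The main obstacle is the spectrum-level (as opposed to space-level) statement in the second stage: plain cofinality without the $K_0$-equality controls only the infinite loop space, so one must carefully verify that conditions (\ref{item:cofinality:cofib})--(\ref{item:cofinality:cylinders}) are preserved under $S_\bullet$, and that the cofinality modification $B_i\mapsto B_i\vee\bar{B}$ can be performed compatibly on simplices of arbitrarily high iterated $S_\bullet$-degree; this is precisely where the extension axiom~(\ref{item:cofinality:extensions}) becomes indispensable and where, I expect, the omission in the literature versions occurs.
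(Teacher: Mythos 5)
Your outline follows the same skeleton as the paper (factor the inclusion through the full subcategory of objects whose class in $K_0(\mathcal{B})$ lies in the image of $K_0(\mathcal{A})$ --- your $\mathcal{A}'$ is the paper's $\mathcal{B}^w$), but the two steps you treat as routine are exactly the hard content, and as written both have gaps. First, the claim that $K_0(\mathcal{A})\to K_0(\mathcal{A}')$ is surjective ``by construction'' is not correct: membership in $\mathcal{A}'$ constrains the class of $X$ in $K_0(\mathcal{B})$, not its class in $K_0(\mathcal{A}')$, and to transfer one to the other you would need injectivity of $K_0(\mathcal{A}')\to K_0(\mathcal{B})$, which is (an instance of) the theorem you are proving. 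Your injectivity argument is circular for the same reason: a relation in $K_0(\mathcal{B})$ passes through arbitrary objects of $\mathcal{B}$, and there is no automatic descent to $K_0(\mathcal{A})$; indeed the paper's counterexample in \ref{subsubsec:saturated-counterexample} shows that without saturation such descent fails, so it cannot be dismissed in one clause. Moreover, even granting $K_0(\mathcal{A})\cong K_0(\mathcal{A}')$, Waldhausen's theorem \cite[1.5.9]{Waldhausen;spaces;;;;1985} requires \emph{strict} cofinality (a complement lying in $\mathcal{A}$, with $B\vee A\in\mathcal{A}$), whereas hypothesis~\eqref{item:cofinality:cofinal} only provides a complement in $\mathcal{B}$. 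Bridging ``cofinal and $K_0$-surjective'' to ``strictly cofinal'' is precisely the paper's Lemma~\ref{lem:cofinality-K-0}, whose proof needs the split groups $K_0'$, the group-completion argument producing $A\vee C\cong B\vee C$, and the saturation and extension hypotheses; none of that appears in your proposal.

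Second, your treatment of $\mathcal{A}'\hookrightarrow\mathcal{B}$ is in effect an unproved re-derivation of Thomason--Trobaugh's Cofinality Theorem \cite[1.10.1]{Thomason-Trobaugh;higher-algebraic}, which is a genuine theorem (not the flawed Exercise 1.10.2) and is what the paper cites for this stage. Your sketched mechanism does not work as stated: choosing a single $\overline{B}$ with $B_n\vee\overline{B}\in\mathcal{A}$ and replacing each $B_i$ by $B_i\vee\overline{B}$ does not put the chain into $\mathcal{A}'$, since $[B_i]+[\overline{B}]$ lies in the image of $K_0(\mathcal{A})$ only if $[B_n]-[B_i]$ already does, which is not automatic for an arbitrary chain of cofibrations; and the ``bounded ambiguity in $K_0(\mathcal{B})/K_0(\mathcal{A})$'', the compatibility over iterated $S_\bullet$-degrees, and the identification of the cofiber with the Eilenberg--MacLane spectrum are asserted rather than argued. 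To repair the proposal you should either quote Thomason--Trobaugh 1.10.1 for the fiber sequence $K(\mathcal{B}^w)\to K(\mathcal{B})\to\text{``}G\text{''}$ and then prove the strict-cofinality lemma for $\mathcal{A}\subseteq\mathcal{B}^w$ (the paper's route), or supply a complete simplicial argument for the second stage --- simply gesturing at the $S_\bullet$-construction does not suffice.
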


\begin{rem}[Similar results]
  Theorem~\ref{thm:thomason-cofinality}
  is inspired from Thomason-Trobaugh \cite[Exercise
  1.10.2]{Thomason-Trobaugh;higher-algebraic} and Vogell
  \cite[Theorem~1.6]{Vogell;bounded}.  However, we have
  slightly different assumptions.  In particular, in
  \cite{Thomason-Trobaugh;higher-algebraic} the saturatedness assumption is
  missing.  We will provide a counterexample in 
  Section~\ref{subsubsec:saturated-counterexample}.  Parts of
  the proof were also inspired by Weibel's
  K-Book~\cite[Corollary V.2.3.1 and prerequisites]{weibel;k-book}, which,
  unfortunately, suffers from the same missing assumption, see again
  Section~\ref{subsubsec:saturated-counterexample}.  Staffeldt,
  \cite[Thm.~2.1]{Staffeldt;on-fundamental}, has a similar result as ours in
  the context of exact categories, which is strictly less general as his class
  of weak equivalences are only the isomorphisms.
\end{rem}

\begin{rem}
  Instead of proving the theorem directly, we prove a lemma about $K_0$ and
  then rely on Thomason-Trobaugh's cofinality theorems~\cite[Thm.~10.1]{Thomason-Trobaugh;higher-algebraic} and
  Waldhausen's strict cofinality
  theorem~\cite[1.5.9]{Waldhausen;spaces;;;;1985}.  (The latter has an
  implicit assumption that the subcategory is full, see
  \ref{subsubsec:fullness-necessary}.)
\end{rem}

\begin{lem}\label{lem:cofinality-K-0}
  Assume we are in the same situation as in
  Theorem~\ref{thm:thomason-cofinality}.  Assume furthermore that $\mathcal{A}
  \rightarrow \mathcal{B}$ induces a surjection $K_0(\mathcal{A})
  \rightarrow K_0(\mathcal{B})$.  Then $\mathcal{A}$ is \emph{strictly
  cofinal} in $\mathcal{B}$ in the sense
  of~\cite[1.5.9]{Waldhausen;spaces;;;;1985}, i.e., for each $B \in
  \mathcal{B}$ there is a $A\in\mathcal{A}$ such that $B\vee A \in
  \mathcal{A}$.  (In this situation we do not need the cylinder functor
  assumption, but all the other ones are used.)
\end{lem}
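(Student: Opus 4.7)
The plan is to exploit the ordinary cofinality hypothesis (\ref{item:cofinality:cofinal}) and the $K_0$-surjectivity in tandem. Given $B \in \mathcal{B}$, first pick $\overline{B} \in \mathcal{B}$ with $C := B \vee \overline{B} \in \mathcal{A}$ using (\ref{item:cofinality:cofinal}). By the hypothesis that $K_0(\mathcal{A}) \to K_0(\mathcal{B})$ is surjective, write $[\overline{B}] = [A_1] - [A_2]$ in $K_0(\mathcal{B})$ with $A_1, A_2 \in \mathcal{A}$. An elementary computation in the abelian group $K_0(\mathcal{B})$ then yields
\[
[B \vee A_1] = [B] + [A_1] = [C] - [\overline{B}] + [A_1] = [C] + [A_2] = [C \vee A_2].
\]
The object $C \vee A_2$ is in $\mathcal{A}$ (since $\mathcal{A}$ is a Waldhausen subcategory containing $*$, hence closed under finite coproducts), so the remaining task is to upgrade this equality of $K_0$-classes to a statement about the underlying objects.

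The key auxiliary statement I will use is the following: under our hypotheses, if $[M] = [N]$ in $K_0(\mathcal{B})$, then there exist $T \in \mathcal{B}$ and a finite zigzag of weak equivalences $M \vee T \xleftarrow{\sim} \cdots \xrightarrow{\sim} N \vee T$ in $\mathcal{B}$. Applied to $M = B \vee A_1$ and $N = C \vee A_2$, this produces such a zigzag together with a witness $T$. Using (\ref{item:cofinality:cofinal}) once more, pick $\overline{T}$ with $T \vee \overline{T} \in \mathcal{A}$ and wedge $\overline{T}$ into every term of the zigzag. The right-hand endpoint becomes $C \vee A_2 \vee T \vee \overline{T}$, a coproduct of $\mathcal{A}$-objects and therefore in $\mathcal{A}$. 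Iterating the saturation axiom (\ref{item:cofinality:saturated}) backwards along the zigzag shows that every intermediate term lies in $\mathcal{A}$; in particular $B \vee A_1 \vee T \vee \overline{T} \in \mathcal{A}$. Taking $A := A_1 \vee T \vee \overline{T} \in \mathcal{A}$ then delivers $B \vee A \in \mathcal{A}$, which is strict cofinality.

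The hard part will be the auxiliary statement on realizing a $K_0$-equality by a zigzag of weak equivalences after a single stabilization, especially because the lemma \emph{excludes} the cylinder-functor assumption (\ref{item:cofinality:cylinders}). With a cylinder functor one would convert each weak equivalence appearing in a formal $K_0$-witnessing chain into a cofibration with contractible cofiber and absorb it via the extension axiom. Without it, the plan is to analyse the defining presentation of $K_0(\mathcal{B})$ as the free abelian group on weak-equivalence classes modulo the cofiber-sequence relations $[Y] = [X] + [Z]$ arising from $X \rightarrowtail Y \twoheadrightarrow Z$. Each such elementary relation should be converted, after summing in the relevant cofiber data and invoking extension-closure (\ref{item:cofinality:extensions}), into an honest weak equivalence relating the two sides up to a suitable summand; concatenating the resulting weak equivalences along a finite chain witnessing $[M]=[N]$ produces the desired zigzag with a common stabiliser $T$. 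This is the one place where real work is needed; everything else in the argument is formal manipulation of cofinality, saturation, and extensions.
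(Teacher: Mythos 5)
The reduction in your first paragraph is fine, but the whole weight of the argument rests on the deferred auxiliary statement, and that statement is false under the lemma's hypotheses: in a general Waldhausen category $\mathcal{B}$, an equality $[M]=[N]$ in $K_0(\mathcal{B})$ can \emph{not} be realized by a zigzag of weak equivalences $M\vee T \simeq \cdots \simeq N\vee T$ after a single (or any) stabilization. Take $\mathcal{B}$ to be bounded complexes of finitely generated free $\Int$-modules with quasi-isomorphisms (and, say, $\mathcal{A}=\mathcal{B}$, so all hypotheses including $K_0$-surjectivity hold). The complex $M=(\Int\xrightarrow{2}\Int)$ has $[M]=0=[0]$ in $K_0(\mathcal{B})$, but $M\vee T$ has an extra $\Int/2$ in its homology for every $T$, so no zigzag of weak equivalences connects $M\vee T$ to $T$. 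The underlying reason is exactly the step you flag as "the one place where real work is needed": an elementary $K_0$-relation $[Y]=[X]+[Z]$ coming from a non-split cofiber sequence $X\rightarrowtail Y\twoheadrightarrow Z$ does not make $Y$ stably weakly equivalent to $X\vee Z$; the extension-closure hypothesis \eqref{item:cofinality:extensions} only yields \emph{membership} of a middle term in $\mathcal{A}$, never a weak equivalence between $Y$ and $X\vee Z$. So the plan of converting the defining relations of $K_0$ into honest weak equivalences cannot be carried out.

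The fix, which is the paper's route, is to change the group in which you realize the equality rather than trying to realize it in $K_0$. Introduce the "split" group $K'_0$ (generated by isomorphism classes with the single relation $[A]+[C]=[A\vee C]$) for both $\mathcal{A}$ and $\mathcal{B}$, and compare the cokernels $G'=K'_0(\mathcal{B})/K'_0(\mathcal{A})$ and $G=K_0(\mathcal{B})/K_0(\mathcal{A})$: cofinality plus saturation show the weak-equivalence relation is redundant in $G$, and cofinality plus extension-closure show the cofiber-sequence relation already holds in $G'$ (stabilize $X\rightarrowtail Y\twoheadrightarrow Z$ by complements $\overline{X},\overline{Z}$ so that all three terms land in $\mathcal{A}$ and hence die). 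Thus $G'\cong G$, so $K_0$-surjectivity forces $K'_0(\mathcal{A})\rightarrow K'_0(\mathcal{B})$ to be onto; and in $K'_0(\mathcal{B})$, which is just the group completion of isomorphism classes under $\vee$, an equality $[A]=[B]$ means an \emph{honest} stable isomorphism $A\vee C\cong B\vee C$. One final application of cofinality (replace $C$ by $C\vee\overline{C}\in\mathcal{A}$) then gives $B\vee(C\vee\overline{C})\cong A\vee(C\vee\overline{C})\in\mathcal{A}$, which is strict cofinality. Your stabilize-and-saturate bookkeeping at the end is in the right spirit, but it only becomes available once the realization happens at the level of isomorphisms rather than zigzags of weak equivalences.
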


Under the hypothesis of the lemma Waldhausen's strict cofinality theorem applies
and shows that $K(\mathcal{A}) \simeq K(\mathcal{B})$.

\begin{proof}
  Recall (e.g.~from \cite[1.5.6]{Thomason-Trobaugh;higher-algebraic}) that
  $K_0(\mathcal{A})$ is the abelian group generated by isomorphism classes of
  objects $[A]$, $A\in \mathcal{A}$ with relations
  \begin{enumerate}
    \item $[A] = [B]$ if there is a weak equivalence $A\xrightarrow{\sim} B$
    \item $[A] + [C] = [B]$ if there is a cofiber sequence $A \rightarrowtail
      B\twoheadrightarrow C$.
  \end{enumerate}
  Let $K'_0(\mathcal{A})$ be the group where we ignore the weak equivalences
  and consider only split cofiber sequences.  That is, it is the group
  generated by isomorphism classes of objects $[A]$, $A\in\mathcal{A}$ with
  relation
  \begin{enumerate}
    \item $[A] + [C] = [A\vee C]$ for $A,C \in \mathcal{A}$.
  \end{enumerate}
  We do the same for $\mathcal{B}$. 
  From the inclusion $i\colon \mathcal{A} \rightarrow \mathcal{B}$ we obtain a
  (solid) commutative diagram
  \begin{equation*}
    \xymatrix{
    K'_0(\mathcal{A}) \ar[d]\ar[r]  &
    K'_0(\mathcal{B}) \ar[d]\ar@{-->}[r]  & G' \ar@{-->}[d]\\
    K_0(\mathcal{A}) \ar[r]  &
    K_0(\mathcal{B}) \ar@{-->}[r]  & G 
    }
  \end{equation*}
  which we can extend to the cokernels $G$ and $G'$ as shown.  We claim $G'
  \rightarrow G$ is an isomorphism.

  First, $G$ is the abelian group with generators $[B]$ for $B\in \mathcal{B}$
  and relations
  \begin{enumerate}
    \item \label{enum:cofib} $[A] + [C] = [B]$ if there is a cofiber sequence
      $A \rightarrowtail B\twoheadrightarrow C$.
    \item \label{enum:from-A} $[A] = 0$ if $A \in \mathcal{A}$.
    \item \label{enum:weq} $[A] = [B]$ if there is a weak equivalence
      $A\xrightarrow{\sim} B$
  \end{enumerate}
  We claim that (\ref{enum:weq}) is redundant:  by cofinality, there is an
  $\overline{A} \in \mathcal{B}$ such that $A \vee \overline{A}
  \in\mathcal{A}$.  By the gluing lemma, $A \vee\overline{A} \xrightarrow{\sim}
  B \vee \overline{A}$ is still a weak equivalence.  By saturation, $B
  \vee \overline{A}$ is also in $\mathcal{A}$.  Therefore, by
  \eqref{enum:from-A}, $[A \vee \overline{A}] = 0$ and $[B \vee \overline{A}]
  = 0$ in $G$.  By \eqref{enum:cofib}, then $[A] = -[\overline{A}] = [B]$ in
  $G$, which implies (\ref{enum:weq}).

  Furthermore, $G'$ is the abelian group with generators $[B]$ for
  $B\in\mathcal{B}$ and relations
  \begin{enumerate}
    \item $[A] + [B] = [A \vee B]$.
    \item $[A] = 0$ if $A \in \mathcal{A}$.
  \end{enumerate}

  We show that the stronger relation \eqref{enum:cofib} for $G$ comes from
  $G'$ and therefore $G$ and $G'$ are isomorphic.  Let $A \rightarrowtail B
  \twoheadrightarrow C$ be a cofiber sequence in $\mathcal{B}$.  By
  cofinality, there are $\overline{A}, \overline{C}$ in $\mathcal{B}$ such
  that $\overline{A} \vee A$ and $\overline{C} \vee C$ are in $\mathcal{A}$.
  Then
  \begin{equation}\label{eq:cofib-seq}
    A \vee \overline{A} \rightarrow B \vee \overline{A} \vee \overline{C}
    \rightarrow C \vee\overline{C}
  \end{equation}
  is a cofiber sequence in $\mathcal{B}$ by the pushout axiom, and the first
  and last object are in $\mathcal{A}$.  Since $\mathcal{A}$ is closed under
  extensions in $\mathcal{B}$,
  the middle term $B \vee \overline{A} \vee \overline{C}$ is also in
  $\mathcal{A}$.  It follows that in $G'$ we have
  \begin{equation*}
    \begin{split}    
      [B] + [\overline{A}] + [\overline{C}] = 0 \\
        [A] + [\overline{A}]  = 0 \\
          [C] + [\overline{C}]  = 0
    \end{split}
  \end{equation*}
  and therefore $[B] = [A] + [C]$ in $G'$.  It follows that $G\cong G'$.\medskip

  We now prove that $\mathcal{A} \subseteq \mathcal{B}$ is strictly cofinal.
  If $K_0(\mathcal{A}) \rightarrow K_0(\mathcal{B})$ is a surjection, $G$
  and therefore $G'$ is trivial.  Hence, $K'_0(\mathcal{A}) \rightarrow
  K'_0(\mathcal{B})$ is surjective.  Let $B\in \mathcal{B}$, then there is an
  $A \in \mathcal{A}$ such that $[A] = [B] \in K'_0(\mathcal{B})$.  Now
  $K'_0(\mathcal{B})$ is just a group completion with respect to $\vee$ after
  taking isomorphism classes.  Therefore, $[A] = [B]$ if and only if there is a
  $C \in \mathcal{B}$ with $A\vee C \cong B\vee C$.  (This is the usual
  algebraic argument.)  As $\mathcal{A}$ is cofinal, there is a
  $\overline{C} \in \mathcal{B}$ such that $C\vee \overline{C} \in
  \mathcal{A}$.  Therefore,
  \begin{equation*}
    A \vee ( C \vee \overline{C}) \cong B \vee C \vee \overline{C}.
  \end{equation*}
  with, of course, $B\vee C \vee \overline{C} \in \mathcal{A}$.  This shows
  strict cofinality.
\end{proof}

\begin{proof}[Proof of Thm.~\ref{thm:thomason-cofinality}]
  We want to apply \cite[1.10.1]{Thomason-Trobaugh;higher-algebraic}.  For the
  convenience of the reader we quote the result:
  \begin{quote}
    1.10.1. \textbf{Cofinality Theorem.}
    Let $v\mathcal{B}$ be a Waldhausen category with a cylinder functor
    satisfying the cylinder axiom. Let $G$ be an abelian group, and $\pi
    \colon K_0(v\mathcal{B}) \rightarrow G$ an epimorphism.  Let
    $\mathcal{B}^w$ be the full subcategory of those $B$ in $\mathcal{B}$ for
    which the class $[B]$ in $K_0(v\mathcal{B})$ has $\pi[B] = 0$ in $G$.
    Make $\mathcal{B}^w$ a Waldhausen category with $v(\mathcal{B}^w) =
    \mathcal{B}^w \cap v(B)$, $\mathrm{co}(\mathcal{B}^w) = \mathcal{B}^w \cap
    \mathrm{co}(B)$.  Let ``$G$'' denote $G$ considered as Eilenberg-MacLane
    spectrum whose only non-zero homotopy group is $G$ in dimension $0$.

    Then there is a homotopy fibre sequence
    \begin{equation*}
      K(v\mathcal{B}^w) \rightarrow K(v\mathcal{B}) \rightarrow
      \text{``}G\text{''}
    \end{equation*}
  \end{quote}
  Define $G$ as $K_0(\mathcal{B})/
  K_0(\mathcal{A})$.  We obtain the above fiber sequence and, in particular,
  that $K_0(v\mathcal{B}^w) = \ker \pi$, where $\pi$ is the projection
  $K_0(v\mathcal{B}) \rightarrow G$. As $\pi[A]=0$ for $A\in \mathcal{A}$ by
  the definition of $G$, the inclusion $\mathcal{A} \rightarrow \mathcal{B}$
  factors as $\mathcal{A} \rightarrow \mathcal{B}^w$.   It suffices to show
  that
  $K(\mathcal{A}) \rightarrow K(\mathcal{B}^w)$ is a weak equivalence.
  Like in $\mathcal{B}$, $\mathcal{A}$ is cofinal in
  $\mathcal{B}^w$, as one can choose the same complement.  Then
  $\mathcal{A}\subseteq \mathcal{B}^w$ satisfies the assumptions of
  Theorem~\ref{thm:thomason-cofinality}.  But $K_0(\mathcal{A}) \rightarrow
  K_0(\mathcal{B}^w)$ is surjective, so by Lemma~\ref{lem:cofinality-K-0},
  $\mathcal{A}$ is strongly cofinal in $\mathcal{B}^w$.  By Waldhausen's
  strict cofinality Theorem~\cite[1.5.9]{Waldhausen;spaces;;;;1985}, there is
  a homotopy equivalence $K(\mathcal{A}) \rightarrow K(\mathcal{B}^w)$.  This
  shows that we obtain the desired homotopy fiber sequence
  \begin{equation*}
    K(\mathcal{A}) \rightarrow K(\mathcal{B}) \rightarrow
    \text{``}K_0(\mathcal{B})/ K_0(\mathcal{A})\text{''}.\qedhere
  \end{equation*}
\end{proof}

\subsubsection{Saturated is necessary: a counterexample}
\label{subsubsec:saturated-counterexample}
The assumption \eqref{item:cofinality:saturated} in
Theorem~\ref{thm:thomason-cofinality} saturatedness is
necessary.  Here is a counterexample, which the author developed during a
discussion with Chuck Weibel.  Let $\mathcal{C}$ be the following Waldhausen
category:
\begin{enumerate}
  \item Objects are finite pointed sets $X$ with decomposition $X =A \vee B
    \vee C$.  (Think of the elements as being colored, while the basepoint is
    black.)
  \item Morphisms are maps $A \vee B \vee C \rightarrow A' \vee B' \vee C'$
    such that they restrict to maps $A \rightarrow A'$,  $B \rightarrow A'
    \vee B'$, $C \rightarrow A' \vee C'$.  That is, you can change any color
    to $A$ or map to the basepoint, or do not change the color.
  \item Let Cofibrations be split injections. That is maps $i\colon X
    \rightarrow X'$ such that there is a map $p \colon
    X' \rightarrow X$ with $p\circ i = \id$.  It follows that $X' \cong X \vee
    Y$ for some $Y$ in $\mathcal{C}$.  This is, they come from the direct sum
    ``$\vee$'' in pointed sets.
  \item Let a weak equivalence be a bijection of pointed sets.
\end{enumerate}
This category has $K_0(\mathcal{C}) \cong \mathbb{Z}$, as each object is
weakly equivalent to an object $A \vee * \vee *$.

Consider the full subcategory $\mathcal{B}$ of objects $A \vee B
\vee C$ with $|A| = |C|$.  This is a cofinal subcategory in $\mathcal{C}$.  It
is not saturated:  while $A \vee * \vee C$ is equivalent to $(A\vee C) \vee *
\vee *$ in $\mathcal{C}$, the latter is not in $\mathcal{B}$.  It satisfies
all the other assumptions of Theorem~\ref{thm:thomason-cofinality}  except for
the cylinder
functor~\ref{thm:thomason-cofinality}.\eqref{item:cofinality:cylinders},
so Lemma~\ref{lem:cofinality-K-0} would apply and show that $K_0(\mathcal{B})
\subseteq K_0(\mathcal{C})$.
However, one sees that $K_0(\mathcal{B}) = \mathbb{Z} \oplus \mathbb{Z}$, with
generators represented by $A \vee * \vee C$ and $* \vee B \vee *$, as all weak
equivalences in $\mathcal{B}$ are isomorphisms.  Hence, $K_0(\mathcal{B})
\rightarrow K_0(\mathcal{C})$ cannot be an injection.

This counterexample shows that the saturatedness assumption is necessary and
missing in Exercise 1.10.2 in \cite{Thomason-Trobaugh;higher-algebraic}, as
well as in Corollary V.2.3.1 in Weibel's K-book \cite{weibel;k-book}.   The latter is deduced
in \cite{weibel;k-book} from Theorem II.9.4 through a chain along exercise
II.9.14 (``Grayson's trick''), Theorem IV.8.9 and Remark IV.8.9.1, (which
states $K_0(\mathcal{B}) = K_0(\mathcal{C})$ is equivalent to $\mathcal{B}$
being strictly cofinal in $\mathcal{C}$).  The gap is in the proof of Theorem
II.9.4, which claims that the proof of Lemma II.7.2 applies verbatim.  In
particular, the
above counterexample applies to Theorem II.9.4.

\subsubsection{Fullness is necessary}\label{subsubsec:fullness-necessary}
For completeness, let us remark that the fullness assumption in
\ref{thm:thomason-cofinality} is also necessary:  in
Waldhausen's cofinality theorem~\cite[1.5.9]{Waldhausen;spaces;;;;1985}, he
does not mention that one needs to assume that the subcategory $\mathcal{A}
\subseteq \mathcal{B}$ is full.  There is a counterexample due to Inna
Zakharevich~\cite{inna-cofinality-mathoverflow}
which shows that one has to assume it:
\begin{quote}
``Consider the following example. Let $C$ be the category of pairs of pointed
finite sets, whose morphisms $(A,B)\rightarrow(A',B')$ are pointed maps
$A\vee B\rightarrow A' \vee B'$, and let
$B$ be the category of pairs of pointed finite sets whose morphisms
$(A,B)\rightarrow (A',B')$ are pairs of pointed maps $A\rightarrow B$ and
$A'\rightarrow B$. We make $C$ a Waldhausen
category by defining the weak equivalences to be the isomorphisms, and the
cofibrations to be the injective maps. $B$ is clearly cofinal in $C$, but
$K_0(B)= \mathbb{Z}\times \mathbb{Z}$, while $K_0(C)=
\mathbb{Z}$.[\ldots]''  (from
\url{http://mathoverflow.net/q/23515})
\end{quote}

\subsection{Change of finiteness conditions}
\label{subsec:change-of-finiteness-conditions}
We turn to the proofs for Section~\ref{sec:algebraic-k-theory-control}.
We need Waldhausen's Approximation Theorem, which we recall for convenience.

\begin{definition}[Approximation Property~{\cite[1.6]{Waldhausen;spaces;;;;1985},%
  \cite[1.9.1]{Thomason-Trobaugh;higher-algebraic}}]
\label{def:approximation-property} \label{def:appendix:approximation-property}
 Let $F\colon \mathcal{A} \rightarrow \mathcal{B}$ be an exact functor of
 Waldhausen categories. $F$ has the
  \emph{Approximation Property} if the following two axioms hold.
  \begin{enumerate}
    \item[(App~1)] A map $f$ in $\mathcal{A}$ is a weak equivalence if (and only
      if) its image $F(f)$ in $\mathcal{B}$ is a weak equivalence.
    \item[(App~2)] Given any object $A$ in $\mathcal{A}$ and a map $x\colon
      F(A) \rightarrow B$ in $\mathcal{B}$ there exists a map $a\colon A
      \rightarrow A'$ in $\mathcal{A}$ and a weak equivalence $x'\colon F(A')
      \rightarrow B$ in $\mathcal{B}$ such that the triangle
      \begin{equation*}
	\xymatrix{ F(A) \ar[r]^{x} \ar[d]_{F(a)} & B \\
	F(A') \ar[ru]_{x'} }
      \end{equation*}
      commutes.
  \end{enumerate}
\end{definition}

\begin{thm}[Approximation Theorem~{\cite[1.6.7]{Waldhausen;spaces;;;;1985},%
  \cite[1.9.1]{Thomason-Trobaugh;higher-algebraic}}]
\label{thm:approximation-theorem} 
  Let $\mathcal{A}$, $\mathcal{B}$ be Waldhausen categories which satisfy the saturation axiom.  Assume $\mathcal{A}$ has a
  cylinder functor satisfying the cylinder axiom.  Let $F\colon \mathcal{A}
  \rightarrow \mathcal{B}$ be an exact functor with the
  Approximation Property.  Then $F$ induces an equivalence 
  \begin{equation*}
    K(F) \colon K(w\mathcal{A}) \rightarrow K(w\mathcal{B})
  \end{equation*}
  on connective algebraic K-theory spectra.  
\end{thm}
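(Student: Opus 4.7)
The plan is to follow Waldhausen's bisimplicial strategy. Since connective algebraic $K$-theory arises from the $wS_\bullet$-construction, it suffices to show that $wS_n F \colon wS_n \mathcal{A} \to wS_n \mathcal{B}$ is a weak equivalence of simplicial sets for each $n \geq 0$: the realization of the bisimplicial map $[n,k] \mapsto N_k\, wS_n F$ will then be a weak equivalence, which gives the statement after looping. The case $n=0$ is trivial, and the cases $n \geq 2$ reduce to the case $n=1$ once we know that $S_n \mathcal{A}$ again carries a Waldhausen structure (with level-wise cofibrations and weak equivalences), inherits a cylinder functor, and that the functor $S_n F$ inherits the Approximation Property.

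This inheritance step is where the cylinder functor in $\mathcal{A}$ is essential. For $(\mathrm{App}\,1)$ in $S_n \mathcal{A}$: a morphism of filtrations is a weak equivalence iff it is one level-wise, and this is detected by $(\mathrm{App}\,1)$ for $F$ applied level-wise. For $(\mathrm{App}\,2)$ in $S_n \mathcal{A}$: given a filtration $F(A_1) \rightarrowtail \cdots \rightarrowtail F(A_n)$ together with a map to a filtration $B_1 \rightarrowtail \cdots \rightarrowtail B_n$ in $\mathcal{B}$, one extends the approximation up the filtration by induction, at each stage factoring the newly appearing map through a mapping cylinder in $\mathcal{B}$, applying $(\mathrm{App}\,2)$ to approximate the cylinder from $\mathcal{A}$, and then using the cylinder functor in $\mathcal{A}$ to turn that approximation into a cofibration extending what has already been built.

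The core case $n=1$, namely that $wF \colon w\mathcal{A} \to w\mathcal{B}$ is a weak equivalence on nerves, then follows by a Quillen-Theorem-$A$-style argument. For each $B \in \mathcal{B}$ form the category $\mathcal{A}/B$ whose objects are pairs $(A, x \colon F(A) \xrightarrow{\sim} B)$ and whose morphisms are arrows in $\mathcal{A}$ that are compatible with the structure maps to $B$. Contractibility of its nerve is verified in two steps: non-emptiness follows from $(\mathrm{App}\,2)$ applied to $\ast \to B$, with the resulting map converted into a weak equivalence by factoring through a mapping cylinder and invoking the saturation axiom together with $(\mathrm{App}\,1)$; directedness follows from the same maneuver applied to $A_1 \vee A_2 \to B$ for two given approximations, producing a common dominator $A_3$. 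The main obstacle is the delicate bookkeeping in alternating $(\mathrm{App}\,2)$, cylinder factorizations, and saturation to pass from ``there exists a lift of the underlying map'' to ``there exists a lift that is actually a weak equivalence''---which is precisely where every hypothesis of the theorem (the cylinder axiom, the saturation axiom, and both halves of the Approximation Property) is indispensably used.
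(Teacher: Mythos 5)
First, note that the paper does not prove this statement at all: it is Waldhausen's Approximation Theorem, imported by citation from \cite[1.6.7]{Waldhausen;spaces;;;;1985} and \cite[1.9.1]{Thomason-Trobaugh;higher-algebraic}; the only original content in the paper is the remark that the weaker form of (App~2) (where $a$ need not be a cofibration) suffices, because the cylinder functor in $\mathcal{A}$ lets one replace $a\colon A \rightarrow A'$ by the cofibration $A \rightarrowtail T(a)$ and use the cylinder axiom plus saturation to adjust the weak equivalence. Your top-level outline (inheritance of the Approximation Property by the $S_n$-construction, the realization lemma, and a Theorem-A-style analysis of $w\mathcal{A} \rightarrow w\mathcal{B}$) is indeed the shape of Waldhausen's argument, so as a roadmap it is reasonable.

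However, two steps as written do not work. First, and most seriously, in the core case $n=1$ you claim contractibility of the comma category from ``non-emptiness plus directedness'' (any two objects admit a common target). That is not a sufficient condition for a contractible nerve: the category with two objects $x,y$ and two parallel non-identity arrows $x \rightarrow y$ satisfies both conditions and has nerve homotopy equivalent to a circle. To conclude contractibility you would at least need genuine filteredness (in particular, coequalizing parallel pairs of arrows, which is exactly where further nontrivial use of the cylinder functor and (App~2) is required), or Waldhausen's more delicate argument in \cite[1.6]{Waldhausen;spaces;;;;1985}, which approximates finite diagrams/simplices rather than verifying a directedness criterion; this is the heart of the theorem and is missing from your sketch. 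Second, in the inheritance of (App~2) for $S_n$ you factor ``through a mapping cylinder in $\mathcal{B}$'', but the theorem does not assume $\mathcal{B}$ has a cylinder functor---only $\mathcal{A}$ does. The inductive extension up the filtration has to be performed with pushouts along cofibrations in $\mathcal{A}$ (using the cofibration form of (App~2), available after cylinder replacement in $\mathcal{A}$, and exactness of $F$), not with cylinders in $\mathcal{B}$. Until these two points are repaired, the proposal is an outline of the known proof rather than a proof.
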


We used Thomason-Trobaugh's remark
in~\cite[1.9.1]{Thomason-Trobaugh;higher-algebraic} that we can use a weaker
version of the Approximation Property.  In~\cite{Waldhausen;spaces;;;;1985},
there is the further requirement in (App~2) that $a$ is a cofibration, which
we can always arrange due to the existence of a cylinder functor.

\begin{proof}[Proof of Theorem~\ref{thm:k-theory-comparison-f-hf-hfd}]
  To prove (\ref{item:prop-k-theory:1})~we use Waldhausen's Approximation
  Theorem~\ref{thm:approximation-theorem} and apply it to the inclusion
  functor.  We check the conditions.  First, all our categories satisfy the
  saturation axiom~\ref{lem:saturation-axiom} and have a cylinder
  functor~\ref{thm:cylinder-functor}
  satisfying the cylinder axiom~\ref{lem:cylinder-axiom}.
  
  A map is a homotopy equivalence in $\mathcal{C}^G_{f}$ if and only if it is
  one in $\mathcal{C}^G_{hf}$, hence (App~1) is satisfied.

  Assume we have $A \in \mathcal{C}^G_{f}$, $B \in \mathcal{C}^G_{hf}$ and a map
  $f\colon A \rightarrow B$. For $B$ there is, by definition, a bl-finite object $B_f
  \in \mathcal{C}^G_f$ which is homotopy equivalent to $B$, i.e.,~there are maps
  $g\colon B_f \rightarrow B$ and $\overline{g}\colon B \rightarrow B_f$ with
  both compositions being homotopic to the identity. Define $j\colon A
  \rightarrow B_f$ as $j:= \overline{g} \circ f$. Then $g\circ j$ is homotopic
  to $f$.  Using the cylinder functor (and
  Section~\ref{subsubsec:rectifying-diagrams}) we can rectify the homotopy
  commutative diagram on the left below to the strict commutative diagram on
  the right:
  \begin{equation*}
    \vcenter{\xymatrix{
    A \ar[r]^f \ar[d]_{j}    & B \\
    B_f \ar[ur]_g }} \qquad \rightsquigarrow\qquad
    \vcenter{\xymatrix{
    A \ar[r]^f \ar[d]_{\iota_0}    & B \\
    T(j) \ar[ur]_H }}
  \end{equation*}
  As $B_f \rightarrow T(j)$ is a homotopy equivalence, by the saturation axiom
  $H$ is a homotopy equivalence. This shows (App~2) and therefore
  (\ref{item:prop-k-theory:1}).\smallskip
  
  For (\ref{item:prop-k-theory:2})~we use the Waldhausen-Thomason cofinality
  Theorem~\ref{thm:thomason-cofinality}.  We have to check the conditions (1)
  to (6). Most of them are clear or shown in the previous sections. 
  
  In particular, a map $A \rightarrow A'$ in
  $\mathcal{C}^G_{hf}$ which is a cofibration in $\mathcal{C}^G_{hfd}$ is a
  cofibration in $\mathcal{C}^G_{hf}$, and therefore its quotient is again in
  $\mathcal{C}^G_{hf}$.  This is \eqref{item:cofinality:cofib}.  By
  definition,
  $\mathcal{C}^G_{hf}$ is full in $\mathcal{C}^{G}_{hfd}$.  A map in the former
  is a weak equivalence if and only if it is in the latter.  Similarly, the
  cylinder functor is just inherited.  This shows \eqref{item:cofinality:weq}
  and \eqref{item:cofinality:cylinders}.  An object homotopy
  equivalent to an object in $\mathcal{C}^{G}_{hf}$ is homotopy bl-finite, hence
  itself in $\mathcal{C}^G_{hf}$, this shows
  \eqref{item:cofinality:saturated}.
  
  We are left with first showing the cofinality
  \eqref{item:cofinality:cofinal} and then that
  $\mathcal{C}^G_{hf}$ is closed under extensions in
  $\mathcal{C}^{G}_{hfd}$ \eqref{item:cofinality:extensions}.

  For $B\in \mathcal{C}^{G}_{hfd}$ there is an $A \in \mathcal{C}^G_{hf}$ such
  that $B$ is a retract of $A$, i.e.,~there are maps $r\colon A \rightarrow B$,
  $i\colon B \rightarrow A$ such that $r\circ i = \id_B$. By replacing $A$
  with $T(i)$, we can assume that $i$ is a cofibration, hence there is a
  cofiber sequence
  \begin{equation*}
    B \stackrel{i}{\rightarrowtail} A \stackrel{p}{\twoheadrightarrow} C :=
    A/B.
  \end{equation*}
  The retraction $r\colon A \rightarrow B$ and the map $* \rightarrow C$ give
  a map $A \rightarrow B \vee C$, and $* \rightarrow B$ and  $A \rightarrow C$
  give another one. The sum of these maps makes the diagram
  \begin{equation*}
    \xymatrix{
      B \ar@{ >->}[r] \ar[d]_= & A \ar@{->>}[r]        \ar[d]& C \ar[d]_= \\
      B \ar@{ >->}[r]          & B \vee C \ar@{->>}[r]       & C        }
    \end{equation*}
  commutative and both rows are cofiber sequences. By the extension
  axiom~\ref{lem:waldh-cat:extension-axiom}, the map $A \rightarrow B \vee C$
  is a homotopy equivalence, hence $B \vee C \in \mathcal{C}^G_{hf}$.  This
  show cofinality.

  Next we need to show that $\mathcal{C}^{G}_{hf}$ is closed under
  extensions in $\mathcal{C}^G_{hfd}$. Let
  \begin{equation*}
    A \rightarrowtail B \twoheadrightarrow C
  \end{equation*}
  be a cofiber sequence in $\mathcal{C}^G_{hfd}$ with $A,C \in
  \mathcal{C}^G_{hf}$ and $B$ (``the extension of $A$ by $C$'') in
  $\mathcal{C}^{G}_{hfd}$. As $\mathcal{C}^G_{hf}$ is cofinal there is a
  $B'\in \mathcal{C}^G_{hfd}$ such that $B \vee B' \in
  \mathcal{C}^G_{hf}$.  
  Then 
  \begin{equation*}
    A \rightarrowtail B \vee B' \twoheadrightarrow C \vee B'
  \end{equation*}
  is a cofiber sequence with $A, B \vee B' \in \mathcal{C}^G_{hf}$, hence
  the quotient $C \vee B'$ is in $\mathcal{C}^G_{hf}$ by the gluing lemma.
  Similarly, but more easily, we use the cofiber sequence
  \begin{equation*}
    C \rightarrowtail C \vee B' \twoheadrightarrow B'
  \end{equation*}
  to show $B' \in \mathcal{C}^G_{hf}$ and then the cofiber sequence
  \begin{equation*}
    B' \rightarrowtail B \vee B' \twoheadrightarrow B
  \end{equation*}
  to show $B \in \mathcal{C}^G_{hf}$.

  The Cofinality Theorem~\ref{thm:thomason-cofinality} therefore gives us a
  homotopy fiber sequence of connective spectra
  \begin{equation*}
    K(\mathcal{C}^G_{hf}) \rightarrow
    K(\mathcal{C}^G_{hfd}) \rightarrow
    K_0(\mathcal{C}^G_{hfd}) / K_0( \mathcal{C}^G_{hf})
  \end{equation*}
  As $\pi_n(K_0(\mathcal{C}^G_{hfd}) / K_0( \mathcal{C}^G_{hf})) = 0$ for $n \neq 0$
  part (\ref{item:prop-k-theory:2})~of the proposition follows.
\end{proof}

\begin{rem}
  In view of the proposition, one can consider $\mathcal{C}^G_{hfd}$ as kind of
  ``idempotent completion'' of $\mathcal{C}^G_{hf}$.  (Recall that for
  algebraic K-theory of rings the idempotent completion
  \cite[2.B, p.61]{freyd;abelian-categories} of the category of finitely
  generated free $R$-modules gives the category of finitely generated
  projective $R$-modules, which has the correct $K_0$, cf.~also
  e.g.~\cite{Cardenas-Pedersen;Karoubi}.)
  
  Corollary~\ref{cor:hpty-idem-split} from the appendix shows that idempotents
  and certain homotopy idempotents
  split in $\mathcal{C}^G_{hfd}$.  The author does not know if every homotopy
  idempotent splits in $\mathcal{C}^G_{hfd}$.  Hence, it is not clear that
  $K_0(\mathcal{C}^G_{hfd})$ is the ``correct'' group from this point of view.
  However, the difference and interplay between $K_0(\mathcal{C}^G_{hf})$ and
  $K_0(\mathcal{C}^G_{hfd})$ is crucial in later work to construct a
  nonconnective delooping of $K(\mathcal{C}^G_{?})$ for all $?= f,hf,
  hfd$---we will obtain equivalent nonconnective K-theory
  spectra for all three finiteness conditions.
\end{rem}

%
%

\subsection{Change of rings} 
\label{subsec:change-of-rings}
Let $f\colon R \rightarrow S$ be a map of simplicial rings.

If $M$ is a cellular $R$-module then $S \otimes_R M$ is a cellular $S$-module
and we obtain a natural bijection $\diamond_R M \cong \diamond_S (S\otimes_R M)$
which makes $S\otimes_R M$ into a controlled $S$-module. This construction
respects all finiteness conditions and cofibrations, so we obtain an exact
functor $S\otimes_R -\colon \mathcal{C}^G_{?}(X; R) \rightarrow
\mathcal{C}^G_{?}(X; S)$.  Next we prove:

\begin{repthm}{thm:change-of-rings}[Change of rings]
  Let $f\colon R \rightarrow S$ be map of simplicial rings which is a weak
  equivalence. Then $f$ induces a map $\mathcal{C}^G_{f}(X; R) \rightarrow
  \mathcal{C}^G_{f}(X; S)$ which is an equivalence on algebraic K-Theory.
\end{repthm}

For technical reasons we assume all modules to be finite-dimensional in this
section.  Therefore, we only have this theorem for the finiteness condition
$f$.  Note that the theorem for $hf$ and $hfd$, except the $K_0$-part of
$hfd$, is already implied by the Theorem~\ref{thm:change-of-rings} using
Theorem~\ref{thm:k-theory-comparison-f-hf-hfd}.

The proof occupies the rest of this section, we need some preparations first.  A
map of simplicial rings which is a weak equivalence of the underlying
simplicial sets is called a \emph{weak equivalence of simplicial rings} for
short.

\begin{lem}\label{lem:w-eq-of-rings-give-w-eq-of-modules}
  Let $R \rightarrow S$ be a weak equivalence of simplicial rings and $P$ a
  cellular (uncontrolled) $R$-module. Let $\eta\colon P \rightarrow \res_R
  S\otimes_R P$ be the unit of the adjunction between the induction
  $S\otimes_R  -$ and the restriction $\res_R$.  Then $\eta$ is a weak
  equivalence of simplicial $R$-modules and, in particular, a homotopy
  equivalence of simplicial sets.
\end{lem}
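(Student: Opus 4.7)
The plan is to argue by induction on the cellular filtration $* = P^{-1} \subseteq P^0 \subseteq P^1 \subseteq \dots$ of $P$. Since $S \otimes_R -$ is a left adjoint it commutes with colimits, and in particular it sends $R[\Delta^n]$ to $S[\Delta^n]$ and $R[\partial \Delta^n]$ to $S[\partial \Delta^n]$. Consequently $S \otimes_R P$ inherits a cellular filtration whose $i$-th stage is $S \otimes_R P^i$, and $\eta$ restricts to each stage; the base case $P^{-1} = *$ is trivial.

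For the inductive step, $P^i$ is a pushout of $P^{i-1} \leftarrow \coprod R[\partial\Delta^i] \rightarrowtail \coprod R[\Delta^i]$ along a cellular inclusion, and applying $S \otimes_R -$ yields the analogous pushout for $S \otimes_R P^i$. Thus $\eta_{P^i}$ is induced by a map of pushout squares whose three component maps are $\eta_{P^{i-1}}$ (a weak equivalence by the inductive hypothesis) and the maps $\coprod R[A] \to \coprod S[A]$ for $A = \partial\Delta^i$ and $A = \Delta^i$. These last two are weak equivalences because tensoring the weak equivalence $R \to S$ of simplicial abelian groups with the degreewise free simplicial abelian group $\Int[A]$ preserves weak equivalences, so $R[A] = R \otimes_\Int \Int[A] \to S \otimes_\Int \Int[A] = S[A]$ is a weak equivalence for every simplicial set $A$. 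Because the horizontal maps $\coprod R[\partial\Delta^i] \rightarrowtail \coprod R[\Delta^i]$ and analogously for $S$ are cellular inclusions, the pushouts are homotopy pushouts; the resulting gluing argument for simplicial modules then gives that $\eta_{P^i}$ is a weak equivalence.

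Since $P$ is finite-dimensional under the standing assumption of this subsection, the induction terminates after finitely many steps and yields that $\eta_P$ is a weak equivalence. (More generally $P = \colim_i P^i$ is a sequential colimit along cellular inclusions, which is preserved by both sides and by geometric realization, so the same argument works without the finite-dimensionality hypothesis.) Finally, since $P$ and $\res_R S \otimes_R P$ are simplicial abelian groups, hence Kan complexes, the weak equivalence $\eta_P$ is automatically a homotopy equivalence of simplicial sets, as asserted.

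The main technical input is the gluing of weak equivalences along cellular inclusions of simplicial $R$-modules. This is standard homotopy theory of simplicial modules and can be justified either by invoking the projective model structure on simplicial $R$-modules (in which cellular inclusions are cofibrations and every object is fibrant, so pushouts along cellular inclusions are homotopy pushouts) or by arguing directly from the short exact sequence $A \rightarrowtail B \twoheadrightarrow B/A$ of simplicial abelian groups associated to a cellular inclusion and the induced long exact sequence in homotopy.
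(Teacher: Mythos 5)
Your proposal is correct and follows essentially the same route as the paper: induction over the skeletal filtration, comparing the pushout of $\coprod R[\Delta^n] \leftarrow \coprod R[\partial\Delta^n] \rightarrow P^{n-1}$ with its image under $S\otimes_R -$ via the gluing lemma for simplicial modules, passing to the colimit over skeleta, and using fibrancy of simplicial abelian groups to upgrade the weak equivalence to a simplicial homotopy equivalence. The only difference is that you spell out why $R[A]\rightarrow S[A]$ is a weak equivalence (tensoring with the degreewise free $\Int[A]$), which the paper leaves implicit.
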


\begin{proof}
  This follows from the gluing lemma and induction over the dimension of $P$.
  We obtain a pushout-diagram
  \begin{equation*}
    \xymatrix{
    \coprod R[\Delta^n]  \ar[d]^{\simeq}  & 
    \coprod R[\partial\Delta^n] \ar[l] \ar[r]\ar[d]^{\simeq} &
    P_{n-1} \ar[d]^{\simeq} \\
    \coprod S[\Delta^n]  & 
    \coprod S[\partial\Delta^n] \ar[l] \ar[r]&
    S\otimes_R P_{n-1} }
  \end{equation*}
  where the vertical maps are weak equivalences of simplicial $R$-modules,
  hence by the gluing lemma for simplicial $R$-modules
  (cf.~\cite[II.8.12;III.2.14]{goerss-jardine;simplicial-homotopy-theory}) the
  pushout $P_n \rightarrow S\otimes_R P_n$ is a weak equivalence.   We
  have $P = \bigcup_n P_{n}$ and the $n$-skeleton of $P$ and $P_n$
  agree. Also the $n$-skeleton of $S\otimes_R P$ and $S \otimes_R P_n$
  agree and $S \otimes_R P = \bigcup_n S \otimes_R P_n$.  Therefore, $P
  \rightarrow S\otimes_R P$ is a weak equivalence.  As simplicial
  abelian groups are fibrant as simplicial sets the weak equivalence is
  a homotopy equivalence of simplicial sets.
\end{proof}

\begin{lem} \label{lem:proof-of-R-S-weq-lifts}
  Let $M, P\in \mathcal{C}^G_{a}(X;R)$.  Let $i\colon A \subseteq M$ be a
  cellular submodule.  Assume $M$ is finite-dimensional.  Let $g\colon A
  \rightarrowtail P$ and $\widehat{f}\colon S\otimes_R M \rightarrow S
  \otimes_R P$ be maps such that the diagram
  \begin{equation}
    \xymatrix{
    S\otimes_R A \ar[r]^{S\otimes g} \ar@{ >->}[d]_{S\otimes i}  & S\otimes_R
    P \\
    S\otimes_R M \ar[ur]_{\widehat{f}} }
  \end{equation}
  commutes.  Then there is a map $f\colon M \rightarrow P$ extending $g$ such
  that $\widehat{f}$ is homotopic to $S\otimes_R f$ relative to the cellular
  submodule $S\otimes_R A$ of $S \otimes_R M$.
\end{lem}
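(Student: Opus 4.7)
The plan is to reformulate the statement using the adjunction $S \otimes_R - \dashv \res_R$ between simplicial $R$- and $S$-modules, and then to produce $f$ cell-by-cell. Under the adjunction, $\widehat{f}$ corresponds to a map $\widehat{f}^\flat \colon M \to \res_R(S \otimes_R P)$ of simplicial $R$-modules with $\widehat{f}^\flat \circ i = \eta_P \circ g$, where $\eta_P \colon P \to \res_R(S \otimes_R P)$ is the unit; by Lemma~\ref{lem:w-eq-of-rings-give-w-eq-of-modules}, $\eta_P$ is a weak equivalence of simplicial $R$-modules. The goal translates to constructing a controlled $R$-linear map $f \colon M \to P$ extending $g$, together with a controlled homotopy $H \colon M[\Delta^1] \to \res_R(S \otimes_R P)$ from $\eta_P \circ f$ to $\widehat{f}^\flat$ which is trivial on $A$. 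Because $S \otimes_R -$ commutes with $-[\Delta^1]$, pushing $H$ through the adjunction yields the desired relative homotopy between $S \otimes_R f$ and $\widehat{f}$ rel $S \otimes_R A$.

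I would construct $f$ and $H$ simultaneously by induction over the cellular skeleta $A = M^{-1} \subseteq M^0 \subseteq \cdots \subseteq M^d = M$; finite-dimensionality of $M$ ensures that the induction terminates after finitely many steps. When attaching a single $n$-cell $e$ (cells of the same dimension in parallel, and $G$-orbits one representative at a time as in Lemma~\ref{lem:horn-filling-relative}), I am left with a boundary-data lifting problem: $f_{n-1} \circ \partial e$ gives the current value in $P$, $\widehat{f}^\flat \circ e$ gives the target value in $\res_R(S \otimes_R P)$, and $H_{n-1}$ pulled back along $\partial e$ gives the boundary homotopy between $\eta_P \circ f_{n-1} \circ \partial e$ and $\widehat{f}^\flat|_{R[\partial\Delta^n]}$. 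Producing the extension $f_n \circ e$ and the filling $H_n$ on the new cell is a standard homotopy-lifting problem along the weak equivalence $\eta_P$ between the Kan simplicial abelian groups $P$ and $\res_R(S \otimes_R P)$; it is solved by iterated Kan fillings (using that any surjection of simplicial abelian groups is a Kan fibration, together with $\eta_P$ being a weak equivalence, so the obstruction to the lift lies in the acyclic kernel and can be killed by a filler).

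The principal obstacle is keeping the construction controlled at each step, and this is handled exactly as in the proofs of Lemma~\ref{lem:horn-filling-relative} and Lemma~\ref{lem:kan-fibration-horn-filling}. The image $\widehat{f}^\flat(\langle e \rangle_M)$ is contained in a bounded cellular submodule of $\res_R(S \otimes_R P)$ supported on a thickening $\{\kappa(e)\}^{E'}$ of the support of $e$, and this submodule equals the image under $\eta_P$ of the corresponding bounded cellular submodule $Q_e \subseteq P$ of the same support, because both $S \otimes_R -$ and the unit $\eta$ commute with the formation of cellular submodules spanned by cells over a fixed subset of $X$. Performing the Kan fillings inside $Q_e$ keeps $f$ and $H$ controlled; since $M$ is finite-dimensional, only finitely many thickening steps occur, so the final control conditions depend only on the control data of $\widehat{f}$, $M$, and $P$. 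Equivariance is preserved throughout by making the lift once per $G$-orbit and extending equivariantly, completing the construction.
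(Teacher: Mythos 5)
Your proposal is correct and follows essentially the same route as the paper: reduce via the $S\otimes_R-\dashv\res_R$ adjunction, induct cell-by-cell over the skeleta relative to $A$, solve each attachment as a lifting-up-to-homotopy problem along the unit $\eta$ (a weak equivalence by Lemma~\ref{lem:w-eq-of-rings-give-w-eq-of-modules}) restricted to a bounded cellular submodule of $P$ supported on a controlled thickening (your $Q_e$ is the paper's $P'$), treat $G$-orbits one representative at a time, and use finite-dimensionality to terminate. The only difference is bookkeeping: you carry the partial homotopy $H_n$ along through the induction, while the paper straightens $\widehat{f}$ at each skeletal stage with the homotopy extension property so the lemma's hypothesis is restored with $A' = A\cup\sk_n M$; also, your phrase about the ``acyclic kernel'' should be replaced by the mapping-cylinder factorization of $\eta$ (as the paper indicates), since $\eta$ need not be surjective.
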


The lemma shows that up to homotopy a map $S \otimes_R M \rightarrow
S\otimes_R P$ comes from a map $M \rightarrow P$.  We need the relative
version.

\begin{proof}
  Assume $M$, $\widehat{f}$, $g$, $P$ are $E$-controlled.  We do induction
  over the dimension of cells of $M$ which are not in $A$.  As usual it
  suffices to consider only one cell.  Let $e\colon R[\Delta^n] \rightarrow M$
  be attached to $A$ via $\partial \colon R[\partial \Delta^n] \rightarrow A$.

  Looking at the smallest submodules containing $e$, $\partial e$ and
  $\widehat{f}(S\otimes_R e)$ we obtain the following commutative diagram.  (We
  denote by $S \otimes_R e$ the cell in $S\otimes_R M$ corresponding to $e$
  via the isomorphism $\diamond_R M \cong \diamond_S (S\otimes_R M)$.)
  \begin{equation*}
    \xymatrix{
    S \otimes_R \left< \partial e\right>_A \ar[r] \ar@{ >->}[d] &
      \left<\widehat{f}(S\otimes_R e) \right>_{S\otimes_R P} \\
    S\otimes_R \left<e\right>_M \ar[ur] }
  \end{equation*}
  Because everything is $E$-controlled the support of every module is
  contained in
  $\left\{\kappa(e)\right\}^{E}$.  Note that $\left< \widehat{f}(S\otimes_R e)
  \right>_{S\otimes_R P}$ is isomorphic to $S\otimes_R P'$ for $P'
  \subseteq P$ a cellular $R$-submodule.  Using the adjunction $S\otimes_R -$
  and restriction $\res_R $ we obtain the solid commutative diagram below.
  \begin{equation*}
    \xymatrix{
    R[\partial\Delta^n] \ar@{-->}[r]\ar@{ >-->}[d] &
      \left< \partial e\right>_A \ar[r] \ar@{ >->}[d] &
      P' \ar[d]^{\eta}_{\sim} \\
    R[\Delta^n] \ar@{-->}[r]&
      \left< e\right>_M \ar[r] &
      \res_R S \otimes_R P'
    }
  \end{equation*}
  Here $\eta$ is the unit of the adjunction.  We want to find a lift up to
  homotopy relative to $\left< \partial e\right>_A$ in the solid diagram.  We
  can extend the diagram to the left by the dashed square, which is a pushout square.
  Hence, using the adjunction $R[-]$ and forgetful functor, it suffices to
  construct a lift up to homotopy relative to $\partial \Delta^n$ in the
  diagram of simplicial sets
  \begin{equation*}
    \xymatrix{
    \partial \Delta^n \ar[r]^{g} \ar@{ >->}[d]^{i} & P'
    \ar[d]^{\eta}_{\sim} \\
    \Delta^n \ar[r]^-{\widehat{f}} & \res_R S \otimes_R P'} .
  \end{equation*}
  There is a lift up to homotopy because by
  Lemma~\ref{lem:w-eq-of-rings-give-w-eq-of-modules} $\eta$ is a homotopy
  equivalence of simplicial sets.  With more effort we can arrange that we
  lift to a map $f\colon \Delta^n \rightarrow P'$ with $f\circ i = g$ and
  $\eta \circ f$ is homotopic to $\widehat{f}$ with the homotopy being trivial
  when restricted via $i$.  (Use the mapping cylinder to factor $\eta$
  and then use that we can find strict lifts for surjective homotopy
  equivalences and deformation retractions.)
%
%

  We obtain a map 
  \begin{equation*}
    f\colon \left< e \right>_M \rightarrow P'
  \end{equation*}
  such that $S\otimes_R f$ is homotopic to $\widehat{f}\colon S \otimes_R
  \left< e \right>_M \rightarrow S \otimes_R P'$ relative to $S\otimes_R
  \left< \partial e \right>_A$ and $f_{|\left<\partial e\right>_A} =
  g_{|\left<\partial e\right>_A}$.
  As $S\otimes_R P'$ has support on $\left\{
  \kappa_R(e)\}^{E}\right\}$ the map and the homotopy are $E$-controlled.

  Assuming the first cells of $M$ which are not in $A$ are of dimension
  $n$, we can use this procedure and the homotopy extension property to
  produce a map $S\otimes_R M \rightarrow S\otimes_R P$ which satisfies the
  assumption of the lemma for an $A' = A \cup \sk_n M$.  Induction and the
  finite-dimensionality of $M$ finishes the proof.
\end{proof}

\begin{proof}[Proof of Theorem~\ref{thm:change-of-rings}]
  We use Waldhausen's Approximation
  Theorem~\ref{thm:approximation-theorem} for the
  functor $F := S\otimes_R -\colon \mathcal{C}^G_{f}(X;R) \rightarrow
  \mathcal{C}^G_{f}(X;S)$.
  We prove (App~1) first. Let $\alpha\colon M \rightarrow M'$ be a map in
  $\mathcal{C}^G_{f}(X; R)$ such that $S\otimes_R \alpha$ is a homotopy equivalence in
  $\mathcal{C}^G_{f}(X; S)$. By Lemma~\ref{lem:proof-of-R-S-weq-lifts}, there
  is a map $\beta'\colon M' \rightarrow M$ such that the
  homotopy inverse $\beta\colon S \otimes_R M' \rightarrow S \otimes_R M$ of
  $S\otimes_R \alpha$ in $\mathcal{C}^G_{?}(X; S)$ is
  homotopic to $S\otimes_R \beta'$.
  Hence, there is a homotopy $H \colon S \otimes_R
  M[\Delta^1] \rightarrow S\otimes_R M$ from $S\otimes_R \id_R$ to $S\otimes_R
  (\beta' \circ \alpha)$ in $\mathcal{C}^G_{f}(X;S)$ which is homotopic
  relative to $M[\partial \Delta^1]$ to a homotopy $S\otimes_R H'$ where
  $H'$ is a homotopy from $\id_R$ to $\beta' \circ \alpha$, using
  Lemma~\ref{lem:proof-of-R-S-weq-lifts} again. Vice versa for $\alpha \circ
  \beta'$, so $\alpha$ is also a homotopy equivalence in
  $\mathcal{C}^G_{f}(X;R)$.\smallskip

  For (App~2) consider $M\in \mathcal{C}^G_{f}(X;R)$ and $N \in
  \mathcal{C}^G_{f}(X;S)$ and a map $f\colon S \otimes_R M \rightarrow N$.  We
  can assume that it is a cellular inclusion by taking the mapping cylinder.
  We show that
  $N$ is homotopy equivalent to a module $S\otimes_R
  \overline{M}^2$, with $\overline{M}^2\in\mathcal{C}^G_{f}(X;R)$, $M
  \subseteq \overline{M}^2$ and the homotopies are relative to $S\otimes_R
  M$.

  Assume that the $n$-skeleton of $S\otimes_R M$ and $N$ agree.  Let
  $N^{n+1}$ be the $(n+1)$-skeleton of $N$ relative to $S\otimes_R M$, i.e.,
  $N^{n+1} = \sk_{n+1} N \cup S\otimes_R M$.  Then $N^{n+1}$ is the pushout
  \begin{equation*}
    \xymatrix{
    S[\coprod\Delta^{n+1}] & 
    S[\coprod\partial \Delta^{n+1}] \ar[r]^-{\varphi^{n+1}}\ar@{ >->}[l]&
      S\otimes_R M  }.
  \end{equation*}
  where $\varphi^{n+1}$ is the attaching map for the cells.  By
  Lemma~\ref{lem:proof-of-R-S-weq-lifts}, there is a map $\psi^{n+1}\colon
  R[\coprod\partial\Delta^{n+1}] \rightarrow {M}$ such that
  $S\otimes_R\psi^{n+1}$ is homotopic to a $\varphi^{n+1}$. Call the homotopy
  $H^{n+1}$. Applying the gluing lemma to the diagram (where all vertical maps
  are homotopy equivalences)
  \begin{equation*}
    \xymatrix{
    S[\coprod\Delta^{n+1}]\ar[d] & 
      S[\coprod\partial \Delta^{n+1}] \ar[r]^{\varphi^{n+1}}\ar[l]\ar[d]&
      S\otimes_R {M}  \ar[d] \\
    S[\coprod\Delta^{n+1}][\Delta^1]   & 
      S[\coprod\partial \Delta^{n+1}] \ar[r]^{H^{n+1}}\ar[l][\Delta^1]  & 
      S\otimes_R {M} [\Delta^1] \\
    S\otimes_R R[\coprod\Delta^{n+1}]\ar[u] & 
    S\otimes_R R[\coprod\partial \Delta^{n+1}] 
      \ar[r]^-{S\otimes \psi^{n+1}}\ar[l]\ar[u] & 
      S\otimes_R {M} \ar[u]
    }
  \end{equation*}
  shows that the pushout of the first row is homotopy equivalent to the
  pushout of the last row. (This is a simplicial version of the topological
  fact that homotopic attaching maps yield homotopy equivalent CW-complexes.)
  Choose such a homotopy equivalence $\xi$.
  In the last row $S\otimes_R -$ commutes with the pushout, define
  $\overline{M}$ as the pushout of
  \begin{equation*}
    \xymatrix{
    R[\coprod\Delta^{n+1}] & 
      R[\coprod\partial \Delta^{n+1}] \ar[r]^-{\psi^{n+1}}\ar[l]&
      {M}  }.
  \end{equation*}
  Then consider the pushout along $\xi\colon N^{n+1}\rightarrow S\otimes_R M$ and
  the inclusion $N^{n+1} \rightarrowtail N$ to obtain $\overline{N}$:
  \begin{equation*}
    \xymatrix{
      N^{n+1} \ar@{ >->}[r] \ar@{ >->}[d]^{\simeq}_{\xi} & N \ar[d]^\simeq \\
      S\otimes_R \overline{M} \ar@{ >->}[r]_-{\overline{f}} & \overline{N}
    }
  \end{equation*}
  Now the $(n+1)$-skeleton of $S\otimes_R \overline{M}$ is isomorphic to
  $\overline{N}$ via $\overline{f}$.  By induction, and because $N$ is
  finite-dimensional, we obtain a diagram 
  \begin{equation*}
    \xymatrix{
      S\otimes_R M \ar@{ >->}[d] \ar@{ >->}[r] & N \ar[d]^{\simeq} \\
      S\otimes_R \overline{M}^1 \ar[r]^-{\cong} & \overline{N}^{1}}
  \end{equation*}
  which we can make into the desired diagramm
  \begin{equation*}
    \xymatrix{
    S\otimes_R M \ar[r] \ar[d]_{S\otimes -}  & N \\
    S\otimes_R \overline{M}^2 \ar[ur]_{\sim} }.
  \end{equation*}
  using a homotopy inverse for the right map and defining $\overline{M}^2$ as the
  mapping cylinder of $M \rightarrow \overline{M}^1$ to make the diagram
  strictly commutative.  This proves (App~2). The theorem follows by
  the Approximation Theorem~\ref{thm:approximation-theorem}.
\end{proof}

\section{Applications}
\label{sec:miscellaneous}
We discuss several applications of our category
$\mathcal{C}^G(X;R)$.  We will not provide proofs because they require
considerably more technology.

\subsection{Controlled algebra for discrete rings}
\label{subsec:controlled-algebra-discrete}
Each discrete ring $R_{d}$ can be made into a simplicial ring taking the
constant functor $[n] \mapsto R_d$ where all structure maps are the identity.
Then a simplicial module over $R_d$ is, essentially, by the
Dold-Kan-Theorem (see~\cite[III.2.3]{goerss-jardine;simplicial-homotopy-theory}), a
non-negatively graded chain complex.  More precisely, there is an adjunction
between simplicial modules and non-negative chain complexes and this
adjunction is a equivalence on homotopy categories.  Furthermore, a cellular
simplicial $R_d$-module with cells only in dimension $0$ is just a free
$R_d$-module.

Consider the subcategory of $0$-dimensional controlled cellular simplicial
$R_d$-modules, which no longer has nice homotopic properties, but is an
additive category.
In fact, this is essentially the category of controlled $R_d$-modules of
e.g.~\cite{Bartels-Farrell-Jones-Reich;;;}
or~\cite{Pedersen-Weibel;nonconnective}.  (There is a small technical
difference between our work and that of~\cite{Bartels-Farrell-Jones-Reich;;;}
in the definition of morphisms, however, that does not affect the algebraic
K-theory.) 

Therefore, the category $\mathcal{C}^G(X;R)$ we present here can be viewed as a
homotopical generalization of the category of controlled $R_d$-modules.
Unfortunately, it comes with a price: the arguments to surrounding
$\mathcal{C}^G(X;R)$ become more involved.
There are simple arguments in the case of discrete modules and rings which
under this generalization would necessitate invoking Waldhausen's
approximation theorem.
Still, we believe that most arguments have an analogue for
$\mathcal{C}^G(X;R)$.

\subsection{The Farrell-Jones Conjecture}
\label{subsec:farrell-jones}
\subsubsection{Statement and Significance}
Let $R$ be a discrete ring or a simplicial ring and $G$ a group.  The
Farrell-Jones Conjecture provides a calculation of $K_n(R[G])$,
$n\in\mathbb{Z}$, the algebraic K-theory of the group ring $R[G]$,
in terms of the algebraic K-theory of $R$ and the geometry of the group
$G$.  More precisely, it claims that the \emph{assembly map}
\begin{equation*}
  H^G_n(E_{\mathcal{VC}} G; \mathbf{K}_R) \rightarrow K_n(R[G])
\end{equation*}
is an isomorphism for every $n \in \mathbb{Z}$.   Here the right-hand side is
the nonconnective algebraic K-theory of the group ring $R[G]$, while the
left-hand side is the $G$-equivariant homology theory with coefficients in the
$G$-equivariant nonconnective K-theory spectrum, evaluated at the
classifying space of $G$ for virtual cyclic subgroups.
We refrain from discussing details, as the Farrell-Jones Conjecture is
not our main focus in this article and refer to
\cite{arxiv:Bartels;on-proofs-of} or the slightly outdated
survey~\cite{Luck-Reich;conjectures}.  

The Farrell-Jones Conjecture implies a plethora of other usually long-standing
conjectures.  This includes the vanishing of the Whitehead Group for
torsion-free groups and the Borel Conjecture about the rigidity of aspherical
manifolds. We refer to
\cite{Luck-Reich;conjectures,Bartels-Reich;applications} for details.
Therefore, it is interesting to know the Farrell-Jones Conjecture for as many
rings $R$, called the \emph{coefficients}, and groups $G$ as possible.

\subsubsection{Status and Proofs}
There is recent and ongoing progress on the
Farrell-Jones Conjecture which has substantially enlarged the class of groups
for which it is known.  Recent approaches prove a more
general version, the ``Farrell-Jones Conjecture with wreath products'', see
Section~6 of \cite{arxiv:Bartels-Reich-Rueping;group-rings}.  Also, that
version allows any additive category $\mathcal{A}$ as coefficients. If
$\mathcal{A}$ is the category of finitely generated free $R$-modules, we
recover the version stated above.

Recent work by Bartels, Farrell, Lück, Reich, Rüping, Wegner, Wu and
others establishes the ``Farrell-Jones Conjecture with wreath products and
coefficients in an additive category'' for large classes of groups---most
recently for $GL_n(\mathbb{Z})$ and some related groups
in~\cite{arxiv:Bartels-Reich-Rueping;group-rings}, solvable Baumslag-Solitar
groups in~\cite{arxiv:Farrell-Wu;solvable-Baumslag-Solitar} and, more
generally, solvable groups
in~\cite{arxiv:Wegner;solvable}.

All recent proofs have in common that they start by translating the
Farrell-Jones
Conjecture to a problem in the algebraic K-theory of controlled algebra, a
strategy first formulated in this way in~\cite{Bartels-Reich;hyperbolic}.

\subsubsection{A reformulation in terms of controlled algebra}
Let $Z$ be a $G$-CW-complex.  We want to make $X \times G \times [1,\infty)$ into
a $G$-control space.  Recall the continuous control structure
$\mathcal{E}_{cc}$ on $X \times [1,\infty)$
from Example~\ref{example:continous-control}.  We can pull back this
morphism control conditions along the projection $p\colon X \times G \times
[1,\infty) \rightarrow X \times [1,\infty)$ by setting
\begin{equation*}
  p^{-1}\mathcal{E}_{cc} := \{ (p\times p)^{-1}(E) \mid E \in
  \mathcal{E}_{cc}\}.
\end{equation*}
Then $p^{-1}\mathcal{E}_{cc}$ is a morphism control structure on $X \times G
\times [1,\infty)$.  We obtain an object support structure
by setting
\begin{equation*}
  \mathcal{F}_{Gc}(X\times G) := \{G.K \times [1,\infty) \mid K \subseteq X
  \times G\text{ is compact}\}
\end{equation*}
where $G.K$ is the $G$-orbit of K.  If $X$ is $E_{\mathcal{VC}} G$, the
classifying space for $G$ (cf.~\cite[I.6]{dieck;transformation-groups})
and the family $\mathcal{VC}$ of virtually cyclic subgroups, we obtain a
category 
\begin{equation*}
  \mathcal{O}^G = 
    \mathcal{C}^G_f(E_{\mathcal{VC}} G \times G \times [1,\infty), p^{-1}\mathcal{E}_{cc},
    \mathcal{F}_{Gc}; R)
\end{equation*}
of controlled simplicial $R$-modules.  As explained above, for a discrete
ring $R_d$ there is a similar category of discrete controlled modules which we
call $\mathcal{O}^G_d$ for brevity.  As it is an additive category, its
algebraic K-theory is defined.
\begin{thm*}[{\cite[3.8]{Bartels-Reich;hyperbolic}}]
  $K_i(\mathcal{O}^G_d) = 0$ for all $i \in \mathbb{N}$ if and only if the
  Farrell-Jones Conjecture holds for $G$.
\end{thm*}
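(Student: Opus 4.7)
The plan is to realize $K_\ast(\mathcal{O}^G_d)$ as (a shift of) the cofiber of the Farrell-Jones assembly map for $G$, so that vanishing of the former is equivalent to the assembly map being a weak equivalence of nonconnective K-theory spectra. Concretely, I would construct a Karoubi-like fiber sequence of algebraic K-theory spectra
\begin{equation*}
  K(\mathcal{T}^G_{0,d}) \longrightarrow K(\mathcal{T}^G_d) \longrightarrow K(\mathcal{O}^G_d)
\end{equation*}
where $\mathcal{T}^G_d$ is the category of $G$-equivariant bl-finite controlled discrete modules over $E_{\mathcal{VC}} G \times G \times [1,\infty)$ equipped with the \emph{bounded} (rather than continuously controlled) morphism control pulled back along the projection to $[1,\infty)$, and $\mathcal{T}^G_{0,d} \subseteq \mathcal{T}^G_d$ is the full subcategory of those objects whose support, after projecting to $[1,\infty)$, is contained in some interval $[1,N]$. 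The category $\mathcal{O}^G_d$ then arises as the ``germs at infinity'' quotient, i.e.\ one passes from bounded to continuously controlled by localizing away from $\mathcal{T}^G_{0,d}$; that this localization yields a fibration of K-theory spectra is the standard Karoubi filtration technology à la Cárdenas--Pedersen.

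Next, I would identify the three terms. The middle term $K(\mathcal{T}^G_d)$ is contractible by an Eilenberg swindle along the $[1,\infty)$-coordinate: translation by $n$ is a bounded, $G$-equivariant, proper endomorphism, and one forms $\bigvee_n \mathrm{id} \oplus \bigvee_n T$ as usual. For the left-hand term, using that objects are supported over $[1,N]$, the $[1,\infty)$-direction becomes irrelevant up to equivalence, and a ``separation of variables'' argument together with a classical identification of controlled K-theory of a $G$-CW complex with an equivariant homology theory (via the Davis--Lück machinery, or equivalently via the local-to-global / Mayer--Vietoris properties one verifies for $X \mapsto K(\mathcal{C}^G_f(X \times G, \ldots; R_d))$) yields
\begin{equation*}
  K_n(\mathcal{T}^G_{0,d}) \;\cong\; H^G_n(E_{\mathcal{VC}} G; \mathbf{K}_{R_d}).
\end{equation*}
The right-hand term, restricted to objects supported near the cone point, is by separation of variables (Corollary~\ref{cor:sep-of-variables}) equivalent to the category of finitely generated free $R_d[G]$-modules, so that the connecting map of the fiber sequence becomes, up to a shift, the assembly map
\begin{equation*}
  H^G_n(E_{\mathcal{VC}} G; \mathbf{K}_{R_d}) \longrightarrow K_n(R_d[G]).
\end{equation*}

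Given this identification, the long exact sequence of the fiber sequence above, together with contractibility of the middle term, shows that the assembly map is an isomorphism in every degree $n \in \mathbb{Z}$ if and only if $K_n(\mathcal{O}^G_d) = 0$ for all $n \in \mathbb{Z}$. To go from this to the statement about $n \in \mathbb{N}$ as given, I would invoke the nonconnective delooping of $K(\mathcal{O}^G_d)$ constructed by iterating the $\mathbb{R}$-direction (cf.\ Section~\ref{subsec:non-connective-algebraic-k-theory} and the Pedersen--Weibel delooping): the negative K-groups of $\mathcal{O}^G_d$ are, by construction, the nonnegative K-groups of $\mathcal{O}^G_d$ with an additional $\mathbb{R}^k$-factor in the control space, so vanishing of $K_i$ for all $i \in \mathbb{N}$ on every such category forces vanishing in all integer degrees.

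The main obstacle is the identification of $K(\mathcal{T}^G_{0,d})$ with the equivariant homology theory evaluated at $E_{\mathcal{VC}} G$. This requires verifying that the functor $Z \mapsto K(\mathcal{T}^G_{0,d}(Z))$ satisfies the axioms of a $G$-equivariant homology theory (disjoint union, excision/Mayer--Vietoris, $G$-homotopy invariance) and has the correct coefficients on orbits $G/H$, identifying them with $K(R_d[H])$ via separation of variables; this is the technical heart of the Bartels--Reich argument and requires a careful analysis of how controlled modules behave under open covers and homotopies of $G$-CW complexes. The remaining steps (Eilenberg swindle, Karoubi fibration, separation of variables at the cone point, delooping) are, in comparison, formal consequences of the framework established in the preceding sections.
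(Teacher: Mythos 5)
You should first note that the paper does not prove this statement at all: it is quoted from Bartels--Reich, and Section~\ref{subsec:farrell-jones} is explicitly expository, so the comparison can only be with the argument in the cited source. Your toolbox (Karoubi filtration, Eilenberg swindle, identification of a term with $H^G_*(E_{\mathcal{VC}}G;\mathbf{K}_{R_d})$ via the homology-theory/Davis--L\"uck machinery, separation of variables, delooping for negative degrees) is indeed the right toolbox, but the way you assemble it contains a genuine structural error. In the actual argument $\mathcal{O}^G_d$ is the \emph{middle} term of the Karoubi sequence, not the quotient: one takes the full subcategory $\mathcal{T}\subseteq\mathcal{O}^G_d$ of objects supported over $E_{\mathcal{VC}}G\times G\times[1,N]$ for some $N$ (there the continuous-control condition is vacuous, and with $G$-compact supports and no control in the remaining directions this subcategory is equivalent to finitely generated free $R_d[G]$-modules, i.e.\ the \emph{target} of assembly), and it is the germ quotient $\mathcal{D}:=\mathcal{O}^G_d/\mathcal{T}$ that is identified, up to the usual degree shift, with the \emph{source}: $K_n(\mathcal{D})\cong H^G_{n-1}(E_{\mathcal{VC}}G;\mathbf{K}_{R_d})$. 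The boundary map of the fibration is the assembly map, so $K_*(\mathcal{O}^G_d)=0$ iff assembly is an isomorphism. No swindle is applied to $\mathcal{O}^G_d$ itself (pushing towards infinity is obstructed by continuous control at $E_{\mathcal{VC}}G\times\{\infty\}$: summing all vertical shifts of a morphism moves low-level components with large $E_{\mathcal{VC}}G$-displacement out to arbitrarily large $[1,\infty)$-coordinate); the swindle enters only in the coefficient computation $K(\mathcal{D}^G(G/H))\simeq\Sigma K(R_d[H])$ over orbits.

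Your version breaks exactly at the identifications. Since the morphism control in your $\mathcal{T}^G_d\supseteq\mathcal{T}^G_{0,d}$ is pulled back from $[1,\infty)$ only, the space $E_{\mathcal{VC}}G$ is invisible to these categories; hence $K_*(\mathcal{T}^G_{0,d})\cong K_*(R_d[G])$, not $H^G_*(E_{\mathcal{VC}}G;\mathbf{K}_{R_d})$ --- it is precisely the continuous-control condition at $E_{\mathcal{VC}}G\times\{\infty\}$, present in $\mathcal{O}^G_d$ and absent in your categories, that makes germs at infinity compute equivariant homology. Moreover $\mathcal{O}^G_d$, as defined in the paper, is not the germ quotient $\mathcal{T}^G_d/\mathcal{T}^G_{0,d}$: it is a category of honest modules which \emph{contains} the bounded-support objects (that is where the $K(R_d[G])$-part lives), and ``passing from bounded to continuously controlled'' is not a localization of this kind. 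Your own identifications refute the conclusion you want: with a contractible middle term your sequence would give $K_n(\mathcal{O}^G_d)\cong K_{n-1}(\mathcal{T}^G_{0,d})$ unconditionally, so this group could never vanish, and the connecting map, being an isomorphism, cannot simultaneously ``be'' the assembly map; also the clause about objects of the quotient ``supported near the cone point'' is empty, since such objects are killed in a germ category. Finally, the reduction from $i\in\mathbb{Z}$ to $i\in\mathbb{N}$ is not achieved by your delooping remark: the hypothesis gives vanishing only for $\mathcal{O}^G_d$ itself, not for its $\mathbb{R}^k$-stabilized variants, so this step also needs the argument from the cited source rather than a formal appeal to the delooping construction.
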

Hence, one can use controlled algebra and manipulation of the control space to
prove the Farrell-Jones Conjecture.  This is in fact the strategy carried out
by recent proofs.

For simplicial rings, an analogue of the theorem holds by unpublished work of
the author in~\cite{thesis-ullmann}.  Thus this article should be viewed as
first step to carry out the successful program of proving the Farrell-Jones
Conjecture for discrete rings in the settings of simplicial rings.

\subsection{Nonconnective algebraic K-theory}
\label{subsec:non-connective-algebraic-k-theory}
We provide a second direct application without proof.
Consider the control space $(\mathbb{R}^n, \mathcal{E}_d)$ arising from the
euclidean metric.  Then there is a map
\begin{equation*}
  K(R)\rightarrow \Omega^n K(\mathcal{C}_f(\mathbb{R}^n;R))
\end{equation*}
of connective K-theory spectra which is an isomorphism on $\pi_i$ for $i\geq
1$ and an injection on $\pi_0$.  This deloops $K(R)$, that means
the $K(\mathcal{C}_f(\mathbb{R}^n;R))$ for varying $n$ can be made into a
spectrum which may have interesting negative homotopy groups and where the positive
homotopy groups are those of $K(R)$.  This is the first construction of a
nonconnective K-theory spectrum for simplicial rings.  It generalizes the
delooping construction of $K(R_d)$ of~\cite{Pedersen-Weibel;nonconnective}.
However, is it known that $\pi_i K(R) = K_i(\pi_0 R)$ for $i = 0,1$.  Because
a Bass-Heller-Swan theorem is expected to hold for algebraic K-theory of
simplicial rings, this means that the negative algebraic K-groups of a
simplicial ring are just those of the discrete ring $\pi_0 R$.  But of
course, a spectrum contains more information than its homotopy groups.

The proofs of both applications need considerably more technology in
$\mathcal{C}^G(X;R)$, namely a notion of germs, which were developed
in~\cite{thesis-ullmann}.  We will come back to these in later work.

\subsection{Ring spectra}
\label{subsec:ring-spectra}
There are generalizations of rings for which the Farrell-Jones Conjecture
should give interesting results with implications to manifold theory.
\emph{Ring spectra} provide a natural generalization of
rings, and simplicial rings are an intermediate step between rings and ring
spectra.  Algebraic K-theory can be defined for ring spectra, see
in~\cite{Elmendorf-Kriz-Mandell;rings-modules}.  The statement of the
Farrell-Jones Conjecture makes sense with connective ring spectra as
coefficients.  In fact,
when Farrell and Jones
in~\cite{farrell-jones;isomorphism-conjectures,farrell-jones;k-theory-dynamics}
originally stated and proved a version of
their conjecture (for a certain class of groups), they also treated the case
of pseudoisotopies, which is more or less the case where the sphere spectrum
are the coefficients.  The sphere spectrum is the initial ring spectrum, like
the integers are the initial ring.

We hope that the theory presented here can be adapted to ring spectra.  We do
not want to go into details, but let us remark that in the 1990's a
variety of different models for ring spectra and categories of modules over ring
spectra where developed.  The main models are symmetric
spectra \cite{Hovey-Shipley-Smith;symmetric-spectra}, orthogonal spectra
\cite{May-Mandell-Schwede-Shipley;;;} and
$S$-modules~\cite{Elmendorf-Kriz-Mandell;rings-modules}.  The category of
$S$-modules is special among these as it has the nice property that it has a
model structure such that every object is fibrant and
\cite[III.2]{Elmendorf-Kriz-Mandell;rings-modules} provides a nice theory of
cellular objects.  As we build our category $\mathcal{C}^G(X;R)$ from cellular
modules (cf. Definition~\ref{def:controlled-module}), it looks like
cellular $S$-modules are a suitable candidate to carry out the program
presented here.  However, there will be a non-trivial amount of work involved,
as the category of $S$-modules is rather hard to define.

\appendix
\section{Appendix: a simplicial mapping telescope}
\label{sec:appendix-simplicial-mapping-telescope}
A map $\eta\colon K \rightarrow K$ in $\mathcal{C}^G$ is called a
\emph{homotopy idempotent} if $\eta^2$ is homotopic to~$\eta$.
Here we provide the necessary tools we need about homotopy idempotents in this
and later work.  This gives some insight into the category
$\mathcal{C}^G_{hfd}(X;R)$, for any control space $X$ and simplicial ring $R$.

We defined the Waldhausen category
 $\mathcal{C}^G =
\mathcal{C}^G(X,\mathcal{E},\mathcal{F};R)$ for a control space
$(X,\mathcal{E}, \mathcal{F})$ and a simplicial ring $R$ in
Section~\ref{subsubsec:G-equivariant-controlled-modules}.

\subsection{Coherent homotopy idempotents}

Some parts of Theorem~\ref{thm:mapping-telescopes} below need an extra
assumption on the idempotent, which we will define now.
We use the diagram language of~\ref{subsubsec:diagrams} in what follows. 
\begin{definition}\label{def:coherent-homotopy-idempotent}
  A homotopy idempotent $\eta\colon K \rightarrow K$ with homotopy $H$ from
  $\eta^2$ to $\eta$ is called \emph{coherent} if there is a map $G\colon
  K[\Delta^1 \times \Delta^1] \rightarrow K$ whose restrictions to the
  boundary are given by the following diagram
  \begin{equation*}
    \xymatrix{ 
    \bullet \ar[r]^{\eta\circ H} \ar[d]_{H\circ \eta[\Delta^1]} \ar[dr] &
     \bullet \ar[d]^H \\ \bullet \ar[r]_H & \bullet}.
  \end{equation*}
\end{definition}
If $\eta^2 = \eta$ then $\eta$ is coherent.
More generally, assume $L$ is homotopy dominated by $K$
(cf.~Definition~\ref{def:retract-homotopy-retract}).  This means that we
have maps $i\colon L \rightarrow K$, $p\colon K\rightarrow L$ and a homotopy
$H' \colon p\circ i \simeq \id_L$.  Then $\eta := i \circ p$ 
is a homotopy idempotent with homotopy $i\circ H' \circ p$ from $\eta^2$ to
$\eta$.
\begin{lem}\label{lem:coherent}
  If $\eta$ arises from a homotopy domination, it is coherent.
\end{lem}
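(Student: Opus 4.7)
The plan is to construct $G$ explicitly by transporting a square filling from $L$ to $K$ via $i$ and $p$. By hypothesis we have $i\colon L \to K$, $p\colon K \to L$, and a homotopy $H'\colon L[\Delta^1] \to L$ from $\xi := p \circ i$ to $\id_L$, with induced idempotent $\eta = i \circ p$ and homotopy $H = i \circ H' \circ p[\Delta^1]$ from $\eta^2$ to $\eta$. Two identities used repeatedly are $i \circ \xi = \eta \circ i$ and $\xi \circ p = p \circ \eta$, both immediate from the definitions.

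Labeling the two $\Delta^1$-factors as $a$ and $b$, define
\[
G_L := H' \circ H'[\Delta^1_b] \colon L[\Delta^1_a \times \Delta^1_b] \to L,
\]
where $H'[\Delta^1_b]$ applies $H'$ on the $a$-factor and retains the $b$-factor, and the outer $H'$ then contracts the $b$-factor. A direct evaluation on the four edges, using $H'|_0 = \xi$ and $H'|_1 = \id_L$, yields $H' \circ \xi[\Delta^1]$ on $\{0\}_a \times \Delta^1_b$, $H'$ on $\{1\}_a \times \Delta^1_b$, $\xi \circ H'$ on $\Delta^1_a \times \{0\}_b$, and $H'$ on $\Delta^1_a \times \{1\}_b$; the corner values are $\xi^2, \xi, \xi, \id_L$.

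Now set $G := i \circ G_L \circ p[\Delta^1 \times \Delta^1] \colon K[\Delta^1 \times \Delta^1] \to K$, identifying the corners (top-left, top-right, bottom-left, bottom-right) of the square in Definition~\ref{def:coherent-homotopy-idempotent} with $(0,0), (1,0), (0,1), (1,1)$ respectively. The four boundary edges of $G$ are obtained by sandwiching the edges of $G_L$ between $i$ and $p[\Delta^1]$. Using $\xi[\Delta^1] \circ p[\Delta^1] = (p \circ \eta)[\Delta^1] = p[\Delta^1] \circ \eta[\Delta^1]$ together with $i \circ \xi = \eta \circ i$, the top edge becomes $i \circ \xi \circ H' \circ p[\Delta^1] = \eta \circ H$, the left edge becomes $i \circ H' \circ p[\Delta^1] \circ \eta[\Delta^1] = H \circ \eta[\Delta^1]$, and both the right and bottom edges become $H$, exactly as prescribed.

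Finally, $G$ is a morphism in $\mathcal{C}^G$ because $i$, $p$, and $H'$ are controlled and both composition and $M[-]$ preserve control (Lemmas~\ref{lem:sums-composition-of-controlled-maps} and~\ref{lem:adjoint-simp-module-controlled}). There is no real obstacle: the desired 2-cell is built directly by double-applying $H'$, so no horn-filling argument is needed; the only care required is keeping the conventions for the two $\Delta^1$-factors consistent with the chosen identification of the square.
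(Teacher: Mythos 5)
Your map $G = i \circ G_L \circ p[\Delta^1\times\Delta^1]$ is exactly the composition the paper itself uses, namely $K[\Delta^1\times\Delta^1]\cong K[\Delta^1][\Delta^1] \xrightarrow{p[\Delta^1][\Delta^1]} L[\Delta^1][\Delta^1] \xrightarrow{H'[\Delta^1]} L[\Delta^1] \xrightarrow{H'} L \xrightarrow{i} K$, and your edge computations (using $i\circ\xi = \eta\circ i$ and $\xi\circ p = p\circ\eta$) are correct. So the proof is correct and takes essentially the same approach as the paper, merely making explicit the boundary verification that the paper leaves to the reader.
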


\begin{proof}
  The coherence homotopy $G$ can be given by the composition
  \begin{equation*}
    K[\Delta^1\times \Delta^1] \cong K[\Delta^1][\Delta^1]
    \xrightarrow{p[\Delta^1][\Delta^1]} L[\Delta^1][\Delta^1]
    \xrightarrow{H'[\Delta^1]} L[\Delta^1]
    \xrightarrow{H'} L \xrightarrow{i} K.\qedhere
  \end{equation*}
\end{proof}

\subsection{A mapping telescope and split homotopy idempotents}
We will show that if $\eta$ is a coherent homotopy idempotent in $\mathcal{C}^G$, then it splits
up to homotopy.  For this we use a construction analogous to the mapping
telescope in topological spaces.  We summarize the results we need in
Theorem~\ref{thm:mapping-telescopes} and directly deduce the splitting as
Corollary~\ref{cor:hpty-idem-split}.  The proof of
Theorem~\ref{thm:mapping-telescopes} itself will occupy the rest of this
appendix.

\begin{rem}
  The author does not know if every homotopy idempotent in $\mathcal{C}^G$
  is coherent.  For the topological case it is known that there are unpointed
  homotopy idempotents of infinite-dimensional CW-complexes which do not
  split, however every pointed homotopy idempotent as well as every homotopy
  idempotent of finite-dimensional CW-complexes split,
  see~\cite{Hastings-Heller;idempotents-finite-dimensional}.
\end{rem}

\begin{thm}\label{thm:mapping-telescopes}
  Let $\eta \colon K \rightarrow K$ be map in
  $\mathcal{C}^G(X)$.  There is a construction $\Tel(-)$ which assigns to $\eta$
  an object $\Tel(\eta)$ in $\mathcal{C}^G(X)$.  It has the following
  properties:
  \begin{enumerate}
    \item \label{item:mapping-tel:inclusion}
      There is a cellular inclusion $\iota\colon K \rightarrowtail
      \Tel(\eta)$.
    \item \label{item:mapping-tel:map-of-idempotents}
      Let
      \begin{equation*}
	\xymatrix{ A \ar[r]^{\mu} \ar[d]^f & A \ar[d]^f \\
	   K\ar[r]^\eta & K}
      \end{equation*}
      be a strict commutative diagram. 
      Then $f$ induces a map $f_*\colon \Tel(\mu)
      \rightarrow \Tel(\eta)$.  This is functorial in $f$.  In particular, if
      $f$ is an isomorphism then $\Tel(f)$ is an isomorphism.
    \item \label{item:mapping-tel:homotopy-id-give-hpty-equiv-tels}
      If $\eta, \mu\colon K \rightarrow K$ are homotopic maps
      then there is a homotopy equivalence
      \begin{equation*}
	\Tel(\eta) \xrightarrow{\simeq} \Tel(\mu).
      \end{equation*}
    \item \label{item:mapping-tel:tel-of-id-hpty-equiv-to-obj}
      Consider the telescope $\Tel(\id_K)$ of the map $\id_K \colon K
      \rightarrow K$.  There is a map
      \begin{equation*}
	\Tel(\id_K) \rightarrow K
      \end{equation*}
      which is a homotopy equivalence.
    \item \label{item:mapping-tel:relative-to-space} 
      All maps in \eqref{item:mapping-tel:map-of-idempotents}
      to~\eqref{item:mapping-tel:tel-of-id-hpty-equiv-to-obj} are relative to
      $\iota\colon K \rightarrow \Tel(\eta)$, i.e.,~they commute with this
      cellular inclusion.
  \end{enumerate}
  Now assume additionally that $\eta\colon K \rightarrow K$ is a homotopy
  idempotent in $\mathcal{C}^G(X)$.
  \begin{enumerate}[resume]
    \item \label{item:mapping-tel:idempot-induces-id-on-tel}
      From \eqref{item:mapping-tel:map-of-idempotents} we obtain for $\mu = \eta
      = f$ an induced map $\eta_* \colon \Tel(\eta) \rightarrow \Tel(\eta)$.
      This map is a homotopy equivalence.  If $\eta$ is coherent, $\eta_*$ is
      homotopic to $\id$.
    \item \label{item:mapping-tel:hpty-retract}
      If $\eta$ is coherent then there is a map $c\colon \Tel(\eta)
      \rightarrow K$ such that $\iota \circ c$ is homotopic to
      $\id_{\Tel(\eta)}$.  Therefore, $\Tel(\eta)$ is a homotopy retract of
      $K$.  Furthermore, $c\circ \iota$ is homotopic to $\eta$ itself.
  \end{enumerate}
\end{thm}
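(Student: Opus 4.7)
The plan is to define $\Tel(\eta)$ as an iterated mapping cylinder. Set $T_0 := K$ and inductively let $T_n$ be the pushout of $T_{n-1}$ along the back inclusion $\iota_1\colon K \rightarrowtail T(\eta)$ identified with the ``last'' copy of $K$ in $T_{n-1}$; then put $\Tel(\eta) := \colim_n T_n$. Each $T_n \rightarrowtail T_{n+1}$ is a cellular inclusion by Lemma~\ref{lem:cellular-pushouts-in-CG}, and by Lemma~\ref{lem:composition-of-cellular-inclusions} the colimit $\Tel(\eta)$ is a cellular $R$-module relative to $K$. Because each mapping cylinder stage is $E$-controlled for a fixed $E$ depending only on the control of $\eta$, the control conditions assemble to make $\Tel(\eta)$ an honest object of $\mathcal{C}^G(X)$. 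The cellular inclusion $\iota\colon K \rightarrowtail \Tel(\eta)$ of (\ref{item:mapping-tel:inclusion}) is $T_0 \subseteq \Tel(\eta)$, and since every map produced below is built stage-wise from maps extending the identity on $T_0$, property (\ref{item:mapping-tel:relative-to-space}) will be immediate throughout.

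For (\ref{item:mapping-tel:map-of-idempotents})--(\ref{item:mapping-tel:tel-of-id-hpty-equiv-to-obj}): functoriality in (\ref{item:mapping-tel:map-of-idempotents}) is inherited from the cylinder functor of Theorem~\ref{thm:cylinder-functor} applied stage by stage. For (\ref{item:mapping-tel:homotopy-id-give-hpty-equiv-tels}), a homotopy from $\eta$ to $\mu$ gives stage-wise homotopy equivalences $T_n(\eta) \xrightarrow{\sim} T_n(\mu)$ by the gluing Lemma~\ref{lem:gluing-lemma} together with the cylinder axiom~\ref{lem:cylinder-axiom}; passage to the colimit along cofibrations yields a homotopy equivalence between the telescopes. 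For (\ref{item:mapping-tel:tel-of-id-hpty-equiv-to-obj}), each stage of $\Tel(\id_K)$ is a cylinder $K[\Delta^1]$ that deformation retracts onto $K$ by Lemma~\ref{lem:cylinder-axiom}; assembling these retractions (in a Moore-style way so that they strictly compose) produces a map $\Tel(\id_K) \to K$ which is a homotopy equivalence by two-out-of-three applied to the composite $K \rightarrowtail \Tel(\id_K) \to K = \id$.

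For (\ref{item:mapping-tel:idempot-induces-id-on-tel}), the map $\eta_*$ is realized as the ``shift'' functor on the telescope sending the $n$-th stage isomorphically onto the $(n+1)$-st stage. It is a homotopy equivalence by the usual cofinality-of-tail argument in a telescope. When $\eta$ is coherent, the coherence homotopy $G\colon K[\Delta^1\times\Delta^1] \to K$ provides exactly the data required to interpolate between the two natural homotopies (arising from $H$) that one obtains when comparing the two stages $T_n$ and $T_{n+1}$: $G$ extends these interpolations across consecutive stages, and glueing them produces a homotopy from $\id_{\Tel(\eta)}$ to $\eta_*$. For (\ref{item:mapping-tel:hpty-retract}), construct $c\colon \Tel(\eta) \to K$ stage-by-stage as $c_n\colon T_n \to K$: set $c_0 = \id_K$, and on each subsequent mapping cylinder $K[\Delta^1]$ use the homotopy $H$. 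Compatibility of $c_n$ with $c_{n+1}$ on the overlapping $K$ uses exactly the coherence $G$ to turn $H \circ \eta[\Delta^1]$ into $\eta \circ H$. Finally, $c \circ \iota$ is $\eta$ by inspection of the restriction to $T_1$, and $\iota \circ c \simeq \id_{\Tel(\eta)}$ follows from an inductive application of the homotopy extension property (Lemma~\ref{lem:horn-filling-relative}) combined with the stage-wise definition of $c$.

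The main obstacle is twofold and corresponds precisely to the two novel ingredients in the construction. First, one must show that the infinite colimit of cellular inclusions does not destroy the control: maps and homotopies constructed on $\Tel(\eta)$ could a priori require unbounded morphism control. The key observation is that $\eta$, $H$ and $G$ each have fixed control conditions, and all the stage-wise constructions inherit a uniform bound in terms of these plus the control of $K$; hence the colimit maps still satisfy a single morphism control condition in $\mathcal{E}$. Second, the concatenation of simplicial homotopies is not strictly associative, so naively splicing copies of $H$ across stages does not yield a well-defined map. This is the exact obstruction removed by the coherence datum $G$: in the proof of (\ref{item:mapping-tel:idempot-induces-id-on-tel}) and (\ref{item:mapping-tel:hpty-retract}), $G$ is precisely the $2$-simplex of homotopies that allows us to rectify the concatenations into a map defined on the telescope. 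A Moore-style parametrization of the cylinders (using variable-length $\Delta^1$-like objects) is the technical device that packages this rectification cleanly, and setting this up carefully is what occupies the bulk of the appendix.
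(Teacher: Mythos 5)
Your construction of $\Tel(\eta)$ and your diagnosis of the two technical themes (uniform control under the infinite gluing, and non-associativity of concatenated homotopies) match the paper, but the arguments you give for the actual hard parts do not go through. For (\ref{item:mapping-tel:tel-of-id-hpty-equiv-to-obj}) your two-out-of-three step is circular: knowing that $K \rightarrowtail \Tel(\id_K) \rightarrow K$ is the identity gives nothing unless one of the two factors is already known to be an equivalence, and that is precisely what is to be proved. The real content is a homotopy $\iota\circ r \simeq \id_{\Tel(\id_K)}$; stage-wise retractions only give a ``sliding'' homotopy of infinite length, and converting it into an honest length-one homotopy is exactly the Convergent Homotopy Lemma~\ref{lem:infinite-homotopies} (proved by repeated relative horn-filling), which your sketch never supplies. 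Similarly, for (\ref{item:mapping-tel:homotopy-id-give-hpty-equiv-tels}) the stage-wise equivalences $T_n(\eta)\to T_n(\mu)$ built from $H$ do not commute strictly with the stage inclusions, so they do not assemble to a map of colimits at all without the long-interval machinery; and even granting a map, ``passage to the colimit along cofibrations preserves homotopy equivalences'' is not available for free here --- there is no ambient model structure and general (co)limits do not exist in $\mathcal{C}^G$. The paper instead builds the comparison map directly from $H$ on lengthened cylinders, and proves it is an equivalence using the inverse homotopy, Lemma~\ref{lem:concat-homotopy-and-inverse}, and the $2$-homotopy criterion~\ref{lem:2-homotopy-induce-homotopy} (Lemma~\ref{lem:homotopic-maps-give-homotopic-telescopes}).

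For (\ref{item:mapping-tel:idempot-induces-id-on-tel}) you misidentify the map: $\eta_*$ is induced by applying $\eta$ levelwise, not by shifting stages; the shift $\sh$ is a different map, and the correct relation is $\sh\circ\eta_*\simeq\id$ (sliding down the cylinders, Lemma~\ref{lem:tel-shift}), so $\sh$ and $\eta_*$ are homotopy inverse. A ``cofinality-of-tail'' argument is not available off the shelf in this category, and coherence enters not to rectify concatenations per se but to prove $\eta_*\circ\eta_*\simeq\eta_*$, from which $\eta_*\simeq\eta_*\circ\eta_*\circ\sh\simeq\eta_*\circ\sh\simeq\id$ (Lemma~\ref{lem:map-tel-homotopy-idemp-induced-id-on-tel}). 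Finally, in (\ref{item:mapping-tel:hpty-retract}) you place the coherence at the wrong spot: defining $c$ stage-wise by $\eta$ on the copies of $K$ and by $H$ on the cylinders glues on the nose, with no coherence needed. The genuinely hard claim is $\iota\circ c\simeq\id_{\Tel(\eta)}$, and ``an inductive application of the homotopy extension property'' cannot establish it: stage-wise extensions again produce a homotopy of unbounded length with no convergence, and since non-coherent homotopy idempotents need not split (as the paper notes for infinite-dimensional CW-complexes), any argument that does not use the coherence datum essentially at this point must fail. The paper's route is to compare $\Tel(\eta)$ with $\Tel(\id_K)\simeq K$ via the cylinder maps $(H,\eta)_*$ and $(\overline{H},\eta)_*$, use the coherence square twice to identify both composites with $\eta_*$ up to homotopy (Lemma~\ref{lem:map-cyl:homotopy-idempotent-retraction}), and then invoke $\eta_*\simeq\id$ from part~(\ref{item:mapping-tel:idempot-induces-id-on-tel}).
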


\begin{cor}[Coherent homotopy idempotents split]\label{cor:hpty-idem-split}
  Let $\eta\colon K \rightarrow K$ be a coherent homotopy idempotent in
  $\mathcal{C}^G$.  Then there is a $B \in \mathcal{C}^G$ such that $K$ is
  homotopy equivalent to $\Tel(\eta)\vee B$.  Moreover, under this equivalence
  $\eta$ corresponds to the projection $\pr\colon \Tel(\eta) \vee B
  \rightarrow \Tel(\eta) \rightarrow \Tel(\eta) \vee B$, i.e.,~there is a
  homotopy commutative diagram
  \begin{equation*}
    \xymatrix{
    K \ar[r]^-f \ar[d]^{\eta}  &  \Tel(\eta) \vee B \ar[d]^{\pr}  \\
    K \ar[r]^-f                &  \Tel(\eta) \vee B }
  \end{equation*}
  where $f$ is the homotopy equivalence $K \xrightarrow{\simeq} \Tel(\eta)
  \vee B$.
\end{cor}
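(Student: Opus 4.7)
The plan is to realize $\Tel(\eta)$ as a homotopy retract of $K$ sitting inside a cofibration, and then split the resulting cofiber sequence using the extension axiom. The homotopic data I need---the cellular inclusion $\iota\colon K\rightarrowtail \Tel(\eta)$, the map $c\colon \Tel(\eta)\to K$, and the relations $\iota c\simeq\id_{\Tel(\eta)}$ and $c\iota\simeq\eta$---is supplied by Theorem~\ref{thm:mapping-telescopes}\,(\ref{item:mapping-tel:hpty-retract}), so coherence of $\eta$ enters only through this input.

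First I would factor $c\colon \Tel(\eta)\to K$ via the cylinder functor of Theorem~\ref{thm:cylinder-functor} as $\Tel(\eta)\xrightarrow{\iota_0^{\mathrm{cyl}}} T(c)\xrightarrow{p} K$, where $\iota_0^{\mathrm{cyl}}$ is a cellular inclusion and $p$ is a homotopy equivalence by the cylinder axiom (Lemma~\ref{lem:cylinder-axiom}); the back inclusion $\iota_1^{\mathrm{cyl}}\colon K\to T(c)$ then satisfies $p\iota_1^{\mathrm{cyl}}=\id_K$ and is itself a homotopy equivalence. Set $B:=T(c)/\iota_0^{\mathrm{cyl}}(\Tel(\eta))$, producing a cofiber sequence $\Tel(\eta)\rightarrowtail T(c)\twoheadrightarrow B$ in $\mathcal{C}^G$. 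The composition $\iota\circ p\colon T(c)\to\Tel(\eta)$ satisfies $(\iota\circ p)\circ\iota_0^{\mathrm{cyl}}=\iota\circ c\simeq\id_{\Tel(\eta)}$, so the Relative Horn-Filling Lemma~\ref{lem:horn-filling-relative} would let me strictify it to a retraction $r\colon T(c)\to\Tel(\eta)$ with $r\iota_0^{\mathrm{cyl}}=\id_{\Tel(\eta)}$.

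Second, with $q\colon T(c)\twoheadrightarrow B$ the quotient, the map $F:=(r,q)\colon T(c)\to\Tel(\eta)\vee B$ fits into a strictly commuting map of cofiber sequences whose outer verticals are identities ($r\iota_0^{\mathrm{cyl}}=\id$ and $q\iota_0^{\mathrm{cyl}}=0$ give the left square; $\pr_B\circ F = q$ gives the right one), so by the extension axiom (Lemma~\ref{lem:waldh-cat:extension-axiom}) $F$ is a homotopy equivalence. Composing with $\iota_1^{\mathrm{cyl}}\colon K\xrightarrow{\simeq} T(c)$ yields the desired equivalence $f:=F\circ\iota_1^{\mathrm{cyl}}\colon K\xrightarrow{\simeq}\Tel(\eta)\vee B$.

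For the ``moreover'' claim, I would verify $f\eta\simeq\pr\circ f$ componentwise. Since $r\simeq\iota\circ p$ and $p\iota_1^{\mathrm{cyl}}=\id_K$, the first component $r\iota_1^{\mathrm{cyl}}$ of $f$ is homotopic to $\iota$; applying this to $\eta$ and using $c\iota\simeq\eta$ and $\iota c\simeq\id$ gives $\iota\eta\simeq\iota c\iota\simeq\iota$, matching $\pr f$ on the first coordinate. For the second coordinate, the cylinder provides a canonical homotopy $\iota_1^{\mathrm{cyl}}\circ c\simeq\iota_0^{\mathrm{cyl}}$, so $q\iota_1^{\mathrm{cyl}}\eta\simeq q\iota_1^{\mathrm{cyl}} c\iota\simeq q\iota_0^{\mathrm{cyl}}\iota=0$, matching the zero second coordinate of $\pr f$. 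The only genuine obstacle is the strictification step: replacing the homotopy retraction $\iota\circ p$ by a strict one in a controlled manner, which is exactly what Lemma~\ref{lem:horn-filling-relative} provides. Everything after that is formal manipulation inside the abelian morphism groups of $\mathcal{C}^G$.
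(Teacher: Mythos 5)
Your proposal is correct and follows essentially the same route as the paper's proof: both take $B$ to be the cofiber of the front inclusion $\Tel(\eta)\rightarrowtail T(c)$, assemble a map $T(c)\rightarrow\Tel(\eta)\vee B$ from a strict retraction plus the quotient, apply the extension axiom to see it is a homotopy equivalence, and obtain $f$ by composing with the back inclusion $K\xrightarrow{\simeq}T(c)$. The only cosmetic differences are that the paper gets the strict retraction directly from the rectification recipe of Section~\ref{subsubsec:rectifying-diagrams} (so that $r\circ\iota_1=\iota$ holds on the nose) rather than by your homotopy-extension strictification of $\iota\circ p$, and it checks the ``moreover'' square by invoking $\eta_*\simeq\id$ on $\Tel(\eta)$ together with functoriality, where you instead compute both components directly using $\iota_1\circ c\simeq\iota_0$, $c\iota\simeq\eta$ and $\iota c\simeq\id$.
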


\begin{proof}
  We know by
  \ref{thm:mapping-telescopes}~\eqref{item:mapping-tel:hpty-retract} that
  $\Tel(\eta)$ is a homotopy retract of $K$.  We use the mapping cylinder and
  the  procedure from Section~\ref{subsubsec:rectifying-diagrams} to produce a
  \emph{strictly} commutative diagram (on the left) from the
  \emph{homotopy} commutative diagram (on the right):
  \begin{equation*}
    {\xymatrix{\Tel(\eta) \ar[r]^-c \ar[dr]_{\id} & K \ar[d]^\iota \\
    & \Tel(\eta) }} \qquad  \qquad
    {\xymatrix{
      \Tel(\eta) \ar@{ >->}[r]^{\inc} \ar[dr]_{\id} & T(c) \ar[d]\\
      &\Tel(\eta)}}
  \end{equation*}
  Here $T(c)$ is the mapping cylinder of $c$ and $\inc$ is a cellular
  inclusion.  Let $B$ denote the cofiber of $\inc$.  The sum of the retraction
  $T(c) \rightarrow \Tel(\eta)$ and the quotient map $T(c) \rightarrow B$
  produces a map $s\colon T(c)
  \rightarrow \Tel(\eta) \vee B$ (using that $\mathcal{C}^G$ is an additive
  category).  The map makes the following diagram of cofiber sequences
  commutative.
  \begin{equation*}
    \xymatrix{
      \Tel(\eta) \ar@{ >->}[r] \ar[d]  & T(c) \ar[d]^s\ar@{->>}[r] & B \ar[d] \\
      \Tel(\eta) \ar@{ >->}[r]  & \Tel(\eta)\vee B  \ar@{->>}[r] & B}
  \end{equation*}
  The extension axiom~\ref{lem:waldh-cat:extension-axiom}
  shows that $s$ is a homotopy equivalence.  This gives the homotopy
  equivalence $f\colon K \rightarrow T(c) \rightarrow \Tel(\eta) \vee
  B$.\smallskip

  By \ref{thm:mapping-telescopes}~\eqref{item:mapping-tel:hpty-retract}, the
  map $\eta\colon K \rightarrow K$ factorizes up to homotopy as $c\circ i
  \colon K \rightarrow \Tel(\eta) \rightarrow K$.  Hence, the upper triangle
  in the following diagram is homotopy commutative, whereas the lower one
  commutes strictly. 
  \begin{equation*}
    \xymatrix{
    K \ar[r]^\eta \ar[d]_{\iota} & K \ar[d] \\
    \Tel(\eta) \ar[ur]^c \ar@{ >->}[r]  & T(c) \ar@{->>}[r] & B}
  \end{equation*}
  It follows that $K\xrightarrow{\eta} K \rightarrow T(c) \rightarrow B$ is
  homotopic to the zero map.

  Furthermore, $K\rightarrow T(c) \rightarrow \Tel(\eta)$ equals $\iota$,
  hence by adding homotopies the map
  \begin{equation*}
    f\circ \eta\colon K \xrightarrow{} K \xrightarrow{\simeq} T(c) \xrightarrow{\simeq}
    \Tel(\eta) \vee B
  \end{equation*}
  is homotopic to $K\xrightarrow{\iota} \Tel(\eta)\rightarrowtail
  \Tel(\eta)\vee B$.  The following diagram is strictly commutative by
  \ref{thm:mapping-telescopes}~\eqref{item:mapping-tel:map-of-idempotents}
  and~\eqref{item:mapping-tel:relative-to-space}, where $0_B$ denotes the zero
  map on $B$.
  \begin{equation*}
    \xymatrix{
      K \ar[r]^-\iota \ar[d]_\eta  & \Tel(\eta) \vee B \ar[d]^{\eta_* \vee 0_B} \\
      K \ar[r]^-\iota              & \Tel(\eta) \vee B }
  \end{equation*}
  As $\eta_* \simeq \id$ by
  \ref{thm:mapping-telescopes}~\eqref{item:mapping-tel:idempot-induces-id-on-tel}
  it follows that 
  \begin{equation*}
    \xymatrix{
    K \ar[r]^-{\simeq}_-f \ar[d]^{\eta}  & \Tel(\eta) \vee B \ar[d]^{\id \vee
    0_B} \\
    K \ar[r]^-{\simeq}_-f & \Tel(\eta) \vee B}
  \end{equation*}
  is homotopy commutative.
\end{proof}

The proof of Theorem~\ref{thm:mapping-telescopes} is a bit involved.  We give
an outline of the arguments in
Section~\ref{subsec:appendix-structure-of-proof} before we turn to the
technical details.

\subsection{Structure of the proof}
\label{subsec:appendix-structure-of-proof}

Let us outline the proof of Theorem~\ref{thm:mapping-telescopes}.  
We list the sections in which we prove the claimed
statements at the end of this section.
In this section let $I$ be the simplicial interval $\Delta^1$. 

\subsubsection{The Definition.}
\label{subsubsec:appendix-definition-telescope}
From $\eta\colon K \rightarrow K$ we can build the mapping cylinder
$M^I(\eta)$, which is the (solid) pushout diagram below.
\begin{equation}
  \label{eq:appendix:structure:cyl}
  \vcenter{\xymatrix{
  & K[1] \ar[r]^{\eta} \ar@{ >->}[d] & K \ar[d]^{i_1} \\
  K[0] \ar@{ >-->}[r] \ar@/_10pt/@{-->}[rr]_{\iota_0} &
  K[I] \ar[r] & M^I(\eta) }}
\end{equation}
We obtain two inclusions: $\iota_1$ and the shown composition $\iota_0$.
Gluing infinitely many cylinders together we obtain the mapping telescope
$\Tel(\eta)$ as the following pushout.
\begin{equation}
  \label{diag:definition-telescope}
    \vcenter{\xymatrix{
    \coprod_{i=1}^{\infty} A[i] \amalg \coprod_{i=0}^{\infty} A[i+1]
    \ar@{ >->}[r]^-{\iota_0 \amalg \iota_1} \ar[d]^c &
    \coprod_{i=0}^{\infty} M^{I}(\eta) \ar[d] \\
    \coprod_{i=1}^\infty A[i] \ar[r]  & \Tel(\eta)}}
\end{equation}
  The map $\iota_0 \amalg \iota_1$ sends the summand $A[i]$ into the
  $i$th copy of $M^{I}(\eta)$ via $\iota_0$ and the summand $A[j+1]$
  into the $i$th copy of $M^{I}(\eta)$ via $\iota_1$. The map $c$ sends $A[l]$
  to $A[l]$.  This defines $\Tel(\eta)$.

\subsubsection{Theorem~\ref{thm:mapping-telescopes}.(\ref{item:mapping-tel:inclusion})
and (\ref{item:mapping-tel:map-of-idempotents}).}

The inclusion $\iota_0 \colon A[0] \rightarrow M^{I(0,1)}(f)$ induces a map
$\iota\colon K \rightarrow \Tel(\eta)$.  The commutative diagram from
\ref{thm:mapping-telescopes}.\eqref{item:mapping-tel:map-of-idempotents}
induces a map of diagrams~\eqref{eq:appendix:structure:cyl} and therefore a map
$M^I(\mu) \rightarrow M^I(\eta)$ which yields a map $f_* \colon \Tel(\mu)
\rightarrow \Tel(\eta)$. 

\subsubsection{Homotopy commutative diagrams and
Theorem~\ref{thm:mapping-telescopes}.(\ref{item:mapping-tel:homotopy-id-give-hpty-equiv-tels}).}
\label{subsubsec:appendix:outline:homotopy-commutative-diagrams}
For the next step we need to discuss homotopy commutative diagrams.  Assume that have
a (possibly noncommutative) solid diagram
\begin{equation*}
    \xymatrix{
    A \ar[r]^{f} \ar[d]^{a}   & A \ar[d]^{a}\\
    B \ar[r]^{g}  \ar@{==>}[ur]|(0.5){H}                & B }
  \end{equation*}
where $g\circ a$ is homotopic to $a \circ f$ via a homotopy $H\colon A[I]
\rightarrow B$, indicated in the interior of the diagram above.  We get
a map $M^I(f) \rightarrow B$ and the diagram
\begin{equation}
  \vcenter{\xymatrix{
   A[I] \ar[d] & A[1] \ar[r]\ar[l]^{\iota_0} \ar[d] & M^I(f) \ar[d] \\
   B[I]        & B[1] \ar[r]^{g} \ar[l]             & B }}
\label{eq:induce-map-long-cylinder}
\end{equation}
induces a map $(H,a)_*\colon M^{I'}(f) \rightarrow M^{I}(g)$ on the pushouts
of the rows, where $I'$ is now the \emph{concatenation} of two copies of $I$,
i.e., $I' = \Delta^1 \cup_{\Delta^0} \Delta^1$.

Note $M^{I'}(f)$ is also the
pushout of $A[I'] \leftarrow A[1] \xrightarrow{f} A$.  In the category of
topological spaces $M^{I'}(f)$ and $M^{I}(f)$ would be homeomorphic and we
would obtain a map $M^I(f) \rightarrow M^I(g)$.  For this outline, we pretend
we have this map in our simplicial context.  Hence, the map $(H,a)_*$ gives a
map $(H,a)_* \colon \Tel(f) \rightarrow \Tel(g)$, which depends on the
homotopy $H$.

If $\eta, \mu\colon K \rightarrow K$ are homotopic via a homotopy $H$ from
$\mu$ to $\eta$ we obtain a map $(H,\id)_* \colon \Tel(\eta) \rightarrow
\Tel(\mu)$.  The inverse homotopy $\overline{H}$ from $\mu$ to $\eta$
gives a map $(\overline{H}, \id)_* \colon \Tel(\mu) \rightarrow \Tel(\eta)$.
We can \emph{stack} homotopy commutative squares as shown below.
\begin{equation*}
    \xymatrix{
    K \ar[r]^{\eta} \ar[d]^{\id}   & K \ar[d]^{\id}\\
    K \ar[r]^{\mu} \ar[d]^{\id} \ar@{==>}[ur]|(0.5){H}    & K \ar[d]^{\id} \\
    K \ar[r]^{\eta}  \ar@{==>}[ur]|(0.5){\overline{H}}                & K }
\end{equation*}
We will describe the composition of $(H ,\id)_*$ and $(\overline{H}, \id)_*$
on $\Tel(\eta)$.  We then will develop a criterion which implies that this
composition is homotopic to $\id_{\Tel(\eta)}$.  Therefore, $(H, \id)_* \colon
\Tel(\eta) \rightarrow \Tel(\mu)$ is a homotopy equivalence.

\subsubsection{Theorem~\ref{thm:mapping-telescopes}.(\ref{item:mapping-tel:tel-of-id-hpty-equiv-to-obj})
and (\ref{item:mapping-tel:relative-to-space}).}
In the topological category $\Tel(\id_K)$ would just be $K \times [0,\infty)$,
therefore homotopy equivalent to $K$.  In the simplicial context, after
we define a simplicial ``interval $[0,\infty)$''
(in Section~\ref{subsubsec:interval-infinite}), we similarly have $\Tel(\id_K)
\cong K[ [0,\infty)]$ and the Convergent Homotopy
Lemma~\ref{lem:infinite-homotopies} provides a homotopy equivalence in that
case. 
This uses the Kan Extension Property in $\mathcal{C}^G$ in a crucial way.

We will show that all maps described so far respect the
inclusion $\iota$ from above.

\subsubsection{Theorem~\ref{thm:mapping-telescopes}.(\ref{item:mapping-tel:idempot-induces-id-on-tel}).}
Let $\eta\colon K \rightarrow K$ be a map in $\mathcal{C}^G$.  By ``sliding
down the cylinder'', the top inclusion $\iota_0 \colon K \rightarrow M^I(\eta)$ is
homotopic to the composition $K \xrightarrow{\eta} K \xrightarrow{\iota_1}
M^I(\eta)$.  Consider $M^I(\eta)
\cup_{\iota_1,\iota_0} M^I(\eta)$.  This is two mapping cylinders glued
together, like in the telescope construction.  We have two inclusions $A_0,A_1$
of
$M^I(\eta)$ into $M^I(\eta) \cup_{\iota_1,\iota_0} M^I(\eta)$.  The homotopy
from $\iota_0$ to $\eta\circ\iota_1$
extends to a homotopy from the first inclusion $A_0$ to $A_1\circ
\eta_*$.  This further extends to a
homotopy between $\id_{\Tel(\eta)}$ and a composition of maps $\sh \circ
\eta_{*}$. Here $\sh \colon \Tel(\eta) \rightarrow \Tel(\eta)$ is the
map which maps the component $M^I(f)$ at position $i$ to $M^I(f)$ at position
$i+1$, and $\eta_{*}$ is the map induced by $\eta$ on the telescope.

If $\eta_*$ is a coherent homotopy idempotent, we show that $\eta_* \circ
\eta_* \simeq \eta_*$ as maps $\Tel(\eta) \rightarrow \Tel(\eta)$ and
therefore, $\eta_* \simeq \sh \circ \eta_* \circ \eta_* \simeq \sh \circ \eta_*
\simeq \id_{\Tel(\eta)}$.

\subsubsection{Theorem~\ref{thm:mapping-telescopes}.(\ref{item:mapping-tel:hpty-retract}).}
Because $\eta$ is a homotopy idempotent we have a stacking of homotopy
commutative diagrams
\begin{equation*}
    \xymatrix{
    K \ar[r]^{\eta} \ar[d]^{\eta}   & K \ar[d]^{\eta}\\
    K \ar[r]^{\id} \ar[d]^{\eta} \ar@{==>}[ur]|(0.5){H}    & K \ar[d]^{\eta} \\
    K \ar[r]^{\eta}  \ar@{==>}[ur]|(0.5){\overline{H}}                & K }.
\end{equation*}
We obtain maps $(H,\eta)_* \colon \Tel(\eta) \rightarrow \Tel(\id_K)$
and $(\overline{H},\eta)_* \colon \Tel(\id_K) \rightarrow \Tel(\eta)$.  We
show that the compositions are homotopic to $\eta_*$ on $\Tel(\eta)$, which is
homotopic to $\id_{\Tel(\eta)}$ by
\eqref{item:mapping-tel:idempot-induces-id-on-tel}, and
to $\eta_*$ on $\Tel(\id_K)$.  By
\ref{thm:mapping-telescopes}.\eqref{item:mapping-tel:tel-of-id-hpty-equiv-to-obj},
we have $\Tel(\id_K) \simeq K$. By inspecting the resulting maps closely, we
obtain that $\eta_*$ on $\Tel(\id_K)$ indeed corresponds to $\eta \colon K
\rightarrow K$.

\medskip

We left out quite a few details in the outline above.  The most important
involves concatenation of homotopies, which necessitates 
an analogue of ``Moore homotopies''. These are homotopies where we allow the
``intervals'' to have different lengths and even different directions.
These occur when we investigate the maps induced by homotopy commutative
diagrams, cf.~the
diagram~\ref{subsubsec:appendix:outline:homotopy-commutative-diagrams}.(\ref{eq:induce-map-long-cylinder}).

The main difficulty is that 
for $I' = \Delta^1\cup_{\Delta^0} \Delta^1$ the mapping
cylinders $M^{I'}(f)$ and $M^{I}(f)$ are in general only homotopy equivalent
and not isomorphic.  We later discuss composition of maps and want certain
maps to be equal, not only homotopic.  The solution is to remember the
``interval'' $I'$ (and its larger relatives).  Because we need inverse
homotopies we must allow intervals in different directions.  (See also
Remark~\ref{rem:directions-of-homotopies}.)
\medskip

We will introduce intervals in the category of simplicial sets in the next
section, Section~\ref{subsec:appendix-simplicial-intervals}.  We discuss
the resulting ``long homotopies'' in Section~\ref{subsec:long-homotopies}
and compare intervals of different length in
Section~\ref{subsec:intervals-different-length}.  Then we discuss
homotopies of infinite length and prove the Convergent Homotopy Lemma
in~\ref{subsec:homotopies-infinite-length}.  We define the mapping telescope
for these long homotopies in~\ref{subsec:long-mapping-cylinder-telescope}.
This provides parts~(\ref{item:mapping-tel:inclusion}),
(\ref{item:mapping-tel:map-of-idempotents}) and
(\ref{item:mapping-tel:tel-of-id-hpty-equiv-to-obj}) of
Theorem~\ref{thm:mapping-telescopes}.
Homotopy commutative squares are discussed
in~\ref{subsec:homotopy-commutative-square-mapping-tel}.  We show that they
induce maps on telescopes and give in~\ref{subsec:homotopy-criterion-tel} a
criterion when those are homotopic.  The last three
sections~\ref{subsec:on-thm:id-give-htpy-equiv}
to~\ref{subsec:on-thm-hpty-retract} finally prove parts
(\ref{item:mapping-tel:homotopy-id-give-hpty-equiv-tels}),
(\ref{item:mapping-tel:idempot-induces-id-on-tel}) and
(\ref{item:mapping-tel:hpty-retract}) of Theorem~\ref{thm:mapping-telescopes}.
(\ref{item:mapping-tel:relative-to-space}) is obvious.

\subsection{Simplicial Intervals}
\label{subsec:appendix-simplicial-intervals}

We start by defining what we will mean by an \emph{interval} in the category of
simplicial sets.  (We have not seen our definition in the literature.)  The
name is chosen to stress the analogies to the topological setting.  Our
definition is essentially a nice formal description of simplicial set of the
form $\rightarrow \leftarrow$ etc.

\begin{definition}[Simplicial intervals]\label{def:interval}\  
  \begin{enumerate}
    \item Let $i\in \Nat$. A one-point simplicial set $I(i)$, $I(i)_k =
      \{i\}$, together with a bijection $l\colon I(i)_0 \rightarrow \{i\}$
      from its zero simplices is a called  a \emph{point at $i$} or
      \emph{interval of length $0$ from $i$ to $i$}.

    \item An \emph{interval of length $1$ from $i$ to $(i+1)$}, denoted
      $I(i,i+1)$, is a simplicial set isomorphic to $\Delta^1$ together with a
      bijection of its zero simplices to the set $\{i, i+1\}$, $l\colon
      I(i,i+1)_0 \rightarrow \{i,i+1\}$ .  The map $l$ is called the
      \emph{labeling}.

    \item Let $i,j\in \Nat$, $i + 2 \leq j$.  An \emph{interval of length
      $(i-j)$ from $i$ to $j$} is a simplicial set $I(i,j)$ together with a
      bijection $l\colon I(i,j)_0\rightarrow \{i, i+1,\dots, j\}$ such that
      there is a pushout diagram
      \begin{equation}\label{diag:pushout-of-intervals}
        \vcenter{\xymatrix{ I(j-1) \ar@{ >->}[r]\ar@{ >->}[d]  &
        I(j-1,j) \ar[d] \\ I(i,j-1) \ar[r] & I(i,j) }}
      \end{equation}
      where the maps are compatible with the labelings and $I(j-1)
      \rightarrowtail I(j-1, i)$, $I(j-1) \rightarrowtail I(i, j-1)$ are the
      obvious inclusions.

    \item The \emph{standard interval from $i$ to $(i+1)$} is the simplicial
      set $\Delta^1$ together with the labeling $l(0) = i$, $l(1)=i+1$.  The
      \emph{standard interval from $i$ to $j$} for $i + 2 \leq j$ is the
      simplicial set arising from the standard interval from $i$ to $j-1$ by
      the pushout~\eqref{diag:pushout-of-intervals} with $I(j-1,j)$ being the
      standard interval of length $1$.

    \item An interval $I(i,j)$ from $i$ to $j$ is called \emph{ordered} if it
      is isomorphic to the standard interval from $i$ to $j$ and the
      isomorphism respects the labeling.
  \end{enumerate}
\end{definition}

\begin{rem}\label{rem:intervals-notation}
  We sometimes draw pictures for intervals. The standard interval is $0
  \rightarrow 1$.  The four intervals for $I(0,2)$ are the following ones: 
  \begin{equation*}
    \begin{array}{c@{\qquad\qquad}c}
      0\rightarrow 1 \rightarrow 2 & 0 \rightarrow 1 \leftarrow 2 \\
      0\leftarrow  1 \rightarrow 2 & 0 \leftarrow  1 \leftarrow 2.
    \end{array}
  \end{equation*}
  The notion $I(i,j)$ is ambiguous, as we want it to cover all possible
  orderings.
  \end{rem}

\subsubsection{Concise notation}
We often just write $I(i,j)$ for an interval from $i$ to $j$ leaving all the
other data understood.  For $A\in \mathcal{C}^G$ we also often abbreviate
$A[I(i,j)]$ as $A[i,j]$ and $A[I(i)]$ as $A[i]$, slightly misusing
notation.

\subsubsection{The infinite interval}\label{subsubsec:interval-infinite}
  Define a simplicial set $I(i,\infty)$ to be an \emph{interval from $i$ to
  $\infty$} if it is the filtered colimit (or union) of intervals $I(i,j)$
  for $j \rightarrow \infty$.  It is called \emph{ordered} if each of the
  $I(i,j)$ is.

\subsection{Long Homotopies}
\label{subsec:long-homotopies}
Our notion of interval gives rise to a
notion of homotopy.
\begin{definition}[Long Homotopy]
  Let $I(0,j)$ be an interval from $0$ to $j$. Let $f_0, f_j \colon A
  \rightarrow B$ be two maps in $\mathcal{C}^G$.
  A \emph{long homotopy} from $f_0$ to $f_j$ is a map $H\colon A[I(0,j)]
  \rightarrow B$ such that the restriction to $A[0]$ is $f_0$ and the
  restriction to $A[j]$ is $f_j$.  We say that $H$ has \emph{length} $j$.
\end{definition}

\begin{example}\label{ex:trivial-long-homotopy}
  If $f\colon A \rightarrow B$ is a map in $\mathcal{C}^G$ and $I(0,i)$ any
  interval we always have the \emph{trivial homotopy}
  $\Tr\colon A[0,i] \rightarrow B$ induced by the map $A[0,i] \rightarrow A
  \rightarrow B$.  We also define it for $i=0$ and therefore call the map
  $\Tr\colon A[0,0] = A[0] = A \xrightarrow{f} B$ the \emph{trivial homotopy
  of length $0$}.
\end{example}

\subsubsection{Ordinary and long homotopies}
Every homotopy in the usual sense is a long homotopy of length $1$.  Every
long homotopy gives a homotopy in the usual sense by the Kan property.  This
is not functorial, which is the reason why we need to consider long
homotopies.  We will omit the ``long'' in the following.

\subsubsection{Concatenation of intervals}
If $I(0,i)$ and $I(0,j)$ are intervals we define the concatenation
$I(0,i)\mathop{\Box} I(0,j)$ to be the pushout
\begin{equation*}
  \xymatrix{
    I(i) \ar[r] \ar[d]  & I(i, i+j) \ar[d] \\
    I(0,i) \ar[r]  & I(0,i)\mathop{\Box} I(0,j)}
\end{equation*}
where $I(i, i+j)$ is defined as a relabeling of $I(0,j)$, replace
the labeling $l$ of $I(0,j)$ by $l(k) = i+k$.

\subsubsection{Concatenation of homotopies}\label{sec:concat-of-homotopies}
Homotopies which agree on the start resp.\ endpoint can be concatenated. For
$H_1 \colon A[0,i] \rightarrow B$, $H_2\colon A[0,j] \rightarrow B$ with
${H_1}_{|A[i]} = {H_2}_{|A[0]}$ define the \emph{concatenation}
\begin{equation*}
  H_1 \mathop{\Box} H_2 \colon A[0,i+j] \rightarrow B
\end{equation*}
as the map induced by the identification on the pushout $I(0,i) \mathop{\Box}
I(0,j)$. The concatenation of homotopies is strictly associative.

\subsubsection{Inverse homotopies}
If $I(0,j)$ is an interval, define the \emph{reversed interval}
$\overline{I(0,j)}$ as the same simplicial set with the labeling
$l$ replaced by $\overline{l}(k) := j - l(k)$.
If $H\colon A[I(0,j)] \rightarrow B$ is a homotopy the
\emph{inverse homotopy} $\overline{H}$ is the
obvious map $\overline{H}\colon A[\overline{I(0,j)}] \rightarrow B$.

If $j=1$ and $I(0,j)$ is an ordered interval we draw
the homotopy as $\xymatrix{ \ar[r]|H & }$ and the inverse homotopy as
$\xymatrix{&\ar[l]|H }$. 

\begin{lem}[Concating a homotopy and its inverse]\label{lem:concat-homotopy-and-inverse}
  Let $H\colon A[0,1]  \rightarrow B$ be a homotopy.
  The concatenation $H \mathop{\Box} \overline{H}$  is homotopic, relative
  boundary, to the trivial homotopy $\Tr\colon A[0,2]
  \rightarrow A \rightarrow B$.
\end{lem}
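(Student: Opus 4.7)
The plan is to construct the required homotopy $A[I(0,2) \times \Delta^1] \to B$ by assembling four $2$-simplices of $B$, where the interesting two come from a single application of the Relative Horn-Filling Lemma~\ref{lem:horn-filling-relative}. By the $\Hom$-bijection~\ref{subsubsec:G-adjunctions}.\eqref{eq:hom-bijection-all-G}, giving such a map amounts to giving a simplicial map into $\HOM^{\mathcal{E}}_R(A,B)^G$, and the underlying simplicial set $I(0,2) \times \Delta^1$ is two copies of $\Delta^1 \times \Delta^1$ glued along the middle slice $\{1\} \times \Delta^1$, hence has four non-degenerate $2$-simplices to specify: a ``bottom'' and a ``top'' in each half. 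The boundary constraints are that the restriction to $I(0,2) \times \{0\}$ is $H\mathop{\Box}\overline{H}$, to $I(0,2) \times \{1\}$ is $\Tr$, and to $\{0,2\} \times \Delta^1$ is the constant map at $f_0 := H|_{A[0]}$.

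The critical observation is that the inverse homotopy $\overline{H}$ is defined (Section~\ref{subsec:long-homotopies}) to be the same underlying simplicial map as $H$, with only the labeling on $\overline{I(0,1)}$ reversed. Hence, as a $1$-simplex of $B$ (equivalently, of $\HOM_R(A,B)$), $\overline{H}$ is literally equal to $H$. Applying Lemma~\ref{lem:horn-filling-relative} to the horn $\Lambda^2_0 \subseteq \Delta^2$ with prescribed $d_2 = H$ and $d_1$ the constant $1$-simplex at $f_0$ (these agree at their shared vertex, both equal to $f_0$), I obtain a single controlled $2$-simplex $\tau \colon A[\Delta^2] \to B$; set $K := d_0\tau$, a controlled $1$-simplex from $f_1$ to $f_0$.

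Finally, I define the desired map by sending both of the ``bottom'' $2$-simplices (one from each half of $I(0,2) \times \Delta^1$) to $\tau$ and both of the ``top'' $2$-simplices to the constant $2$-simplex at $f_0$. A direct check shows that all shared edges match: the middle slice $\{1\} \times \Delta^1$ is $K$ from both halves; the two diagonals are constant $f_0$; the bottom of each half reproduces $H$ and $\overline{H}$ (equal as $1$-simplices of $B$); the top reproduces $\Tr$; and the outer vertical slices $\{0,2\} \times \Delta^1$ remain constant at $f_0$. The main obstacle in the proof is not the construction itself but the careful bookkeeping of labels and orientations on the intervals $I(0,1)$ and $\overline{I(0,1)}$, and the recognition that $H$ and $\overline{H}$ give the same element of $\HOM_R(A,B)_1$---once that identification is made, the single horn-filler $\tau$ suffices to realize the homotopy rel $A[\{0,2\}]$.
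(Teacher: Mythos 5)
Your construction is correct. The square $A[I(0,2)\times\Delta^1]$ is assembled from the four non-degenerate $2$-simplices, the outer vertical edges are constant at $f_0$, all shared faces (the two diagonals, the middle slice $K$, the vertices) match, and your key observation --- that $\overline{H}$ is literally the same map as $H$, only the labeling of the interval being reversed --- is exactly what makes the two halves symmetric; moreover the horn you fill is a controlled ($G$-equivariant) map on $A[\Lambda^2_0]$, so Lemma~\ref{lem:horn-filling-relative} (with trivial relative part) legitimately supplies a controlled filler $\tau$. The difference from the paper's proof lies in the choice of diagonal. You take the diagonal of each half to be the constant edge at $f_0$; a $2$-simplex with faces $(K,\mathrm{const},H)$ cannot be degenerate unless $H$ is, so this choice genuinely forces an appeal to horn-filling and produces an auxiliary, non-explicit edge $K=d_0\tau$. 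The paper instead takes the diagonal (and the middle vertical edge) to be $H$ itself, so that each of the four $2$-simplices is a degeneracy of $H$, i.e.\ the composite of $H$ with the map $A[\Delta^2]\rightarrow A[\Delta^1]$ induced by a degeneracy $\Delta^2\rightarrow\Delta^1$; the rel-boundary homotopy is then completely explicit, no filling argument is needed, and control is immediate because these composites are $H$ precomposed with $0$-controlled maps. Both routes are valid: yours costs one application of the Relative Horn-Filling Lemma (harmless, since it is the paper's basic tool anyway), while the paper's is more economical and canonical. One small point to mirror from the paper: you implicitly take $I(0,1)$ to be the standard interval; for the reversed orientation the same construction goes through with the roles of the horn faces adjusted, just as the paper's ``the other case proceeds similarly''.
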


\begin{proof}
  Assume that $I(0,1)$ is the standard interval, the other case proceeds
  similarly.
  The homotopy $A[0,2][\Delta^1] \rightarrow B$ is given by the left diagram
  below.  It is constructed by gluing the $2$-simplices together which are
  shown on the right.  These arise from the $2$nd degeneracy map $\Delta^2
  \rightarrow \Delta^1$.
  \begin{equation*}
    \xymatrix{
    \bullet \ar[d]|{\Tr} \ar[r]|{\Tr}  \ar[dr]|{H}  & 
    \bullet \ar[d]|(0.4){H}  &
    \bullet \ar[d]|{\Tr} \ar[l]|{\Tr} \ar[dl]|{H}  \\
    \bullet \ar[r]|{H}  & \bullet  & 
    \bullet \ar[l]|{H} 
    }\qquad \qquad
    \xymatrix@R=5ex@C=2ex{
      & \bullet \ar[dr]|H & \\
      \bullet \ar[ur]|{\Tr} \ar[rr]|H && \bullet }
  \end{equation*}\qedhere 
\end{proof}

\subsubsection{Variations of the previous lemma}
\label{subsubsec:variations-longer-homotopies}
In the proof of the lemma we gave a homotopy from the trivial homotopy to the
given one.  We can give one in the other direction by a similar proof where we
use a map $A[\Delta^2] \rightarrow B$ arising from the Kan extension property.

The other concatenation, $\overline{H}\mathop{\Box} H$, is also homotopic to
the trivial homotopy.  Furthermore, the lemma still holds if we allow an interval of length
$n$ instead of length $1$. To prove this, one does induction over $n$ and
starts building the homotopy from the middle, filling outer parts in each
induction step with some of four trivial homotopies of the remaining
homotopies $H'$ of length $1$, like
\begin{equation*}
  \vcenter{
  \xymatrix{
    \bullet \ar[r]|{H'}  &
    \bullet  \\
    \bullet \ar[u]|{\Tr} \ar[ur]|{H'} \ar[r]|{H'}  &
    \bullet \ar[u]|{\Tr}  }} \qquad \text{or} \qquad
  \vcenter{
  \xymatrix{
    \bullet  &
    \bullet \ar[l]|{\Tr} \\
    \bullet \ar[u]|{H'}  &
    \bullet \ar[u]|{H'} \ar[l]|{\Tr} \ar[ul]|{H'}  } }.
\end{equation*}
These techniques will work in the more complicated situations later, hence we
will only draw the diagrams for length $1$ homotopies in the
following.\smallskip

\begin{rem}\label{rem:directions-of-homotopies}
  Lemma~\ref{lem:concat-homotopy-and-inverse} illustrates why we need
  intervals in
  different directions.  It might be possible to avoid these in all the
  following proofs by choosing $\overline{H}$ to be the filling of the horn
  \begin{equation*}
    \xymatrix@R=5ex@C=2ex{
        & \bullet & \\
        \bullet \ar[ur]|{\Tr} \ar[rr]|H && \bullet
        \ar[ul]|{\overline{H}} ,}
  \end{equation*}
  but then we would at least have to remember not only the choice of
  $\overline{H}$, but the whole filling. Also, several of the later diagrams
  would lose their symmetry.
\end{rem}

\subsection{Comparing intervals of different length}
\label{subsec:intervals-different-length}

We now show that
for $A\in\mathcal{C}^G$ the modules $A[0,i]$, ($i \in \Nat$), are homotopy
equivalent to $A$ and the homotopies can be chosen to be relative to the
endpoints.  The result will follow from the next lemma.

\begin{lem}\label{lem:sides-are-homotopy-retracts}
  Let $\Lambda^2_i$ be the $i$th horn and $d_i$ the $i$th face of $\Delta^2$.
  Then $A[d_i]$ and $A[\Lambda^2_i]$ are homotopy equivalent relative the
  $0$-simplices of $d_i$.  The homotopy equivalence can be chosen to be one of
  the maps $A[\Lambda^2_i]\rightarrow A[d_i]$ which induced by collapsing one
  $1$-simplex.
\end{lem}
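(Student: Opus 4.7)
The plan is to reduce to the case $i=0$ by the symmetry of $\Delta^2$ and to exhibit the required homotopy equivalence by realising both $A[\Lambda^2_0]$ and $A[d_0]$ as homotopy retracts of $A[\Delta^2]$. Write $d_0 = [12]$ and $\Lambda^2_0 = d_1 \cup d_2$ with $d_1 = [02]$, $d_2 = [01]$ meeting at vertex $0$.

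I would take as the candidate collapse the simplicial map $c\colon \Delta^2 \to d_0$ sending $0 \mapsto 1$, $1 \mapsto 1$, $2 \mapsto 2$, and set $p_* := c_* \circ \iota_{\Lambda,*}$, where $\iota_\Lambda \colon \Lambda^2_0 \hookrightarrow \Delta^2$ is the inclusion. Then $p_*$ is induced by the simplicial map $\Lambda^2_0 \to d_0$ that collapses $d_2$ to the degenerate $1$-simplex at vertex $1$ and carries $d_1$ isomorphically onto $d_0$, and it restricts to the identity on the common cellular sub-module $A[\{1,2\}]$ generated by the $0$-simplices of $d_0$.

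The two key inputs are: (a) the Relative Horn-Filling Lemma~\ref{lem:horn-filling-relative}, which yields a controlled retraction $r\colon A[\Delta^2] \to A[\Lambda^2_0]$ of $\iota_{\Lambda,*}$ together with a homotopy $\iota_{\Lambda,*} \circ r \simeq \id_{A[\Delta^2]}$ relative to $A[\Lambda^2_0]$; and (b) the explicit simplicial homotopy $H\colon \Delta^2 \times \Delta^1 \to \Delta^2$ from $\id_{\Delta^2}$ to $\iota_d \circ c$ coming from the standard shuffle decomposition of the prism, where $\iota_d\colon d_0 \hookrightarrow \Delta^2$. A direct calculation on vertices and on the three non-degenerate $3$-simplices of the prism shows that $H$ restricts to the trivial (projection) homotopy on $d_0 \times \Delta^1$; in particular, $H$ is constant on the $0$-simplices $\{1,2\}$.

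Defining $s_* := r \circ \iota_{d,*}\colon A[d_0] \to A[\Lambda^2_0]$, we obtain
$p_* \circ s_* = c_* \circ \iota_{\Lambda,*} \circ r \circ \iota_{d,*} \simeq c_* \circ \iota_{d,*} = \id_{A[d_0]}$
relative to $A[\{1,2\}]$, using (a) and that the relevant homotopy is constant on $A[\Lambda^2_0] \supseteq A[\{1,2\}]$. For the other composition, the map $r \circ H_*\,|_{A[\Lambda^2_0][\Delta^1]}\colon A[\Lambda^2_0][\Delta^1] \to A[\Lambda^2_0]$ supplies a homotopy from $r \circ \iota_{\Lambda,*} = \id$ to $r \circ \iota_{d,*} \circ c_* \circ \iota_{\Lambda,*} = s_* \circ p_*$, and this homotopy is rel $A[\{1,2\}]$ by (b). Hence $p_*$ is a homotopy equivalence relative to the $0$-simplices of $d_0$, and by construction it is induced by collapsing one $1$-simplex of $\Lambda^2_0$. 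The main obstacle is the verification in (b) that the standard prism homotopy is actually constant on $d_0$; this is a finite but essential direct check, and everything else is a formal consequence of the data provided by Lemma~\ref{lem:horn-filling-relative}.
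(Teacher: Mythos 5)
Your proposal is correct and follows essentially the same route as the paper: factor the collapse as $A[\Lambda^2_i]\rightarrow A[\Delta^2]\rightarrow A[d_i]$, use the relative horn-filling Lemma~\ref{lem:horn-filling-relative} to deformation-retract $A[\Delta^2]$ onto the horn, and use that the simplicial collapse $\Delta^2\rightarrow d_i$ is a deformation retraction constant on $d_i$. You merely make explicit the homotopy inverse $r\circ\iota_{d,*}$ and the rel-$\{$0-simplices$\}$ bookkeeping that the paper leaves implicit.
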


\begin{proof}
  Consider the composition
  \begin{equation*}
    A[\Lambda^2_i] \rightarrow A[\Delta^2] \rightarrow A[d_i]
  \end{equation*}
  where the first map is the inclusion.  The last map, and hence the
  composition, can be induced by any map collapsing a $1$-simplex not
  equal to $d_1$ or $d_i$.  It is not hard to see that the first map has a
  deformation retraction by horn-filling.  The second map is induced by a
  deformation retraction of simplicial sets.  Therefore, the composition is a
  homotopy equivalence relative to the $0$-simplices of $d_i$.
\end{proof}

\begin{cor} \label{cor:long-cylinders-short-equivalent}
  Let $I(0,1)$ be the standard interval and $I(0,i)$ any interval.  Then we
  have a homotopy equivalence relative endpoints
  \begin{equation*}
    A[0,1] \simeq A[0,i].
  \end{equation*}
\end{cor}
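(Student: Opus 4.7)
I would prove the corollary by induction on $i$. The case $i=1$ is immediate: $I(0,1)$ is by definition isomorphic to $\Delta^1$, so the identity serves as the equivalence.

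For the inductive step $i \geq 2$, the strategy is to reduce the length of the interval by one using Lemma~\ref{lem:sides-are-homotopy-retracts}. By Definition~\ref{def:interval}, $I(0,i)$ contains the subinterval
\begin{equation*}
  I(i-2,i) \;=\; I(i-2,i-1)\cup_{I(i-1)} I(i-1,i),
\end{equation*}
two $1$-simplices meeting at the vertex $i-1$. As a simplicial set (forgetting labelings), $I(i-2,i)$ is isomorphic to some horn $\Lambda^2_j \subseteq \Delta^2$, the apex $j$ corresponding to $i-1$ and the opposite face $d_j\cong \Delta^1$ joining the remaining vertices $i-2$ and $i$ (the precise value of $j$ depends on the orientations of the two intervals). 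Lemma~\ref{lem:sides-are-homotopy-retracts} then gives a homotopy equivalence $A[I(i-2,i)]\simeq A[d_j]$ induced by collapsing one of the two $1$-simplices (I would collapse $I(i-1,i)$), and this equivalence is relative to the endpoints $\{i-2,i\}$.

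Because $I(0,i) = I(0,i-2) \cup_{I(i-2)} I(i-2,i)$ and the equivalence fixes $A[I(i-2)]$ pointwise, one can glue on the identity of $A[I(0,i-2)]$. The gluing Lemma~\ref{lem:gluing-lemma}, applied to the resulting diagram of pushouts, then yields a homotopy equivalence
\begin{equation*}
  A[I(0,i)] \;\xrightarrow{\simeq}\; A\bigl[I(0,i-2)\cup_{I(i-2)} \Delta^1\bigr]
\end{equation*}
relative to the endpoints $\{0,i\}$. The target is an interval of length $i-1$ (with its outer endpoint relabeled from $i-1$ to $i$), so the inductive hypothesis provides a homotopy equivalence relative endpoints to $A[I(0,1)]$. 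Composition finishes the induction.

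The only delicate point is verifying that the equivalence supplied by Lemma~\ref{lem:sides-are-homotopy-retracts} really does extend by the identity over $A[I(0,i-2)]$; this is exactly where the ``relative endpoints'' conclusion of the lemma is essential, together with the observation that $A[-]$ preserves the relevant pushouts. Labelings along the way are purely bookkeeping and do not obstruct the argument.
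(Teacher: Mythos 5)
Your inductive step is sound and is essentially the paper's own argument made explicit: both reduce the length by one via Lemma~\ref{lem:sides-are-homotopy-retracts} applied to the length-two tail, and the extension over $A[I(0,i-2)]$ works precisely because that lemma gives an equivalence relative to the $0$-simplices of $d_j$ (your appeal to the gluing lemma is not quite what is needed for the ``relative endpoints'' conclusion, but the direct gluing of the relative homotopies, which you hint at, does the job). The genuine gap is your base case. For $i=1$ the ``arbitrary'' interval $I(0,1)$ is only required to be isomorphic to $\Delta^1$ as a simplicial set; its labeling may be reversed, i.e.\ it may be the interval $0 \leftarrow 1$. Since the unique nondegenerate $1$-simplex of $\Delta^1$ has a preferred initial and terminal vertex, the only simplicial automorphism of $\Delta^1$ is the identity, so there is no label-preserving isomorphism between the standard and the reversed interval, and ``the identity serves as the equivalence'' is false: the identity does not commute with the endpoint inclusions. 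A homotopy equivalence $A[0\rightarrow 1]\simeq A[0\leftarrow 1]$ relative endpoints genuinely requires an argument, and your induction cannot avoid this case: for the all-reversed interval $0\leftarrow 1\leftarrow\cdots\leftarrow i$ the tail has the $\Lambda^2_1$ shape, whose retained face $d_1$ is again reversed (and here the identification with the horn is forced), so your induction terminates exactly at $0\leftarrow 1$. This is not bookkeeping; it is the very case the corollary exists for, since inverse homotopies are indexed by reversed intervals (cf.\ Remark~\ref{rem:directions-of-homotopies}).

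The paper closes this case by the chain $A[0\rightarrow 1]\simeq A[0\rightarrow 1\leftarrow 2]\simeq A[0\leftarrow 1]$ of equivalences relative endpoints, both instances coming once more from Lemma~\ref{lem:sides-are-homotopy-retracts} applied to the horn $\Lambda^2_2$, with the two possible identifications of the outer vertices with the vertices of $d_2$. Adding this (or an equivalent argument, e.g.\ exhibiting an endpoint-swapping automorphism of $A[\Delta^1]$ in the category of controlled simplicial modules, which exists even though none exists on the level of simplicial sets, and checking it is controlled and equivariant) as the true base case repairs your proof; without it, the induction proves the corollary only for those intervals whose successive collapses happen to end at the standard orientation.
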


\begin{proof}
  Lemma \ref{lem:sides-are-homotopy-retracts} implies $A[0,2]\simeq
  A[0,1]$ relative endpoints if there is a projection $I(0,2) \rightarrow
  I(0,1)$. It also implies $A[\rightarrow] \simeq A[\leftarrow]$ relative
  endpoints by the chain $A[\rightarrow] \simeq A[\rightarrow \leftarrow] \simeq
  A[\leftarrow]$ of homotopy equivalences relative endpoints.  The corollary
  follows by induction.
\end{proof}

\subsection{Homotopies of infinite length and the Convergent Homotopy Lemma}
\label{subsec:homotopies-infinite-length}

In the following, we assume for simplicity that the infinite interval
$I(0,\infty)$ is ordered (cf.~\ref{subsubsec:interval-infinite}).  We
abbreviate $A[I(0,\infty)]$, $A \in \mathcal{C}^G$, as $A[0,\infty)$.  We
suggestively call a map $A[0,\infty) \rightarrow B$ a homotopy \emph{of
infinite length}.  From such a homotopy we want to obtain a homotopy of length
$1$.   In general, this is impossible. But if the homotopy is
\emph{convergent} in the sense we define below, this can be done.

\begin{lem}[Convergent Homotopy Lemma]\label{lem:infinite-homotopies}
  Let $H \colon A[0,\infty) \rightarrow B$ be a \emph{convergent homotopy}, this
  means we assume:
  \begin{enumerate}
    \item There is a filtration $A_0 \subseteq A_1 \subseteq \cdots
      \subseteq A_n\subseteq \cdots \subseteq A$ by cellular submodules such
      that $\bigcup_i A_i = A$.
    \item For each $A_i$ there is an $n_i$ such that $H_{|A_i[n_i,\infty)}$ is
        the trivial homotopy $\Tr$ (cf.~\ref{ex:trivial-long-homotopy}).
  \end{enumerate}
  Then there exists a homotopy $G\colon
  A[\Delta^1] \rightarrow B$ with $G_{|A[0]} = H_{|A[0]}$ and
  $G_{|A_i[1]} = H_{|A_i[n_i]}$.
\end{lem}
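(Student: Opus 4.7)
The plan is to build $G$ inductively along the filtration, constructing maps $G_k \colon A_k[\Delta^1] \rightarrow B$ with $G_k|_{A_k[0]} = H|_{A_k[0]}$, $G_k|_{A_k[1]} = H|_{A_k[n_k]}$, and $G_k|_{A_{k-1}[\Delta^1]} = G_{k-1}$; the output is then $G := \colim_k G_k$, well-defined since every cell of $A$ lies in some $A_k$.  The endpoint conditions fit consistently across the filtration: $G_{k-1}|_{A_{k-1}[1]} = H|_{A_{k-1}[n_{k-1}]}$ equals $H|_{A_{k-1}[n_k]}$ by triviality of $H|_{A_{k-1}[n_{k-1},\infty)}$, which matches the constraint that the level-$k$ extension requires.

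At each step $k$, the length-$n_k$ homotopy $H|_{A_k[0,n_k]}$ is to be reduced to a length-$1$ homotopy $G_k$ by iterated horn-filling.  Given consecutive edges $\alpha_i \colon f_{i-1} \rightarrow f_i$ in the simplicial abelian group $\HOM_R^\mathcal{E}(A_k, B)$ (which is Kan), one fills the $\Lambda^2_1$-horn spanned by $(\alpha_i, \alpha_{i+1})$ into $\Delta^2$ and takes its $d_1$-edge as a composite $f_{i-1} \rightarrow f_{i+1}$; iterating $n_k - 1$ times yields the desired length-$1$ edge from $H|_{A_k[0]}$ to $H|_{A_k[n_k]}$.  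The controlled version is performed cell by cell on the cells of $A_k \setminus A_{k-1}$ via the Relative Horn-Filling Lemma~\ref{lem:horn-filling-relative}, which guarantees that the filling lives in $\mathcal{C}^G$ with bounded control depending only on the control of $H|_{A_k}$ and on $A_k$ itself.

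The main obstacle will be arranging the horn-fillings so that $G_k$ genuinely extends $G_{k-1}$ on $A_{k-1}[\Delta^1]$.  The key observation is that on $A_{k-1}$, the last $n_k - n_{k-1}$ edges of $H|_{A_{k-1}[0,n_k]}$ are degenerate, being trivial homotopies.  A $\Lambda^2_1$-horn one of whose edges is degenerate admits the degeneracy of the other edge as a filling, and the resulting ``composite'' $d_1$-edge is just that other edge itself.  Consequently, applied over $A_{k-1}$, the iterated reduction leaves the first $n_{k-1}$-step reduction unchanged, producing precisely $G_{k-1}$.  Organizing the construction as a cellular induction over the cells of $A_k \setminus A_{k-1}$ (ordered by dimension) and applying Lemma~\ref{lem:horn-filling-relative} at each cell with the already-defined $G_{k-1}$ and the endpoint maps as boundary input gives $G_k$ extending $G_{k-1}$ with the correct restrictions; passing to the colimit finishes the proof.
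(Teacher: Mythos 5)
Your proposal is correct and takes essentially the same route as the paper: the paper likewise composes the unit homotopies by iterated $\Lambda^2_1$-fillings done relative to the filtration via Lemma~\ref{lem:horn-filling-relative} (organized there through a thickened interval $\widehat{I(0,\infty)}$ rather than stage-by-stage on $A_k[\Delta^1]$), and it uses exactly your degenerate-filler observation on the trivial tail so that the construction stabilizes over each $A_i$. The one point to make explicit is that at stage $k$ you must retain the intermediate $2$-simplices chosen at stage $k-1$ (not merely the composite edge $G_{k-1}$) as the prescribed boundary data for the relative horn-fillings over $A_{k-1}$ --- which is precisely the bookkeeping the paper's map on the thickened interval records.
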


\begin{rem}
  Recall that $A_i[n_i]$ and $A_i[n_i,\infty)$ denote obvious cellular
  submodules of $A[0,\infty)$.  We can and will assume in the proof that
  $n_{i+1} \geq n_i$.

  This lemma is well-known in the topological case. $G$ may be
  called the limit of the homotopy $H$.
\end{rem}

\begin{proof}[Proof of Lemma~\ref{lem:infinite-homotopies}]
  We enlarge $I(0,\infty)$ to a new simplicial set
  $\widehat{I(0,\infty)}$ by filling some horns.

  The subsimplicial set
  $I(0,2)$ is isomorphic to the horn $\Lambda^2_1$.  We consider the pushout of
  $\Delta^2 \leftarrowtail \Lambda^2_1 \rightarrowtail I(0,\infty)$ and
  call it $\widehat{I(0,2)}$.  It has an extra $1$-simplex with boundaries
  $I(0)$ and $I(2)$ in $I(0,\infty)$, which we call $(0\rightarrow 2)$.

  In $\widehat{I(0,2)}$ the $1$-simplices $(0,\rightarrow 2)$ and $I(2,3)$
  constitute a horn $\Lambda^2_1$.  Like before we define
  $\widehat{I(0,3)}$ to be the following pushout.
  \begin{equation*}
    \xymatrix{ \Lambda^2_1 \ar@{ >->}[r] \ar@{ >->}[d]  & \ar[d] \Delta^2 \\
    \widehat{I(0,2)} \ar[r]   &  \widehat{I(0,3)} }
  \end{equation*}
  Again it has an extra $1$-simplex with boundaries
  $I(0)$ and $I(3)$ in $I(0,\infty)$, which we call $(0\rightarrow 3)$.  Now
  we proceed by induction and define $\widehat{I(0,\infty)}$ as the
  filtered colimit $\widehat{I(0,\infty)} := \bigcup_n \widehat{I(0,n)}$. Figure
  \ref{fig:thick-line} sketches a picture of $ \widehat{I(0,\infty)}$ with
  $I(0,\infty)$ being the bottom line.\medskip
  \begin{figure}[t]
    \centering
%
\begin{tikzpicture}[very thin,
  side/.style={fill opacity=0.8,fill=white,draw=black},
  middle side/.style={side,fill=white!85!black},
  arr/.style={->,shorten >=2pt,shorten <=2pt},
  ]

  \draw[arr] (0,0) to (1,0);
  \draw[arr] (1,0) to (2,0);
  \draw[arr] (2,0) to (3,0);
  \draw[arr] (3,0) to (4,0);
  \draw[arr] (4,0) to (5,0);
  \node at (5,0) [right] {\dots};
  \draw[arr] (0,0) to [out=45,in=135] (2,0);
  \draw[arr] (0,0) to [out=45,in=135] (3,0);
  \draw[arr] (0,0) to [out=45,in=135] (4,0);
  \draw[arr] (0,0) to [out=45,in=135] (5,0);
\end{tikzpicture}
    \caption{A sketch of $\widehat{I(0,\infty)}$.}
    \label{fig:thick-line}
  \end{figure}
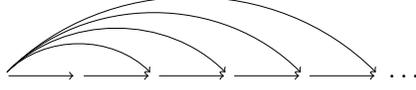
  
  We have $A\widehat{[0,\infty)} = \bigcup_n A\widehat{[0,n]}$, with
  $A\widehat{[0,n]}$ and $A\widehat{[0,\infty)}$ being abbreviations for
  $A[\widehat{I(0,n)}]$ and $A[\widehat{I(0,\infty)}]$, respectively.  Note
  that $A\widehat{[0,n]}$ arises from $A\widehat{[0,n-1]}$ by horn-filling. 
  We want to construct a certain map $\widehat{H} \colon A\widehat{[0,\infty)}
  \rightarrow B$ which extends $H$.

  We do induction over $i$.  Assume that we have constructed a homotopy $G_i
  \colon A_i\widehat{[0, \infty)} \rightarrow B $ which extends $H\colon
  A_i[0,\infty) \rightarrow B$ and has the property that
  ${G_i}_{|A_i[(0\rightarrow n)]} = {G_i}_{|A_i[(0\rightarrow n_i)]}$ for all $n
  \geq n_i$.  By iterating the relative horn-filling
  property~\ref{lem:horn-filling-relative}, we can extend $G_i$ to
  a map $A_{i+1}\widehat{[0,n_{i+1}]} \cup
  A_{i}\widehat{[0,\infty)}\rightarrow B$.
  
  For $n \geq n_{i+1}$, we do not want to apply the relative horn-filling
  property as we need special fillings.  By assumption, $H_{|A_{i+1}[n,n+1]}$
  is the trivial homotopy $\Tr$.  Hence, the relative horn spanned by
  $A_{i+1}[(0\rightarrow
  n)]$ and $A_{i+1}[n,n+1]$, given by $ A_{i+1}[\Lambda^2_1] \rightarrow
  A_{i+1}\widehat{[0,n]} \cup A_{i}\widehat{[0,\infty)}$, can be filled in the
  following way
  \begin{equation}\label{diag:fill-horn-constant}
   \vcenter{ \xymatrix { \ar@/^5ex/[rrr]|X  \ar@/^2ex/[rr]|X && \ar[r]|{\Tr} &
 }},
  \end{equation}
  where $X$ is the homotopy coming from the previous horn-fillings.  This
  defines a map $G_{i+1}\colon A_{i+1}\widehat{[0,\infty)}$ with
  ${G_{i+1}}_{|A[(0\rightarrow n)]} = {G_{i+1}}_{|A[(0 \rightarrow
  n_{i+1})]}$ for all $n \geq n_{i+1}$.  This shows the induction step.

  Taking the colimit over $G_i$ we obtain a map $\widehat{H}\colon
  A\widehat{[0,\infty)} \rightarrow B$.  We now define $G_{|A_j}\colon
  A_j[\Delta^1] \rightarrow B$ as the restriction of $\widehat{H}$ (or
  equivalently $G_j$) to $A_j[(0 \rightarrow n_j)]$, i.e.,~to the $1$-simplex
  from $0$ to $n_j$.  This is compatible with the inclusion $A_j \rightarrow
  A_{j+1}$ and thus the colimit over $j$ gives the desired homotopy $G\colon
  A[\Delta^1] \rightarrow B$.
\end{proof}

\begin{cor}\label{cor:half-interval-hpty-to-space}
  The map $i\colon A \rightarrow A[0,\infty)$ has a deformation retraction, so in
  particular $i$ is a homotopy equivalence.
\end{cor}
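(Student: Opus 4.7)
The plan is to exhibit an explicit retraction, build a convergent infinite-length homotopy, and then invoke the Convergent Homotopy Lemma~\ref{lem:infinite-homotopies} to extract the desired length-$1$ deformation.

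Write $B := A[0,\infty)$ and let $r\colon B \to A$ be induced by the unique simplicial collapse $I(0,\infty) \to I(0)$. The equation $r \circ i = \id_A$ is immediate from the definitions. The key observation for producing the homotopy from $i \circ r$ to $\id_B$ is that $B[0,\infty) \cong A[I(0,\infty) \times I(0,\infty)]$, so any simplicial map $\mu\colon I(0,\infty) \times I(0,\infty) \to I(0,\infty)$ induces a map $\widetilde{H}\colon B[0,\infty) \to B$. Using the identification of the ordered $I(0,\infty)$ with the nerve of $\Nat$, I would take $\mu$ to be the simplicial map induced by the order-preserving functor $\min\colon \Nat \times \Nat \to \Nat$.

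I then check that $\widetilde{H}$ is a convergent homotopy, using the filtration $B_n := A[0,n]$. Since $\mu(k,0) = 0$ for every $k$, the restriction $\widetilde{H}|_{B[0]}$ equals $i \circ r$. Since $\mu(k,j) = k$ whenever $0 \leq k \leq n \leq j$, the restriction $\widetilde{H}|_{B_n[n,\infty)}$ is the trivial homotopy at the inclusion $B_n \hookrightarrow B$. The Convergent Homotopy Lemma then produces $G\colon B[\Delta^1] \to B$ with $G|_{B[0]} = i \circ r$ and $G|_{B_n[1]}$ the inclusion $B_n \hookrightarrow B$; passing to the colimit $B = \bigcup_n B_n$ gives $G|_{B[1]} = \id_B$, so $G$ is a homotopy from $i \circ r$ to $\id_B$.

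To make this deformation relative to $A = A[0] \subseteq B$, I would exploit that $\mu(0,j) = 0$ identically, so $\widetilde{H}|_{A[0,\infty)}$ is already the trivial homotopy at $i$. Taking $A$ as the bottom member $B_0$ of the filtration (with $n_0 = 1$), the horn-filling construction in the proof of Lemma~\ref{lem:infinite-homotopies} can be chosen to extend $\widetilde{H}|_{A[0,\infty)}$ trivially, yielding $G|_{A[\Delta^1]} = \Tr$. The only delicate point I anticipate is verifying that the horn-fillings in the proof of the lemma admit this ``preservation of triviality'' choice; this should reduce to the observation that whenever the relevant horn in diagram~\eqref{diag:fill-horn-constant} already has a trivial edge feeding into it, the filling may be taken trivial as well, but it needs to be spelled out.
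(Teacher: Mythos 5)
Your argument is essentially the paper's own proof: the retraction induced by $[0,\infty)\rightarrow 0$, the convergent infinite-length homotopy induced by $(i,j)\mapsto\min(i,j)$ (with $j$ the homotopy direction), and then the Convergent Homotopy Lemma~\ref{lem:infinite-homotopies}. Two minor remarks: the ordered $I(0,\infty)$ is one-dimensional and therefore not the nerve of $\Nat$, so the existence of $\mu$ should be justified directly on the product of intervals (the $\min$ vertex assignment does extend, since on each simplex of the product it produces at most one unit jump, which is the paper's ``determined on the $0$-simplices'' remark); and your additional effort to make the homotopy relative to $A[0]$ goes beyond what the paper's proof establishes, which only produces a homotopy $i\circ r\simeq\id_{A[0,\infty)}$.
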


\begin{proof} 
  The map $[0,\infty) \rightarrow 0$ induces a retraction $r\colon A[0,\infty)
  \rightarrow A$ for $i$. We have to prove that the composition $i\circ r
  \colon A[0,\infty) \rightarrow A\rightarrow A[0,\infty)$ is homotopic to the
  identity. We use the Convergent Homotopy Lemma~\ref{lem:infinite-homotopies}.
  Define the convergent homotopy $H \colon A[0,\infty)[0,\infty) \rightarrow
  A[0,\infty)$ as the map induced by the map
  \begin{equation*}
    (i,j) \mapsto \min(i,j)
  \end{equation*}
  where we use that map $[0,\infty)\times [0,\infty) \rightarrow [0,\infty)$
  is determined on the $0$-simplices.  We regard $j$ as the homotopy
  direction. This map has the following properties:
  \begin{enumerate}
    \item For $j=0$ it is the projection to $0$, hence the map $i\circ r$.
    \item For any $j \geq i$ the map $A[0,i][j] \rightarrow A[0,i]$ is the
      identity.
    \item $\bigcup_i A[0,i]$ is a filtration of
      $A[0,\infty)$ by cellular modules.
  \end{enumerate}
  Now the Convergent Homotopy Lemma~\ref{lem:infinite-homotopies} applies and
  hence we obtain a homotopy $G\colon A[0,\infty)[\Delta^1] \rightarrow
  A[0,\infty)$ from $i\circ r$ to the identity.
\end{proof}

\subsection{The long mapping cylinder and the telescope}
\label{subsec:long-mapping-cylinder-telescope}
In the following, $I =I(0,i)$ is always an interval and $f\colon A \rightarrow
A$ a map in $\mathcal{C}^G$.
Define the mapping cylinder $M^I(f)$ for $I$ of $f$ like in
\ref{subsubsec:appendix-definition-telescope}.\eqref{eq:appendix:structure:cyl}
We obtain the
front and back inclusion $\iota_0, \iota_1 \colon A \rightarrow M^I(f)$.  We
have $I$ in the notation because we need to keep track of it in the following.
The definition is slightly different from the definition of the cylinder
functor in~\ref{subsec:cylinder-functor} because the mapping cylinder will
play a slightly different role here.

\begin{definition}[Long Mapping Telescope]\label{def:map-tel:general}
  Let $I$ be an interval in the sense of Definition~\ref{def:interval}.
  Define $\Tel^I(f)$, the \emph{mapping telescope of $f\colon
  A \rightarrow A$ for the interval $I$} is the following pushout.
  \begin{equation*}
    \xymatrix{
    \coprod_{i=1}^{\infty} A[i] \amalg \coprod_{j=0}^{\infty} A[j+1]
    \ar@{ >->}[r]^-{\iota_0 \amalg \iota_1} \ar[d]^c &
    \coprod_{i=0}^{\infty} M^{I}(f) \ar[d] \\
    \coprod_{i=1}^\infty A[i] \ar[r]  & \Tel^I(f)}
  \end{equation*}
  The map $\iota_0 \amalg \iota_1$ sends the summand $A[i]$ into the
  $i$th copy of $M^{I}(\eta)$ via $\iota_0$ and the summand $A[j+1]$
  into the $i$th copy of $M^{I}(\eta)$ via $\iota_1$. The map $c$ sends $A[l]$
  to $A[l]$. 
\end{definition}

\subsubsection{Front inclusion}\label{subsubsec:telescope-front-inclusion}
The front inclusion into the first mapping cylinder $\iota_0\colon
A\rightarrow M^{I}(f)$
(which is not used in the diagram above) gives a map
\begin{equation*}
  \iota \colon A \rightarrow \Tel^I(f)	
\end{equation*}
which is a called the \emph{front inclusion} of the mapping cylinder.

\begin{rem}
  The telescope consists of infinitely many mapping cylinders
  glued together on the right.  Each mapping cylinder has the same
  interval structure.  To construct $\Tel(f)$ we used that countable
  coproducts exist in~$\mathcal{C}^G$.
\end{rem}

\begin{lem}\label{lem:appendix:telescope-functorial}
  $f\mapsto \Tel^I(f)$ is functorial, that is a commutative diagram
  \begin{equation*}
    \xymatrix{ A \ar[r]^{f} \ar[d]^a & A \ar[d]^a \\
      K\ar[r]^{g} & K}
  \end{equation*}
  induces a map $a_* \colon \Tel(f) \rightarrow \Tel(g)$.\qed
\end{lem}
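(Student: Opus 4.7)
The plan is to build the map $a_*$ step by step using universal properties of the pushouts that define the mapping cylinder and the mapping telescope, and then verify functoriality from the functoriality of each intermediate construction.

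First, I would upgrade the commutative square in the hypothesis into a map of the pushout diagrams defining $M^I(f)$ and $M^I(g)$. Applying the functor $-[I]$ (which is functorial in its module argument by Lemma~\ref{lem:cellular-inclusions} and the surrounding discussion) gives a map $a[I]\colon A[I]\to K[I]$. Combining this with $a\colon A\to K$ yields a map of spans
\begin{equation*}
\xymatrix{
A[I] \ar[d]_{a[I]} & A[1] \ar[l] \ar[r]^-f \ar[d]^a & A \ar[d]^a \\
K[I] & K[1] \ar[l] \ar[r]^-g & K
}
\end{equation*}
which commutes because the original square commutes. The universal property of the pushout defining $M^I(f)$ then produces a unique map $M^I(a)\colon M^I(f)\to M^I(g)$, which by construction is compatible with both the front and back inclusions $\iota_0,\iota_1$.

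Next, I would assemble these cylinder maps into a map of the pushout diagram defining $\Tel^I$. Taking coproducts, the map $\coprod_{i=0}^{\infty} M^I(a)\colon \coprod M^I(f)\to \coprod M^I(g)$ is compatible with $\iota_0\amalg\iota_1$ coming from the same indexed copies of $a$, and with the map $c$ induced degreewise by $a$. Concretely, we have a commutative diagram
\begin{equation*}
\xymatrix@C=2em{
\coprod A[i]\amalg\coprod A[j+1] \ar@{ >->}[r]^-{\iota_0\amalg\iota_1} \ar[d] & \coprod M^I(f) \ar[d]^{\coprod M^I(a)} \\
\coprod A[i]\amalg\coprod A[j+1] \ar@{ >->}[r]^-{\iota_0\amalg\iota_1} & \coprod M^I(g)
}
\end{equation*}
with vertical arrows induced by $a$, and a similar compatibility with $c$ since $c$ is defined on each summand as $\id_{A[l]}$ and $a$ commutes with itself. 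The universal property of the pushout defining $\Tel^I(g)$ then yields the desired map $a_*\colon \Tel^I(f)\to \Tel^I(g)$.

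Functoriality is then a direct consequence of the functoriality of each step: $-[I]$, coproducts, and pushouts are all functors, so $(b\circ a)_* = b_*\circ a_*$ and $(\id_A)_* = \id_{\Tel^I(f)}$. The only mild subtlety is ensuring canonicity of the chosen pushouts so that equalities hold on the nose rather than up to isomorphism, but this is handled by Lemma~\ref{lem:cellular-pushouts-in-CG} which provides canonical pushouts along cellular inclusions—and all the inclusions appearing here ($A[0]\cup A[1]\rightarrowtail A[I]$ and the assembly maps into the telescope) are cellular. No nontrivial step is expected; this is essentially a formal consequence of the definitions, and the main bookkeeping item is keeping straight which copies of $A$ and $K$ go into which summand of the coproducts.
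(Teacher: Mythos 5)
Your argument is correct and is exactly the argument the paper has in mind: the lemma is stated with no written proof precisely because the map $a_*$ is obtained, as you do, by mapping the defining pushout diagrams of $M^I(f)$ and $\Tel^I(f)$ into those of $M^I(g)$ and $\Tel^I(g)$ and invoking the universal property, with canonicity and control supplied by Lemma~\ref{lem:cellular-pushouts-in-CG}. Nothing essential differs between your route and the paper's.
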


\begin{rem}
  As $\Tel(f) = \Tel^{\Delta^1}(f)$
  (cf.~\ref{subsubsec:appendix-definition-telescope}),
  Theorem~\ref{thm:mapping-telescopes}.\eqref{item:mapping-tel:inclusion} and
  \eqref{item:mapping-tel:map-of-idempotents}  are now obvious.  Furthermore,
  $\Tel(\id_K) \cong K[0,\infty)$, so
  Theorem~\ref{thm:mapping-telescopes}.\eqref{item:mapping-tel:tel-of-id-hpty-equiv-to-obj}
  follows by Corollary~\ref{cor:half-interval-hpty-to-space}.
\end{rem}

Using Corollaries~\ref{cor:long-cylinders-short-equivalent} and
\ref{cor:half-interval-hpty-to-space} we obtain the following lemma.

\begin{lem}\label{lem:mapping-cylinders-different-length-equivalent}
  \label{lem:tel-homotopy-equiv-to-obj}Recall $\Delta^1$ is the standard
  interval. 
  \begin{enumerate}  
    \item The mapping cylinders for $I$ and $\Delta^1$ are homotopy
      equivalent relative to the front and the back inclusion:
      $M^I(f) \simeq M^{\Delta^1}(f) = T(f)$.
    \item The mapping telescopes for $I$ and $\Delta^1$ are homotopy
      equivalent: $\Tel^I(f) \simeq \Tel^{\Delta^1}(f)$.
  \end{enumerate}
  Each map $I \rightarrow \Delta^1$ respecting the endpoints can be chosen to
  induce the homotopy equivalences. \qed
\end{lem}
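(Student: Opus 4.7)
For part~(1), pick any map $p\colon I \to \Delta^1$ respecting endpoints. By Corollary~\ref{cor:long-cylinders-short-equivalent}, the induced map $A[p]\colon A[I] \to A[\Delta^1]$ is a homotopy equivalence relative to $A[0]\amalg A[i] \cong A[0]\amalg A[1]$. Since $M^I(f)$ and $M^{\Delta^1}(f)$ are the pushouts of the spans $A[I] \leftarrow A[i] \xrightarrow{f} A$ and $A[\Delta^1] \leftarrow A[1] \xrightarrow{f} A$ respectively (with the inclusions from $A[i]$ and $A[1]$ being cellular, hence cofibrations), the gluing lemma~\ref{lem:gluing-lemma} applied to the map of spans---$A[p]$ on the cylinder, the identification $A[i]\cong A[1]$ in the middle, and $\id_A$ on the right---yields a homotopy equivalence $p_*\colon M^I(f) \xrightarrow{\simeq} M^{\Delta^1}(f)$. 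Because $A[p]$ is relative to the endpoints, $p_*$ commutes strictly with both the front inclusion $\iota_0$ and the back inclusion $\iota_1$.

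For part~(2), the cylinder-level map $p_*$ from part~(1) respects both $\iota_0$ and $\iota_1$, so the universal property of the pushout in Definition~\ref{def:map-tel:general} assembles it into a telescope map $p_*\colon \Tel^I(f) \to \Tel^{\Delta^1}(f)$. Filter $\Tel^I(f) = \bigcup_n T_n^I$ by the sub-telescopes consisting of the first $n$ mapping cylinders glued together; each step $T_n^I = T_{n-1}^I \cup_{A[n-1]} M^I(f)$ is a pushout along the back inclusion $\iota_1$, which is a cofibration. An induction on $n$, using the gluing lemma at each stage together with the cylinder-level equivalence from part~(1), produces compatible homotopy equivalences $T_n^I \xrightarrow{\simeq} T_n^{\Delta^1}$.

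The main obstacle is passing from these finite-stage equivalences to one of the colimits. Choose a cylinder-level homotopy inverse $q_*\colon M^{\Delta^1}(f) \to M^I(f)$ for $p_*$ together with homotopies $H\colon p_*\circ q_* \simeq \id$ and $G\colon q_* \circ p_* \simeq \id$, all relative to the two endpoints (possible by the relative statement of Corollary~\ref{cor:long-cylinders-short-equivalent}); these assemble into a telescope-level inverse $q_*\colon \Tel^{\Delta^1}(f) \to \Tel^I(f)$ and, by concatenating cylinder-by-cylinder, into infinite-length homotopies on the telescopes. Arranging each cylinder's homotopy to be trivial on the parts of the telescope beyond its own cylinder (achievable because the homotopies are relative to the endpoints, so they extend by trivial homotopies to all later cylinders), the assembled infinite-length homotopies satisfy the convergence hypothesis of Lemma~\ref{lem:infinite-homotopies} with respect to the filtration $\{T_n\}$. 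Applying the Convergent Homotopy Lemma produces honest length-one homotopies witnessing $p_*\circ q_* \simeq \id$ and $q_* \circ p_* \simeq \id$, completing the proof. The last assertion is built in throughout: any chosen $p\colon I\to\Delta^1$ respecting endpoints induces both equivalences.
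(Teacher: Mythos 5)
Your proof is correct, but it takes a noticeably heavier route than the statement needs (the paper itself records this lemma as a direct consequence of Corollary~\ref{cor:long-cylinders-short-equivalent} and leaves the details to the reader). Two remarks on the comparison. First, in your part~(1) the gluing Lemma~\ref{lem:gluing-lemma} only yields an unqualified homotopy equivalence $M^I(f)\simeq M^{\Delta^1}(f)$; the assertion ``relative to the front and back inclusion'' is really obtained the way you do it only later, namely by gluing the endpoint-relative inverse and homotopies for $A[I]\simeq A[\Delta^1]$ with the identity, respectively the trivial homotopy, on $A$ across the pushout defining $M^I(f)$, using that $-[\Delta^1]$ commutes with that pushout; so the gluing-lemma step is dispensable once you do this. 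Second, in part~(2) the finite-stage induction over the sub-telescopes $T_n$ is never used in your final argument, and the detour through Lemma~\ref{lem:infinite-homotopies} is unnecessary: since the cylinder-level homotopies from $q_*p_*$ to $\id$ are relative to \emph{both} endpoints and are literally the same on every copy of the cylinder, they agree (being trivial) on the gluing modules $A[n]$ and therefore glue \emph{simultaneously}, not sequentially, to a single length-one homotopy $\Tel^I(f)[\Delta^1]\rightarrow\Tel^I(f)$ (again because $-[\Delta^1]$ commutes with the defining pushout of coproducts in Definition~\ref{def:map-tel:general}, and the control condition is uniform over the copies); likewise for $p_*q_*$. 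Your sequential scheme---hold the finished cylinders at the identity, the unstarted ones at $q_*p_*$, and filter by the $T_n$---does satisfy the convergence hypothesis of Lemma~\ref{lem:infinite-homotopies}, so your argument goes through; it just buys nothing here. The convergent-homotopy machinery is genuinely needed only where the homotopies are not relative to the far end and fail to stabilize, as in Corollary~\ref{cor:half-interval-hpty-to-space}.
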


\begin{rem}
  While we can give the homotopy equivalences quite explicitly, the inverse is
  not canonical and not easy to write down since we used the Kan Extension
  property to construct it. 
\end{rem}

\subsection{Homotopy commutative squares and induced maps on telescopes}
\label{subsec:homotopy-commutative-square-mapping-tel}
To prove the rest of Theorem~\ref{thm:mapping-telescopes} we need to know more
about homotopy commutative diagrams.  They will induce a map of (long) mapping
telescopes, but it is only ``functorial'', in some sense we make precise, if
we allow to change the intervals.

\begin{definition}[Homotopy commutative square]\label{def:homotopy-comm-diag}
  A square in $\mathcal{C}^G$
  \begin{equation}\label{diag:hom-comm}
    \xymatrix{
    A \ar[r]^{f} \ar[d]^{a}   & A \ar[d]^{a} \\
    B \ar[r]^{g}                & B }
  \end{equation}
  is \emph{homotopy commutative} if there is an interval $I=I(0,i)$ and a
  \emph{specified homotopy} $H^a
  \colon A[0,i] \rightarrow B$ which goes from $g\circ a$ to $a\circ f$. This
  should mean ${H^a}_{|A[0]} = g \circ a$ and ${H^a}_{|A[i]} = a \circ f$.
\end{definition}

\begin{rem}
  The homotopy of a homotopy commutative square always goes from the lower
  left corner to the upper right, it is helpful to visualize this as the
  diagram below when thinking about the homotopies. 
  \begin{equation*}
    \xymatrix{
    A \ar[r]^{f} \ar[d]^{a}   & A \ar[d]^{a}\\
    B \ar[r]^{g}  \ar@{=>}[ur]|(0.5){H}                & B }
  \end{equation*}
  We chose the direction of the homotopy such that it will fit together with
  our definition of mapping cylinder.
\end{rem}

The next observation is central for the rest of the proof.

\begin{lemdef}[stacking squares]\label{lemdef:stacking-htpy-comm-squares}
  We can \emph{stack} homotopy commutative squares. Given two homotopy
  commutative squares
  \begin{equation}\label{diag:two-homotopy-comm}
    \xymatrix{
    A \ar[r]^{f} \ar[d]^{a}   & A \ar[d]^{a} \\
    B \ar[r]^{g}                & B }, \qquad
    \xymatrix{
    B \ar[r]^{g} \ar[d]^{b}   & B \ar[d]^{b} \\
    C \ar[r]^{h}                & C }
  \end{equation}
  with homotopies $H^a, H^b$ using intervals $I^a$, $I^b$. Then the 
  \emph{stacked} square
  \begin{equation*}
    \xymatrix{
    A \ar[r]^{f} \ar[d]^{b\circ a}   & A \ar[d]^{b \circ a} \\
    C \ar[r]^{h}                & C }
  \end{equation*}
  is homotopy commutative with homotopy
  \begin{equation*}\label{eq:comp-hom-comm-diag}
    (H^b \circ a[I^b]) \mathop{\Box} (b \circ H^a) \colon A[I^b \mathop{\Box} I^a]
    \rightarrow C. 
  \end{equation*}
\end{lemdef}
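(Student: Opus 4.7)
The plan is to verify that the stated concatenation is a well-defined homotopy in the sense of Definition~\ref{def:homotopy-comm-diag} with the required endpoints, and that everything is natural in the category-theoretic data. Write $i^a$ and $i^b$ for the lengths of $I^a$ and $I^b$. The content splits into (i) constructing the two pieces as maps into $C$, (ii) checking that they agree on their shared boundary so they concatenate, and (iii) reading off the two endpoints of the concatenation.

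First I would form $b \circ H^a \colon A[I^a] \to C$ by postcomposition. Since $H^a$ is a homotopy from $g \circ a$ to $a \circ f$, the restrictions satisfy $(b \circ H^a)_{|A[0]} = b \circ g \circ a$ and $(b \circ H^a)_{|A[i^a]} = b \circ a \circ f$. Next I would form $H^b \circ a[I^b] \colon A[I^b] \to C$, where $a[I^b]$ is the map induced by the functoriality of $-[I^b]$ (Lemma~\ref{lem:adjoint-simp-module-controlled}). Since $H^b$ is a homotopy from $h \circ b$ to $b \circ g$, the restrictions are $(H^b \circ a[I^b])_{|A[0]} = h \circ b \circ a$ and $(H^b \circ a[I^b])_{|A[i^b]} = b \circ g \circ a$, using that $a[I^b]$ restricts on $0$-simplices to $a$.

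The matching condition for concatenation from Subsection~\ref{sec:concat-of-homotopies} requires the endpoint of $H^b \circ a[I^b]$ at position $i^b$ to agree with the starting point of $b \circ H^a$ at position $0$; both are $b \circ g \circ a$, so the concatenation $(H^b \circ a[I^b]) \Box (b \circ H^a) \colon A[I^b \Box I^a] \to C$ is defined. Reading off the endpoints of the concatenation gives $h \circ b \circ a = h \circ (b \circ a)$ at position $0$ and $b \circ a \circ f = (b \circ a) \circ f$ at position $i^b + i^a$, which verifies the homotopy commutativity of the stacked square with the claimed homotopy.

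There is no genuine obstacle here; the statement is essentially a bookkeeping lemma. The only thing worth emphasizing is that we really do need the long-homotopy framework of Subsection~\ref{subsec:long-homotopies} so that the concatenation lands on $I^b \Box I^a$ rather than requiring us to collapse the result back to an interval of length $1$ (which would only be possible up to homotopy by Lemma~\ref{lem:mapping-cylinders-different-length-equivalent}, losing strict functoriality). Keeping the interval $I^b \Box I^a$ in the notation is exactly what makes this operation strictly associative, which will be used repeatedly in the subsequent proofs.
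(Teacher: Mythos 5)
Your proof is correct and matches the paper's (much terser) argument: the paper simply observes that the concatenated map is a homotopy from $h\circ b\circ a$ to $b\circ a\circ f$, which is exactly the endpoint/matching check you carry out in detail. Your verification of the gluing condition at $b\circ g\circ a$ and your remark on why one keeps the interval $I^b \mathop{\Box} I^a$ rather than collapsing it are both consistent with the paper's conventions in~\ref{sec:concat-of-homotopies} and~\ref{lemdef:stacking-htpy-comm-squares}.
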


\begin{proof}
  The given homotopy is a homotopy from $h\circ b\circ
  a$ to $b\circ a\circ f$.
\end{proof}

\subsubsection{Stacking is associative}
Stacking of homotopy commutative squares is strictly
associative, because concatenation of homotopies is. The length of the
homotopies add.  We will consider a strictly commutative square as a homotopy
commutative square where the homotopy has length $0$.

We only stack in one direction of the two possible directions
in which the squares could be stacked because we do not need the other case.

\begin{lemdef}[Homotopy commutative squares and mapping cylinders]\label{lemdef:homotopy-commutative-to-mapping-cylinders}
  Let $I$ be an interval. A homotopy commutative square
  \begin{equation}\label{diag:hpty-comm-square-1}
    \vcenter{\xymatrix{
    A \ar[r]^{f} \ar[d]^{a}   & A \ar[d]^{a} \\
    B \ar[r]^{g}                & B }}
  \end{equation}
  with homotopy $H\colon A[I] \rightarrow B$ induces a map 
  $(H,a)_*\colon M^I(f) \rightarrow B$ such that the following diagram
  commutes strictly.  
  \begin{equation}\label{diag:hpty-comm-map-cyl}
    \vcenter{\xymatrix{
    A \ar[r]^-{\iota_0} \ar[d]^a  & M^I(f) \ar[d]  & A \ar[l]_-{\iota_1} \ar[d]^a \\
    B \ar[r]^g                    & B              & B \ar[l]_{\id_B}}}
  \end{equation}
  Here $\iota_0$ is the front and $\iota_1$ the back
  inclusion.  Each such diagram determines uniquely the homotopy
  of~\eqref{diag:hpty-comm-square-1}.
\end{lemdef}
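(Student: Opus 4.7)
The plan is to read off both directions (construction and uniqueness) from the universal property of the pushout that defines $M^I(f)$. Recall that $M^I(f)$ is the pushout of $A[I] \hookleftarrow A[1] \xrightarrow{f} A$; so to produce a map $(H,a)_*\colon M^I(f)\rightarrow B$, I only need a pair of maps $A[I]\rightarrow B$ and $A\rightarrow B$ that agree after restriction to $A[1]$.

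For the construction, I would take the pair $(H,a)$: the homotopy $H\colon A[I]\rightarrow B$ on the cylinder side, and $a\colon A\rightarrow B$ on the mapped-in side. The compatibility check is exactly the endpoint condition packaged into the definition of a homotopy commutative square (Definition~\ref{def:homotopy-comm-diag}): pulling back via $A[1]\rightarrowtail A[I]$ gives $H_{|A[1]} = a\circ f$, while pulling back via $f\colon A[1]\rightarrow A$ and then $a$ gives the same map $a\circ f$. The pushout thus produces a unique map $(H,a)_*$.

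Next I verify that the diagram \eqref{diag:hpty-comm-map-cyl} commutes strictly. For the right square, the back inclusion $\iota_1$ is by definition the leg $A\rightarrow M^I(f)$ of the pushout, so $(H,a)_*\circ\iota_1 = a$ by construction. For the left square, $\iota_0$ is the composition $A=A[0]\rightarrowtail A[I]\rightarrow M^I(f)$; composing with $(H,a)_*$ gives the restriction $H_{|A[0]}$, which equals $g\circ a$ by the other endpoint condition of the homotopy commutative square.

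Finally, for uniqueness: given any map $\varphi\colon M^I(f)\rightarrow B$ fitting into a commutative diagram of the form \eqref{diag:hpty-comm-map-cyl}, restricting $\varphi$ along the canonical map $A[I]\rightarrow M^I(f)$ yields a map $A[I]\rightarrow B$ whose endpoints (by commutativity of the two squares) are $g\circ a$ and $a\circ f$, i.e.~a homotopy of the required form; and since $\varphi$ is determined on $A[I]$ and on the copy of $A$ glued in via $\iota_1$ (which must be $a$), the universal property shows that $\varphi=(H,a)_*$ for this extracted $H$. No serious obstacle is expected here—the whole statement is essentially a repackaging of the universal property of the defining pushout—but some care is needed to match the endpoint conventions (the direction of $H$ from $g\circ a$ to $a\circ f$) with the front/back inclusion conventions in \eqref{eq:appendix:structure:cyl}.
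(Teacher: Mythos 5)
Your proposal is correct and follows essentially the same argument as the paper: both construct $(H,a)_*$ from the universal property of the pushout defining $M^I(f)$, with $H$ on the cylinder leg and $a$ on the module leg, and both recover $H$ by restricting along $A[I]\rightarrow M^I(f)$ for the uniqueness direction. (One small notational slip: for a general interval $I=I(0,i)$ the gluing is along $A[i]$, the final endpoint, not $A[1]$; your reasoning is unaffected.)
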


\begin{proof}
  The pushout of the strictly commutative diagram
  \begin{equation*}
    \xymatrix{
    A[I] \ar[d]^H & A[i] \ar[l]_{\iota_1} \ar[d]^{a\circ f} \ar[r]^f & A \ar[d]^a \\
    B             & B \ar[r]^{\id_B} \ar[l]_{\id_B} & B}
  \end{equation*}
  gives the map $(H,a)_*\colon M^I(f) \rightarrow B$ and then
  diagram~\eqref{diag:hpty-comm-map-cyl} commutes.  Conversely taking the map
  $A[I] \rightarrow M^I(f) \rightarrow B$ gets back the homotopy $H$ and the
  commutativity of~\eqref{diag:hpty-comm-map-cyl} shows that $H$ makes the
  square~\eqref{diag:hpty-comm-square-1} homotopy commutative.
\end{proof}

\subsubsection{Induced map on longer mapping cylinders}
Let $J$ be another interval. 
Consider the mapping cylinders of $\iota_0$ and $g$
from~\eqref{diag:hpty-comm-map-cyl}.
We obtain a map $a[J]
\mathop{\Box} (H,a)_* \colon M^{J\mathop{\Box} I}(f) \rightarrow M^{J}(g)$
such that the following diagram commutes.
\begin{equation}\label{diag:hpty-comm-long-map-cyl}
  \vcenter{\xymatrix{
  A \ar[r]^-{\iota_0} \ar[d]^a  & M^{J\mathop{\Box}I}(f) \ar[d]  & A \ar[l]_-{\iota_1} \ar[d]^a \\
  B \ar[r]^-{\iota_0}           & M^{J}(g)                       & B
  \ar[l]_-{\iota_1}   }}
\end{equation}
We call the $a[J] \mathop{\Box} (H,a)_*$ the \emph{cylinder map with respect
to $J$} of the homotopy commutative diagram~\eqref{diag:hpty-comm-square-1}.

\begin{definition}[Composition]\label{def:composition-map-of-cylinders}
  Given maps $f\colon A \rightarrow A$, $g\colon B \rightarrow B$ and $h\colon
  C \rightarrow C$ as well as $a\colon A \rightarrow B$ and $b\colon B
  \rightarrow C$.
  Assume we have cylinder maps $(H^a,a)_*\colon M^{I^a}(f) \rightarrow B$ and
  $(H^b,b)_*\colon M^{I^b}(g) \rightarrow C$ like in
  Definition~\ref{lemdef:homotopy-commutative-to-mapping-cylinders} satisfying
  diagrams like~\eqref{diag:hpty-comm-map-cyl}.
  Define the \emph{composition} $(H^a,a)_* \boxcircle (H^b,b)_*$ as
  \begin{equation*}
    M^{I^b\mathop{\Box} I^a}(f) \xrightarrow{a[I^b] \mathop{\Box} (H^a,a)_*}
    M^{I^b} (g) \xrightarrow{(H^b,b)_*}
    C
  \end{equation*}
  More generally, let $J$ be another interval.  Assume we have cylinder maps
  with respect to $J$:
  \begin{equation*}
    a[J] \mathop{\Box} (H^a,a)_*\colon M^{J\mathop{\Box}I^a}(f) \rightarrow M^{J}(g)
    \ \text{ and } \ 
    b[J]\mathop{\Box}(H^b,b)_*\colon M^{J\mathop{\Box} I^b}(g) \rightarrow
    M^J(h)
  \end{equation*}
  Define the \emph{composition with respect to $J$} as
  \begin{multline*} 
    \Bigl(a[J]\mathop{\Box} (H^a,a)_*\Bigr)
     \boxcircle 
    \Bigl(b[J]\mathop{\Box} (H^b,b)_*\Bigr) \colon
    \\
    M^{J\mathop{\Box} I^b\mathop{\Box} I^a}(f) 
      \xrightarrow{a[J]\mathop{\Box} a[I^b] \mathop{\Box} (H^a,a)_*}
    M^{J\mathop{\Box} I^b} (g) \xrightarrow{b[J]\mathop{\Box}(H^b,b)_*}
    M^J(h)
  \end{multline*}
\end{definition}

\begin{lem}\label{lem:map-cyl:composition}
  Given two homotopy commutative squares
  \begin{equation}
    \label{eq:homotopy-commutative-square}
    \xymatrix{
    A \ar[r]^{f} \ar[d]^{a}   & A \ar[d]^{a} \\
    B \ar[r]^{g}                & B }, \qquad
    \xymatrix{
    B \ar[r]^{g} \ar[d]^{b}   & B \ar[d]^{b} \\
    C \ar[r]^{h}                & C }
  \end{equation}
  with homotopies $H^a\colon A[I^a] \rightarrow B$, $H^b\colon B[I^b]
  \rightarrow C$.  Then the cylinder map of the stacked homotopy
  commutative square (cf.~\ref{lemdef:stacking-htpy-comm-squares}) 
  \begin{equation*}
    \xymatrix{
    A \ar[r]^{f} \ar[d]^{b\circ a}   & A \ar[d]^{b \circ a} \\
    C \ar[r]^{h}                & C }
  \end{equation*}
  is equal to the composition of the cylinder maps of the individual
  squares, i.e.,
  \begin{equation*}
    \left((H^b \circ a[I^b]) \mathop{\Box} (b\circ H^a), b\circ a\right)_* =
     (H^b, b)_* \circ \Bigl(a[I^b] \mathop{\Box} (H^a,a)_*\Bigr)
  \end{equation*}
  The same is true for cylinder maps with respect to $J$.
\end{lem}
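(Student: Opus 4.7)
The plan is to exploit the universal property of the mapping cylinder as a pushout, together with the characterisation of cylinder maps given in Lemma/Definition~\ref{lemdef:homotopy-commutative-to-mapping-cylinders}. Both sides of the claimed identity are maps out of $M^{I^b\mathop{\Box}I^a}(f)$, which is built as the pushout of $A[I^b\mathop{\Box}I^a] \leftarrow A[j+i] \xrightarrow{f} A$; hence to verify equality it suffices to compare the two maps on $A[I^b\mathop{\Box}I^a]$ and on the back copy of $A$, and to check compatibility along the glue.

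First I would show that the composition $(H^b,b)_* \circ \bigl(a[I^b]\mathop{\Box}(H^a,a)_*\bigr)$ satisfies precisely the pushout data that define the cylinder map of the stacked square. For the restriction to the back $A$, iterating the defining diagrams~\eqref{diag:hpty-comm-map-cyl} and~\eqref{diag:hpty-comm-long-map-cyl} shows that $a[I^b]\mathop{\Box}(H^a,a)_*$ sends the back $A$ into $M^{I^b}(g)$ by $A \xrightarrow{a} B \xrightarrow{\iota_1} M^{I^b}(g)$, and applying $(H^b,b)_*$ converts $\iota_1$ into plain $b$; the net result is $b\circ a$, which is the vertical map of the stacked square.

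For the restriction to $A[I^b\mathop{\Box}I^a]$, I would use the decomposition
\begin{equation*}
  M^{I^b\mathop{\Box}I^a}(f) \;=\; A[I^b] \cup_{A[j]=A[0^{I^a}]} M^{I^a}(f),
\end{equation*}
where $A[j] \to M^{I^a}(f)$ is the front inclusion $\iota_0$, and treat the two pieces separately. On $A[I^b]$ the map $a[I^b]\mathop{\Box}(H^a,a)_*$ acts by $a[I^b]$ landing in $B[I^b]\subset M^{I^b}(g)$, and $(H^b,b)_*$ restricted to $B[I^b]$ is exactly $H^b$, yielding $H^b\circ a[I^b]$. On $A[I^a]\subset M^{I^a}(f)$ the map $(H^a,a)_*$ restricts to $H^a\colon A[I^a]\to B$, followed by $\iota_1\colon B \to M^{I^b}(g)$ and then by $(H^b,b)_*$, collapsing to $b\circ H^a$. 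Gluing these two pieces gives the stacked homotopy $(H^b\circ a[I^b])\mathop{\Box}(b\circ H^a)$ of Lemma/Def~\ref{lemdef:stacking-htpy-comm-squares}; the required agreement on $A[j^{I^b}]=A[0^{I^a}]$ is automatic from the endpoint conditions $H^b|_{B[j]}=b\circ g$ and $H^a|_{A[0]}=g\circ a$, both of which evaluate to $b\circ g\circ a$.

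Since both the composition and the stacked cylinder map satisfy the same defining pushout data, the uniqueness part of Lemma/Definition~\ref{lemdef:homotopy-commutative-to-mapping-cylinders} forces them to coincide. The ``with respect to $J$'' version follows by exactly the same strategy applied to the longer diagram~\eqref{diag:hpty-comm-long-map-cyl}: one checks that the composition $(b[J]\mathop{\Box}(H^b,b)_*) \circ (a[J]\mathop{\Box}a[I^b]\mathop{\Box}(H^a,a)_*)$ agrees with the stacked cylinder map with respect to $J$ on $A[J\mathop{\Box} I^b\mathop{\Box} I^a]$ and on the back $A$. No substantial obstacle arises; the argument is a diagram chase against a universal property, and the only real care required is bookkeeping of endpoints and orientations of the various intervals, together with the elementary observation that $(H^b,b)_* \circ \iota_1 = b$ and $(H^b,b)_*|_{B[I^b]} = H^b$.
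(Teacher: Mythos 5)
Your proof is correct and takes essentially the same route as the paper: the paper's own (very terse) proof is precisely a piecewise check of the two maps $M^{J\mathop{\Box} I^b\mathop{\Box} I^a}(f) \rightarrow M^J(h)$, with Figure~\ref{fig:map-cylc} serving as the bookkeeping for exactly the pushout decomposition and restriction computations you spell out. One small remark: the final appeal should be to the fact that a map out of the (iterated) pushout $M^{I^b\mathop{\Box} I^a}(f)$ is determined by its restrictions to the cylinder part and the back copy of $A$ --- which you already state and use --- rather than to the ``uniqueness'' clause of Lemma/Definition~\ref{lemdef:homotopy-commutative-to-mapping-cylinders}, which as written determines the homotopy from the diagram, not the map; this is a labelling issue only, not a gap.
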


\begin{proof}
  We have to check the equality of two maps $M^{J \mathop{\Box} I^b
  \mathop{\Box} I^a}(f) \rightarrow M^J(h)$.  Figure~\ref{fig:map-cylc} shows
  the situation.  With its help for the bookkeeping the equality can be
  checked directly.
\end{proof}

\begin{figure}[t]
  \centering
  \input{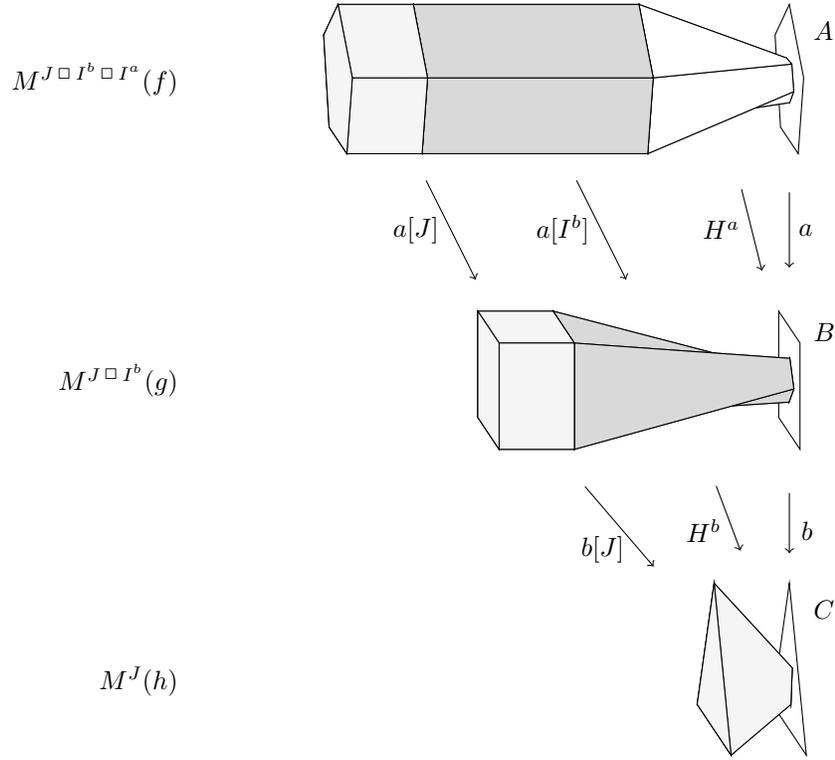}
  \caption{Composition of maps of long mapping cylinders. Shows the mapping
  cylinder construction is ``functorial'' if performed with long cylinders.}
  \label{fig:map-cylc}
\end{figure}

\subsubsection{Induced maps on telescopes}
\label{subsubsec:telescopes-stacking-composition}
Everything from~\ref{lemdef:homotopy-commutative-to-mapping-cylinders} on
transfers immediately to mapping telescopes, by gluing the parts together.  In
particular, if we have homotopy commutative squares like
in~\ref{lem:map-cyl:composition}.\eqref{eq:homotopy-commutative-square} we
obtain maps
\begin{align*}
  (H^a,a)_* &\colon \Tel^{J\mathop{\Box} I^a}(f) \rightarrow \Tel^{J}(g),\\
  (H^b,b)_* &\colon \Tel^{J\mathop{\Box} I^b}(g) \rightarrow
  \Tel^{J}(h).
\end{align*}
and their composition
\begin{equation*}
   (H^b,b)_* \mathop{\boxcircle} (H^a,a)_* \colon
   \Tel^{J\mathop{\Box}I^b \mathop{\Box} I^a} (f) \rightarrow
   \Tel^{J}(h) 
\end{equation*}
which is the same as the induced map of the stacking of the homotopy
commutative squares.
The map $(H,a)_*$ commutes with the front inclusion $\iota$
from~\ref{subsubsec:telescope-front-inclusion}.

We can specialize to $H$ being the trivial homotopy of length $0$ and $J :=
\Delta^1$.  Then we obtain the strict functoriality of $\Tel^{\Delta^1}(-)$ from
Theorem~\ref{thm:mapping-telescopes}
\eqref{item:mapping-tel:map-of-idempotents}.  

One the other hand  we can consider the square given by $f=g$ and $a=\id_A$
being homotopy commutative with trivial homotopy $\Tr\colon A[I] \rightarrow
A$, for $I$ any interval.  The resulting map $(\Tr,\id)_* \colon
\Tel^{\Delta^1 \mathop{\Box} I}(f) \rightarrow \Tel^{\Delta^1}(f)$ is induced
by the projection $\Delta^1 \mathop{\Box} I \rightarrow \Delta^1$ mapping $I$
to $I(1)\subseteq \Delta^1$.

\subsection{A homotopy criterion for maps on $\Tel(-)$}
\label{subsec:homotopy-criterion-tel}
We need a criterion when two homotopy commutative squares
\begin{equation*}
  \vcenter{\xymatrix{
    A \ar[r]^{f} \ar[d]^{a}   & A \ar[d]^{a} \\
    B \ar[r]^{g}                & B }} \qquad \text{and} \qquad
  \vcenter{\xymatrix{
    A \ar[r]^{f} \ar[d]^{\widetilde{a}}   & A \ar[d]^{\widetilde{a}} \\
    B \ar[r]^{g}                & B } }
\end{equation*}
with homotopies $H^a$ and $H^{\widetilde{a}}$ induce homotopic maps on mapping
telescopes.
Assume:

\subsubsection{Homotopy conditions}
\label{subsubsec:assumption-for-induced-map}
\begin{enumerate}
  \item $H^a$ and $H^{\widetilde{a}}$ have the same length and are indexed
    over the same interval $I = I(0,i)$.  We can arrange this by extending
    with trivial homotopies.
  \item There is a homotopy $H\colon A[J] \rightarrow B$ from $a$ to
    $\widetilde{a}$.
  \item There is ``\/$2$-homotopy'' $G\colon A[I][J] \rightarrow B$ from $H^a$
    to $H^{\widetilde{a}}$ which restricts on $I(0)\times J$ to
    $g\circ H$ and on $I(i)\times J$ to $H \circ f[J]$.
\end{enumerate}
The last condition can be visualized for $I=J= \Delta^1$ by writing $G\colon
A[\Delta^1 \times \Delta^1] \rightarrow B$ in our diagram language as
\begin{equation}\label{diag:lem:two-homotopy}
  \vcenter{\xymatrix{
  g\circ a \ar[r]^{H^{a}} \ar[d]_{g\circ H} \ar[dr] & 
    a\circ f \ar[d]^{H\circ f[J]} \\
    g\circ\widetilde{a} \ar[r]_{H^{\widetilde{a}}}  & \widetilde{a}\circ f }}.
\end{equation}

\begin{lem}[Homotopy criterion]\label{lem:2-homotopy-induce-homotopy}
  If the conditions from~\ref{subsubsec:assumption-for-induced-map} are
  satisfied, the two induced maps $(H^a, a)_*,
  (H^{\widetilde{a}}, \widetilde{a})_* \colon 
  \Tel^{\Delta^1 \mathop{\Box} I}(f) \rightarrow \Tel^{\Delta^1}(g)$ are
  homotopic. The homotopy is $(G, H)_*\colon 
  \Tel^{\Delta^1 \mathop{\Box} I}(f)[J] \rightarrow \Tel^{\Delta^1}(g)$.
\end{lem}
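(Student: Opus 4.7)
My plan is to construct the map $(G,H)_*$ explicitly on each long mapping cylinder, glue these into a map on the whole telescope, and verify that the boundary values of this map with respect to $J$ recover the two cylinder maps to be compared. Everything hinges on the universal property of pushouts and the specified boundary behavior of $G$.

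First, I would analyze a single long mapping cylinder. Setting $I' := \Delta^1 \mathop{\Box} I$, we have the decomposition
\begin{equation*}
  M^{I'}(f)[J] \;=\; A[\Delta^1][J] \cup_{A[J]} A[I][J] \cup_{A[J],\,f[J]} A[J],
\end{equation*}
and the target splits as $M^{\Delta^1}(g) = B[\Delta^1] \cup_{B,\,g} B$. I would define the map $(G,H)_*$ piecewise: on $A[\Delta^1][J] \cong A[J][\Delta^1]$ send it to $B[\Delta^1]$ via $H[\Delta^1]$; on $A[I][J]$ send it via $G$ into the back $B$ of $M^{\Delta^1}(g)$; on the final (attached) $A[J]$ send it via $H$ into the back $B$. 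Compatibility at the internal junction $A[J]$ between $A[\Delta^1][J]$ and $A[I][J]$ amounts to the identity $G|_{I(0) \times J} = g \circ H$, which is exactly the left face of the 2-homotopy square \eqref{diag:lem:two-homotopy}. Compatibility at the junction with the attached $A[J]$ requires $G|_{I(i) \times J} = H \circ f[J]$, which is the right face. So the three pieces glue by the pushout universal property into a well-defined controlled $G$-equivariant map $M^{I'}(f)[J] \to M^{\Delta^1}(g)$.

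Next I would extend this construction over the telescope. Since $-[J]$ is a left adjoint (a consequence of \ref{lem:adjoint-simp-module-controlled} and the adjunctions in \ref{subsubsec:G-adjunctions}), it commutes with the pushout defining $\Tel^{I'}(f)$, so $\Tel^{I'}(f)[J]$ is obtained by gluing the cylinders $M^{I'}(f)[J]$ along their front and back $A[J]$'s. Between consecutive cylinders, the attached $A[J]$ of the $n$-th cylinder is identified with the front $A[J]$ sitting in $A[\Delta^1][J]$ of the $(n+1)$-th at the simplex $\Delta^1(0)$. Our maps on both sides restrict at this junction to $H \colon A[J] \to B$, landing in identified copies of $B$, so the cylinder maps glue to a map $(G,H)_* \colon \Tel^{I'}(f)[J] \to \Tel^{\Delta^1}(g)$.

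Finally, I would verify the endpoint values of $J$. Restricting to $J(0)$: $H[\Delta^1]$ collapses to $a[\Delta^1]$, $G$ collapses to $H^a$, and $H$ to $a$; assembling these gives exactly $a[\Delta^1] \mathop{\Box} (H^a, a)_*$, which is the cylinder map with respect to $\Delta^1$ of the first square, i.e.\ $(H^a, a)_*$ on the telescope level (see \ref{subsubsec:telescopes-stacking-composition}). Restricting to $J(1)$ gives $\widetilde{a}[\Delta^1] \mathop{\Box} (H^{\widetilde{a}}, \widetilde{a})_* = (H^{\widetilde{a}}, \widetilde{a})_*$ by the same reasoning. Thus $(G,H)_*$ is the required homotopy.

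The only real obstacle here is notational: carefully tracking how the decomposition of $I' = \Delta^1 \mathop{\Box} I$ and the pushout presentation of the mapping cylinder interact with $-[J]$, and confirming that the four boundary faces of the 2-homotopy $G$ of diagram \eqref{diag:lem:two-homotopy} supply exactly the four compatibility conditions needed (two internal junctions of each cylinder, and the two $J$-endpoint conditions). The proof itself is essentially diagram-chasing via the pushout universal property once this bookkeeping is set up, and no further horn-filling or Kan-type argument is required.
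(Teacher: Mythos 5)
Your construction is correct, and it arrives at the same map $(G,H)_*$ as the paper, but by a more hands-on route. The paper's proof is essentially a one-liner: it observes that the data $(H,G)$ exhibit the square with horizontal maps $f[J]\colon A[J]\to A[J]$ and $g\colon B\to B$ and vertical maps $H\colon A[J]\to B$ as a homotopy commutative square (the two boundary conditions on $G$ from \ref{subsubsec:assumption-for-induced-map} are exactly what makes $G$ a homotopy from $g\circ H$ to $H\circ f[J]$), then applies the already-established machinery of \ref{lemdef:homotopy-commutative-to-mapping-cylinders} and \ref{subsubsec:telescopes-stacking-composition} to get $(G,H)_*\colon \Tel^{\Delta^1\mathop{\Box} I}(f[J])\rightarrow \Tel^{\Delta^1}(g)$, and finally identifies $\Tel^{\Delta^1\mathop{\Box} I}(f[J])\cong \Tel^{\Delta^1\mathop{\Box} I}(f)[J]$ because the telescope, being a colimit, commutes with adjoining the interval $J$; the check that the $J$-endpoints give $(H^a,a)_*$ and $(H^{\widetilde a},\widetilde a)_*$ is left to the reader. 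You instead unwind this: you build the map cylinder-by-cylinder from the pushout decomposition $M^{\Delta^1\mathop{\Box} I}(f)[J]=A[\Delta^1][J]\cup_{A[J]}A[I][J]\cup_{A[J],\,f[J]}A[J]$, check that the two internal junctions are glued precisely by the faces $G|_{I(0)\times J}=g\circ H$ and $G|_{I(i)\times J}=H\circ f[J]$, glue along the telescope using the same ``$-[J]$ commutes with the defining pushout'' observation, and verify the $J(0)$- and $J(1)$-restrictions explicitly. What your version buys is precisely the endpoint verification and junction bookkeeping that the paper omits; what the paper's version buys is brevity and reuse of the functoriality package already proved, avoiding any re-derivation of well-definedness of the glued map. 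Both are valid, and you are right that no horn-filling or Kan-type input is needed here.
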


\begin{proof}
  Interpreting $G$ as homotopy from $g\circ H$ to $H\circ f[J]$ gives a
  homotopy commutative square
  \begin{equation*}
    \xymatrix{
      A[J] \ar[r]^{f[J]} \ar[d]^{H}   & A[J] \ar[d]^{H} \\
      B \ar[r]^{g}                & B } 
  \end{equation*}
  with homotopy $G\colon (A[J]) [I] \rightarrow B$.
  We obtain a map
  $(G,H)_*\colon \Tel^{\Delta^1\mathop{\Box} I}(f[J]) \rightarrow
  \Tel^{\Delta^1}(g)$. As the telescope is a colimit it commutes with
  adjoining an interval, hence we can write the domain
  of the induced map as $\Tel^{\Delta^1\mathop{\Box} I}(f)[J]$.  Therefore,
  $(G,H)_*$ is a homotopy.  We leave it to the reader to check that it is the
  desired one.
\end{proof}

\begin{rem}
  Thanks to
  Lemma~\ref{lem:2-homotopy-induce-homotopy} we only need to provide
  diagrams
  like~\ref{subsubsec:assumption-for-induced-map}.\eqref{diag:lem:two-homotopy}
  to prove that maps on
  telescopes are homotopic.  To simplify the diagram we will usually assume
  that all intervals have length $1$ and are ordered, cf.
  Remark~\ref{subsubsec:variations-longer-homotopies}.
\end{rem}

\subsection{On
Theorem~\ref{thm:mapping-telescopes}.(\ref{item:mapping-tel:homotopy-id-give-hpty-equiv-tels})
}
\label{subsec:on-thm:id-give-htpy-equiv}

\begin{lem}[Homotopic maps give equivalent telescopes]
  \label{lem:homotopic-maps-give-homotopic-telescopes}
  Let $f,g \colon A \rightarrow A$ be homotopic maps. Then
  $\Tel^{\Delta^1}(f)$ and $\Tel^{\Delta^1}(g)$ are homotopy equivalent.  The homotopy
  equivalences are relative to the front inclusions.
\end{lem}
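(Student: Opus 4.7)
The idea is to build candidate homotopy equivalences out of the given homotopy and its inverse, then use the stacking construction (\ref{lemdef:stacking-htpy-comm-squares}) and the homotopy criterion (Lemma \ref{lem:2-homotopy-induce-homotopy}) to verify they are mutually inverse up to homotopy, all relative to the front inclusions.  Let $H\colon A[\Delta^1]\to A$ be a homotopy with $H|_{A[0]}=g$ and $H|_{A[1]}=f$, and view it as the specified homotopy of the square with horizontals $f$ on top, $g$ on bottom, and identity verticals; its inverse $\overline{H}$ fills the analogous square with $g$ on top and $f$ on bottom.  By \ref{subsubsec:telescopes-stacking-composition} these produce cylinder maps
\begin{equation*}
  (H,\id)_*\colon \Tel^{\Delta^1\mathop{\Box}\Delta^1}(f)\to \Tel^{\Delta^1}(g),
  \qquad
  (\overline{H},\id)_*\colon \Tel^{\Delta^1\mathop{\Box}\Delta^1}(g)\to \Tel^{\Delta^1}(f).
\end{equation*}
Composing with the homotopy equivalences $\Tel^{\Delta^1}(h)\simeq\Tel^{\Delta^1\mathop{\Box}\Delta^1}(h)$ of Lemma \ref{lem:mapping-cylinders-different-length-equivalent} (chosen relative to the front inclusion) yields candidate maps $\varphi\colon \Tel^{\Delta^1}(f)\to\Tel^{\Delta^1}(g)$ and $\psi\colon \Tel^{\Delta^1}(g)\to\Tel^{\Delta^1}(f)$, both commuting with the front inclusions.

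To show $\psi\circ\varphi\simeq\id$, I would stack the two homotopy commutative squares using \ref{lemdef:stacking-htpy-comm-squares}.  By Lemma \ref{lem:map-cyl:composition}, lifted to telescopes via \ref{subsubsec:telescopes-stacking-composition}, the composition $(\overline{H},\id)_*\boxcircle(H,\id)_*\colon \Tel^{\Delta^1\mathop{\Box}\Delta^1\mathop{\Box}\Delta^1}(f)\to\Tel^{\Delta^1}(f)$ agrees with the cylinder map of the stacked square, whose specified homotopy is the concatenation $\overline{H}\mathop{\Box}H\colon A[\Delta^1\mathop{\Box}\Delta^1]\to A$ running $f\rightsquigarrow g\rightsquigarrow f$.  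The variation of Lemma \ref{lem:concat-homotopy-and-inverse} discussed in \ref{subsubsec:variations-longer-homotopies} supplies a $2$-homotopy $G\colon A[\Delta^1\mathop{\Box}\Delta^1][\Delta^1]\to A$ from $\overline{H}\mathop{\Box}H$ to the trivial homotopy of length $2$, which is trivial on both endpoints of $\Delta^1\mathop{\Box}\Delta^1$.  Taking the auxiliary connecting homotopy between $\id_A$ and itself to be the trivial homotopy of length $1$, $G$ satisfies the three conditions \ref{subsubsec:assumption-for-induced-map} of the homotopy criterion, so Lemma \ref{lem:2-homotopy-induce-homotopy} identifies the stacked cylinder map with $(\Tr,\id)_*$.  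By the last paragraph of \ref{subsubsec:telescopes-stacking-composition}, $(\Tr,\id)_*$ is the projection $\Tel^{\Delta^1\mathop{\Box}\Delta^1\mathop{\Box}\Delta^1}(f)\to\Tel^{\Delta^1}(f)$ induced by collapsing $\Delta^1\mathop{\Box}\Delta^1\to\Delta^1$, which by Lemma \ref{lem:mapping-cylinders-different-length-equivalent} is a homotopy equivalence of telescopes.  Choosing compatible equivalences in the definitions of $\varphi$ and $\psi$ then identifies $\psi\circ\varphi$ up to homotopy with $\id_{\Tel^{\Delta^1}(f)}$, relative to the front inclusion.  The other composition $\varphi\circ\psi\simeq\id$ is obtained by swapping the roles of $H$ and $\overline{H}$, and of $f$ and $g$.

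The hard part is a bookkeeping one: one must track carefully which interval each long telescope is indexed by so that the cylinder maps compose in the sense of Definition \ref{def:composition-map-of-cylinders}, and verify that the $2$-homotopy provided by \ref{subsubsec:variations-longer-homotopies} restricts to the trivial homotopy on \emph{both} endpoints of $\Delta^1\mathop{\Box}\Delta^1$ (so that the auxiliary homotopy in \ref{subsubsec:assumption-for-induced-map} can itself be taken trivial and the boundary conditions in Lemma \ref{lem:2-homotopy-induce-homotopy} are genuinely met).  Once this alignment is in place, the homotopy criterion does the substantive work and the rest of the argument is formal.
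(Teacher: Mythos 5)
Your proposal is correct and follows essentially the same route as the paper: form the two homotopy commutative squares with identity verticals and homotopies $H$, $\overline{H}$, identify the stacked composition with $(\Tr,\id)_*$ via Lemma~\ref{lem:concat-homotopy-and-inverse} (and its variations) together with the homotopy criterion Lemma~\ref{lem:2-homotopy-induce-homotopy}, and then cancel the length-change equivalences of Lemma~\ref{lem:mapping-cylinders-different-length-equivalent} to get mutually inverse maps $\varphi$, $\psi$ relative to the front inclusions. The only cosmetic difference is that the paper indexes the second homotopy over the reversed interval $\overline{I}$ (so the middle factor of the long telescope is $\overline{I}$ rather than an ordered $\Delta^1$), which is exactly the bookkeeping point you flag and does not affect the argument.
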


\begin{proof}
  Let $H\colon A[I] \rightarrow A$ be the homotopy from $g$ to $f$.  We get
  two homotopy commutative squares
  \begin{equation*}
    \vcenter{\xymatrix{
    A \ar[r]^f \ar[d]^{\id}  & A \ar[d]^{\id} \\ A \ar[r]^g & A}}
    \qquad\text{ and }\qquad
    \vcenter{\xymatrix{
    A \ar[r]^g \ar[d]^{\id}  & A \ar[d]^{\id} \\ A \ar[r]^f & A}}
  \end{equation*}
  with homotopies $H\colon A[I]\rightarrow A$ and $\overline{H}\colon
  A[\overline{I}]\rightarrow A$ (where the latter is the inverse homotopy,
  cf.~Section~\ref{sec:concat-of-homotopies}).  This gives maps $(H,\id)_*
  \colon \Tel^{\Delta^1\mathop{\Box} I}(f) \rightarrow
  \Tel^{\Delta^1}(g)$ and 
  $(\overline{H},\id)_* \colon \Tel^{\Delta^1\mathop{\Box} \overline{I}}(g)
  \rightarrow \Tel^{\Delta^1}(f)$.  The
  composition
  \begin{equation*}
    (\overline{H},\id)_* \boxcircle (H,\id)_* \colon
    \Tel^{\Delta^1\mathop{\Box} \overline{I} \mathop{\Box} I}(f) \rightarrow 
    \Tel^{\Delta^1}(f)
  \end{equation*}
  (cf.~\ref{subsubsec:telescopes-stacking-composition})
  is induced by the homotopy commutative square
  \begin{equation*}
    \xymatrix{A \ar[r]^f \ar[d]^{\id} & A \ar[d]^{\id} \\
    A \ar[r]^f & A}
  \end{equation*}
  with homotopy $\overline{H} \mathop{\Box} H \colon A[\overline{I}
  \mathop{\Box} I] \rightarrow A$.  By
  Lemma~\ref{lem:concat-homotopy-and-inverse}, that homotopy is homotopic relative
  endpoints to the trivial homotopy, hence
  Lemma~\ref{lem:2-homotopy-induce-homotopy} shows that $(\overline{H},\id)_*
  \boxcircle (H,\id)_*$ is homotopic to $(\Tr,\id)_*$.  The same holds for the
  other composition.  

  As $(\Tr,\id)_*\colon \Tel^{\Delta^1 \mathop{\Box} I}(f) \rightarrow
  \Tel^{\Delta^1}(f)$ is a homotopy equivalence induced by the projection
  $\Delta^1 \mathop{\Box} I \rightarrow \Delta^1$ we
  obtain two homotopy commutative triangles
  \begin{equation*}
    \vcenter{\xymatrix{
    \Tel^{\Delta^1 \mathop{\Box} I}(f) \ar[r]^{\simeq} \ar[d]_{(H,\id)_*} &
    \Tel^{\Delta^1}(f)  \ar[dl]^{\varphi}  \\
    \Tel^{\Delta^1}(g)} }
    \quad \text{ and } \quad
    \vcenter{\xymatrix{
    \Tel^{\Delta^1 \mathop{\Box} \overline{I}}(g) \ar[r]^{\simeq}
    \ar[d]_{(\overline{H},\id)_*} &
    \Tel^{\Delta^1}(g)  \ar[dl]^{\psi}  \\
    \Tel^{\Delta^1}(f)} }
  \end{equation*}
  where $\varphi$ is defined using a chosen homotopy inverse of the
  horizontal map and $\psi$ similarly.  We claim both compositions of these
  maps are homotopic to the identity.  Together with these triangles we obtain a
  large diagram
  \begin{equation*}
    \xymatrix{
    \Tel^{\Delta^1 \mathop{\Box} \overline{I} \mathop{\Box} I}(f) 
            \ar[r]^{\simeq} \ar[d]_{(H,\id)_*} &
      \Tel^{\Delta^1 \mathop{\Box} I}(f) \ar[r]^{\simeq} \ar[d]_{(H,\id)_*} &
      \Tel^{\Delta^1}(f)  \ar[dl]^{\varphi}  \\
    \Tel^{\Delta^1 \mathop{\Box} \overline{I}}(g) \ar[r]^{\simeq}
    \ar[d]_{(\overline{H},\id)_*} &
    \Tel^{\Delta^1}(g)  \ar[dl]^{\psi}  \\
    \Tel^{\Delta^1}(f)}
  \end{equation*}
  where the square is strictly commutative.  The left vertical composition is
  the composition $(\overline{H},\id)_* \boxcircle (H,\id)_* $ which is
  homotopic to $(\Tr,\id)_*$, which is exactly the composition of the upper
  horizontal maps.  It follows that $\psi\circ \varphi\simeq \id$ and
  similarly for the other composition.

  As the homotopy inverses in the definition of $\psi$ and $\varphi$ can be
  chosen to respect the front inclusion by
  Lemma~\ref{lem:mapping-cylinders-different-length-equivalent} and all other
  maps and homotopies are relative to it $\varphi$ is a homotopy equivalence
  relative to the front inclusion.
\end{proof}


\subsection{On Theorem~\ref{thm:mapping-telescopes}.%
(\ref{item:mapping-tel:idempot-induces-id-on-tel})}

\subsubsection{The shift map}
Recall that $\Tel^I(f)$ is a quotient of $\coprod_{n\in \Nat} M^I(f)$. The map
taking the $n$th component to the $(n+1)$st component is compatible with the
quotient, hence induces a map $\Tel^I(f) \rightarrow \Tel^I(f)$, which we will
call the \emph{shift map} and denote it by~$\sh$.

\begin{lem}\label{lem:tel-shift}
  Let $I$ be an interval, $f\colon A \rightarrow A$ a self-map.  The maps
  $\sh$ and $(\Tr,f)_*$ from $\Tel^I(f)$ to $\Tel^I(f)$ are homotopy
  inverse:
  \begin{equation*}
    (\Tr, f)_* \circ \sh = \sh \circ (\Tr,f)_* \simeq \id \colon
      \Tel^I(f) \rightarrow \Tel^I(f)
  \end{equation*}
\end{lem}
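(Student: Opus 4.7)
The strict equality $\sh \circ (\Tr, f)_* = (\Tr, f)_* \circ \sh$ will follow directly from the definitions: both maps act cylinderwise on $\Tel^I(f)$, with $\sh$ performing a uniform shift of cylinder indices and $(\Tr, f)_*$ (induced via Lemma~\ref{lem:appendix:telescope-functorial} by the strict square with all entries $f$ and trivial homotopy) applying $f$ inside each cylinder, so the two operations commute on each $M^I(f)_n$.

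For the homotopy $\sh \circ (\Tr, f)_* \simeq \id$, I would follow the sliding-down-the-cylinder strategy sketched in Section~\ref{subsec:appendix-structure-of-proof}. The crucial local observation is that the interior inclusion $K[I] \hookrightarrow M^I(f)$ is itself a homotopy from the front inclusion $\iota_0$ to $\iota_1 \circ f$, since in the pushout $M^I(f) = K[I] \cup_{K[1], f} K$ the composition $K[1] \hookrightarrow K[I] \hookrightarrow M^I(f)$ agrees with $K[1] \xrightarrow{f} K \xrightarrow{\iota_1} M^I(f)$. Under the telescope identification $\iota_1^{(n)} = \iota_0^{(n+1)}$, the interior of cylinder $n+1$ then provides a canonical homotopy within $M^I(f)_n \cup M^I(f)_{n+1}$ from $\iota_0^{(n)}(x)$ to $\iota_0^{(n+1)}(f(x))$, which matches precisely the restriction of $\sh \circ (\Tr, f)_*$ to the front of cylinder $n$.

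To globalize, I would construct $H\colon \Tel^I(f)[\Delta^1] \to \Tel^I(f)$ cylinderwise, with $H$ restricted to $M^I(f)_n[\Delta^1]$ landing in $M^I(f)_n \cup M^I(f)_{n+1}$. Using the pushout decomposition $M^I(f)[\Delta^1] = K[I \times \Delta^1] \cup_{K[1 \times \Delta^1], f[\Delta^1]} K[\Delta^1]$, I define the backbone $K[\Delta^1]$-part via the interior of cylinder $n+1$ (sending the back of cyl $n$ at time $0$ to the back of cyl $n+1$ at time $1$, after applying $f$). On the interior $K[I \times \Delta^1]$ the four boundary edges of the square are specified compatibly by the cylinder-interior homotopies of cyls $n$ and $n+1$, and I fill the interior diagonal $(0,0)\to(1,1)$ together with the two non-degenerate 2-simplices by means of the Relative Horn-Filling Lemma~\ref{lem:horn-filling-relative}, which applies since the relevant $E$-controlled $\HOM$-spaces are simplicial abelian groups and hence Kan. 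The cylinderwise homotopies glue along overlaps because both the map on the back of cyl $n$ and on the front of cyl $n+1$ are defined through the interior of cyl $n+1$.

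The main obstacle will be the two-dimensional filling on $K[I \times \Delta^1]$: two pairs of the square's edges coincide (the left and bottom both being ``interior of cyl $n$ at $x$'', the right and top both being ``interior of cyl $n+1$ at $f(x)$''), and I must choose a diagonal so that the two 2-simplices admit compatible fills. This can be done canonically by taking the diagonal as the simplicial-abelian-group composition of the two interior 1-simplices, after which the same 2-simplex fills both; the $G$-equivariance and control-condition compatibility then follow because the entire construction is localized to pairs of adjacent cylinders.
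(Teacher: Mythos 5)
Your proof is correct and is essentially the paper's own (sketched) argument: work two consecutive cylinders at a time and slide everything one cylinder down the telescope, the only cosmetic difference being that the paper obtains the two-dimensional filling from a ``sliding'' homotopy between the two inclusions $I \rightarrow I \mathop{\Box} I$ (mapped into the union of the two cylinders), while you build it directly by horn-filling in the simplicial-abelian-group $\HOM$-spaces. Two small points to fix in the write-up: the canonical path from $\iota_0^{(n)}(x)$ to $\iota_0^{(n+1)}(f(x))$ is the interior of cylinder $n$ (as your later edge bookkeeping correctly uses), not of cylinder $n+1$, and for a general interval $I$ your ``square'' is really a ladder of squares whose vertical edges must first be converted into length-one homotopies (cf.\ Corollary~\ref{cor:long-cylinders-short-equivalent}).
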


\begin{proof}[Sketch of proof] 
  The first equality is clear.  For the homotopy one restricts the map of
  telescopes to a map $M^I(f) \rightarrow M^I(f) \cup_A M^I(f)$, which maps
  into the second summand.  Then one can construct a simplicial homotopy from
  this map to the map ``inclusion of the first summand''.  Namely, the two
  inclusions $I \rightarrow I \mathop{\Box} I$ give two maps $A[I] \rightarrow
  A[I \mathop{\Box}I]$ which are homotopic by ``sliding''.  The desired
  homotopy arises from this homotopy.  The details are left to the reader.
\end{proof}

\subsubsection{Telescopes of coherent homotopy idempotents}
In the following we consider coherent homotopy idempotents.

\begin{lem}\label{lem:map-tel-homotopy-idemp-induced-id-on-tel}
  Let $\eta\colon K \rightarrow K$ be a \emph{coherent} homotopy idempotent in
  $\mathcal{C}^G$.  Then the induced map $\eta_* = (\Tr,\eta)_* \colon
  \Tel^{\Delta^1}(\eta) \rightarrow \Tel^{\Delta^1}(\eta)$ is homotopic to the
  identity.
\end{lem}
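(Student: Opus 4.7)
The plan is to reduce the statement to two facts: Lemma~\ref{lem:tel-shift} already supplies $\sh\circ\eta_* \simeq \id_{\Tel^{\Delta^1}(\eta)} \simeq \eta_*\circ\sh$ with no assumption on $\eta$; second, on the telescope the induced map $\eta_*$ is itself homotopy idempotent, i.e., $\eta_*\circ\eta_*\simeq\eta_*$. Granting the second fact, one concludes
\begin{equation*}
  \eta_* \;\simeq\; (\sh\circ\eta_*)\circ\eta_* \;=\; \sh\circ(\eta_*\circ\eta_*) \;\simeq\; \sh\circ\eta_* \;\simeq\; \id_{\Tel^{\Delta^1}(\eta)},
\end{equation*}
so the whole lemma reduces to proving $\eta_*\circ\eta_*\simeq\eta_*$.

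For that reduction I invoke the composition law of Lemma~\ref{lem:map-cyl:composition} in its telescope version (Section~\ref{subsubsec:telescopes-stacking-composition}): the composite $\eta_*\circ\eta_*$ is strictly equal to the map induced by the stacked commutative square with horizontal arrows $\eta$, vertical arrow $\eta^2$, and concatenated trivial homotopy, while $\eta_*$ itself is induced by the analogous square with vertical arrow $\eta$ and trivial homotopy (extended to the same length). To compare the two I apply the homotopy criterion Lemma~\ref{lem:2-homotopy-induce-homotopy} with $a=\eta^2$, $\widetilde{a}=\eta$, $f=g=\eta$, and $H\colon K[\Delta^1]\to K$ the homotopy from $\eta^2$ to $\eta$ furnished by the homotopy-idempotent structure on~$\eta$. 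Everything is routine except for the production of the required $2$-homotopy $G\colon K[\Delta^1\times\Delta^1]\to K$, whose four sides must be the trivial homotopies at $\eta^3$ and $\eta^2$ (top and bottom) together with the maps $\eta\circ H$ (left) and $H\circ\eta[\Delta^1]$ (right).

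The construction of $G$ is the main obstacle, and it is here that the coherence hypothesis (Definition~\ref{def:coherent-homotopy-idempotent}) is used. Coherence supplies $G_{\mathrm{coh}}\colon K[\Delta^1\times\Delta^1]\to K$ whose boundary has sides $\eta\circ H$, $H\circ\eta[\Delta^1]$, $H$, $H$. Read as a relation between length-$2$ homotopies, this asserts that the concatenations $(\eta\circ H)\mathop{\Box} H$ and $(H\circ\eta[\Delta^1])\mathop{\Box} H$ are homotopic relative to their endpoints $\eta^3,\eta$. I cancel the common right factor $H$: concatenate both sides on the right with the inverse homotopy $\overline{H}$, then invoke Lemma~\ref{lem:concat-homotopy-and-inverse} to replace $H\mathop{\Box}\overline{H}$ by the trivial homotopy at $\eta^2$. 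This yields a relative-endpoint homotopy $\eta\circ H\simeq H\circ\eta[\Delta^1]$ as length-$1$ homotopies $\eta^3\rightsquigarrow\eta^2$, which, after identifying the two $\Delta^1$-factors appropriately, is a $G$ of exactly the required form. With this $G$ in hand, Lemma~\ref{lem:2-homotopy-induce-homotopy} delivers $\eta_*\circ\eta_*\simeq\eta_*$, and the argument is complete. The residual bookkeeping of interval lengths is handled as in~\ref{subsubsec:variations-longer-homotopies}.
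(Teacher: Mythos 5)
Your proposal is correct and follows essentially the same route as the paper: reduce via Lemma~\ref{lem:tel-shift} to showing $\eta_*\circ\eta_*\simeq\eta_*$, identify $\eta_*\circ\eta_*$ with $(\eta^2)_*$ by strict functoriality, and compare $(\Tr,\eta^2)_*$ with $(\Tr,\eta)_*$ via the homotopy criterion Lemma~\ref{lem:2-homotopy-induce-homotopy}, with coherence supplying the needed $2$-homotopy $G$ and the interval-length bookkeeping handled by the projection triangles. The only difference is cosmetic: where the paper pastes the coherence square (cut along its diagonal) together with degenerate cells into an explicit diagram for $G$, you obtain $G$ by reading the coherence square as a rel-endpoints homotopy $(\eta\circ H)\mathop{\Box} H\simeq(H\circ\eta[\Delta^1])\mathop{\Box} H$ and cancelling the common factor $H$ via Lemma~\ref{lem:concat-homotopy-and-inverse}, an equivalent reshaping carried out with the same tools.
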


\begin{proof} 
  We show $\eta_* \circ \eta_* \simeq \eta_*$, then by
  Lemma~\ref{lem:tel-shift} we have $\eta_*\circ \sh\simeq \id$ and
  therefore, $\eta_* \simeq \eta_* \circ \eta_* \circ \sh \simeq \eta_* \circ
  \sh \simeq \id$ and we are done. 
  
  Assume that $H\colon A[I] \rightarrow A$ is the homotopy from $\eta^2$ to
  $\eta$ and for simplicity assume $I =\Delta^1$.
  As $\eta$ is coherent we have a diagram
  \begin{equation*}
    \xymatrix{ 
    \bullet \ar[r]^{\eta\circ H} \ar[d]_{H\circ \eta[I]} \ar[dr]|{X} &
     \bullet \ar[d]^H \\ \bullet \ar[r]_H & \bullet}.
  \end{equation*}
  Using this diagram we construct the following diagram, which gives a
  $2$-homotopy $G$.
  \begin{equation*}
    \xymatrix@C=44pt{
    \eta^3 \ar[r]|{\Tr} \ar[d]|{\eta\circ H} \ar[dr]|{X} & \eta^3 \ar[d]|{X} &
    \eta^3 \ar[l]|{\Tr} \ar[d]|{H\circ \eta[I]} \ar[ld]|{X} \\
    \eta^2 \ar[r]|{H} & \eta & \eta^2 \ar[l]|{H} \\
    \eta^2 \ar[r]|{\Tr} \ar[u]|{\eta\circ\Tr} \ar[ur]|{H} & \eta^2 \ar[u]|{H} &
    \eta^2 \ar[l]|{\Tr} \ar[u]|{\Tr\circ\eta[I]} \ar[ul]|{H}}
  \end{equation*}
  Lemma~\ref{lem:2-homotopy-induce-homotopy} shows that $(G,H)_*$ is a homotopy
  from $(\Tr, \eta^2)_*$ to $(\Tr,\eta)_*$, which are maps $\Tel^{\Delta^1\mathop{\Box} I
  \mathop{\Box} \overline{I}}(\eta) \rightarrow  \Tel^{\Delta^1}(\eta)$.  But
  $ \Delta^1 \mathop{\Box} I \mathop{\Box} \overline{I} \rightarrow \Delta^1$
  induces a homotopy equivalence on telescopes such that the triangle for
  $\eta_*$ below, as well as one for $\eta^2_*$ commute strictly.
  \begin{equation*}
    \xymatrix{
    \Tel^{\Delta^1 \mathop{\Box} I \mathop{\Box} \overline{I}}(\eta) 
            \ar[r]^-{\simeq} \ar[d]_{\eta_*} &
      \Tel^{\Delta^1}(\eta)  \ar[dl]^{\eta_*}  \\
    \Tel^{\Delta^1}(\eta)}
  \end{equation*}
  Therefore, the maps on the cylinder of the same lengths are homotopic, too.
\end{proof}

\subsection{On Theorem~\ref{thm:mapping-telescopes}.%
(\ref{item:mapping-tel:hpty-retract})}
\label{subsec:on-thm-hpty-retract}

\begin{lem}\label{lem:map-cyl:homotopy-idempotent-retraction}
  Let $\eta\colon K \rightarrow K$ be a coherent homotopy idempotent in
  $\mathcal{C}^G$.  Then there is a map $c\colon \Tel^{\Delta^1}(\eta)
  \rightarrow K$ such that the composition $\iota\circ c$ with the inclusion
  $\iota\colon K \rightarrow \Tel^{\Delta^1}(\eta)$ is homotopic to the
  identity on $\Tel^{\Delta^1}(\eta)$, whereas the other composition $c \circ \iota$
  is homotopic to $\eta\colon K \rightarrow K$.
\end{lem}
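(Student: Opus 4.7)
The plan is to carry out the sketch in Section A.3.6 in detail, assembling $c$ as a composition of telescope maps together with the identification $\Tel(\id_K)\simeq K$.

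First I would set up the two stackings of homotopy commutative squares
\begin{equation*}
    \xymatrix{
    K \ar[r]^{\eta} \ar[d]^{\eta}   & K \ar[d]^{\eta}\\
    K \ar[r]^{\id_K} \ar@{==>}[ur]|{H}    & K
    }
    \qquad\text{and}\qquad
    \xymatrix{
    K \ar[r]^{\id_K} \ar[d]^{\eta} \ar@{==>}[dr]|{\overline{H}}   & K \ar[d]^{\eta} \\
    K \ar[r]^{\eta}    & K
    }
\end{equation*}
where $H\colon K[I]\to K$ is the given homotopy from $\eta^2$ to $\eta$ and $\overline{H}$ is the reverse homotopy (Section A.4.4). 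By Lemma/Def.~\ref{lemdef:homotopy-commutative-to-mapping-cylinders} and the discussion in~\ref{subsubsec:telescopes-stacking-composition}, these induce maps
\begin{equation*}
    (H,\eta)_*\colon \Tel^{\Delta^1\mathop{\Box} I}(\eta)\longrightarrow \Tel^{\Delta^1}(\id_K)
    \quad\text{and}\quad
    (\overline{H},\eta)_*\colon \Tel^{\Delta^1\mathop{\Box} \overline{I}}(\id_K)\longrightarrow \Tel^{\Delta^1}(\eta),
\end{equation*}
both commuting with the front inclusions. Using Lemma~\ref{lem:mapping-cylinders-different-length-equivalent} I would pre-compose with (homotopy inverses to) the canonical maps $\Tel^{\Delta^1}(-)\xrightarrow{\simeq}\Tel^{\Delta^1\mathop{\Box} I}(-)$ to get honest maps $\varphi\colon\Tel(\eta)\to\Tel(\id_K)$ and $\psi\colon\Tel(\id_K)\to\Tel(\eta)$ that still commute with $\iota$.

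Next I would compute both compositions. Stacking the two homotopy commutative squares (Lemma/Def.~\ref{lemdef:stacking-htpy-comm-squares}) and applying Lemma~\ref{lem:map-cyl:composition} in its telescope form shows that $\psi\circ\varphi$ is homotopic to the map on $\Tel(\eta)$ induced by the homotopy commutative square with vertical maps $\eta^2$ and homotopy $\overline{H}\mathop{\Box} H$. By Lemma~\ref{lem:concat-homotopy-and-inverse} (and the variants in~\ref{subsubsec:variations-longer-homotopies}) the concatenation $\overline{H}\mathop{\Box} H$ is homotopic rel endpoints to the trivial homotopy of $\eta^2$, so the homotopy criterion~\ref{lem:2-homotopy-induce-homotopy} identifies $\psi\circ\varphi$ with $(\Tr,\eta^2)_*=\eta_*\circ\eta_*$ up to homotopy. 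The coherence of $\eta$ enters exactly here: by the argument in Lemma~\ref{lem:map-tel-homotopy-idemp-induced-id-on-tel}, $\eta_*\circ\eta_*\simeq\eta_*$, and then Lemma~\ref{lem:map-tel-homotopy-idemp-induced-id-on-tel} itself gives $\eta_*\simeq\id_{\Tel(\eta)}$. Hence $\psi\circ\varphi\simeq\id$. The symmetric analysis shows that $\varphi\circ\psi\colon\Tel(\id_K)\to\Tel(\id_K)$ is homotopic to $\eta_*$ acting on $\Tel(\id_K)$.

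Finally I would combine $\varphi$ with the homotopy equivalence $q\colon\Tel(\id_K)\xrightarrow{\simeq} K$ from Theorem~\ref{thm:mapping-telescopes}.(\ref{item:mapping-tel:tel-of-id-hpty-equiv-to-obj}) (which is Corollary~\ref{cor:half-interval-hpty-to-space} under the identification $\Tel(\id_K)\cong K[0,\infty)$), defining $c:=q\circ\varphi\colon\Tel(\eta)\to K$. Then $\iota\circ c\simeq \psi\circ\varphi\simeq\id_{\Tel(\eta)}$ (after replacing $q$ by its inverse up to homotopy and using that $\psi$ can be chosen rel the front inclusion). For $c\circ\iota\simeq\eta$, I would trace through: $\varphi\circ\iota$ equals the front inclusion $K\to\Tel(\id_K)$ because $(H,\eta)_*$ respects front inclusions, and $q$ composed with the front inclusion of $\Tel(\id_K)$ is homotopic to $\id_K$ by Corollary~\ref{cor:half-interval-hpty-to-space}; the twist by $\eta$ comes from the vertical label on the square used to define $\varphi$, so the composition $c\circ\iota$ is homotopic to $\eta$.

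The main obstacle will be the last identification, namely verifying that $c\circ\iota\simeq\eta$ rather than $\id_K$: this requires keeping careful track of which vertical map ($\eta$ vs.\ $\id_K$) appears in each homotopy commutative square and how it interacts with the front inclusion under the equivalence $\Tel(\id_K)\simeq K$. All the other steps are a direct assembly of Lemmas~\ref{lem:mapping-cylinders-different-length-equivalent}, \ref{lem:2-homotopy-induce-homotopy}, \ref{lem:map-tel-homotopy-idemp-induced-id-on-tel} and Corollary~\ref{cor:half-interval-hpty-to-space}.
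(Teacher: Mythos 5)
Your overall architecture is the paper's: the same two homotopy commutative squares with vertical maps $\eta$ and homotopies $H$, $\overline{H}$, the induced maps between $\Tel(\eta)$ and $\Tel(\id_K)$ corrected for interval length via Lemma~\ref{lem:mapping-cylinders-different-length-equivalent}, the identification of the compositions using the homotopy criterion~\ref{lem:2-homotopy-induce-homotopy} together with $\eta_*\simeq\id$ from Lemma~\ref{lem:map-tel-homotopy-idemp-induced-id-on-tel}, and finally transport along $\Tel(\id_K)\simeq K$ to get $c\circ\iota\simeq\eta$. (Minor slip: with the convention of Definition~\ref{def:homotopy-comm-diag} the square with bottom $\id_K$ needs $\overline{H}$ and the one with bottom $\eta$ needs $H$, so your labels are interchanged.)

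However, there is a genuine gap at the step where you compute $\psi\circ\varphi$. By Lemma/Definition~\ref{lemdef:stacking-htpy-comm-squares} and Lemma~\ref{lem:map-cyl:composition}, the homotopy of the stacked square is \emph{not} $\overline{H}\mathop{\Box} H$ but $(H\circ\eta[I])\mathop{\Box}(\eta\circ\overline{H})$ (the constituent homotopies get pre-, respectively post-composed with the vertical map $\eta$); it runs from $\eta^3$ to $\eta^3$, not from $\eta^2$ to $\eta^2$. Its two halves are \emph{not} a homotopy and its inverse, so Lemma~\ref{lem:concat-homotopy-and-inverse} does not apply: trivializing this concatenation rel endpoints amounts to knowing that $H\circ\eta[I]$ and $\eta\circ H$ agree up to a homotopy rel endpoints, which is precisely the coherence datum $G$ of Definition~\ref{def:coherent-homotopy-idempotent}. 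This is exactly where the paper's proof uses coherence, building the explicit $2$-homotopy out of the coherence square and feeding it into Lemma~\ref{lem:2-homotopy-induce-homotopy} to identify the composite with $(\Tr,\eta)_*$; your plan defers all use of coherence to the citation $\eta_*\simeq\id$, which is not enough. The same objection applies to the ``symmetric analysis'' of $\varphi\circ\psi$ on $\Tel(\id_K)$, where the paper again needs the diagonal of the coherence square. The remaining steps (choosing the length-change inverses relative to the front inclusions, and the bookkeeping giving $c\circ\iota\simeq\eta$ via front-inclusion compatibility, $\eta_*\simeq\id$ and the deformation retraction $\Tel(\id_K)\to K$) are workable as sketched, but the reduction of the two compositions must be repaired by invoking the coherence $2$-cell at that point.
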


\begin{proof}
  Let $H \colon A[I] \rightarrow A$ be the homotopy from $\eta^2$ to $\eta$.
  We obtain two homotopy commutative squares
  \begin{equation*}
    \vcenter{\xymatrix{
    K \ar[r]^{\id} \ar[d]^{\eta} & K \ar[d]^{\eta} \\ K \ar[r]^{\eta} & K } }
    \qquad\text{ and }\qquad
    \vcenter{\xymatrix{
    K \ar[r]^{\eta} \ar[d]^{\eta} & K \ar[d]^{\eta} \\ K \ar[r]^{\id} & K } }
  \end{equation*}
  with homotopies $H\colon A[I] \rightarrow A$ and $\overline{H}\colon
  A[\,\overline{I}\,] \rightarrow A$.  Hence, we obtain two induced maps
  \begin{align*}
    (H,\eta)_* &\colon \Tel^{\Delta^1 \mathop{\Box} I}(\id_K) \rightarrow
    \Tel^{\Delta^1}(\eta) \\
    (\overline{H},\eta)_* &\colon \Tel^{\Delta^1 \mathop{\Box}
    \overline{I}}(\eta) \rightarrow \Tel^{\Delta^1}(\id_K) .
  \end{align*}
  Consider the composition (\ref{def:composition-map-of-cylinders})
  \begin{equation*}
    (H,\eta)_* \boxcircle (\overline{H},\eta)_* \colon
    \Tel^{\Delta^1 \mathop{\Box} I \mathop{\Box} \overline{I}}(\eta)
    \rightarrow
    \Tel^{\Delta^1}(\eta)
  \end{equation*}
  which by Lemma~\ref{lem:map-cyl:composition} is equal to $(H\circ \eta[I]
  \mathop{\Box} \eta\circ\overline{H}, \eta^2)_*$.  As the $\eta$
  is coherent we have a map
  $A[I\times I] \rightarrow A$ which is on the boundary of $I^2$ as shown
  below.
  \begin{equation}\label{diag:map-tel-coherent}
    \begin{split}\xymatrix{  
    \bullet \ar[r]^{\eta\circ H} \ar[d]_{H\circ \eta[I]} \ar[dr] &
    \bullet \ar[d]^H \\ \bullet \ar[r]_H & \bullet}\end{split}
  \end{equation}
  Thus by putting two copies of the above square together as shown below we
  obtain a $2$-homotopy~$G$  from $H\circ \eta[I] \mathop{\Box}
  \eta\circ\overline{H}$ to $\Tr$ as shown below.
  \begin{equation*}
    \xymatrix@C=44pt{
    \eta^3 \ar[r]|{H\circ \eta[I]} 
    \ar[d]|{\eta \circ H} &
    \eta^2 \ar[d]|{H} &
    \eta^3 \ar[l]|{\eta\circ H} 
    \ar[d]|{H\circ \eta[I]} \\
    \eta^2 \ar[r]|{H} & \eta & \eta^2 \ar[l]|{H} \\
    \eta^2 \ar[u]|{\eta\circ\Tr} \ar[ur]|{H} \ar[r]|{\Tr} &
    \eta^2 \ar[u]|{H} &
    \eta^2 \ar[u]|{\Tr\circ\eta[I]} \ar[ul]|{H} \ar[l]|{\Tr} } 
  \end{equation*}
  Lemma~\ref{lem:2-homotopy-induce-homotopy} $(G,H)_*$ provides a homotopy from
  the composition $(H,\eta)_* \boxcircle (\overline{H},\eta)_*$ to the map
  $(\Tr,\eta)_*$.  Similarly, the other composition $(\overline{H},\eta)_*
  \boxcircle (H,\eta)_*$ is homotopic to $(\Tr,\eta)_*\colon
  \Tel^{\Delta^1\mathop{\Box} \overline{I} \mathop{\Box} I}(\id_K)\rightarrow
  \Tel^{\Delta^1}(\id_K)$ using the $2$-homotopy
  \begin{equation*}
    \xymatrix@C=44pt{
    \eta^2 \ar[d]|H & \eta^3 \ar[l]|{\eta\circ H} \ar[dr]|{X} \ar[d]|{X}
    \ar[dl]|{X} \ar[r]|{H\circ \eta[I]} & \eta^2 \ar[d]|H \\
    \eta & \eta \ar[r]|{\Tr} \ar[l]|{\Tr} & \eta}
  \end{equation*}
  where $X$ is the diagonal in diagram~\eqref{diag:map-tel-coherent} and the
  upper left and right triangles are also from~\eqref{diag:map-tel-coherent}.

  Now we make $(H,\eta)_*$ and $(\overline{H},\eta)_*$ into maps of telescopes
  of the same length as in the proof of
  Lemma~\ref{lem:homotopic-maps-give-homotopic-telescopes}.  Define
  $\widetilde{c}$ and $\widetilde{\iota}$ by choosing a homotopy inverse
  in the top row of the following diagrams.
  \begin{equation*}
    \vcenter{\xymatrix{
    \Tel^{\Delta^1 \mathop{\Box} \overline{I}}(\eta) \ar[r]^{\simeq}
    \ar[d]_{(\overline{H},\eta)_*} &
    \Tel^{\Delta^1}(\eta)  \ar[dl]^{\widetilde{c}}  \\
    \Tel^{\Delta^1}(\id_K)} }
    \quad \text{ and } \quad
    \vcenter{\xymatrix{
    \Tel^{\Delta^1 \mathop{\Box} I}(\id_K) \ar[r]^{\simeq} \ar[d]_{(H,\eta)_*} &
    \Tel^{\Delta^1}(\id_K)  \ar[dl]^{\widetilde{\iota}}  \\
    \Tel^{\Delta^1}(\eta)} }
  \end{equation*}
  A similar argument as at the end of the proof of
  Lemma~\ref{lem:homotopic-maps-give-homotopic-telescopes} shows that $\widetilde{c}\circ  \widetilde{\iota}$ is homotopic to
  $\eta_*\colon \Tel^{\Delta^1}(\id_K) \rightarrow \Tel^{\Delta^1}(\id_K)$ and
  $\widetilde{\iota}\circ \widetilde{c}$ is homotopic to $\eta_*\colon
  \Tel^{\Delta^1}(\eta) \rightarrow \Tel^{\Delta^1}(\eta)$.  
  Lemma~\ref{lem:map-tel-homotopy-idemp-induced-id-on-tel} shows that on
  $\Tel^{\Delta^1}(\eta)$  the map $\eta_*$ is homotopic to the identity.

  Now $\iota_{\id_K}\colon K\rightarrow \Tel^{\Delta^1}(\id_K)$ is a homotopy
  equivalence and even an inclusion for a deformation retraction $\pr$ by
  Lemma~\ref{lem:tel-homotopy-equiv-to-obj}. Define $c$ as the composition $ \pr \circ
  \widetilde{c}\colon \Tel^{\Delta^1}(\eta)\rightarrow K$, note
  $\widetilde{\iota}\circ \iota_{\id_K} =
  \iota_{\eta}$.  Then $\iota\circ c =\widetilde{\iota}\circ \iota_{\id_K} 
  \circ\pr \circ \widetilde{c}$ is homotopic to $\widetilde{\iota} \circ
  \widetilde{c}$ and hence to $\id_{\Tel^{\Delta^1}(\eta)}$
  and $c \circ \iota = \pr \circ \widetilde{c} \circ\widetilde{\iota}\circ
  \iota_{\id_K}$is homotopic to $\eta\colon K \rightarrow K$.  This shows the lemma.
\end{proof}

\bibliographystyle{halpha}
\bibliography{ullmann-controlled-simplicial-rings}

\begin{thebibliography}{EKMM97}

\bibitem[Bar13]{arxiv:Bartels;on-proofs-of}
Arthur Bartels.
\newblock {O}n proofs of the {F}arrell-{J}ones {C}onjecture.
\newblock {\em arXiv}, math.GT, March 2013, 1210.1044v2.

\bibitem[BFJR04]{Bartels-Farrell-Jones-Reich;;;}
A.~Bartels, T.~Farrell, L.~Jones, and H.~Reich.
\newblock On the isomorphism conjecture in algebraic {$K$}-theory.
\newblock {\em Topology}, 43(1):157--213, 2004.

\bibitem[BLR08a]{Bartels-Reich;hyperbolic}
Arthur Bartels, Wolfgang L{\"u}ck, and Holger Reich.
\newblock The {$K$}-theoretic {F}arrell-{J}ones conjecture for hyperbolic
  groups.
\newblock {\em Invent. Math.}, 172(1):29--70, 2008.

\bibitem[BLR08b]{Bartels-Reich;applications}
Arthur Bartels, Wolfgang L{\"u}ck, and Holger Reich.
\newblock On the {F}arrell-{J}ones conjecture and its applications.
\newblock {\em J. Topol.}, 1(1):57--86, 2008.

\bibitem[BLRR13]{arxiv:Bartels-Reich-Rueping;group-rings}
Arthur Bartels, Wolfgang L{\"u}ck, Holger Reich, and Henrik R{\"u}ping.
\newblock {K}- and {L}-theory of group rings over ${G}{L}_n({Z})$.
\newblock {\em arXiv}, math.KT, May 2013, 1204.2418v2.

\bibitem[Bor94]{borceux;handbook-categorical}
F.~Borceux.
\newblock {\em Handbook of categorical algebra. 1}, volume~50 of {\em
  Encyclopedia of Mathematics and its Applications}.
\newblock Cambridge University Press, Cambridge, 1994.
\newblock Basic category theory.

\bibitem[Bro73]{brown;abstract-homotopy-sheaf-cohomology}
Kenneth~S. Brown.
\newblock Abstract homotopy theory and generalized sheaf cohomology.
\newblock {\em Trans. Amer. Math. Soc.}, 186:419--458, 1973.

\bibitem[CP97]{Cardenas-Pedersen;Karoubi}
M.~C{\'a}rdenas and E.~K. Pedersen.
\newblock On the {K}aroubi filtration of a category.
\newblock {\em $K$-Theory}, 12(2):165--191, 1997.

\bibitem[EKMM97]{Elmendorf-Kriz-Mandell;rings-modules}
A.~D. Elmendorf, I.~Kriz, M.~A. Mandell, and J.~P. May.
\newblock {\em Rings, modules, and algebras in stable homotopy theory},
  volume~47 of {\em Mathematical Surveys and Monographs}.
\newblock American Mathematical Society, Providence, RI, 1997.
\newblock With an appendix by M. Cole.

\bibitem[FJ87]{farrell-jones;k-theory-dynamics}
F.~T. Farrell and L.~E. Jones.
\newblock {$K$}-theory and dynamics. {II}.
\newblock {\em Ann. of Math. (2)}, 126(3):451--493, 1987.

\bibitem[FJ93]{farrell-jones;isomorphism-conjectures}
F.~T. Farrell and L.~E. Jones.
\newblock Isomorphism conjectures in algebraic {$K$}-theory.
\newblock {\em J. Amer. Math. Soc.}, 6(2):249--297, 1993.

\bibitem[Fre03]{freyd;abelian-categories}
Peter~J. Freyd.
\newblock Abelian categories.
\newblock {\em Repr. Theory Appl. Categ.}, 3:1--190, 2003.
\newblock Reprint of the 1964 edition.

\bibitem[FW13]{arxiv:Farrell-Wu;solvable-Baumslag-Solitar}
Tom Farrell and Xiaolei Wu.
\newblock {F}arrell-{J}ones {C}onjecture for the solvable {B}aumslag-{S}olitar
  groups.
\newblock {\em arXiv}, math.GT, April 2013, 1304.4779v1.

\bibitem[GJ99]{goerss-jardine;simplicial-homotopy-theory}
P.~G. Goerss and J.~F. Jardine.
\newblock {\em Simplicial homotopy theory}, volume 174 of {\em Progress in
  Mathematics}.
\newblock Birkh\"auser Verlag, Basel, 1999.

\bibitem[Hat02]{Hatcher;algebraic-topology}
Allen Hatcher.
\newblock {\em Algebraic topology}.
\newblock Cambridge University Press, Cambridge, 2002.

\bibitem[HH82]{Hastings-Heller;idempotents-finite-dimensional}
H.~M. Hastings and A.~Heller.
\newblock Homotopy idempotents on finite-dimensional complexes split.
\newblock {\em Proc. Amer. Math. Soc.}, 85(4):619--622, 1982.

\bibitem[HSS00]{Hovey-Shipley-Smith;symmetric-spectra}
Mark Hovey, Brooke Shipley, and Jeff Smith.
\newblock Symmetric spectra.
\newblock {\em J. Amer. Math. Soc.}, 13(1):149--208, 2000.

\bibitem[LR05]{Luck-Reich;conjectures}
W.~L{\"u}ck and H.~Reich.
\newblock The {B}aum-{C}onnes and the {F}arrell-{J}ones conjectures in {$K$}-
  and {$L$}-theory.
\newblock In {\em Handbook of {$K$}-theory. {V}ol. 1, 2}, pages 703--842.
  Springer, Berlin, 2005.

\bibitem[MMSS01]{May-Mandell-Schwede-Shipley;;;}
M.~A. Mandell, J.~P. May, S.~Schwede, and B.~Shipley.
\newblock Model categories of diagram spectra.
\newblock {\em Proc. London Math. Soc. (3)}, 82(2):441--512, 2001.

\bibitem[Ped00]{Pedersen;controlled-algebraic;;;2000}
Erik~Kj{\ae}r Pedersen.
\newblock Controlled algebraic {$K$}-theory, a survey.
\newblock In {\em Geometry and topology: {A}arhus (1998)}, volume 258 of {\em
  Contemp. Math.}, pages 351--368. Amer. Math. Soc., Providence, RI, 2000.

\bibitem[PW85]{Pedersen-Weibel;nonconnective}
E.~K. Pedersen and C.~A. Weibel.
\newblock A nonconnective delooping of algebraic {$K$}-theory.
\newblock In {\em Algebraic and geometric topology ({N}ew {B}runswick,
  {N}.{J}., 1983)}, volume 1126 of {\em Lecture Notes in Math.}, pages
  166--181. Springer, Berlin, 1985.

\bibitem[Sta89]{Staffeldt;on-fundamental}
R.~E. Staffeldt.
\newblock On fundamental theorems of algebraic {$K$}-theory.
\newblock {\em $K$-Theory}, 2(4):511--532, 1989.

\bibitem[tD87]{dieck;transformation-groups}
Tammo tom Dieck.
\newblock {\em Transformation groups}, volume~8 of {\em de Gruyter Studies in
  Mathematics}.
\newblock Walter de Gruyter \& Co., Berlin, 1987.

\bibitem[TT90]{Thomason-Trobaugh;higher-algebraic}
R.~W. Thomason and T.~Trobaugh.
\newblock Higher algebraic {$K$}-theory of schemes and of derived categories.
\newblock In {\em The {G}rothendieck {F}estschrift, {V}ol.\ {III}}, volume~88
  of {\em Progr. Math.}, pages 247--435. Birkh\"auser Boston, Boston, MA, 1990.

\bibitem[Ull11]{thesis-ullmann}
Mark Ullmann.
\newblock Controlled algebra for simplicial rings and the algebraic {K}-theory
  assembly map.
\newblock Thesis, Düsseldorf, available from
  \url{http://docserv.uni-duesseldorf.de/servlets/DocumentServlet?id=17133},
  2011.

\bibitem[Vog90]{Vogell;bounded}
W.~Vogell.
\newblock Algebraic {$K$}-theory of spaces, with bounded control.
\newblock {\em Acta Math.}, 165(3-4):161--187, 1990.

\bibitem[Wal]{Waldhausen;A-Top-I}
F.~Waldhausen.
\newblock Lecture: Algebraische {T}opologie.
\newblock Unpublished lecture notes (german), available from
  \url{www.math.uni-bielefeld.de/~fw}.

\bibitem[Wal78]{Waldhausen;topological-spaces;;;1978}
Friedhelm Waldhausen.
\newblock Algebraic {$K$}-theory of topological spaces. {I}.
\newblock In {\em Algebraic and geometric topology ({P}roc. {S}ympos. {P}ure
  {M}ath., {S}tanford {U}niv., {S}tanford, {C}alif., 1976), {P}art 1}, Proc.
  Sympos. Pure Math., XXXII, pages 35--60. Amer. Math. Soc., Providence, R.I.,
  1978.

\bibitem[Wal85]{Waldhausen;spaces;;;;1985}
F.~Waldhausen.
\newblock Algebraic {$K$}-theory of spaces.
\newblock In {\em Algebraic and geometric topology ({N}ew {B}runswick,
  {N}.{J}., 1983)}, volume 1126 of {\em Lecture Notes in Math.}, pages
  318--419. Springer, Berlin, 1985.

\bibitem[Weg13]{arxiv:Wegner;solvable}
Christian Wegner.
\newblock {T}he {F}arrell-{J}ones {C}onjecture for virtually solvable groups.
\newblock {\em arXiv}, math.GT, August 2013, 1308.2432v2.

\bibitem[Wei02]{weiss;excision}
M.~Weiss.
\newblock Excision and restriction in controlled {$K$}-theory.
\newblock {\em Forum Math.}, 14(1):85--119, 2002.

\bibitem[Wei13]{weibel;k-book}
Charles~A. Weibel.
\newblock {\em The {$K$}-book}, volume 145 of {\em Graduate Studies in
  Mathematics}.
\newblock American Mathematical Society, Providence, RI, 2013.
\newblock An introduction to algebraic $K$-theory.

\bibitem[Zak10]{inna-cofinality-mathoverflow}
Inna Zakharevich.
\newblock Cofinal inclusions of waldhausen categories.
\newblock
  \url{http://mathoverflow.net/questions/23515/cofinal-inclusions-of-waldhausen-categories},
  May 2010.
\newblock Question on mathoverflow.net.

\end{thebibliography}
\end{document}